\newcommand{\Footnote}[1]{\footnote{{\color{Sepia} #1}}}  
\renewcommand{\Footnote}[2][]{\relax}  
\theoremstyle{plain}
\newcommand{\id}{\operatorname{id}}
\newcommand{\pr}{\operatorname{pr}}
\newcommand{\Hom}{\operatorname{Hom}}
\newcommand{\Fil}{\operatorname{Fil}}
\newcommand{\Rep}{\operatorname{Rep}}
\newcommand{\Gal}{\operatorname{Gal}}
\newcommand{\cor}{\operatorname{cor}}
 \newcommand{\CC}{\mathbb{C}}
 \newcommand{\QQ}{\mathbb{Q}}
 \newcommand{\ZZ}{\mathbb{Z}}
\newcommand{\bA}{\mathbf{A}}
 \newcommand{\bB}{\mathbf{B}}
  \newcommand{\bE}{\mathbf{E}}
 \newcommand{\cO}{\mathcal{O}}
\renewcommand{\cR}{\mathcal{R}}
 \newcommand{\Cp}{\CC_p}
 \newcommand{\be}{\begin{equation}}
\newcommand{\ee}{\end{equation}}
\newcommand{\fpbar}{\ifmmode {\overline{\mathbb{F}_p}}\else$\mathbb{F}_p$\ \fi}
\newcommand{\fp}{\ifmmode {\mathbb{F}_p}\else$\mathbb{F}_p$\ \fi}
\newcommand{\zp}{\ifmmode \mathbb{Z}_p\else$\mathbb{Z}_p$\ \fi}
\newcommand{\zpur}{\ifmmode \widehat{\zp^{ur}}\else $\widehat{\zp^{ur}}$\ \fi}
 \newcommand{\TLT}{T_{\pi}}
   \newcommand{\cyc}{\mathrm{cyc}}
  \newcommand{\inv}{\mathrm{inv}}
 \newcommand{\cris}{\mathrm{cris}}
 \newcommand{\dR}{\mathrm{dR}}
 \newcommand{\Iw}{\mathrm{Iw}}
 \newcommand{\rig}{\mathrm{rig}}
  \newcommand{\LT}{\mathrm{LT}}
  \newcommand{\Tate}{\mathrm{Tate}}
\newcommand{\La}{\ifmmode\Lambda\else$\Lambda$\fi}
\newcommand{\q}{\ifmmode {\mathbb Q}\else${\mathbb Q}$\ \fi}
\newcommand{\qp}{\ifmmode {\mathbb Q}_p\else${\mathbb Q}_p$\ \fi}
\newcommand{\Q}{\ifmmode {\mathbb Q}\else${\mathbb Q}$\ \fi}
\newcommand{\ql}{\ifmmode {{\mathbb Q}_l}\else${\mathbb Q}_l$\ \fi}
\newtheorem{theorem}{Theorem}[subsection]
\newtheorem{corollary}[theorem]{Corollary}
\newtheorem{lemma}[theorem]{Lemma}
\newtheorem{proposition}[theorem]{Proposition}
\theoremstyle{definition}
\theoremstyle{remark}
\newtheorem{remark}[theorem]{Remark}
\newtheorem*{acknowledgement}{Acknowledgements}
\newcommand{\cN}{\mathcal{N}}
\newcommand{\GG}{\mathbb{G}}
\title{On Lubin-Tate regulator maps and Kato's explicit reciprocity law}
\author{Takamichi Sano and Otmar Venjakob}
\address{Osaka Metropolitan University,
Department of Mathematics,
3-3-138 Sugimoto\\Sumiyoshi-ku\\Osaka\\558-8585,
Japan}
\email{tsano@omu.ac.jp}
\address{Universit\"{a}t Heidelberg,  Mathematisches Institut,  Im Neuenheimer Feld 288,  69120
Heidelberg,  Germany,}
\urladdr{http://www.mathi.uni-heidelberg.de/~venjakob/}
\email{venjakob@mathi.uni-heidelberg.de}
\begin{document}

\begin{abstract}
We extend the interpolation property of the Lubin-Tate regulator map from \cite{SV24} to Artin characters and show a reciprocity law in the sense of Cherbonnier-Colmez. This allows us to provide a new proof of Kato's explicit reciprocity law for Lubin-Tate formal groups.
\end{abstract}

\maketitle
\tableofcontents

\section{Introduction}


\subsection{Background}

In \cite[Cor.\ 8.7]{SV15} Schneider and the second named author reproved a special case of Kato's explicit reciprocity law via Lubin-Tate $(\varphi,\Gamma)$-modules. It is a natural question - arisen by the first named author - whether one can also give a proof of the remaining cases by such methods.

In order to describe an  answer we first review Kato's explicit reciprocity law starting with  some notations. Let $L$ be a finite extension of $\mathbb{Q}_p$ and fix a Lubin-Tate formal group ${\rm LT}$ over the ring of integers $o_L$ with uniformizer $\pi_L$. Let $\eta=(\eta_n)_n$ be an $o_L$-generator of the Tate module $T=T_\pi:=\varprojlim_n {\rm LT}[\pi_L^n]$ and set $V:=L\otimes_{o_L} T$. The $\pi_L^n$-division points generate a tower of Galois extensions $L_n:=L({\rm LT}[\pi_L^n])$ of $L$. We set $L_\infty:=\bigcup_n L_n$ and $\Gamma_L:=\Gal(L_\infty/L)$. For a norm compatible system $u=(u_n)_n\in \varprojlim_n L_n^\times$, let $g_{u,\eta}\in o_L((Z))^\times$ be the Coleman Laurent series, which satisfies $g_{u,\eta}(\eta_n)=u_n$ for all $n.$ Let $\partial_{\mathrm{inv}}$ denote the invariant derivative operator with respect to ${\rm LT}$  (see (\ref{def:partial}) below).

The {\it equivariant Coates-Wiles homomorphisms} for $j,m \geq 1$ are defined by
\begin{equation*}
   \psi_{{\rm CW},m}^j :  \varprojlim_n L_n^\times  \to
   L_m; \ u \mapsto
    \frac{1}{j!\pi_L^{mj}} \left. \left(\partial_{\mathrm{inv}}^{j}\log(g_{u,\eta})\right)\right|_{Z=\eta_m} .
\end{equation*}
Here $\partial_{\rm inv}^j \log(g_{u,\eta})$ means $\partial_{\rm inv}^{j-1}\left( \partial_{\rm inv}(g_{u,\eta})/g_{u,\eta}\right)$.
%
%
%

We shall next introduce another map $\lambda_{m,j}: \varprojlim_n L_n^\times \to L_m$.
For $j\geq 1$, the cup product with $\eta^{\otimes(-j)} \in T^{\otimes(-j)}$ induces a ``twisting" map
$${\rm tw}_j:H^1_{\rm Iw}(L_\infty/L, \ZZ_p(1)) \to H^1_{\rm Iw}(L_\infty/L, T^{\otimes(-j)}(1)),$$ where $H^i_{\rm Iw}(L_\infty/L,-):=\varprojlim_n H^i(L_n,-)$ denotes Iwasawa cohomology.
If, for $m\geq 1$, we write ${\rm pr}_m: H^1_{\rm Iw}(L_\infty/L,-)\to H^1(L_m, -)$ for the projection map,  we define the second map
$$\lambda_{m,j}: \varprojlim_n L_n^\times \to L_m,$$
as the composition
$$\varprojlim_n L_n^\times \xrightarrow{{\rm Kum}} H^1_{\rm Iw}(L_\infty/L,\ZZ_p(1)) \xrightarrow{{\rm tw}_j} H^1_{\rm Iw}(L_\infty/L, T^{\otimes(-j)}(1)) \xrightarrow{{\rm pr}_m} H^1(L_m, T^{\otimes(-j)}(1)) \xrightarrow{\exp_j^\ast} L_m,$$
where the first map is the Kummer map and $\exp_j^\ast := \exp_{L_m,V^{\otimes (-j)}(1)}^\ast$ is the Bloch-Kato dual exponential map (upon identifying $D^0_{{\rm dR}, L_m}(V^{\otimes(-j)} (1))$ with $L_m$ by choosing a basis, see (\ref{def dr}) below).

Then Kato's explicit reciprocity law for Lubin-Tate formal groups is stated as follows.

\begin{theorem}[{Kato's explicit reciprocity law, \cite[Thm. II.2.1.7]{katolecture}}]\label{kato erl}
For any $j,m\geq 1$ and $u=(u_n)_n \in \varprojlim_n L_n^\times$, we have
$$\lambda_{m,j}(u)= j\cdot \psi_{{\rm CW},m}^j(u). $$
\end{theorem}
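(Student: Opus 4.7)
The plan is to use the Lubin--Tate regulator map $\mathcal{L}$ of \cite{SV24} as a bridge between the two sides of the claimed identity. On one side, the interpolation property of $\mathcal{L}$ (suitably extended) recovers the equivariant Coates--Wiles values $\psi_{{\rm CW},m}^j$, while on the other side, a Cherbonnier--Colmez-type reciprocity law identifies the same regulator values with the dual exponential expression $\lambda_{m,j}$. Comparing the two evaluations of $\mathcal{L}$ yields Kato's identity, with the factor $j$ coming from the difference in normalization between $\partial_{\mathrm{inv}}^j \log$ (i.e.\ $\partial_{\mathrm{inv}}^{j-1}$ of a logarithmic derivative) and the natural normalization arising from dualizing Perrin-Riou-style ``big exponential'' constructions.

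\textbf{Step 1: interpolation at Artin characters.} The interpolation property from \cite{SV24} evaluates $\mathcal{L}(\kappa(u))$ at powers $\chi_{\rm LT}^{-j}$ of the Lubin--Tate character and recovers $\psi_{{\rm CW}}^j$ at the bottom level. To reach level $m$, I would evaluate $\mathcal{L}(\kappa(u))$ instead at characters of the form $\chi_{\rm LT}^{-j}\rho$, where $\rho$ ranges over the Artin characters of $\Gal(L_m/L)$. By an explicit computation — essentially unfolding $\partial_{\mathrm{inv}}^j\log g_{u,\eta}$ at $Z=\eta_m$ and applying Fourier inversion on $\Gal(L_m/L)$ — one shows that these evaluations assemble into $j\cdot\psi_{{\rm CW},m}^j(u)\in L_m$. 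The $\rho$-isotypic components of $L_m$ should be handled uniformly by exploiting the $\Gamma_L$-equivariance of $\mathcal{L}$ built into the framework of Lubin--Tate $(\varphi,\Gamma)$-modules.

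\textbf{Step 2: reciprocity law.} In parallel, I would prove that evaluation of $\mathcal{L}(\kappa(u))$ at $\chi_{\rm LT}^{-j}\rho$ coincides — up to the same constants — with $\exp^*_{L_m, V^{\otimes(-j)}(1)}\bigl(\mathrm{pr}_m \circ \mathrm{tw}_j(\kappa(u))\bigr)$ projected onto the $\rho$-component. This is the Cherbonnier--Colmez-type statement promised in the abstract. The cyclotomic prototype rests on Fontaine's identification of the dual exponential map via the $(\varphi,\Gamma)$-module and a residue pairing; in the Lubin--Tate setting, the analogous ingredients are available (Kisin--Ren, Berger, \cite{SV15}), but one must track the twist by $\mathrm{tw}_j$ and the action of $\Gamma_L$ on the differential $d\log\eta_{\mathrm{LT}}$ carefully so that the constants on both sides match. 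Summing over $\rho$ and using the Fourier decomposition of Step~1 then yields $\lambda_{m,j}(u)$.

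\textbf{Main obstacle and conclusion.} Once both identities are in place, Theorem~\ref{kato erl} follows by equating the two formulas for $\mathcal{L}(\kappa(u))$ evaluated along $\{\chi_{\rm LT}^{-j}\rho\}_\rho$. The main difficulty is clearly Step~2: the reciprocity statement at \emph{all} Artin characters of $\Gamma_L$. The issue is not the formal shape of the formula but its normalization — pinning down the precise interaction of the Bloch--Kato dual exponential $\exp^*_{L_m,V^{\otimes(-j)}(1)}$ at a finite level $L_m$ with the Lubin--Tate regulator, which lives naturally on the full tower $L_\infty/L$. Concretely, one has to control how $\mathrm{pr}_m$ transforms under the character-variety picture used to define $\mathcal{L}$, and this will require an explicit Tate--Sen-type descent in the Lubin--Tate setting, generalizing the cyclotomic arguments of Cherbonnier--Colmez.
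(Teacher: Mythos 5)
Your strategy is essentially the paper's second proof (\S\ref{sec second}): one compares two evaluations of the regulator $\mathbf{L}_{L(\tau)}(\mathbf{u})$ at the characters $\rho\chi_{\LT}^{j-1}$, where $\mathbf{u}$ is the image of $u$ under the Kummer map and ${\rm tw}_1$. Your Step 1 is Lemma \ref{lem:regtwist}, which rests on the explicit description of ${\rm Exp}^\ast$ on the image of the Kummer map (Theorem \ref{thm sv}) together with evaluation of the Mellin transform at $\rho$ (Lemma \ref{lem:evaluate}); your Step 2 --- correctly identified as the crux --- is the interpolation property at Artin characters, Corollary \ref{cor:adjointformula}/Corollary \ref{f:intchi-j}, which the paper derives from the reciprocity formula of \cite{SV24} (Theorem \ref{cor:adjoint}) and the Berger--Fourquaux interpolation of the big exponential map. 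The reduction to the case where $m$ equals the conductor of $\rho$ is handled by the trace compatibilities of Lemma \ref{lem:trace}.

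There is one point where your plan of treating all $\rho$-isotypic components ``uniformly'' genuinely breaks down: the trivial character. At conductor $0$ the interpolation formula carries the Euler factor $(1-\pi_L^{-j})^{-1}(1-\pi_L^{j}/q)$ (since $\varphi_L$ acts trivially on $D_{\cris,L}(L)$), and when $\pi_L^{j}=q$ the operator $1-\frac{\pi_L^{j}}{q}\varphi_L$ is not invertible, so the interpolation identity cannot be solved for the trivial-character component of $\lambda_{m,j}(u)$; this is the same obstruction that forces the hypothesis $\pi_L^j\neq q$ in Theorem \ref{key claim}. The paper circumvents this by first replacing $u$ by $u/\sigma(u)$ to reduce to a norm-compatible system of units (using the $\chi_{\LT}^j$-equivariance of both $\lambda_{m,j}$ and $\psi_{{\rm CW},m}^j$) and then invoking the already-established special case of Kato's law from \cite[Cor.\ 8.7]{SV15} for the trivial-character component (see Remark \ref{rem:m0}). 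Without some such separate argument your proof only yields the theorem under the additional assumption $\pi_L^j\neq q$.
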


Next, we recall the explicit reciprocity law due to Schneider and the second named author.
Let $\bA_L$ be the $\pi_L$-adic completion of $o_L((Z))$, i.e.,
$$\bA_L:=\left\{ \sum_{n \in \ZZ} a_n Z^n \ \middle| \  a_n\in o_L, \ \underset{n\to -\infty}{\lim} a_n =0\right\}.$$
(See \cite[\S 4]{SV15} for the correct choice of the variable $Z=\omega_\LT$ contained in some ring of Witt vectors.)
In \cite[Thm. 5.13]{SV15} an isomorphism
$${\rm Exp}^\ast : H^1_{\rm Iw}(L_\infty/L, T^{\otimes(-1)} (1)) \xrightarrow{\sim} \bA_L^{\psi=1} $$
is constructed, where $\psi=\psi_L$ denotes a trace operator (see (\ref{def psi}) below).

\begin{theorem}[{\cite[Thm. 6.2]{SV15}}]\label{thm sv}
The composition map
$$\varprojlim_n L_n^\times \xrightarrow{{\rm Kum}} H^1_{\rm Iw}(L_\infty/L,\ZZ_p(1))\xrightarrow{{\rm tw}_1} H^1_{\rm Iw}(L_\infty/L,T^{\otimes(-1)}(1)) \xrightarrow{{\rm Exp}^\ast} \bA_L^{\psi=1}$$
is explicitly given by $u \mapsto \partial_{\rm inv}\log(g_{u,\eta}):=\partial_{\rm inv}(g_{u,\eta})/g_{u,\eta}$.
\end{theorem}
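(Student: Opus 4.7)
The plan is to translate the composition $\mathrm{Exp}^\ast\circ\mathrm{tw}_1\circ\mathrm{Kum}$ into the language of Lubin-Tate $(\varphi_L,\Gamma_L)$-modules and identify the output with the logarithmic derivative of the Coleman series. Recall from \cite[Thm.~5.13]{SV15} that $\mathrm{Exp}^\ast$ is constructed via the Kisin-Ren equivalence: to $T^{\otimes(-1)}(1)$ one attaches an étale $(\varphi_L,\Gamma_L)$-module over $\bA_L$ whose $\psi_L$-invariants compute $H^1_{\mathrm{Iw}}(L_\infty/L,T^{\otimes(-1)}(1))$, and a distinguished basis trivializes this rank-one module to $\bA_L^{\psi_L=1}$. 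The theorem is then equivalent to checking the formula after this chain of identifications.

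I would handle the Kummer step via Lubin-Tate Coleman theory. For the representation $\ZZ_p(1)$ the associated rank-one $(\varphi_L,\Gamma_L)$-module carries a canonical generator with respect to which the Kummer image of a norm-coherent unit $u=(u_n)_n$ is represented precisely by $g_{u,\eta}$: the interpolation property $g_{u,\eta}(\eta_n)=u_n$ together with Coleman's norm identity $\psi_L(g_{u,\eta})=g_{u,\eta}$ encode the Galois cocycles representing $\mathrm{Kum}(u)$ at each finite level. The twist $\mathrm{tw}_1$ corresponds, under the equivalence, to tensoring with the rank-one $(\varphi_L,\Gamma_L)$-module attached to $T^{\otimes(-1)}$, which is generated by the inverse of the Lubin-Tate period $\omega_{\mathrm{LT}}$ (normalized so that $\partial_{\mathrm{inv}}(\omega_{\mathrm{LT}})=1$); after performing this operation and passing to the basis that trivializes the twisted module, $g_{u,\eta}$ is converted into $\partial_{\mathrm{inv}}(g_{u,\eta})/g_{u,\eta}=\partial_{\mathrm{inv}}\log(g_{u,\eta})$.

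To conclude, I would verify that the right-hand side indeed lies in $\bA_L^{\psi_L=1}$: this follows from applying $\partial_{\mathrm{inv}}\log$ to the relation $\psi_L(g_{u,\eta})=g_{u,\eta}$ and using the interaction between $\partial_{\mathrm{inv}}$ and $\varphi_L$ coming from $\varphi_L(\omega_{\mathrm{LT}})=[\pi_L]_{\mathrm{LT}}(\omega_{\mathrm{LT}})$. The main obstacle is the bookkeeping of normalizations: matching the abstract construction of $\mathrm{Exp}^\ast$ (built from the large Lubin-Tate exponential and the Fontaine-style period ring) with the naive Coleman image requires carefully tracking the basis of $T^{\otimes(-1)}(1)$ and the interplay between $\eta$, $\omega_{\mathrm{LT}}$ and $\partial_{\mathrm{inv}}$. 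Once these normalizations are pinned down to agree with those of \cite[\S 5]{SV15}, the formula $u\mapsto \partial_{\mathrm{inv}}\log(g_{u,\eta})$ falls out.
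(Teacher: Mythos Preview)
The paper does not prove this theorem: it is quoted from \cite[Thm.~6.2]{SV15} as a background result and used as a black box. So there is no ``paper's own proof'' to compare against here.

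That said, your sketch has a genuine gap. The Coleman Laurent series $g_{u,\eta}$ is a unit in $o_L((Z))^\times$, and the relation it satisfies is not $\psi_L(g_{u,\eta})=g_{u,\eta}$ (which is meaningless for the additive operator $\psi_L$) but rather $\mathcal{N}(g_{u,\eta})=g_{u,\eta}$ for the \emph{multiplicative} Coleman norm operator. The passage from the multiplicative world (norm-coherent units, Kummer cocycles, Coleman series) to the additive world (elements of $\bA_L^{\psi_L=1}$) is precisely the map $g\mapsto \partial_{\mathrm{inv}}\log(g)=\partial_{\mathrm{inv}}(g)/g$, and this is the content of the theorem, not a consequence of the twist. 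Your outline attributes the appearance of $\partial_{\mathrm{inv}}\log$ to the tensoring with the period $\omega_{\mathrm{LT}}^{-1}$ under $\mathrm{tw}_1$, but that twist is just a basis change on a rank-one module and cannot by itself produce a logarithmic derivative. What is actually needed (and is the heart of the argument in \cite{SV15}) is to unwind $\mathrm{Exp}^\ast$ via its defining duality with the connecting map $H^1(L_\infty,\mu_{p^k})\simeq (\bA_L/p^k)^\times/\varphi-1$ arising from the Kummer sequence on $\bA$, and to see that this connecting map sends $u_n$ to the class of $g_{u,\eta}$; the logarithmic derivative then appears when one identifies the twisted $(\varphi_L,\Gamma_L)$-module of $T^{\otimes(-1)}(1)$ with $\Omega^1_{\bA_L}$ and uses the residue pairing. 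Your sketch does not touch this mechanism.
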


\subsection{Main results}

{\it In this article, we deduce Theorem \ref{kato erl} from Theorem \ref{thm sv} using Theorem \ref{key claim} below and thus give a new proof of Kato's explicit reciprocity law}.

Our argument is as follows.
We set
$$t_{\rm LT}:=\log_{\rm LT}(Z) \in L[[Z]]$$
and for $m\geq 1$ we define
$$\varphi^{-m}=\varphi_L^{-m}: \bA_L^{\psi=1} \to L_m[[t_{\rm LT}]]; \ f \mapsto f\left(\eta_m+_{\rm LT} \exp_{\rm LT}\left(\frac{t_{\rm LT}}{\pi_L^m}\right)\right)$$
(this is well-defined: see Corollary \ref{corpsi}).
Then for $j,m \geq 1$ we define an ``evaluation map"
$${\rm ev}_{m,j}: \bA_L^{\psi =1} \to L_m$$
by
$${\rm ev}_{m,j}(f) :=\text{the coefficient of $t_{\rm LT}^{j-1}$ in $\pi_L^{-m}\varphi^{-m}(f)$}.$$
By explicit calculation, one sees that this map is explicitly given by
\begin{equation}\label{evmj}
{\rm ev}_{m,j}(f) = \frac{1}{(j-1)! \pi_L^{mj}} \bigg(\partial_{\rm inv}^{j-1} f\bigg)|_{Z=\eta_m}.
\end{equation}
(See Proposition \ref{prop:explicit}.)

The following result, which relates ${\rm Exp}^\ast$ with Bloch-Kato dual exponential maps, is one of the main results of this article.

\begin{theorem}\label{key claim}
For any $m\geq 1$ and $j\geq 1$ such that $\pi_L^j\neq q$, the following diagram is commutative:
$$\xymatrix{
H^1_{\rm Iw}(L_\infty/L, T^{\otimes(-1)}(1)) \ar[d]_-{{\rm tw}_{j-1}} \ar[rr]^-{{\rm Exp}^\ast}&  & \bA_L^{\psi=1}\ar[dd]^-{{\rm ev}_{m,j}}   \\
H^1_{\rm Iw}(L_\infty/L, T^{\otimes(-j)}(1))  \ar[d]_-{{\rm pr}_m}& & \\
H^1(L_m, T^{\otimes(-j)}(1))  \ar[rr]_-{\exp^\ast_j}& &L_m.
}$$
\end{theorem}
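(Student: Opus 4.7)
The plan is to translate everything to the Lubin-Tate $(\varphi_L, \Gamma_L)$-module side, interpret the twist $\mathrm{tw}_{j-1}$ and projection $\mathrm{pr}_m$ explicitly in terms of the completed localization $\varphi^{-m} : \bA_L^{\psi=1} \to L_m[[t_{\mathrm{LT}}]]$, and then invoke a Cherbonnier-Colmez style description of the Bloch-Kato dual exponential $\exp^*_j$ at the finite level $m$. Theorem \ref{thm sv} identifies $\mathrm{Exp}^*$ with the explicit Coleman-type expression $\partial_{\inv}\log g_{u,\eta}$, so once the right-hand side of the diagram is translated into a formula on the $(\varphi_L,\Gamma_L)$-module side, the desired compatibility becomes a purely analytic identity between coefficients of powers of $t_{\mathrm{LT}}$.

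Concretely, I would first recall that for each $m\geq 1$ the map $\varphi^{-m}$ identifies the image of $f \in \bA_L^{\psi=1}$ (together with the fixed basis coming from $\eta$) with its image in a Lubin-Tate analogue of $\Ddifp$, non-canonically isomorphic to $L_m[[t_{\mathrm{LT}}]]\cdot \mathbf{d}_0$, where $\mathbf{d}_0$ is the distinguished basis vector of $\DdR(V^{\otimes(-1)}(1))$ attached to $t_{\mathrm{LT}}\otimes \eta^{\otimes(-1)}\otimes \eta$. Next I would translate the twist: the cup product with $\eta^{\otimes(-(j-1))}$ corresponds, via the identification $t_{\mathrm{LT}}\otimes\eta \in \DdR(V)$, to division by $t_{\mathrm{LT}}^{j-1}$ on the $\Ddif$-side. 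Hence $\mathrm{tw}_{j-1}\circ \mathrm{pr}_m\circ (\mathrm{Exp}^*)^{-1}$ applied to $f$ is represented by $t_{\mathrm{LT}}^{-(j-1)}\varphi^{-m}(f)$, times the corresponding basis vector, in the localized $\Ddif$-module attached to $V^{\otimes(-j)}(1)$.

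The third step is to compute $\exp^*_j$ on such a localized class. The key input here is the standard fact that, under the chosen identification $D^0_{\dR, L_m}(V^{\otimes(-j)}(1)) \simeq L_m$, the Bloch-Kato dual exponential sends an element of the localized module at level $m$ to its image in $\Ddifp/t_{\mathrm{LT}}\Ddifp \simeq L_m\cdot \mathbf{d}_0$, i.e., to its constant term in $t_{\mathrm{LT}}$. Combined with the two preceding steps, this projection extracts the coefficient of $t_{\mathrm{LT}}^{j-1}$ in $\varphi^{-m}(f)$, and tracking the substitution $Z = \eta_m +_{\mathrm{LT}}\exp_{\mathrm{LT}}(t_{\mathrm{LT}}/\pi_L^m)$ built into the definition of $\varphi^{-m}$ produces the normalization factor $\pi_L^{-m}$; this matches $\mathrm{ev}_{m,j}(f)$ via the explicit formula \eqref{evmj}.

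The main obstacle will be step three: giving a correctly normalized version of the Cherbonnier-Colmez dual-exponential formula in the Lubin-Tate setting, with careful bookkeeping of the various identifications of $\DdR$ and the bases chosen for $\DdR(V^{\otimes(-j)}(1))$, $T^{\otimes(-j)}(1)$ and $t_{\mathrm{LT}}$. The hypothesis $\pi_L^j\neq q$ enters precisely here: it guarantees that $V^{\otimes(-j)}(1)$ has no $\Gamma_L$-invariants at level $m$ and that the associated Bloch-Kato fundamental sequence is non-degenerate, so that $\exp^*_j$ is indeed given by pure projection to the constant term in $t_{\mathrm{LT}}$ without additional correction terms coming from exceptional zero/pole phenomena.
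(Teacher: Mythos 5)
Your proposal is the direct Cherbonnier--Colmez route: identify $\mathrm{pr}_m\circ(\mathrm{Exp}^*)^{-1}$ with $\varphi_L^{-m}$ into a Lubin--Tate analogue of $\Ddifp$, and then claim that $\exp^*_j$ is ``pure projection to the constant term in $t_{\LT}$.'' This is exactly the approach the paper says it cannot carry out in the Lubin--Tate setting (the authors state explicitly that they are not aware of any direct proof of Theorem \ref{key claim}, and the direct argument is only given in the cyclotomic case, Theorem \ref{thm:cyc colmez}). The gap is your step three, together with the translation of $\mathrm{pr}_m$. In the cyclotomic case the direct proof rests on two ingredients that are not available here: (a) the two-term Herr complex for the procyclic group $\Gal(L_\infty/L_m)\simeq\ZZ_p$, which gives the explicit cocycle $g\mapsto \ell_{L_m}(\gamma_m)\bigl(\tfrac{g-1}{\gamma_m-1}y-(g-1)b\bigr)$ representing $\mathrm{pr}_m\circ(\mathrm{Exp}^*)^{-1}(y)$; and (b) Kato's description of $\exp^*$ as the inverse of cup product with $\log\chi_{\cyc}$, which converts that cocycle into the constant $t$-coefficient of $p^{-m}\varphi^{-m}(y)$. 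For $L\neq\QQ_p$ the group $\Gal(L_\infty/L_m)$ is an open subgroup of $o_L^\times$, of $\ZZ_p$-dimension $d>1$, so neither the two-term complex nor the cocycle formula computes $H^1(L_m,V(\chi_\LT^{-j}))$, and there is no established statement that the Bloch--Kato dual exponential of the class $\mathrm{pr}_m(\mathrm{tw}_{j-1}(x))$ equals the $t_{\LT}^{j-1}$-coefficient of $\pi_L^{-m}\varphi_L^{-m}(\mathrm{Exp}^*(x))$; proving that identity \emph{is} the theorem, so your argument is circular at its central step.

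By contrast, the paper's proof is indirect: it splits the diagram into a twisting square (\ref{f:twist}), handled by the Kisin--Ren comparison, and a descent square (\ref{f:interpolation}), which is deduced from the interpolation property of the regulator map (Corollary \ref{f:intchi-j}); that interpolation property in turn comes from the reciprocity formula of \cite{SV24} paired against the Berger--Fourquaux big exponential map, whose interpolation at finite level \emph{is} known. You have also slightly misplaced the role of the hypothesis $\pi_L^j\neq q$: it is not about $\Gamma_L$-invariants of $V^{\otimes(-j)}(1)$, but ensures that the operator $1-\tfrac{\pi_L^j}{q}\varphi_L$ is invertible so that the comparison map $\Theta_{L(\chi_\LT^{1-j}),D,m}$ is an isomorphism and can be cancelled in the final step of the descent. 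If you want a genuinely direct proof along your lines, you would first need a Lubin--Tate analogue of the Cherbonnier--Colmez/Berger $\Ddif$-description of $\exp^*$ at finite level, which is an open problem as far as this paper is concerned.
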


\begin{remark}\label{rem:CC}
In the cyclotomic case (i.e., when $L=\QQ_p$, $\pi_L=p$, ${\rm LT}=\widehat \GG_m$, and $T=\ZZ_p(1)$), Theorem \ref{key claim} is proved by Cherbonnier-Colmez \cite[Thm. IV.2.1]{CC}. (Note that their definition of ${\rm Exp}^\ast$ is different from that of \cite{SV15}, but we can check that they coincide. See Appendix \ref{sec:bigexp}.) In fact, using their notation, Theorem \ref{key claim} is expressed by the formula
$$p^{-m} \varphi^{-m}({\rm Exp}^\ast(\mu)) = \sum_{j \in \ZZ} \exp^\ast_{L_m, \QQ_p(1-j)}\left(\int_{\Gamma_{L_m}} \chi_{\rm cyc}(x)^{1-j} \mu(x)\right).$$
Thus Theorem \ref{key claim} is a generalization of this result to the Lubin-Tate setting.
\end{remark}

Based on Theorem \ref{key claim} we can give a new proof of Kato's explicit reciprocity law (Theorem \ref{kato erl}) by using Theorem \ref{thm sv}  (under the assumption $\pi_L^j \neq q$, but see Remark \ref{rem:katoproof} below).

\begin{proof}[Proof of Theorem \ref{kato erl}]
Theorem \ref{key claim} implies that the map $\lambda_{m,j}$ coincides with the composition
$$\lambda_{m,j}: \varprojlim_n L_n^\times \xrightarrow{{\rm Kum}} H^1_{\rm Iw}(L_\infty/L,\ZZ_p(1))\xrightarrow{{\rm tw}_1} H^1_{\rm Iw}(L_\infty/L,T^{\otimes(-1)}(1)) \xrightarrow{{\rm Exp}^\ast} \bA_L^{\psi=1} \xrightarrow{{\rm ev}_{m,j}} L_m.$$
By Theorem \ref{thm sv} and the explicit description (\ref{evmj}) of the map ${\rm ev}_{m,j}$, we have for any $u \in \varprojlim_n L_n^\times$
$$\lambda_{m,j}(u) = {\rm ev}_{m,j}(\partial_{\rm inv} \log(g_{u,\eta}))  = \frac{1}{(j-1)!\pi_L^{mj}} \left. \left(\partial_{\mathrm{inv}}^{j}\log (g_{u,\eta})\right)\right|_{Z=\eta_m}=j\cdot \psi_{{\rm CW},m}^j(u) .$$
This is Kato's explicit reciprocity law.
\end{proof}
\begin{remark}\label{rem:katoproof}
Even when $\pi_L^j =q$, we can still give a proof of Kato's explicit reciprocity law by our method. See \S \ref{sec second} for details.
\end{remark}

In this article, we prove Theorem \ref{key claim} by using the ``Lubin-Tate regulator map" introduced by Schneider and the second named author in \cite{SV24}. One of the key ingredients is the ``reciprocity formula" established in \cite[Cor. 5.2.2]{SV24} (see Theorem \ref{cor:adjoint}). Combining this result with the interpolation property of Berger-Fourquaux's big exponential map (see \cite[Thm. 3.5.3]{BF}, which we recall in Theorem \ref{thm:BF}), we deduce the interpolation formula for the regulator map (see Theorem \ref{thm:adjointformulan}), which generalizes \cite[Thm.~A.2.3]{LVZ15} and \cite[Thm.\ B.5]{LZ}  from the cyclotomic case as well as \cite[Thm.\ 5.2.26]{SV24} in the Lubin-Tate case. Theorem \ref{thm:adjointformulan} is another main result of this article. Using this interpolation formula, we prove Theorem \ref{key claim} in \S \ref{sec:claim}.

In other words, we establish the general case of  Kato's explicit reciprocity law as a consequence of the reciprocity formula in \cite{SV24}. Although desirable we are not aware of any more direct proof of Theorem \ref{key claim}. (In the cyclotomic case, Theorem \ref{key claim} is proved directly without using regulator maps: see Theorem \ref{thm:cyc colmez}.)


As a byproduct we show a version of Colmez' reciprocity law in the Lubin-Tate setting (see Theorem \ref{thm:Colmez}), which generalizes the formula in Remark \ref{rem:CC}. Moreover, as another consequence we are able to extend the interpolation property of the regulator map to Artin characters in Corollary \ref{cor:adjointformula}.

\subsection{Notation}\label{sec:notation}

We shall summarize notations which will be used throughout this article.

Let $L/\QQ_p$ be a finite   extension of degree $d$, $o_L$ the ring of integers of $L$, $\pi_L \in o_L$ a fixed prime element, $k_L := o_L/\pi_L o_L$ the residue field with cardinality $q := \# k_L$,
and $G_L:=\Gal(\overline{L}/L).$ We always use the absolute value $|\cdot |_p$ on $\mathbb{C}_p$ which is normalized by $|\pi_L|_p = q^{-1}$.

We fix a Lubin-Tate formal group ${\rm LT} = {\rm LT}_{\pi_L}$ over $o_L$ corresponding to the prime element $\pi_L$. We always identify ${\rm LT}$ with the open unit disk around zero, which gives us a global coordinate $Z$ on ${\rm LT}$. The $o_L$-action then is given by formal power series $[a](Z) \in o_L[[Z]]$ for $a\in o_L$. For simplicity the formal group law will be denoted by $+_{\LT}$. We set
$$g_{\rm LT}(Z):= \left(\left. \frac{\partial (X +_{\LT} Y)}{\partial Y}\right|_{(X,Y) = (Z,0)} \right)^{-1} \in o_L[[Z]]^\times.$$
We define $\log_{\rm LT}(Z) \in Z L[[Z]]$ to be the unique formal power series such that
$$\frac{d}{dZ} \log_{\rm LT}(Z)=g_{\rm LT}(Z).$$
The invariant derivative operator is defined by
\begin{equation}\label{def:partial}
\partial_{\rm inv}:=g_{\rm LT}(Z)^{-1}\frac{d}{dZ}.
\end{equation}
For basic properties, see \cite{SV15}.


Let $\TLT$ be the Tate module of ${\rm LT}$. Then $\TLT$ is
a free $o_L$-module of rank one, say with generator $\eta=(\eta_n)_n$, and the action of
$G_L$ on $\TLT$ is given by a continuous character
$$\chi_{\LT} : G_L \longrightarrow o_L^\times.$$
Let $\TLT'$ denote the Tate module of the $p$-divisible group Cartier dual to ${\rm LT}$ with period $\Omega$ (depending on the choice of a generator of $\TLT'$, see \cite{ST2}), which again is a free $o_L$-module of rank one. The Galois action on $\TLT'\simeq \TLT^\ast(1):=\Hom_{o_L}(\TLT,o_L(1))$ is given by the continuous character
\begin{equation*}\label{def tau}
\tau := \chi_{\cyc}\cdot\chi_{\LT}^{-1},
\end{equation*}
where $\chi_{\cyc}$ is the cyclotomic character.

Let $K\subseteq \mathbb{C}_p$ be any complete subfield containing the period $\Omega$  and the maximal abelian extension $L^{\rm ab}$ of $L$.

For $n \geq 0$ we let $L_n/L$ denote the extension (in $\mathbb{C}_p$) generated by the $\pi_L^n$-torsion points of ${\rm LT}$, and set $G_n:=\Gal(L_n/L)$. Also, we set $L_\infty := \bigcup_n L_n$, $\Gamma_L := \Gal(L_\infty/L)$, and $H_L := \Gal(\overline{L}/L_\infty)$. The Lubin-Tate character $\chi_{\LT}$ induces an isomorphism $\Gamma_L \xrightarrow{\sim} o_L^\times$.

For the following standard facts we refer the reader to \cite{SV15,SV24}.
The ring endomorphisms induced by sending $Z$ to $[\pi_L](Z)$ are called $\varphi_L$ where applicable; e.g.\  for the ring $\bA_L$ defined to be the $\pi_L$-adic completion of $o_L((Z)):=o_L[[Z]][Z^{-1}]$ or $\bB_L := \bA_L[\pi_L^{-1}]$ which denotes the field of fractions of $\bA_L$.  Recall that we also have introduced the unique additive endomorphism $\psi_L$ of $\bB_L$ (and then  $\bA_L$) which satisfies
\begin{equation}\label{def psi}
  \varphi_L \circ \psi_L = \pi_L^{-1} \cdot \mathrm{Tr}_{\bB_L/\varphi_L(\bB_L)}  .
\end{equation}
Moreover,  projection formula
\begin{equation}\label{proj formula}
  \psi_L(\varphi_L(f_1)f_2) = f_1 \psi_L(f_2) \qquad\text{for any $f_1, f_2 \in \bB_L$}
\end{equation}
as well as the formula
\begin{equation*}
  \psi_L \circ \varphi_L = \frac{q}{\pi_L} \cdot \id \
\end{equation*}
hold. An  \'{e}tale $(\varphi_L,\Gamma_L)$-module $M$ comes with a Frobenius operator $\varphi_M$ semilinear with respect to $\varphi_L$ and an induced operator  denoted by $\psi_M$.

We set $t_{\rm LT}:=\log_{\rm LT}(Z) \in L[[Z]]$, so that
\begin{align*}
 \varphi_L(t_{\rm LT})=\pi_L\cdot t_{\rm LT}\ \ \ \text{ and }\ \ \ \gamma(t_{\rm LT})=\chi_{\rm LT}(\gamma)\cdot t_{\rm LT}
 \text{ for all }\gamma\in\Gamma_L.
\end{align*}

 Let $\cR_L$ be the Robba ring over $L$ and set
$$\cR_L^+:= \cR_L \cap L[[Z]],$$
which is the ring of power series with coefficients in $L$ that converge on the open unit disk.

Let $\Rep_{L}(G_L)$ denote the category of finitely dimensional $L$-vector spaces equipped with a continuous linear $G_L$-action.
For $V\in \Rep_L(G_L)$ we put
\[D_{\cris,L}(V) := (B_{\cris} \otimes_{L_0} V)^{G_L}\;\; \mbox{ and } \;\; D_{\dR,L}(V) := ( B_{\dR} \otimes_{\qp} V)^{G_{L}},\]
where $L_0$ denotes the maximal unramified extension of $\QQ_p$ inside $L$.
The second definition will be used also in the more general form
\[D_{\dR,L'}(V) := D_{\dR,L'}(V_{|G_{L'}})=( B_{\dR} \otimes_{\qp} V)^{G_{L'}}\]
for $L'$ any finite extension of $L$. Moreover, we abbreviate $D^0_{\dR,L'}(V):=\Fil^0(  D_{\dR,L'}(V)).$
Furthermore we write $D_{\dR,L'}^{\id}(V)$    for the direct summand $( B_{\dR} \otimes_L V)^{G_{L'}}$   of $D_{\dR,L'}(V).$

Recall that $T_\pi=o_L\eta$ denotes the Tate module of the Lubin-Tate formal group ${\rm LT}$. We set $V_\pi:=L\otimes_{o_L} T_\pi \in\Rep_L(G_L)$. We write
$t_{\mathbb{Q}_p} := \log_{\widehat{\mathbb{G}}_m}(\omega_{\widehat{\mathbb{G}}_m})$ for the multiplicative formal group $\widehat{\mathbb{G}}_m$. Then, for $j\geq 1,$
\begin{equation}\label{def dr}
\mathbf{d}_j := t_{\LT}^j t_{\mathbb{Q}_p}^{-1} \otimes (\eta^{\otimes -j} \otimes \eta^{\cyc}),
\end{equation}
where $\eta^{\cyc}$ is a generator of the cyclotomic Tate module $\mathbb{Z}_p(1)$, is an $L$-basis of  $D^0_{\dR,L}(V_\pi^{\otimes -j}(1))$.
We also set $e_j:=t_{\LT}^{-j}\otimes \eta^{\otimes j}\in D_{\cris,L}(L(\chi_{\LT}^j)).$

Let $\Rep_{o_L}(G_L)$ denote the category of free $o_L$-modules of finite rank  equipped with a continuous linear $G_L$-action.
The Iwasawa cohomology of $T\in \Rep_{o_L}(G_L)$ is defined by
\begin{equation*}
    H^i_{\Iw}(L_\infty/L,T) := \varprojlim_{L'} H^i(L',T)
\end{equation*}
where $L'$ runs through the finite Galois extensions of $L$ contained in $L_\infty$ and
the transition maps in the projective system are the cohomological corestriction maps.
For $V:=T\otimes_{o_L}L\in \Rep_L(G_L)$ we define
\begin{equation*}
    H^i_{\Iw}(L_\infty/L,V) := H^i_{\Iw}(L_\infty/L,T)\otimes_{o_L} L,
\end{equation*}
which is independent of the choice of $T$.

\subsection{A remark about signs regarding the dual Bloch-Kato exponential map}\label{rem sign}

Let
$$\exp_V: D_{\dR,L}(V) \to H^1(L,V)$$
be the Bloch-Kato exponential map. Using the local Tate duality isomorphism
\begin{equation}\label{local tate}
H^1(L,V) \simeq H^1(L,V^\ast(1))^\ast,
\end{equation}
the dual exponential map is usually defined by the composition map
$$\exp_V^\ast: H^1(L,V) \stackrel{(\ref{local tate})}{\simeq} H^1(L,V^\ast(1))^\ast \xrightarrow{\text{dual of $\exp_{V^\ast(1)}$}} D_{{\rm dR},L}(V^\ast(1))^\ast \simeq D_{{\rm dR},L}(V).$$
However, there is sign ambiguity in this definition, since there are two ways to define (\ref{local tate}): for $x\in H^1(L,V)$ and $y\in H^1(L,V^\ast(1))$,
\begin{itemize}
\item[(a)] $x \mapsto (y\mapsto {\rm inv}(x\cup y))$;
\item[(b)] $x \mapsto (y\mapsto {\rm inv}(y \cup x))$.
\end{itemize}
These definitions differ by sign since $x\cup y = -y\cup x$ (see \cite[Prop. 1.4.4]{NSW}).
%

On the other hand, Kato's definition of the dual exponential map (see \cite[\S II.1.2.4]{katolecture}) has no sign ambiguity: it is defined by the composite
$$\exp^\ast_{V}: H^1(L,V)\to H^1(L, V\otimes_{\QQ_p} B_{\rm dR}^+) \simeq D_{{\rm dR},L}^0(V),$$
where the last isomorphism is the inverse of
$$D_{{\rm dR},L}^0(V)=H^0(L,V\otimes_{\QQ_p} B_{\rm dR}^+) \xrightarrow{\sim} H^1(L,V\otimes_{\QQ_p} B_{\rm dR}^+); \ d \mapsto d\cup \log \chi_{\rm cyc}. $$
(Note that we regard $\log \chi_{\rm cyc}\in H^1(L,\ZZ_p)$.) {\it In this article, we use this definition of the dual exponential map.}

Using the normalization of the reciprocity map as in \cite{serre}, we claim that our dual exponential map coincides with the definition using (b), i.e., we have the commutative diagram
$$
\xymatrix{
H^1(L,V) & \times & H^1(L,V^\ast(1)) \ar[d]^{\exp^\ast_{V^\ast(1)}}  \ar[r]^\cup& H^2(L,L(1)) \ar[r]^-{\rm inv}& L \ar@{=}[d]\\
D_{{\rm dR},L}(V)\ar[u]_{\exp_V} &\times & D_{{\rm dR},L}(V^\ast(1)) \ar[rr]& & L.
}$$
In fact, according to \cite[Cor. 7.2.13]{NSW} and the fact that the composite
$$\ZZ_p^\times \xrightarrow{{\rm rec}} G_{\QQ_p}^{\rm ab} \xrightarrow{\chi_{\rm cyc}} \ZZ_p^\times$$
is given by $u\mapsto u^{-1}$ (see \cite[Rem. in \S XIV.7]{serre}), we have
\begin{equation}\label{kummer normalize}
{\rm inv}(\kappa(a) \cup \log \chi_{\rm cyc}) = \log(a)
\end{equation}
for any $a \in \QQ_p^\times$, where $\kappa: \QQ_p^\times \to H^1(\QQ_p,\QQ_p(1))$ denotes the Kummer map. Our claim then follows from the argument of \cite[Thm. II.1.4.1(4)]{katolecture} if we note that the sign of \cite[Lem. II.1.4.5]{katolecture} is different from (\ref{kummer normalize}).

%


\section{Lubin-Tate regulator maps}

The aim of this section is to establish the interpolation property of the Lubin-Tate regulator map (see Theorem \ref{thm:adjointformulan}, Corollaries \ref{cor:adjointformula} and \ref{f:intchi-j}).

\subsection{Definition of regulator maps}\label{sec:reg}

We recall regulator maps defined in \cite[\S 5.1]{SV24}.  We use the same notation in loc. cit. In particular, we write
$$\mathfrak{M}: D(\Gamma_L,\Cp)\xrightarrow{\sim} {\mathcal{O}_{\Cp}(\mathbf{B})}^{\psi_L=0}; \ \lambda\mapsto \lambda\cdot \eta(1,Z)$$
for the Mellin transform, where we write $D(\Gamma_L, \Cp)$ for the locally analytic distribution algebra on $\Gamma_L$, $\cO_{\CC_p}(\mathbf{B})$ for the ring of rigid analytic functions on the open unit disk $\mathbf{B}$ over $\Cp$, and we set
$$\eta(1,Z):=\exp(\Omega {\rm log}_{\rm LT}(Z))\in \mathcal{O}_{\Cp}(\mathbf{B}).$$
The ring of rigid analytic functions on the open unit disk $\mathbf{B}$ over $L$ is simply denoted by $\cO$. For a subfield $L'\subseteq \CC_p$ we often abbreviate $\cO_{L'}(\mathbf{B})$ to $\cO_{L'}$. Also, we write $\Rep_{o_L,f}^{\cris}(G_L)$ and $\Rep_{o_L,f}^{\cris,\mathrm{an}}(G_L)$ for the full subcategories of $\Rep_{o_L,f}(G_L)$, the    category of finitely generated free $o_L$-modules  with a continuous linear $G_L$-action, consisting    of those $T$ which are free over $o_L$ and such that the representation $V := L \otimes_{o_L} T$ is crystalline and in addition   analytic, respectively. Here, $V$ is called {  analytic},   if   the filtration on $D_{\dR}(V)_\mathfrak{m}$ is trivial for each maximal ideal $\mathfrak{m}$ of $L \otimes_{\mathbb{Q}_p} L$ which does not correspond to the identity $\id : L \to L$. Moreover, $D_{\LT}(T)$ denotes the Lubin-Tate $(\varphi_L,\Gamma_L)$-module attached to any $T\in \Rep_{o_L,f}^{\cris}(G_L)$ (see \cite[Thm.\ 1.6]{KR}, also recalled in \cite[Thm.\ 4.2]{SV15}), while   $N(T)$ denotes the attached Wach module \`{a} la Kisin-Ren  for $T$ in $\Rep_{o_L,f}^{\cris,\mathrm{an}}(G_L)$, see \cite[Ch.\ 3, especially before Rem.\ 3.1.6]{SV24}.

Recall that we set $\tau := \chi_{\cyc}\cdot\chi_{\LT}^{-1}$.
 Let   $T$ be in $\Rep_{o_L,f}^{\cris}(G_L)$    such that $T(\tau^{-1})$ belongs to  $\Rep_{o_L,f}^{\cris,\mathrm{an}}(G_L)$ with all   Hodge-Tate weights   in $[0,r],$ and such that $V:=L\otimes_{o_L} T$ does not have any quotient isomorphic to $L(\tau).$ Under these assumptions Schneider and Venjakob \cite[\S 5.1]{SV24} define the regulator maps
\begin{align*}
\mathbf{L}_V: & H^1_{\Iw}(L_\infty/L,T)\to  D(\Gamma_L,\Cp)\otimes_L D_{\cris,L}(V(\tau^{-1})), \\
  \mathcal{L}_V: & H^1_{\Iw}(L_\infty/L,T)\to D(\Gamma_L,\Cp)\otimes_L D_{\cris,L}(V)
\end{align*}
as (part of) the composite \small
\begin{align}\label{f:defregulator}
 \notag H^1_{\Iw}(L_\infty{/L},T)&\stackrel{{\rm Exp}^\ast}{\simeq} D_{\LT}(T(\tau^{-1}))^{\psi_L=1}=N(T(\tau^{-1}))^{\psi_{D_{\LT}(T(\tau^{-1}))}=1} \xrightarrow{(1-\frac{\pi_L}{q}\varphi_L)}\varphi^*_L(N(V(\tau^{-1})))^{\psi_L=0}\\
  &\hookrightarrow \mathcal{O}^{\psi_L=0} \otimes_L D_{\cris,L}(V(\tau^{-1})) \subseteq {\mathcal{O}_{\Cp}(\mathbf{B})}^{\psi_L=0}\otimes_L D_{\cris,L}(V(\tau^{-1}))\\
\notag &\xrightarrow{\mathfrak{M}^{-1}\otimes \id}D(\Gamma_L,\Cp)\otimes_L D_{\cris,L}(V(\tau^{-1}))\to D(\Gamma_L,\Cp)\otimes_L D_{\cris,L}(V),
\end{align}\normalsize
 where the last map sends $\mu\otimes d \in D(\Gamma_L,\Cp)\otimes_L D_{\cris,L}(V(\tau^{-1}))$ to $\mu\otimes d \otimes \mathbf{d}_1\in D(\Gamma_L,\Cp)\otimes_L D_{\cris,L}(V(\tau^{-1}))\otimes_L D_{\cris,L}(L(\tau))\simeq D(\Gamma_L,\Cp)\otimes_L D_{\cris,L}(V). $ Note that
 $$D:=D_{\cris,L}(L(\tau))=D^0_{\dR,L}(L(\tau)) = L \mathbf{d}_1$$
with $\mathbf{d}_1= t_{\LT} t_{\mathbb{Q}_p}^{-1} \otimes (\eta^{\otimes -1} \otimes \eta^{\cyc})$, where $L(\chi_{\LT})=L\eta$ and $L(\chi_{\cyc})=L\eta^{\cyc}.$

Alternatively, in order to stress that the regulator is essentially the map $1- \varphi_L, $ one can rewrite this as \small
 \begin{equation*}
  \begin{aligned}
  	H^1_{\Iw}&(L_\infty/L,T) \stackrel{{\rm Exp}^\ast}{\simeq} D_{\LT}(V(\tau^{-1}))^{\psi_L=1}=N(T(\tau^{-1}))^{\psi_{D_{\LT}(T(\tau^{-1}))}=1} \\
  &\hookrightarrow \Big(N(V(\tau^{-1}))\otimes_L D\Big) ^{\Psi =1}\xrightarrow{1- \varphi_L}\varphi^*_L(N(V(\tau^{-1})))^{\Psi_L=0}\otimes_L D \\
  &\hookrightarrow \mathcal{O}^{\Psi_L=0} \otimes_L D_{\cris,L}(V(\tau^{-1}))\otimes_L D \subseteq {\mathcal{O}_{\Cp}(\mathbf{B})}^{\Psi_L=0}\otimes_L D_{\cris,L}(V)\\
 &\xrightarrow{\mathfrak{M}^{-1}\otimes \id}D(\Gamma_L,\Cp)\otimes_L D_{\cris,L}(V)
  \end{aligned}
\end{equation*}\normalsize
 where the $ \hookrightarrow $ in the second line sends $n$ to $n\otimes \mathbf{d}_1$ and the $\varphi_L$ as well as $\Psi$ now acts diagonally.

 As mentioned after \cite[Prop.\ 5.1.1]{SV24} one can extend the definition of the regulator map to the case of all  $T$   in $\Rep_{o_L,f}^{\cris}(G_L)$    such that $T(\tau^{-1})$ belongs to  $\Rep_{o_L,f}^{\cris,\mathrm{an}}(G_L)$, if one is willing to accept denominators in the target, e.g. by replacing $D(\Gamma_L,\mathbb{C}_p)$   by its  total ring of quotients. For example, we obtain for $T=o_L(\chi_\LT^{-j})(1)$ a regulator map
 \[\mathcal{L}_{L(\chi_\LT^{-j})(1)}:  H^1_{\Iw}(L_\infty/L,T)\to \frac{1}{\mathfrak{l}'_{L(\chi_\LT^{j+1})}}D(\Gamma_L,\Cp)\otimes_L D_{\cris,L}(L(\chi_\LT^{-j})(1)),\]
where $\mathfrak{l}'_{L(\chi_\LT^{j+1})}\in D(\Gamma_L,\Cp)$ is defined in Appendix \ref{sec:Lie}. Inspection of the construction shows that we have a commutative diagram

\footnotesize
\begin{equation}\label{f:regulatortwist}
 \xymatrix@C=0.1cm{
\mathcal{L}_{L(\chi_{\rm LT})(1)}: H^1_{\Iw}(L_\infty{/L},o_L(\chi_\LT(1)))   \ar[rr]^-{\mathrm{Exp}^*} && \mathbf{A}_L^{\psi=1}(\chi_{\rm LT}^{2})    \ar[rr]^-{\Xi_{L(\chi_{\rm LT}^{2})}}   & &  D(\Gamma_L,K)\otimes_L D_{\mathrm{cris},L}(L(\chi_{\rm LT} )(1)) \\
  \mathcal{L}_{L(\chi_\LT^{-j})(1)}:  H^1_{\Iw}(L_\infty{/L},\TLT^{\otimes -j}(1))\ar[u]_{\otimes \eta^{\otimes j+1}}  \ar[rr]^-{\mathrm{Exp}^*} && \mathbf{A}_L^{\psi=1}(\chi_{\rm LT}^{1-j}) \ar[u]_{\otimes \eta^{\otimes j+1}}  \ar[rr]^-{\Xi_{L(\chi_{\rm LT}^{1-j})}}   & & \frac{1}{\frak{l}'_{L(\chi_\LT^{j+1})}}D(\Gamma_L,K)\otimes_L D_{\mathrm{cris},L}(L(\chi_{\rm LT}^{-j})(1)) \ar[u]_-{\frac{\frak{l}_{L(\chi_\LT^{j+1})}\mathrm{Tw}_{\chi_\LT^{-(j+1)}}}{\Omega^{j+1}}\otimes \id \otimes e_{j+1}},    }
\end{equation}\normalsize
in which - according to \cite[Figure 5.1]{SV24} - the map $\Xi_{L(\chi_{\rm LT}^{2})}$ is given as the composite
\begin{align*}
 \mathbf{A}_L^{\psi=1}(\chi_{\rm LT}^{2})  \simeq &(Z^{-2}o_L[[Z]])^{\psi_L=1}\otimes_{o_L} o_L \eta^{\otimes 2}\xrightarrow{(1-\frac{\pi_L}{q}\varphi_L)\otimes \id } \varphi_L(Z)^{-2}o_L[[Z]]^{\psi_L=0}\otimes_{o_L} o_L\eta^{\otimes 2}\\
 &\xrightarrow{t_\LT^2\otimes t_\LT^{-2}} (\cR_K^+)^{\psi_L=0}\otimes_L D_{\cris,L}(L(\chi_\LT^2))\xrightarrow{\mathfrak{M}^{-1}\otimes \id \otimes \mathbf{d}_1} D(\Gamma_L,K)\otimes_L D_{\cris,L}(L(\chi_\LT)(1))
\end{align*}
or equivalently as
\begin{align}\label{f:Xichi2}\footnotesize
 \mathbf{A}_L^{\psi=1}(\chi_{\rm LT}^{2})&\xrightarrow{x\mapsto x\otimes \mathbf{d}_1}   \bigg(\frac{t_\LT^2}{Z^2}o_L[[Z]]\otimes_{o_L}D_{\cris,L}(L(\chi_\LT)(1)) \bigg)^{\Psi_L=1} \\
 &  \xrightarrow{1- \varphi_L\otimes \varphi_L }  (\cR_K^+)^{\psi_L=0}\otimes_L D_{\cris,L}(L(\chi_\LT)(1)) \xrightarrow{\mathfrak{M}^{-1}\otimes \id } D(\Gamma_L,K)\otimes_L D_{\cris,L}(L(\chi_\LT)(1)).\notag
\end{align}\normalsize

\begin{remark}\label{rem:Xi}
 $\Xi_{L(\chi_{\rm LT}^{1-j})}$ can be described as composite of
\begin{align}\label{f:Achid1}
  \mathbf{A}_L^{\psi=1}(\chi_{\rm LT}^{1-j})\xrightarrow{x\mapsto x\otimes \mathbf{d}_1} \mathbf{A}_L^{\psi=1}&(\chi_{\rm LT}^{1-j})\otimes_{o_L} D\subseteq \bigg(\frac{1}{t^{j+1}}\mathcal{O}_K\otimes D_{\cris,L}(L(\chi_\LT^{-j})(1))\bigg)^{\Psi_L=1},
\end{align}
\begin{align}\label{f:XiexplicitGamma}
  \bigg(\frac{1}{t^{j+1}}\mathcal{O}_K\otimes D_{\cris,L}(L(\chi_\LT^{-j})(1))\bigg)^{\Psi_L=1}  & \xrightarrow{\partial_\mathrm{inv}^{j+1}t_\LT^{j+1}\otimes\id} \bigg(\mathcal{O}_K\otimes D_{\cris,L}(L(\chi_\LT^{-j})(1))\bigg)^{\Psi_L=1} ,
\end{align}\footnotesize
\begin{align}\label{f:Xiexplicit}
  \bigg(\mathcal{O}_K\otimes D_{\cris,L}(L(\chi_\LT^{-j})(1))\bigg)^{\Psi_L=1}  & \xrightarrow{1-\varphi_L\otimes \varphi_L} ({\mathcal{O}_K)^{\psi_L=0}\otimes D_{\cris,L}(L(\chi_\LT^{-j})(1)) }\xrightarrow{\mathfrak{M}^{-1}\otimes \id} D(\Gamma_L,K)\otimes_L D_{\cris,L}(L(\chi_\LT^{-j})(1))
\end{align}\normalsize
and the inverse of
\begin{align}\label{f:Gammainverse}
 \frac{1}{\mathfrak{l}'_{L(\chi_\LT^{j+1})}} D(\Gamma_L,K)\otimes_L D_{\cris,L}(L(\chi_\LT^{-j})(1)) & \xrightarrow{\cdot \mathfrak{l}'_{L(\chi_\LT^{j+1})}} D(\Gamma_L,K)\otimes_L D_{\cris,L}(L(\chi_\LT^{-j})(1)).
\end{align}
\end{remark}
\begin{proof}
  Note that we have a commutative diagram
\[\xymatrix{
  \mathbf{A}_L^{\psi=1}(\chi_{\rm LT}^{1-j}) \ar[d]_-{\otimes \eta^{\otimes j+1}} \ar[r]^-{\otimes \mathbf{d}_1 } &  \bigg(\frac{1}{t^{j+1}}\mathcal{O}_K\otimes D_{\cris,L}(L(\chi_\LT^{-j})(1))\bigg)^{\Psi_L=1} \ar[d]^{t^{j+1}\otimes e_{j+1}} \\
  \mathbf{A}_L^{\psi=1}(\chi_{\rm LT}^{2}) \ar[r]^-{\otimes \mathbf{d}_1} & \bigg(\frac{t_\LT^2}{Z^2}o_L[[Z]]\otimes_{o_L}D_{\cris,L}(L(\chi_\LT)(1)) \bigg)^{\Psi_L=1}.   }\]
Now use the fact from  \cite[\S 5.1]{SV24} that under the Mellin transform $\partial_\mathrm{inv}^{j+1}$ corresponds to $\Omega^{j+1}\mathrm{Tw}_{\chi_\LT^{j+1}}$ while $\partial_\mathrm{inv}^{j+1}t_\LT^{j+1}$ to $\mathfrak{l}'_{L(\chi_\LT^{j+1})}$ in order to transform \eqref{f:Xichi2} into the claimed composite using the recipe from \eqref{f:regulatortwist}.
\end{proof}

\subsection{The reciprocity formula}\label{sec:Berger-Four}

 Let  $W$ denote an $L$-analytic crystalline representation of $G_L$ (recall from \cite{BF}\footnote{Laurent Berger confirmed to us that the condition on the extension $F/\mathbb{Q}_p$, i.e., $L/\mathbb{Q}_p$ in this paper, being {\it Galois}  is really used nowhere in that article, see also \url{https://perso.ens-lyon.fr/laurent.berger/articles/errata.pdf}. } that an $L$-linear representation $V$ of $G_L$ is said to be $L$-analytic if $\CC_p\otimes_{L,\sigma}V$ is trivial as a semilinear $\CC_p$-representation of  $G_{\widetilde L}$ for any embedding $\sigma: L\hookrightarrow \CC_p$ with $\sigma \neq \id$, where $\widetilde L$ denotes the Galois closure of $L/\QQ_p$). Take an integer $h\geq 1$   such that $\mathrm{Fil}^{-h}D_{\cris,L}(W)=D_{\cris,L}(W)$ and such that $D_{\cris,L}(W)^{\varphi_L=\pi_L^{-h}}=0$ holds. Under these conditions in \cite{BF} a big exponential map \`{a} la Perrin-Riou
\begin{align*}
\Omega_{W,h}:\left(\mathcal{O}^{\psi_L=0}\otimes_L D_{\cris,L}(W)\right)^{\Delta=0}\to D_{\mathrm{rig}}^\dagger(W)^{\psi_L=\frac{q}{\pi_L}}
\end{align*}
is constructed as follows: According to \cite[Lem.\ 3.5.1]{BF}  there is an exact sequence
\begin{align*}
0\to \bigoplus_{k=0}^h t_{\LT}^kD_{\cris,L}(W)&^{\varphi_L=\pi_L^{-k}}\to \left( \mathcal{O}\otimes_{o_L} D_{\cris,L}(W)\right)^{\psi_L=\frac{q}{\pi_L}}\xrightarrow{1-\varphi_L} \\ & \mathcal{O}^{\psi_L=0}\otimes_{L} D_{\cris,L}(W)\xrightarrow{\Delta}\bigoplus_{k=0}^hD_{\cris,L}(W)/(1-\pi_L^k\varphi_L)D_{\cris,L}(W)\to 0,
\end{align*}
where, for $f\in \mathcal{O}\otimes_L D_{\cris,L}(W)$,  $\Delta(f)$ denotes the image of $\bigoplus_{k=0}^h(\partial_\inv^k\otimes \id_{D_{\cris,L}(W)})(f)(0)$ in $\bigoplus_{k=0}^hD_{\cris,L}(W)/(1-\pi_L^k\varphi_L)D_{\cris,L}(W).$ Hence, if $f\in \left(\mathcal{O}^{\psi_L=0}\otimes_L D_{\cris,L}(W)\right)^{\Delta=0}$ there exists $y\in  \left( \mathcal{O}\otimes_{o_L} D_{\cris,L}(W)\right)^{\psi_L=\frac{q}{\pi_L}}$ such that $f=(1-\varphi_L)y.$ Setting $\nabla_i:=\nabla-i$ for any integer $i$, one observes that $\nabla_{h-1}\circ \ldots \circ \nabla_0$ annihilates $ \bigoplus_{k=0}^{h-1} t_{\LT}^kD_{\cris,L}(W)^{\varphi_L=\pi_L^{-k}}$ whence
$$\Omega_{W,h}(f):=\nabla_{h-1}\circ \ldots \circ \nabla_0(y)$$
is well-defined and belongs under the comparison isomorphism  to $D_{\mathrm{rig}}^\dagger(W)^{\psi_L=\frac{q}{\pi_L}}$.

Note that $\left(\mathcal{O}^{\psi_L=0}\otimes_L D_{\cris,L}(W)\right)^{\Delta=0}=\mathcal{O}^{\psi_L=0}\otimes_L D_{\cris,L}(W) $ if $D_{\cris,L}(W)^{\varphi_L=\pi_L^{-k}}=0$ for all $0\leq k\leq h.$ If this does not hold for $W$ itself, it does hold for $W(\chi_{\LT}^{-r})$ for $r$ sufficiently large (with respect to the same $h$).

We also could consider the following variant of the big exponential map (under the assumptions of the theorem below, in particular with $W=V^*(1)$)
\begin{align*}
\mathbf{\Omega}_{V^*(1),h}: D(\Gamma_L,\mathbb{C}_p)\otimes_L D_{\cris,L}(V^*(1))\to D_{\mathrm{rig}}^\dagger(V)^{\psi_L=\frac{q}{\pi_L}}
\end{align*}
by extending scalars from $L$ to $\mathbb{C}_p$ and composing the original one with $\Omega^{-h}$
times
\[
 D(\Gamma_L,\mathbb{C}_p)\otimes_L D_{\cris,L}(V^*(1))\xrightarrow{ \mathfrak{M}\otimes \id}({\mathcal{O}_{\CC_p}(\mathbf{B})})^{\psi_L=0}\otimes_L D_{\cris,L}(V^*(1)).\]
Let $\upiota_\ast$ be the involution on $D(\Gamma_L, \CC_p)$ induced by $\Gamma_L \to \Gamma_L; \ g\mapsto g^{-1}$.
\begin{theorem}[Reciprocity formula, {\cite[Cor.\ 5.2.2]{SV24}}] \label{cor:adjoint}
Assume that $V^*(1)$ is $L$-analytic with $\mathrm{Fil}^{-1}D_{\cris,L}(V^*(1))=D_{\cris,L}(V^*(1))$ and   $D_{\cris,L}(V^*(1))^{\varphi_L=\pi_L^{-1}}=D_{\cris,L}(V^*(1))^{\varphi_L=1}=0$. Then the following diagram  of $D(\Gamma_L,K)$-$\upiota_*$-sesquilinear pairings  commutes:\Footnote{According to \cite[Prop.\ A.2.2]{LVZ15} in the cyclotomic case we have the relation
\[<\mathcal{L}_V(x),\omega>_{\Iw,cris}=\gamma_{-1}<x,\Omega_{V^*(1),1,\xi}(\omega)>_{\Iw}\]
for $x\in H^1_{\Iw}(\qp.V)$ and $\omega\in D(\Gamma,\qp)\otimes D_{\cris}(V^*(1))$ (partly in the notation of that article). Note that the factor $\gamma_{-1}$ is already contained in the definition \eqref{f:def[]0} of the pairing $[,]_{\cris}.$ It is derived from the equation (12) in (loc.\ cit.)
\[\Omega_{V,1,\xi}(\mathcal{L}_V(x))=\nabla(x)  \mbox{ modulo torsion}\]
for all $x\in D(\Gamma,\qp)\otimes_{\Lambda(\Gamma)} H^1_{\Iw}(\qp,V)$ and the reciprocity formula from Thm.\ 3.3.7 in (loc.\ cit.)
\[<\mathcal{L}_V(x),\mathcal{L}_{V^*(1)}(y)>_{\Iw,cris}=\gamma_{-1}\nabla<x,y>_{\Iw},\]
(In (loc.\ cit.) there appears a sign on the right hand side, which  I do not understand and which according to David Loeffler is a typo!) {We assume that the HT weights of $V$ are in $[0,r].$ Then $\mathcal{L}_{V^*(1)} $ is defined as follows:  choose $h$ sufficiently large that $V^*(1+h)$ has HT weights $\geq0$ and set \[\mathcal{L}_{V^*(1)}(y)=(\nabla_{-1}\cdots \nabla_{-h})^{-1}\mathrm{Tw}_{\chi_{\cyc}^{-h}}(\mathcal{L}_{V^*(1+h)}(\mathrm{tw}_{\chi_{\cyc}^{-h}}(y)))\otimes t^he_{-h}\in FracD(\Gamma,\qp)\otimes D_{\cris}.\] }}
\begin{equation}\label{f:adjointcor}
\xymatrix{
    {D_{\mathrm{rig}}^\dagger(V^*(1))^{\psi_L=\frac{q}{\pi_L}}}\ar@{}[r]|{\times} &  {D(V(\tau^{-1}))^{\psi_L=1}}\ar[d]_{\frac{\sigma_{-1}\mathbf{L}_V}{{{\Omega}}} } \ar[r]^-{\frac{q-1}{q}\{,\}_{\Iw}} & D(\Gamma_L,\Cp) \ar@{=}[d] \\
    D(\Gamma_L,\mathbb{C}_p)\otimes_L D_{\cris,L}(V^*(1)) \ar[u]_{\mathbf{\Omega}_{V^*(1),1}}   \ar@{}[r]|{\times} & D(\Gamma_L,\mathbb{C}_p)\otimes_L D_{\cris,L}(V(\tau^{-1})) \ar[r]^-{[,]^0} & D(\Gamma_L,\Cp),}
\end{equation}
where $[-,-]^0=[ \mathfrak{M}\otimes \id (-)  , \sigma_{-1}\mathfrak{M}\otimes \id (-)   ]   ,$ i.e.,
\begin{equation}\label{f:def[]0}
[\lambda\otimes\check{d},\mu \otimes d]^0\cdot \eta(1,Z)\otimes (t_{\LT}^{-1}\otimes \eta)=\lambda\upiota_*(\mu)\cdot  \eta(1,Z)\otimes[\check{d}, d]_{\cris},
\end{equation}  where $D_{\cris,L}(V^*(1))\times D_{\cris,L}(V(\tau^{-1}))\xrightarrow{[\;,\;]_{\cris}} D_{\cris,L}(L(\chi_{\LT}))$ is the canonical pairing.
\end{theorem}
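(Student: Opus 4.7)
The plan is to interpret each side of the diagram as a $D(\Gamma_L,\mathbb{C}_p)$-valued $\upiota_*$-sesquilinear pairing and to verify the equality after evaluating at a Zariski-dense family of characters of $\Gamma_L$, namely the locally algebraic characters $\chi_\LT^{-j}$ for $j\gg 0$, possibly twisted by characters of finite order. Since distributions in $D(\Gamma_L,\mathbb{C}_p)$ are determined by their values on such characters (an Amice-style density argument, using that the $\mathbf{L}$- and $\Omega$-images have finite order), it is enough to verify the identity after each such specialization, at every finite level $L_n$ in the Lubin-Tate tower.

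The first step is to unwind what each corner of the diagram becomes at a specialization. On the upper path, the Iwasawa cup product $\{-,-\}_\Iw$ projects at level $L_n$ to the usual local Tate cup product $H^1(L_n,V^*(1))\times H^1(L_n,V)\to H^2(L_n,L(1))\to L$; the normalization factor $\frac{q-1}{q}$ is exactly what identifies $\psi_L=1$ invariants with the projective system under corestriction, reflecting $\psi_L\circ\varphi_L=\frac{q}{\pi_L}\cdot\id$ together with the projection formula \eqref{proj formula}. On the lower path, unwinding the Mellin transform $\mathfrak{M}$ via $\eta(1,Z)=\exp(\Omega\log_\LT(Z))$ shows that $[-,-]^0$ specializes to the crystalline pairing $[-,-]_\cris$ on $D_{\cris,L}(V^*(1))\times D_{\cris,L}(V(\tau^{-1}))$, up to an explicit period factor.

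Next I would apply two interpolation theorems to identify the vertical maps at each specialization. The left vertical $\mathbf{\Omega}_{V^*(1),1}$ is controlled by the Berger--Fourquaux interpolation theorem (Theorem \ref{thm:BF}), so that evaluation at $\chi_\LT^{-j}$ recovers, up to a factor depending on $j!$ and on a power of $\Omega$, the Bloch--Kato exponential of the twist $V^*(1)(\chi_\LT^{-j})$ at the appropriate finite level. The right vertical $\mathbf{L}_V$ should be simultaneously controlled by the analogous interpolation of the Lubin-Tate regulator, which recovers the Bloch--Kato dual exponential. Under both identifications, commutativity of the diagram at each character reduces to the classical adjunction $\langle \exp_V(d),c\rangle=[d,\exp^*_{V^*(1)}(c)]_\cris$ between the Bloch--Kato exponential and its dual via local Tate duality, which holds essentially tautologically by the convention for $\exp^*$ recalled in \S\ref{rem sign}.

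The main obstacle is the precise bookkeeping of constants: one must match (i) the factor $\frac{q-1}{q}$ converting $\psi_L=1$ invariants to corestriction, (ii) powers of $\Omega$ coming from $\mathfrak{M}$ via $\eta(1,Z)$, (iii) the involution $\sigma_{-1}$ and the $\upiota_*$-sesquilinearity keeping track of contragredient actions on $D(\Gamma_L,\mathbb{C}_p)$, and (iv) the twist by $\tau=\chi_\cyc\chi_\LT^{-1}$ on the right-hand column, a genuinely Lubin-Tate feature absent from the cyclotomic case reflecting the distinction between the Kummer dual $V^*(1)$ and the Cartier dual of $\LT$. To avoid circularity one treats the interpolation of $\mathbf{\Omega}_{V^*(1),1}$ as input from \cite{BF} and uses \eqref{f:adjointcor} to \emph{deduce} the corresponding interpolation formula for $\mathbf{L}_V$, and not the other way around; indeed this is precisely the strategy used later in the paper to establish Theorem \ref{thm:adjointformulan}.
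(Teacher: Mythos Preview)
The paper does not prove this theorem; it is quoted verbatim as \cite[Cor.\ 5.2.2]{SV24} and used as a black-box input. So there is no ``paper's own proof'' to compare against here, and the relevant question is whether your outline constitutes an independent proof.

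It does not, and you essentially say so yourself in the last paragraph. Your strategy requires, at step three, the interpolation property of $\mathbf{L}_V$ at finite-order twists of $\chi_{\LT}^j$ (i.e.\ Theorem~\ref{thm:adjointformulan}). But in this paper that interpolation property is \emph{derived from} Theorem~\ref{cor:adjoint}: look at the proof of Theorem~\ref{thm:adjointformulan}, where the very first substantive equality invokes ``the reciprocity formula in Theorem~\ref{cor:adjoint}''. Your final paragraph correctly identifies this circularity and then proposes to resolve it by reversing the direction of implication---but that is simply restating how the paper uses the theorem, not a proof of the theorem itself. A density-of-characters argument cannot bootstrap: you need at least one of the two vertical interpolation formulas to be established independently of \eqref{f:adjointcor}, and for $\mathbf{L}_V$ no such independent proof exists in this paper.

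The actual proof in \cite{SV24} proceeds by direct manipulation of the Iwasawa pairing $\{,\}_{\Iw}$ at the level of $(\varphi_L,\Gamma_L)$-modules over the Robba ring, using the explicit description of $\mathbf{\Omega}_{V^*(1),1}$ via $(1-\varphi_L)$ and $\nabla$, the definition \eqref{f:defregulator} of $\mathbf{L}_V$ via $(1-\tfrac{\pi_L}{q}\varphi_L)$, and the compatibility of these with the crystalline pairing under the comparison isomorphism---no evaluation at characters is involved. The adjointness is an identity of $D(\Gamma_L,\CC_p)$-valued pairings verified structurally, not pointwise.
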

We denote by
\begin{equation}\label{def cris prime}
[\;,\;]_{\cris}' :D_{\cris,L}(V^*(1))\times D_{\cris,L}(V(\tau^{-1}))\to L
\end{equation}
the pairing such that $[\;,\;]_{\cris}=[\;,\;]_{\cris}'\cdot(t_{\LT}^{-1}\otimes \eta).$
\begin{remark}\label{rem:BigExpTwist}
By \cite[Cor.\ 3.5.4]{BF} we have $\Omega_{V,h}(x)\otimes \eta^{\otimes j}=\Omega_{V(\chi_{\LT}^j), h+j}(\partial_\inv^{-j}x\otimes t_{\LT}^{-j}\eta^{\otimes j})$ and $\mathfrak{l}_h\circ \Omega_{V,h}=\Omega_{V,h+1},$ whence we obtain $\mathbf{\Omega}_{V,h}(x)\otimes \eta^{\otimes j}=\mathbf{\Omega}_{V(\chi_{\LT}^j),h+j}(\mathrm{Tw}_{\chi_{\LT}^{-j}}(x)\otimes t_{\LT}^{-j}\eta^{\otimes j})$ and $\mathfrak{l}_h\circ \mathbf{\Omega}_{V,h}=\mathbf{\Omega}_{V,h+1}.$
\end{remark}

\subsection{Evaluation maps}\label{sec:evaluation}

Let $W$ be an $L$-analytic, crystalline  $L$-linear representation of $G_L$.
For $n\geq 1$, we write
$$\mathrm{Ev}_{W,n}:\mathcal{O}_L \otimes_L D_{\cris,L}(W)\to L_n\otimes_L D_{\cris,L}(W)$$
for the composite $\partial_{D_{\cris,L}(W)}\circ \varphi_q^{-n}$ from the introduction of \cite{BF}, which actually sends $f(Z)\otimes d$ to $f(\eta_n)\otimes \varphi_L^{-n}(d).$  For $n=0$, define ${\rm Ev}_{W,0}$ by
$${\rm Ev}_{W,0}(f(Z)\otimes d):= f(0) \otimes (1-q^{-1}\varphi_L^{-1})d.$$
Then by \cite[Lem.\  2.4.3]{BF} we have
\begin{equation}\label{f:Bergerrel}
   \mathrm{Tr}_{L_n/L_m}(q^{-n}\mathrm{Ev}_{W,n}(x) )=q^{-m}\mathrm{Ev}_{W,m}(x)
\end{equation}
for any $n\geq m\geq 0$.

We extend it to a map
\begin{align*}
  \mathrm{Ev}_{W,n}:\mathcal{O}_K \otimes_L D_{\cris,L}(W) & \to K\otimes_L L_n\otimes_L D_{\cris,L}(W), \\
  f(Z)\otimes d & \mapsto (f(\sigma_a\eta_n)\otimes \varphi_L^{-n}(d))_a,
\end{align*}
using the identification $K\otimes_L L_n=\prod_{(o_L/\pi_L^n)^\times}K$ mapping $k\otimes_L l$ to $(\sigma_a(l)\cdot k)_{a\in  (o_L/\pi_L^n)^\times }$; we have the maps
\[\mathrm{Tr}_{K_n/K}:\prod_{(o_L/\pi_L^n)^\times}K\to K,\; (l_a)_{a\in  (o_L/\pi_L^n)^\times }\mapsto \sum_{a\in  (o_L/\pi_L^n)^\times } l_a.\]

For $\rho$  running to the characters of $G_n:=\Gal(L_n/L)$ with values in  $L^{\rm ab}\subseteq K$, we denote by
$$\mathfrak{e}_\rho:=\frac{1}{[L_n:L]}\sum_{g\in G_n} \rho(g^{-1})g\in K[G_n]$$
the idempotent attached to  $\rho$ satisfying $g\mathfrak{e}_\rho=\rho(g)\mathfrak{e}_\rho$ for all $g\in G_n$.
For the purpose of this article we say that a character
$\rho: G_n\to K^\times$ has {\em conductor} $0\leq a(\rho)\leq n,$ if it factorizes over $G_{a(\rho)}$, but not over $G_{a(\rho)-1}.$ Furthermore, $\tau(\rho)$ denotes the Gau{\ss}  sum
$$\tau(\rho):= [L_{a(\rho)}:L]\mathfrak{e}_\rho \eta(1,\eta_{a(\rho)})=\sum_{g\in G_{a(\rho)}}\rho(g^{-1})g \cdot \eta(1,\eta_{a(\rho)}).$$

Assuming $D_{\cris,L}(W)^{\varphi_L=q^{-1}}=0$, we consider the map
\begin{align*}
  \Theta_{W,n}:  K\otimes_L L_n\otimes_L D_{\cris,L}(W)   \to K[G_n]\otimes_L D_{\cris,L}(W)
\end{align*}
which is characterized by the following property: for a character $\rho$ of $G_n$, we have
\begin{equation}\label{def Theta}
({\rm pr}_\rho\otimes \id) \circ \Theta_{W,n} (x) = \begin{cases}
(1-\varphi_L)(1-q^{-1}\varphi_L^{-1})^{-1}{\rm Tr}_{L_n/L}(x), &\text{if $a(\rho)=0$,}\\
\tau(\rho)^{-1} q^{a(\rho)}\varphi_L^{a(\rho)} \sum_{g\in G_{a(\rho)}} \rho(g)g^{-1}({\rm Tr}_{L_n/L_{a(\rho)}}(x)) &\text{if $a(\rho)\geq 1$}
\end{cases}
\end{equation}
for any $x\in  L_n\otimes_L D_{\cris,L}(W)$, where ${\rm pr}_\rho: K[G_n] \to K$ denotes the ring homomorphism induced by $\rho$ and ${\rm Tr}_{L_n/L_{a(\rho)}}: L_n \otimes_L D_{\cris,L}(W)\to L_{a(\rho)} \otimes_L D_{\cris,L}(W)$ the map induced by the trace map $L_n\to L_{a(\rho)}$.  By definition, we have the following commutative diagram:
\begin{equation}\label{Theta compatible}
\xymatrix{
 K\otimes_L L_n\otimes_L D_{\cris,L}(W)  \ar[d]_{{\rm Tr}_{L_n/L_m}} \ar[r]^-{\Theta_{W,n}}&  K[G_n] \otimes_L D_{\cris,L}(W) \ar[d]^{{\rm pr}\otimes \id}\\
K\otimes_L L_m\otimes_L D_{\cris,L}(W) \ar[r]_-{\Theta_{W,m}} &  K[G_m]\otimes_L D_{\cris,L}(W).
}
\end{equation}
Note that $\Theta_{W,n}$
is an isomorphism if $D_{\cris,L}(W)^{\varphi_L=q^{-1}}= D_{\cris,L}(W)^{\varphi_L=1}=0$.

For $x\in  D(\Gamma_L,K)\otimes_L D_{\cris,L}(W)$ we denote by $x(\chi_{\LT}^{j}) $ the image under the map $ D(\Gamma_L,K)\otimes_L D_{\cris,L}(W)\to K\otimes_L D_{\cris,L}(W),$ $\lambda\otimes d\mapsto \lambda(\chi_{\LT}^{j})\otimes d$ and similarly for other characters $\rho.$

\begin{lemma}\label{lem:Evn} Let $W$ be an $L$-analytic, crystalline  $L$-linear representation of $G_L$. Assume that $D_{\cris,L}(W)^{\varphi_L=q^{-1}}=0$. Then, for $n\geq 0,$  we have the following  commutative diagram:\small
\begin{equation*}
\xymatrix{
  D(\Gamma_L,K)\otimes_L D_{\cris,L}(W) \ar[d]_{\mathrm{pr}_{G_n}\otimes \id} \ar[r]^-{\mathfrak{M}\otimes \id} & \bigg(\mathcal{O}_K \otimes_L D_{\cris,L}(W)\bigg)^{\Psi=0}  &&  \bigg(\mathcal{O}_K \otimes_L D_{\cris,L}(W) \bigg)^{\Psi=1} \ar[ll]_-{1-\varphi_L\otimes \varphi_L} \ar[d]^{q^{-n}\mathrm{Ev}_{W,n}}\\
 \phantom{D(\Gamma_L,)} K[G_n]\otimes_L D_{\cris,L}(W)   &&& K\otimes_L L_n\otimes_L D_{\cris,L}(W) .\ar[lll]_-{\Theta_{W,n}}    }
\end{equation*}\normalsize
\end{lemma}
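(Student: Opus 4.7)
The plan is to check commutativity by evaluating at each character $\rho$ of $G_n$: since the map $\mathrm{pr}_{G_n}\otimes \id$ is determined by all its further compositions with $\mathrm{pr}_\rho\otimes \id$, it suffices to prove that for every character $\rho$ of $G_n$, the two resulting maps from $(\mathcal{O}_K\otimes_L D_{\cris,L}(W))^{\Psi=1}$ to $K\otimes_L D_{\cris,L}(W)$ coincide. Fix $y\in (\mathcal{O}_K\otimes_L D_{\cris,L}(W))^{\Psi=1}$ and set $\mu:=(\mathfrak{M}^{-1}\otimes \id)\bigl((1-\varphi_L\otimes \varphi_L)(y)\bigr)$. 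I would first establish the Fourier-inversion identity for the Mellin transform: for any $\mu\in D(\Gamma_L,K)\otimes_L D_{\cris,L}(W)$ and any character $\rho$ of conductor $a(\rho)=m\ge 1$,
\[
\tau(\rho)\cdot \rho(\mu) \;=\; \sum_{g\in G_m}\rho(g^{-1})\, g\cdot \mathfrak{M}(\mu)(\eta_m),
\]
and $\mathbf{1}(\mu)=\mathfrak{M}(\mu)(0)$ in the trivial case. This is a direct consequence of the transformation rule $\gamma\cdot\eta(1,Z)=\eta(\chi_{\mathrm{LT}}(\gamma),Z)$ together with character orthogonality, matching the defining formula $\tau(\rho)=\sum_{g\in G_m}\rho(g^{-1})\,g\cdot\eta(1,\eta_m)$.

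Next, for nontrivial $\rho$ with $a(\rho)=m\ge 1$, I substitute $\mathfrak{M}(\mu)=(1-\varphi_L\otimes \varphi_L)(y)$ into the above identity. Writing $y=\sum f_i\otimes d_i$, the $\varphi_L$-contribution becomes
\[
\Bigl(\sum_{g\in G_m}\rho(g^{-1})\, f_i(g\eta_{m-1})\Bigr)\otimes \varphi_L(d_i),
\]
since $\varphi_L(f_i)(g\eta_m)=f_i(g\eta_{m-1})$. As $f_i(g\eta_{m-1})$ depends only on the image $\bar g\in G_{m-1}$, the inner sum over the fiber $\ker(G_m\twoheadrightarrow G_{m-1})$ vanishes precisely because $\rho$ has conductor exactly $m$. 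Hence one obtains
\[
\tau(\rho)\cdot \rho(\mu) \;=\; \sum_{g\in G_m}\rho(g^{-1})\, g\cdot y(\eta_m).
\]

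For the other leg of the diagram, the trace-compatibility \eqref{f:Bergerrel} gives $\mathrm{Tr}_{L_n/L_m}(q^{-n}\mathrm{Ev}_{W,n}(y))=q^{-m}\mathrm{Ev}_{W,m}(y)$, so by the defining formula \eqref{def Theta} for $\Theta_{W,n}$,
\[
(\mathrm{pr}_\rho\otimes \id)\circ\Theta_{W,n}\bigl(q^{-n}\mathrm{Ev}_{W,n}(y)\bigr) \;=\; \tau(\rho)^{-1}\,\varphi_L^{m}\sum_{g\in G_m}\rho(g)\, g^{-1}\mathrm{Ev}_{W,m}(y).
\]
Using the explicit description $\mathrm{Ev}_{W,m}(f\otimes d)=(f(\sigma_a\eta_m))_a\otimes \varphi_L^{-m}(d)$, the factors $\varphi_L^m$ and $\varphi_L^{-m}$ cancel on $D_{\cris,L}(W)$; unwinding the permutation action $(gx)_a=x_{a\chi_{\mathrm{LT}}(g)}$ of $G_m$ on $K\otimes_L L_m\cong \prod_a K$ and making the substitution $g\leftrightarrow g^{-1}$ (which turns $\sum_g\rho(g)g^{-1}$ into $\sum_g\rho(g^{-1})g$) matches this expression with the right-hand side of Step 2, dividing out by $\tau(\rho)$. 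For the trivial character $\rho=\mathbf{1}$, evaluating $(1-\varphi_L\otimes \varphi_L)(y)$ at $Z=0$ produces $(1-\varphi_L)y(0)$ on $D_{\cris,L}(W)$, whereas on the other side $\mathrm{Ev}_{W,0}(y)=y(0)\otimes (1-q^{-1}\varphi_L^{-1})$ together with the prefactor $(1-\varphi_L)(1-q^{-1}\varphi_L^{-1})^{-1}$ in \eqref{def Theta} (which is well-defined since $D_{\cris,L}(W)^{\varphi_L=q^{-1}}=0$) again yields $(1-\varphi_L)y(0)$.

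\textbf{Main obstacle.} The principal technical point is the careful bookkeeping of the two distinct $G_m$-actions appearing in the argument---one on the Mellin-transform side via ``evaluation at the shifted torsion point'' and one on the evaluation-map side via the permutation action on $K\otimes_L L_m\cong \prod_a K$---together with correctly tracking the Gauss-sum normalization $\tau(\rho)$ and the cancellation of the $\varphi_L^{\pm m}$ factors. Once these identifications are set up correctly, the argument reduces to classical Fourier-inversion for distributions on $\Gamma_L$ and an orthogonality argument for the conductor.
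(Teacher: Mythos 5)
Your proposal is correct and follows essentially the same route as the paper's proof: check commutativity character by character, use trace compatibility (\ref{f:Bergerrel}) and (\ref{Theta compatible}) to reduce to the conductor level, kill the $\varphi_L\otimes\varphi_L$ contribution by orthogonality of $\rho$ on the fiber of $G_{a(\rho)}\to G_{a(\rho)-1}$ (the paper phrases this as $\mathfrak{e}_\rho L_{a(\rho)-1}=0$), and identify the remaining term via the $D(\Gamma_L,K)$-linearity of evaluation at $\eta_{a(\rho)}$ together with the defining formula for the Gauss sum $\tau(\rho)$. The only differences are presentational (you unwind the linearity of $\mathrm{Ev}$ explicitly and handle the trivial character directly rather than reducing to $n=0$).
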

\begin{proof}
The case $n=0$ is easy, so we assume $n\geq 1$.
%

We will check the commutativity $\rho$-componentwise, i.e., after applying $\mathfrak{e}_\rho$ for all characters $\rho$ of $G_n.$ We first consider the trivial character. In this case, using the case $m=0$ of the relation (\ref{f:Bergerrel}) and (\ref{Theta compatible}), the claim follows from the case $n=0$.

Now let $\rho$ be a character of conductor $a(\rho)\geq 1$. By (\ref{f:Bergerrel}) and (\ref{Theta compatible}) we may assume $a(\rho)=n$.
Then, noting that $\mathfrak{e}_\rho L_{n-1}=0$, we have
\begin{equation}\label{f:phi}
 \mathfrak{e}_\rho \mathrm{Ev}_{W,n}(x)= \mathfrak{e}_\rho \mathrm{Ev}_{W,n}\bigg((1-\varphi_L\otimes \varphi_L)(x)\bigg).
\end{equation}
If $\lambda\in  D(\Gamma_L,K)\otimes_L D_{\cris,L}(W)$ satisfies $(\mathfrak{M}\otimes \id)(\lambda)=(1-\varphi_L\otimes \varphi_L)(x),$ then we see that
\begin{align*}
  \mathfrak{e}_\rho q^{-n}\mathrm{Ev}_{W,n}(x) & =\mathfrak{e}_\rho  q^{-n}\mathrm{Ev}_{W,n}\bigg((1-\varphi_L\otimes \varphi_L)(x)\bigg) \\
    & = \mathfrak{e}_\rho  q^{-n}\mathrm{Ev}_{W,n}\bigg((\mathfrak{M}\otimes \id)(\lambda)\bigg)\\
    & = q^{-n}\varphi_L^{-n}(\lambda )\mathfrak{e}_\rho  \mathrm{Ev}_{L,n}(\eta(1,Z))\\
    & =  q^{-n}\varphi_L^{-n}(\lambda )(\rho)\mathfrak{e}_\rho (\eta(1,\eta_n)) \\
    & = [L_n:L]^{-1}\tau(\rho) q^{-n}\varphi_L^{-n}(\lambda (\rho)),
\end{align*}
where we used for the third equation $D(\Gamma_L,K)$-linearity of the homomorphism $\mathrm{Ev}_{W,n}.$ Hence we have
$$\lambda(\rho)= \tau(\rho)^{-1}q^n\varphi_L^n \sum_{g\in G_n} \rho(g) g^{-1}\cdot q^{-n}{\rm Ev}_{W,n}(x)=({\rm pr}_\rho\otimes \id) \circ \Theta_{W,n} (q^{-n}{\rm Ev}_{W,n}(x)),$$
where the second equality follows from the definition of $\Theta_{W,n}$ (see (\ref{def Theta})). Since ${\rm pr}_\rho: K[G_n]\to K$ induces an isomorphism
$${\rm pr}_\rho: \mathfrak{e}_\rho K[G_n]\xrightarrow{\sim} K; \ \mathfrak{e}_\rho y \mapsto {\rm pr}_\rho(y), $$
we obtain
$$  \mathfrak{e}_\rho ({\rm pr}_{G_n}\otimes \id)(\lambda)= \mathfrak{e}_\rho \Theta_{W,n}(q^{-n}{\rm Ev}_{W,n}(x)),$$
which proves the claim.
%
\end{proof}
\begin{remark}\label{rem:EvDn}
Recall that $D:=D_{\cris,L}(L(\tau))=D^0_{\dR,L}(L(\tau)) = L \mathbf{d}_1$, on which $\varphi_L$ acts via $\pi_L/q$. We define
$$\mathrm{Ev}_{W,D,n}:\mathcal{O}_K \otimes_L \bigg(D_{\cris,L}(W)\otimes D\bigg) \to K\otimes_L L_n \otimes_L \bigg(D_{\cris,L}(W)\otimes D\bigg)$$
analogously to $\mathrm{Ev}_{W,n}$ taking now  the $\varphi_L$-action on $D_{\cris,L}(W)\otimes_L D$. Note that we have
$$\mathrm{Ev}_{W,D,n}(f\otimes d\otimes\mathbf{d}_1)=\begin{cases}
f(0) \otimes (1-\pi_L^{-1}\varphi_L^{-1}) (d)\otimes \mathbf{d}_1 &\text{if $n=0$,}\\
(\frac{q}{\pi_L})^n\mathrm{Ev}_{W,n}(f\otimes d)\otimes\mathbf{d}_1 &\text{if $n\geq 1$.}
\end{cases}$$
The variant of $\Theta_{W,n}$
$$\Theta_{W,D,n}: K\otimes_L L_n\otimes_L \bigg(D_{\cris,L}(W)\otimes_L D \bigg)  \to {K[G_n]}\otimes_L \bigg(D_{\cris,L}(W)\otimes_L D \bigg)$$
is defined similarly by extending the $\varphi_L$-action onto $D_{\cris,L}(W)\otimes_L D$ diagonally, under the assumption $(D_{\cris,L}(W)\otimes_L D)^{\varphi_L=q^{-1}}=0$ (or equivalently, $D_{\cris,L}(W)^{\varphi_L=\pi_L^{-1}}=0$).
Then we have
\begin{equation}\label{Thetad1}
\Theta_{W,D,n}(x\otimes d\otimes\mathbf{d}_1)= \Theta_{W,n}^\ast(x\otimes d) \otimes\mathbf{d}_1,
\end{equation}
where we define
$$\Theta_{W,n}^\ast:  K\otimes_L L_n\otimes_L D_{\cris,L}(W)\to  K[G_n]\otimes_L D_{\cris,L}(W)$$
to be the map characterized by
\begin{equation}\label{def Theta ast}
({\rm pr}_\rho\otimes \id)\circ \Theta_{W,n}^\ast (x) =\begin{cases}
(1-\pi_L^{-1}\varphi_L^{-1})^{-1}(1-\frac{\pi_L}{q}\varphi_L) {\rm Tr}_{L_n/L}(x) &\text{if $a(\rho)=0$,}\\
\tau(\rho)^{-1}\pi_L^{a(\rho)}\varphi_L^{a(\rho)}\sum_{g \in G_{a(\rho)}} \rho(g)g^{-1}({\rm Tr}_{L_n/L_{a(\rho)}}(x))&\text{if $a(\rho)\geq 1$}
\end{cases}
\end{equation}
for a character $\rho$ of $G_n$ and $x\in  L_n\otimes_L D_{\cris,L}(W) $  (the notation $\Theta_{W,n}^\ast $   indicating some adjointness property compared to $\Theta_{W^\ast(1)(\tau^{-1}),n}$ will be justified by Lemma \ref{lem:dR-pairing-cris} below).

There is the following variant of Lemma \ref{lem:Evn}, which we also will need for our decent calculation:
{\it Let $W$ be an $L$-analytic, crystalline  $L$-linear representation of $G_L$. Assume that $\Omega$ is contained in $K$ and that  $(D_{\cris,L}(W)\otimes_L D)^{\varphi_L=q^{-1}}=0$. Then, for $n\geq 0,$  there is the following  commutative diagram:}
\tiny
\begin{equation*}
\xymatrix{
  D(\Gamma_L,K)\otimes  \bigg(D_{\cris}(W)\otimes D\bigg) \ar[d]_{\mathrm{pr}_{G_n}\otimes \id} \ar[r]^-{\mathfrak{M}\otimes \id} & \bigg(\mathcal{O}_K \otimes  \bigg(D_{\cris}(W)\otimes D\bigg)\bigg)^{\Psi=0}  &&  \bigg(\mathcal{O}_K \otimes  \bigg(D_{\cris}(W)\otimes D\bigg) \bigg)^{\Psi=1} \ar[ll]_-{1-\varphi_L\otimes \varphi_L} \ar[d]^{q^{-n}\mathrm{Ev}_{W,D,n}}\\
 \phantom{D(\Gamma_L,)} K[G_n]\otimes  \bigg(D_{\cris}(W)\otimes D\bigg)  &&& K\otimes  L_n\otimes  \bigg(D_{\cris}(W)\otimes D\bigg) .\ar[lll]_-{\Theta_{W,D,n}}    }
\end{equation*}\normalsize
\end{remark}

\Footnote{ Alternative approach: (with old definition of $\mathrm{Ev}_{W,0}!$)
For $n\geq k \geq  0,$ we define the maps
\begin{align*}
  \Upsilon_k^n:=\Upsilon_{W,k}^n:  K[G_n]&\otimes_L D_{\cris,L}(W)  \to  K\otimes_L L_n\otimes_L D_{\cris,L}(W) \\
  \lambda\otimes d & \mapsto (q^{-n}\lambda\cdot\varphi^{-k}(( \eta(1,\sigma_a\eta_k))\otimes d))_a=(q^{-n}(\lambda\cdot \eta(1,\sigma_a\eta_k))\otimes \varphi^{-k}(d))_a.
\end{align*}
We have
\begin{align}\label{f:rho}
  \mathfrak{e}_\rho \Upsilon_{a(\rho)}^n (x)=\left\{
\begin{array}{ll}
   q^{-n}\Upsilon_{W}^n(\chi_{\mathrm{triv}})(x) , & \hbox{if $\rho=\chi_{\mathrm{triv}}$;} \\
   {[L_{n}:L]}^{-1}  \Upsilon_{W}^n(\rho)(x), & \hbox{otherwise.}
\end{array}
\right.
\end{align}
We set
\[\Xi_{W,n}:= (1-\varphi)^{-1}\mathrm{Ev}_{W,0} +\sum_{k=1}^n \mathrm{Ev}_{W,k}:\mathcal{O}_K \otimes_L D_{\cris,L}(W)\to K\otimes_L L_n\otimes_L D_{\cris,L}(W),\]
  supposed that $D_{\cris,L}(W)^{\varphi_L=1}=0$,
and
\[\beth_{W,n}:=(1-\varphi)^{-1}\Upsilon_{W,0}^n +\sum_{k=1}^n  \Upsilon_{W,k}^n:K[G_n]\otimes_L D_{\cris,L}(W)  \to  K\otimes_L L_n\otimes_L D_{\cris,L}(W).\]

\begin{lemma}\label{lem:Evnalt} Let $W$ be an $L$-analytic, crystalline  $L$-linear representation of $G_L$ and assume that $\Omega$ is contained in $K$. Then, for $n\geq 0,$  there is the following  commutative diagram:

\begin{equation*}
\xymatrix{
  D(\Gamma_L,K)\otimes_L D_{\cris,L}(W) \ar[d]_{\mathrm{pr}_{G_n}\otimes \id} \ar[r]^-{\mathfrak{M}\otimes \id} & \bigg(\mathcal{O}_K \otimes_L D_{\cris,L}(W)\bigg)^{\Psi=0} \ar[d]^{{ q^{-n}\Xi_{W,n}}} &&  \bigg(\mathcal{O}_K \otimes_L D_{\cris,L}(W) \bigg)^{\Psi=1} \ar[ll]_-{1-\varphi_L\otimes \varphi_L} \ar[d]^{{q^{-n}}\mathrm{Ev}_{W,n}}\\
 \phantom{D(\Gamma_L,)} K[G_n]\otimes_L D_{\cris,L}(W) \ar[r]^-{\beth_{W,n}}_\simeq &  \phantom{O} K\otimes_L L_n\otimes_L D_{\cris,L}(W)   && K\otimes_L L_n\otimes_L D_{\cris,L}(W) .\ar@{=}[ll]   }
\end{equation*}

\end{lemma}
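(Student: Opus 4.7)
The plan is to verify the diagram componentwise with respect to the idempotents $\mathfrak{e}_\rho$ attached to characters $\rho$ of $G_n$, splitting into the case $n=0$, the trivial character, and characters of positive conductor. The point is that an element $\lambda\in D(\Gamma_L,K)\otimes_L D_{\cris,L}(W)$ is determined by the collection of its values $\lambda(\rho)\in K\otimes_L D_{\cris,L}(W)$ ranging over characters of the finite quotients $G_n$, so it suffices to identify $\lambda(\rho)$ on each side.

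First I would dispose of $n=0$ by a direct unwinding: both the definitions of $\mathrm{Ev}_{W,0}$ and of $\Theta_{W,0}$ at the trivial character involve the factor $(1-q^{-1}\varphi_L^{-1})$, which has been inserted precisely so that $1-\varphi_L\otimes\varphi_L$ on the top right matches the composition on the bottom; under the hypothesis $D_{\cris,L}(W)^{\varphi_L=q^{-1}}=0$ the relevant operator is invertible, and the identification is then an easy calculation with constant terms of Mellin transforms.

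For $n\geq 1$, I would use the trace compatibility \eqref{f:Bergerrel} together with the compatibility \eqref{Theta compatible} to reduce the trivial character component to the case $n=0$ already handled, and to reduce a general character $\rho$ to its natural level $a(\rho)$; by renaming I may then assume $a(\rho)=n$. Here the key observation is that $\mathfrak{e}_\rho$ kills $L_{n-1}\otimes_L D_{\cris,L}(W)$ (since $\rho$ is nontrivial on $\Gal(L_n/L_{n-1})$), so $\mathfrak{e}_\rho$ annihilates $\mathfrak{e}_\rho\varphi_L\otimes\varphi_L\mathrm{Ev}_{W,n}$ after the Frobenius descent, giving
\[
\mathfrak{e}_\rho\mathrm{Ev}_{W,n}(x)=\mathfrak{e}_\rho\mathrm{Ev}_{W,n}\bigl((1-\varphi_L\otimes\varphi_L)(x)\bigr).
\]
Substituting $(1-\varphi_L\otimes\varphi_L)(x)=(\mathfrak{M}\otimes\id)(\lambda)$ and using $D(\Gamma_L,K)$-linearity of $\mathrm{Ev}_{W,n}$, the right-hand side becomes $q^{-n}\varphi_L^{-n}(\lambda(\rho))\cdot\mathfrak{e}_\rho\mathrm{Ev}_{L,n}(\eta(1,Z))$; the latter equals $[L_n:L]^{-1}\tau(\rho)\mathfrak{e}_\rho$ by the very definition of the Gauss sum. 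Solving for $\lambda(\rho)$ then yields exactly the formula in \eqref{def Theta} for $a(\rho)=n$.

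The main obstacle I expect is bookkeeping of normalizations: the factor $[L_n:L]^{-1}$ built into $\mathfrak{e}_\rho$ must cancel against the factor $[L_{a(\rho)}:L]$ appearing in $\tau(\rho)$, the $q^{-n}$ in front of $\mathrm{Ev}_{W,n}$ must combine cleanly with the $q^{a(\rho)}$ in \eqref{def Theta}, and the Frobenius twist $\varphi_L^{a(\rho)}$ must be produced by inverting $\varphi_L^{-n}$ only after one has checked that the reduction step $a(\rho)\le n$ does not introduce extra Frobenius factors. Once these normalizations are matched, the passage from the $\rho$-isotypic piece back to the full group algebra is immediate via the isomorphism $\mathrm{pr}_\rho:\mathfrak{e}_\rho K[G_n]\xrightarrow{\sim} K$, and the diagram commutes.
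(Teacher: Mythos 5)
There is a genuine mismatch here: what you have written is, almost verbatim, the paper's proof of Lemma \ref{lem:Evn} (the diagram whose bottom arrow is $\Theta_{W,n}$), not a proof of Lemma \ref{lem:Evnalt}. The statement you were asked to prove asserts the commutativity of \emph{two} squares through the middle vertical map $q^{-n}\Xi_{W,n}$, with bottom arrow the explicit map $\beth_{W,n}=(1-\varphi)^{-1}\Upsilon_{W,0}^n+\sum_{k=1}^n\Upsilon_{W,k}^n$; neither $\Xi_{W,n}$ nor $\beth_{W,n}$ appears anywhere in your argument. Your character-by-character computation establishes the outer rectangle of the \emph{other} diagram, namely $\Theta_{W,n}\bigl(q^{-n}\mathrm{Ev}_{W,n}(x)\bigr)=(\mathrm{pr}_{G_n}\otimes\id)(\lambda)$; to deduce even the outer rectangle of Lemma \ref{lem:Evnalt} from this you would additionally have to prove $\beth_{W,n}=\Theta_{W,n}^{-1}$, which is a nontrivial identity between the Gau{\ss}-sum description \eqref{def Theta} and the evaluation maps $\Upsilon^n_{W,k}$, and you would still not obtain the two inner squares, which are part of the assertion.

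There are also two hypothesis/normalization problems. You invoke $D_{\cris,L}(W)^{\varphi_L=q^{-1}}=0$, which is an assumption of Lemma \ref{lem:Evn} but not of Lemma \ref{lem:Evnalt}; the latter only needs $D_{\cris,L}(W)^{\varphi_L=1}=0$ for $(1-\varphi)^{-1}$ in $\Xi_{W,n}$ and $\beth_{W,n}$ to make sense. Moreover Lemma \ref{lem:Evnalt} is stated with the ``old'' $\mathrm{Ev}_{W,0}$ (without the factor $1-q^{-1}\varphi_L^{-1}$), so the trivial-character normalization in your $n=0$ step does not match. For comparison, the paper's proof proceeds square by square: the left square for $n>0$ follows from the Galois-equivariance $(\gamma_a\,\eta(1,Z))|_{Z=\eta_n}=\gamma_a\,\eta(1,\eta_n)$ and the definitions of $\mathfrak{M}$ and $\Upsilon^n_{W,k}$, while the right square follows from the telescoping identity
\begin{equation*}
\mathrm{Ev}_{W,n}(x)=(1-\varphi)^{-1}\mathrm{Ev}_{W,0}\bigl((1-\varphi_L\otimes\varphi_L)(x)\bigr)+\sum_{k=1}^n\mathrm{Ev}_{W,k}\bigl((1-\varphi_L\otimes\varphi_L)(x)\bigr)
\end{equation*}
for $x$ with $\Psi(x)=x$, which is a consequence of the trace relation \eqref{f:Bergerrel}. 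Your idempotent decomposition and the identity $\mathfrak{e}_\rho\mathrm{Ev}_{W,n}(x)=\mathfrak{e}_\rho\mathrm{Ev}_{W,n}\bigl((1-\varphi_L\otimes\varphi_L)(x)\bigr)$ are correct and would carry the proof of Lemma \ref{lem:Evn}, but as a proof of the stated lemma the argument is aimed at the wrong target.
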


\begin{proof}  We first deal with the case $n=0:$ For the left diagram note that $\eta_0=0$ and $\left(\delta_g\cdot\eta(1,Z)\right)_{|Z=0}=1,$ from which the claim follows for Dirac distributions, whence in general. For the right square we observe that $\varphi_L(g(Z))_{|Z=0}=g(0).$ Now assume $n>0.$ The commutativity of the left square follows from the relation
\begin{align*}
  (\gamma_a \eta(1,Z))_{|Z=\eta_n} = & \eta(1,[a](Z))_{|Z=\eta_n} \\
  =  & \eta(1,[a](\eta_n)) \\
  =  & \gamma_a \eta(1,\eta_n).
\end{align*}
and the definition of the maps. From \eqref{f:Ev} we have
\begin{equation*}
   \mathrm{Ev}_{W,n}(x)=(1-\varphi)^{-1}\mathrm{Ev}_{W,0}\bigg((1-\varphi_L\otimes \varphi_L)(x)\bigg)+\sum_{k=1}^n\mathrm{Ev}_{W,k}\bigg((1-\varphi_L\otimes \varphi_L)(x)\bigg),
\end{equation*}
supposed that $D_{\cris,L}(W)^{\varphi_L=1}=0$ (or modulo $D_{\cris,L}(W)^{\varphi_L=1} $).
This shows the commutativity of the right square.
%
\end{proof}
%
}

\subsection{Twisting}

In this subsection, we review some basic properties of twisting maps. The following result is proved in \cite[Lem.\ 5.2.24]{SV24}. (We abbreviate $\partial_{\rm inv}$ to $\partial$.)

\begin{lemma}\label{lem:Tw}
There is the following commutative diagram:
\begin{equation*}\small
\xymatrix{
  D(\Gamma_L,K)\otimes_L D_{\cris,L}(W) \ar[d]_{\mathrm{Tw}_{\chi_{\LT}^{-j}}\otimes e_j} \ar[r]^-{\mathfrak{M}\otimes \id} & \mathcal{O}_K \otimes_L D_{\cris,L}(W)\ar[d]^{(\frac{\partial}{\Omega})^{-j}\otimes e_j} &  \mathcal{O}_K \otimes_L D_{\cris,L}(W) \ar[l]_{1-\varphi_L\otimes \varphi_L}\ar[d]^{(\frac{\partial}{\Omega})^{-j}\otimes e_j}\\
D(\Gamma_L,K)\otimes_L D_{\cris,L}(W(\chi_{\LT}^j))  \ar[r]^-{\mathfrak{M}\otimes \id} & \mathcal{O}_K \otimes_L D_{\cris,L}(W(\chi_{\LT}^j)) &  \mathcal{O}_K \otimes_L D_{\cris,L}(W(\chi_{\LT}^j)) \ar[l]_{1-\varphi_L\otimes \varphi_L}  .   }
\end{equation*}
Here we follow the (for $j>0$) abusive   notation $\partial^{-j}$  from \cite[Rem.\ 3.5.5]{BF}.
\end{lemma}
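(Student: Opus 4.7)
The plan is to verify the two squares of the diagram independently, each reducing to a short calculation with the Mellin transform, the invariant derivative, and the Lubin--Tate Frobenius.

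For the left square, I would reduce by $K$-linearity and continuity to Dirac distributions $\delta_\gamma\otimes d$ with $\gamma\in\Gamma_L$, and exploit the two identities $\partial_{\rm inv}(\eta(1,Z)) = \Omega\cdot \eta(1,Z)$ and $\gamma\cdot\eta(1,Z)=\exp(\Omega\,\chi_{\LT}(\gamma)\,\log_{\LT}(Z))$; the latter follows from $\gamma\cdot Z = [\chi_{\LT}(\gamma)](Z)$ together with $\log_{\LT}\circ[a]=a\cdot\log_{\LT}$ for $a\in o_L$. Iterating and inverting, $(\partial_{\rm inv}/\Omega)^{-j}$ acts on $\gamma\cdot\eta(1,Z)$ by the scalar $\chi_{\LT}(\gamma)^{-j}$, which exactly matches the effect of first applying $\mathrm{Tw}_{\chi_{\LT}^{-j}}$ to $\delta_\gamma$ and then taking the Mellin transform; the coefficient twist $d\mapsto d\otimes e_j$ is carried along unchanged on both routes.

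For the right square, the chain rule applied to $\log_{\LT}\circ[\pi_L]=\pi_L\cdot\log_{\LT}$ gives $\partial_{\rm inv}\circ\varphi_L=\pi_L\cdot\varphi_L\circ\partial_{\rm inv}$ on $\mathcal{O}_K$, hence $\varphi_L\circ(\partial_{\rm inv}/\Omega)^{-j}=\pi_L^j\cdot(\partial_{\rm inv}/\Omega)^{-j}\circ\varphi_L$. On the coefficient side, $\varphi_L(e_j)=\pi_L^{-j}e_j$, since $\varphi_L(t_{\LT})=\pi_L\,t_{\LT}$ and $\varphi_L$ acts trivially on $\eta^{\otimes j}$; consequently the crystalline Frobenius on $D_{\cris,L}(W(\chi_{\LT}^j))\simeq D_{\cris,L}(W)\otimes_L Le_j$ differs from the one on $D_{\cris,L}(W)$ by the scalar $\pi_L^{-j}$. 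The two $\pi_L^{\pm j}$ factors cancel on either route, yielding commutativity with $1-\varphi_L\otimes\varphi_L$.

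The main obstacle, though not deep, is purely bookkeeping: tracking the two $\pi_L^{\pm j}$ factors so they cancel with the correct sign, and interpreting $\partial_{\rm inv}^{-j}$, as in \cite[Rem.\ 3.5.5]{BF}, as the genuine inverse on the image where $\partial_{\rm inv}$ is invertible. Once these conventions are pinned down, no further input is required.
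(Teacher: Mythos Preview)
Your proposal is correct. The paper does not actually prove this lemma but only cites \cite[Lem.\ 5.2.24]{SV24}; your direct verification---reducing the left square to the eigenvector identity $(\partial_{\rm inv}/\Omega)(\gamma\cdot\eta(1,Z))=\chi_{\LT}(\gamma)\,\gamma\cdot\eta(1,Z)$ and the right square to the commutation $\varphi_L\circ\partial_{\rm inv}^{-1}=\pi_L\,\partial_{\rm inv}^{-1}\circ\varphi_L$ together with $\varphi_L(e_j)=\pi_L^{-j}e_j$---is the expected argument and matches what is recorded in the cited reference.
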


Consider the following pairing induced by local Tate duality:
\begin{align*}
  \xymatrix{
H^1_{\dagger}(L_n,V^*(1)(\chi_{\LT}^{j}))\ar@{}[r]|{\times} &H^1_{/\dagger}(L_n,V(\chi_{\LT}^{-j}))  \ar[rr]^-{\langle,\rangle_{\Tate,L_n,\dagger}} && H^2(L_n,L(1))\simeq L\subseteq\Cp . }
\end{align*}
It induces the next pairing
\begin{align*}
  \xymatrix{
H^1_{\dagger}(L_n,V^*(1)(\chi_{\LT}^{j}))\ar@{}[r]|{\times} &H^1_{/\dagger}(L_n,V(\chi_{\LT}^{-j}))  \ar[rr]^-{\langle\langle,\rangle\rangle_{\Tate,L_n,\dagger}} && \Cp[G_n], }
\end{align*}
by defining $\langle\langle x,y \rangle\rangle_{\Tate,L_n,\dagger}:=\sum_{g\in G_n} \langle g^{-1}x, y\rangle_{\Tate,L_n,\dagger}g.$
By Frobenius reciprocity we obtain from \cite[Prop.\ 5.2.18]{SV24} and its proof (in particular equation (5.27)) combined with remark 4.5.21 in (loc.\ cit.) the following variant of Cor.\ 5.2.20 of (loc.\ cit.):
\begin{proposition}\label{prop:Taten}
For a representation $V$ of $G_L$ such that $V^*(1)$ is $L$-analytic, the following diagram is commutative:
\begin{equation*}
\xymatrix{
H^1_{\dagger}(L_n,V^*(1)(\chi_{\LT}^{j}))\ar@{}[r]|{\times} &H^1_{/\dagger}(L_n,V(\chi_{\LT}^{-j}))  \ar[rr]^-{\langle\langle,\rangle\rangle_{\Tate,L_n,\dagger}} && \Cp[G_n]\\
    {D_{\mathrm{rig}}^\dagger(V^*(1))^{\psi_L=\frac{q}{\pi_L}}}\ar[u]_{h^1_{L_n,V^*(1)}\circ \mathrm{tw}_{\chi_{\LT}^{j}}} \ar@{}[r]|{\times} &  {D_{\mathrm{rig}}^\dagger(V(\tau^{-1}))^{\psi_L=1}}\ar[u]_{\pr_{L_n}\circ \mathrm{tw}_{\chi_{\LT}^{-j}} } \ar[rr]^-{ \frac{q-1}{q}\{,\}_{\Iw}} && D(\Gamma_L,\Cp).\ar[u]_{ \mathrm{pr}_{G_n}\circ\mathrm{Tw}_{\chi_{\LT}^{-j}} } }
\end{equation*}
\end{proposition}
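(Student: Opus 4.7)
The plan is to deduce the commutativity from \cite[Cor.\ 5.2.20]{SV24}, which establishes the analogous diagram in the untwisted Iwasawa setting, by combining it with two ingredients: (a) the compatibility of the $(\varphi_L,\Gamma_L)$-module twisting operators $\mathrm{tw}_{\chi_{\LT}^{\pm j}}$ with the distribution-algebra twist $\mathrm{Tw}_{\chi_{\LT}^{-j}}$, and (b) Frobenius reciprocity, used to refine the $D(\Gamma_L,\Cp)$-valued pairing on the bottom row into the $\Cp[G_n]$-valued pairing on the top row.

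For ingredient (a) I would invoke the computation in equation (5.27) of the proof of \cite[Prop.\ 5.2.18]{SV24}, together with the twisting formula in \cite[Rem.\ 4.5.21]{SV24}. These describe how twisting the two factors of $\{,\}_{\Iw}$ by $\chi_{\LT}^{j}$ and $\chi_{\LT}^{-j}$, respectively, corresponds on the distribution side to applying $\mathrm{Tw}_{\chi_{\LT}^{-j}}$, and how $\mathrm{tw}_{\chi_{\LT}^{j}}$ commutes with the $\psi_L$-structure on $D^\dagger_{\rig}(V^*(1))$. The corresponding statement on the Tate side---that cupping with $\eta^{\otimes j}$ and $\eta^{\otimes -j}$ on the two factors leaves the local pairing unchanged under the canonical identification of $H^2(L_n,L(1))$---is immediate from the definitions.

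For ingredient (b), the projection $\mathrm{pr}_{G_n}:D(\Gamma_L,\Cp)\to \Cp[G_n]$ matches on the cohomological side with the combination of $h^1_{L_n,V^*(1)}$ and $\mathrm{pr}_{L_n}$, which jointly transport a pair in the $\psi_L$-eigenspaces to a pair of classes in $H^1(L_n,-)$. By the very definition $\langle\langle x,y\rangle\rangle_{\Tate,L_n,\dagger}=\sum_{g\in G_n}\langle g^{-1}x,y\rangle_{\Tate,L_n,\dagger}\,g$, evaluation at an arbitrary character $\rho$ of $G_n$ recovers the ordinary Tate pairing twisted by $\rho^{-1}$. Comparing the $\rho$-components on the two sides of the diagram reduces the commutativity to a scalar identity in $\Cp$, which is precisely what the untwisted reciprocity formula of \cite[Cor.\ 5.2.20]{SV24}, applied to the representations $V(\chi_{\LT}^{-j})$ and $V^*(1)(\chi_{\LT}^{j})$, delivers.

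The main obstacle will be bookkeeping of signs and normalizations: one has to track the involution $\sigma_{-1}$ (equivalently $\upiota_*$) implicit in the Iwasawa pairing via \eqref{f:adjointcor}, the factor $\tfrac{q-1}{q}$, and the orientation of $\mathrm{tw}_{\chi_{\LT}^{\pm j}}$ on each argument so that their combined effect on the left-hand side matches $\mathrm{Tw}_{\chi_{\LT}^{-j}}$ on the right. Once these normalizations are pinned down and verified against the conventions used in \cite[Rem.\ 4.5.21]{SV24}, a character-by-character comparison over the characters of $G_n$ concludes the proof.
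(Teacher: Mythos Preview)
Your proposal is correct and follows essentially the same approach as the paper: the paper's proof is a one-sentence reference stating that the result is obtained by Frobenius reciprocity from \cite[Prop.\ 5.2.18]{SV24} and its proof (in particular equation (5.27)) combined with \cite[Rem.\ 4.5.21]{SV24}, as a variant of \cite[Cor.\ 5.2.20]{SV24}. You have identified exactly these ingredients; your character-by-character verification is a more explicit unpacking of what ``Frobenius reciprocity'' encodes, but the underlying argument is the same.
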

With respect to evaluating at a character we have the following analogue of Proposition \ref{prop:Taten}:

\begin{proposition}\label{prop:crisn}
For a $G_L$-representation $V$ such that $V^*(1)$ is $L$-analytic the following diagram is commutative, in which the middle pairing $[[,]]'_{\cris,n}$ is the $\Cp[G_n]$-$\upiota_*$-sesquilinear extension of $[,]'_{\cris}: D_{\cris,L}(V^*(1)(\chi_{\LT}^{j}))\times D_{\cris,L}(V(\tau^{-1})(\chi_{\LT}^{-j}))\to L $. \small
\begin{equation*}\small
\xymatrix{
\Cp[G_n]\otimes_L D_{\cris,L}(V^*(1)(\chi_{\LT}^{j}))\ar@{}[r]|{\times}  & \Cp[G_n]\otimes_L D_{\cris,L}(V(\tau^{-1})(\chi_{\LT}^{-j})) \ar[r]^-{[[,]]'_{\cris,n}} &  \Cp[G_n] \\
    {D(\Gamma_L,\mathbb{C}_p)\otimes_L D_{\cris,L}(V^*(1))}\ar[u]_{\mathrm{pr}_{G_n}\circ\mathrm{Tw}_{\chi_{\LT}^{-j}} \otimes t_{\LT}^{-j} \eta^{\otimes j} } \ar@{}[r]|{\times} &  {D(\Gamma_L,\mathbb{C}_p)\otimes_L D_{\cris,L}(V(\tau^{-1})) }\ar[u]_{\mathrm{pr}_{G_n}\circ\mathrm{Tw}_{\chi_{\LT}^{j}} \otimes t_{\LT}^{j} \eta^{\otimes -j} } \ar[r]^-{[,]^0}& D(\Gamma_L,\Cp).\ar[u]_{ \mathrm{pr}_{G_n}\circ\mathrm{Tw}_{\chi_{\LT}^{-j}} } }
\end{equation*}\normalsize
\end{proposition}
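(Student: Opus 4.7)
The plan is to verify commutativity of the diagram on pure tensors $(\lambda\otimes\check d,\mu\otimes d)\in \bigl(D(\Gamma_L,\Cp)\otimes_L D_{\cris,L}(V^*(1))\bigr)\times\bigl(D(\Gamma_L,\Cp)\otimes_L D_{\cris,L}(V(\tau^{-1}))\bigr)$ and then extend by $L$-bilinearity. First I would unwind the lower-right path: by the defining formula~\eqref{f:def[]0}, evaluating $[,]^0$ on such a tensor gives $\lambda\,\upiota_*(\mu)\cdot[\check d,d]'_{\cris}$, and subsequently applying the right vertical map $\mathrm{pr}_{G_n}\circ\mathrm{Tw}_{\chi_{\LT}^{-j}}$ yields
\[
\mathrm{pr}_{G_n}\bigl(\mathrm{Tw}_{\chi_{\LT}^{-j}}(\lambda\cdot\upiota_*(\mu))\bigr)\cdot[\check d,d]'_{\cris}\in\Cp[G_n].
\]
Using that $\mathrm{Tw}_{\chi_{\LT}^{-j}}$ is a ring endomorphism of $D(\Gamma_L,\Cp)$, that $\mathrm{pr}_{G_n}$ is a ring homomorphism, and the easily verified identity $\upiota_*\circ\mathrm{Tw}_{\chi_{\LT}^j}=\mathrm{Tw}_{\chi_{\LT}^{-j}}\circ\upiota_*$ (check on Dirac masses $\delta_g\mapsto\chi_{\LT}^j(g)\delta_{g^{-1}}$), I would rewrite this as $\mathrm{pr}_{G_n}\bigl(\mathrm{Tw}_{\chi_{\LT}^{-j}}(\lambda)\bigr)\cdot\upiota_*\bigl(\mathrm{pr}_{G_n}(\mathrm{Tw}_{\chi_{\LT}^j}(\mu))\bigr)\cdot[\check d,d]'_{\cris}$.

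For the upper-left path, I would push the tensor through the two left vertical maps using the canonical isomorphism $D_{\cris,L}(V^*(1)(\chi_{\LT}^{j}))\simeq D_{\cris,L}(V^*(1))\otimes_L D_{\cris,L}(L(\chi_{\LT}^{j}))$, with $e_{\pm j}=t_{\LT}^{\mp j}\otimes\eta^{\otimes\pm j}$ as bases of $D_{\cris,L}(L(\chi_{\LT}^{\pm j}))$, and similarly on the $V(\tau^{-1})$-side. Invoking $\Cp[G_n]$-$\upiota_*$-sesquilinearity of $[[,]]'_{\cris,n}$ then produces
\[
\mathrm{pr}_{G_n}\bigl(\mathrm{Tw}_{\chi_{\LT}^{-j}}(\lambda)\bigr)\cdot\upiota_*\bigl(\mathrm{pr}_{G_n}(\mathrm{Tw}_{\chi_{\LT}^j}(\mu))\bigr)\cdot\bigl[\check d\otimes e_j,\,d\otimes e_{-j}\bigr]'_{\cris}.
\]
The two expressions coincide precisely when $[\check d\otimes e_j,d\otimes e_{-j}]'_{\cris}=[\check d,d]'_{\cris}$, which is the single nontrivial point and encodes compatibility of the crystalline pairing with twists by $\chi_{\LT}^{\pm j}$. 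This reduces to the observation that the defining contraction $V^*(1)(\chi_{\LT}^j)\otimes_L V(\tau^{-1})(\chi_{\LT}^{-j})\to L(\chi_{\LT})$ factors as the untwisted contraction tensored with the canonical identification $L(\chi_{\LT}^j)\otimes_L L(\chi_{\LT}^{-j})\simeq L$, under which $e_j\otimes e_{-j}\mapsto 1$.

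The main obstacle is essentially bookkeeping: one has to match the placement of $\upiota_*$ in the sesquilinear conventions for $[,]^0$ versus $[[,]]'_{\cris,n}$, and ensure that the twist conventions used in the vertical maps (tensoring a representation by $L(\chi_{\LT}^j)$ on the coefficient side corresponds to $\mathrm{Tw}_{\chi_{\LT}^{-j}}$ on the distribution side) align with those already fixed in diagram~\eqref{f:regulatortwist}. No analytic input is required; the argument is purely formal once these normalisations are pinned down and the compatibility of $[\,,\,]'_{\cris}$ with twists is in place.
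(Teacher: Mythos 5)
Your proposal is correct and follows essentially the same route as the paper: the paper reduces to $j=0$ by citing the twisting lemma \cite[Lem.\ 5.2.23]{SV24} and then evaluates the defining formula \eqref{f:def[]0}, whereas you carry out that reduction inline by verifying the identities $\upiota_*\circ\mathrm{Tw}_{\chi_{\LT}^j}=\mathrm{Tw}_{\chi_{\LT}^{-j}}\circ\upiota_*$ and $[\check d\otimes e_j,d\otimes e_{-j}]'_{\cris}=[\check d,d]'_{\cris}$ directly. The substance is the same; your version just makes the content of the cited lemma explicit.
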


\begin{proof}
Using \cite[Lem.\ 5.2.23]{SV24}  the statement is reduced to $j=0.$ Evaluation of \eqref{f:def[]0} implies the claim in this case.
\end{proof}

\subsection{Dual exponential and logarithm maps}

Recall that we set
$$D_{\dR,L'}(V) := ( B_{\dR} \otimes_{\qp} V)^{G_{L'}}$$
for an algebraic extension $L'/L$.
Since $B_{\dR}$ contains the algebraic closure $\overline{L}$ of $L$ we have the isomorphism
\begin{equation*}
  B_{\dR} \otimes _{\mathbb{Q}_p} V = (B_{\dR} \otimes _{\mathbb{Q}_p} L) \otimes_L V  \xrightarrow{\;\simeq\;} \prod_{\sigma \in G_{\mathbb{Q}_p}/G_L} B_{\dR} \otimes_{\sigma,L} V
\end{equation*}
which sends $b \otimes v$ to $(b \otimes v)_\sigma$. The tensor product in the factor $B_{\dR} \otimes_{\sigma,L} V$ is formed with respect to $L$ acting on $B_{\dR}$ through $\sigma$. With respect to the $G_L$-action the right hand side decomposes according to the double cosets in $G_L \backslash G_{\mathbb{Q}_p}/G_L$. It follows, in particular, that
$$D_{\dR,L}^{\id}(V) := ( B_{\dR} \otimes_L V)^{G_L}$$
is a direct summand of $D_{\dR,L}(V)$.
Similarly,
$${\rm tan}_{L,\id}(V) := ( B_{\dR}/B^+_{\dR} \otimes_L V)^{G_L}  $$
is a direct summand of  ${\rm tan}_L(V):= ( B_{\dR}/B_{\dR}^+ \otimes_{\QQ_p} V)^{G_L} $.
 More generally, also the filtration $D_{\dR,L}^i(V)$ decomposes into direct summands.

In the Lubin-Tate setting we can   consider the dual of the identity component
$$\exp_{L',V^*(1),\id} : {\rm tan}_{L',\id}(V^\ast(1)) \to H^1(L',V^\ast(1))$$
of $\exp_{L',V^*(1)}, $ which is given by the commutativity of the following diagram - note that in this paper in contrast to \cite{SV24} we use Kato's sign convention for the dual exponential map of $V$, i.e., variant (b) in \S \ref{rem sign}:
\begin{equation}
\label{f:dualexpId}
\xymatrix{
K\otimes_L H^{1} (L_n,V^*(1)) \ar@{}[r]|(0.45){\times} & { \phantom{} }K\otimes_L H^1 (L_n,V)\ar[d]^{\widetilde{\exp}_{L_n,V,\id}^*}  \ar[rr]^-{\langle\langle,\rangle\rangle_{\Tate,L_n}} &&  K[G_n]\ar@{=}[dd]\\
  K\otimes_L {\rm tan}_{L_n,\id}(V^\ast(1)) \ar[u]_{\exp_{L_n,V^*(1),\id}}\ar@{}[r]|(0.45){\times} & K\otimes_L D_{\dR,L_n}^{\id,0}(V(\tau^{-1})) \ar@{^(->}[d]  & &  \\
 K\otimes_L   D_{\dR,L_n}^{\id}(V^*(1))\ar@{->>}[u]^{\pr}
\ar@{}[r]|(0.45){\times} & K\otimes_L D_{\dR,L_n}^{\id}(V(\tau^{-1}))\ar[rr]^-{[[,]]_{\mathrm{dR},L_n}} && K[G_n].}
\end{equation}
Here,  the  $K[G_n]$-$\upiota_*$-sesquilinear bottom pairing is induced by the $K$-linear base change $[, ]_{\mathrm{dR},L_n}$ of the pairing
\begin{align*}
  \xymatrix{
   D_{\dR,L_n}^{\id}(V^*(1))\ar@{}[r]|{\times} &    D_{\dR,L_n}^{\id}(V(\tau^{-1}))\ar[r]^{} & D_{\dR,L_n}^{\id}(L(\chi_{\LT}))\ar[r]^-{\simeq} & L_n\ar[r]^{\mathrm{Tr}_{L_n/L}} & L. }
\end{align*}
by defining $[[x,y ]]_{\mathrm{dR},L_n}:=\sum_{g\in G_n} [g^{-1} x, y]_{\mathrm{dR},L_n}\ g.$

Upon noting that under the identifications $D_{\dR,L'}(\mathbb{Q}_p(1)) \simeq L'$ and $D_{\dR,L'}^{\id}(\mathbb{Q}_p(1)){\simeq}  L'$ the elements $t_{\mathbb{Q}_p}\otimes \eta^{\cyc}$  and $t_{\LT}\otimes \eta$  are sent to $1,$ one easily checks that, if $V^*(1)$ is $L$-analytic, whence the inclusion ${\rm tan}_{L',\id}(V^*(1)) \subseteq {\rm tan}_{L'}(V^*(1)) $ is an equality and  $\exp_{L',V^*(1),\id} =\exp_{L',V^*(1)} $, then it holds
\begin{equation}\label{f:dualexpcomp}
  \mathbb{T}_{\tau^{-1}}\circ \exp_{L',V}^*=\widetilde{\exp}_{L',V,\id}^*,
\end{equation}
where $\mathbb{T}_{\tau^{-1}}: D_{\dR,L'}^0(V) \to D_{\dR,L'}^{\id,0}(V(\tau^{-1})) $ is the isomorphism, which sends $b\otimes v$ to $b\frac{t_{\mathbb{Q}_p}}{t_{\LT}}\otimes v\otimes \eta\otimes\eta_{\cyc}^{\otimes -1};$ note that $\frac{t_{\mathbb{Q}_p}}{t_{\LT}}\in (B_{\dR}^+)^\times,$ whence the filtration is preserved.

Now let $W(=V^*(1))$ be an $L$-analytic, crystalline  $L$-linear representation of $G_L.$
Consider the map $\mathrm{tw}_{\chi_{\LT}^j}:D^\dagger_{\rig}(W)\to D^\dagger_{\rig}(W(\chi_{\LT}^j));$ $d\mapsto d\otimes \eta^{\otimes j}$. For $D_{\cris}$ twisting $D_{\cris,L}(W)\xrightarrow{-\otimes e_j} D_{\cris,L}(W(\chi_{\LT}^j)) $ maps $d$ to $d\otimes e_j$ with $e_j:=t_{\LT}^{-j}\otimes \eta^{\otimes j}\in D_{\cris,L}(L(\chi_{\LT}^j)).$

If we assume, in addition, that
\begin{enumerate}
\item $W$ has Hodge-Tate weights   $\leq 0$,   whence $W^*(1)$ has Hodge-Tate weights $\geq 1$ and $D_{\dR,L}^0(W^*(1))=0$, and
\item $D_{\cris,L}(W^*(\chi_{\LT}))^{\varphi_L=\frac{q}{\pi_L}}=0,$
\end{enumerate}
then  $\exp_{L,W^*(1)}:D_{\dR,L}(W^*(1))\hookrightarrow H^1(L,W^*(1))$ is   injective    with image $H^1_e(L,W^*(1))=H^1_f(L,W^*(1))$ by our assumption (see \cite[Cor.\ 3.8.4]{BK}). We denote its inverse by
\[\log_{L,W^*(1)}:H^1_f(L,W^*(1))\to D_{\dR,L}(W^*(1))\] and define
\[\widetilde{\log}_{L,W^*(1)}:H^1_f(L,W^*(1))\to D_{\dR,L}(W^*(1))\xrightarrow{\mathbb{T}_{\tau^{-1}}} D_{\cris,L}(W^*(\chi_{\LT}))\]
where  (by abuse of notation) we also write $\mathbb{T}_{\tau^{-1}}: D_{\dR,L}(W^*(1)) \to D_{\dR,L}^{\id}(W^*(\chi_{\LT}))=D_{\cris,L}(W^*(\chi_{\LT})) $ for the isomorphism, which sends $b\otimes v$ to $b\frac{t_{\mathbb{Q}_p}}{t_{\LT}}\otimes v\otimes \eta\otimes\eta_{\cyc}^{\otimes -1}.$
We obtain the following commutative diagram, which defines the dual map $\log_{L,W}^*$ being inverse to $\exp_{L,W}^*$ (up to factorisation over $H^{1} (L,W)/H^1_f(L,W) $):
\begin{equation*}
\label{f:duallog}
\begin{gathered}
	\xymatrix{
H^{1} (L,W)/H^1_f(L,W) \ar@{}[r]|(0.5){\times} & { \phantom{} }H^1_f (L,W^*(1)) \ar[d]_{{  -}\log_{L,W^*(1)}}\ar[rr]^-{\langle ,\rangle_{\Tate,L}} &&   L\ar@{=}[d]\\
D_{\dR,L}(W)
\ar@{}[r]|(0.45){\times}\ar[u]^{\log_{L,W}^*}  & D_{\dR,L}(W^*(1))  \ar[r]^(0.5){} & D_{\dR,L}(\mathbb{Q}_p(1))\ar[r]^-{\simeq} & L.}
\end{gathered}
\end{equation*}
  Note that in this and the next diagram a sign in front of $\log$ shows up due to the skew-symmetry of the cup-product as we  changed the order compared to \eqref{f:dualexpId}.
Similarly as above there is a commutative diagram
\begin{equation}
\label{f:duallogId}
\xymatrix{
 K\otimes_L H^{1} (L_n,V^*(1))/ K\otimes_L H^1_f(L,V^*(1)) \ar@{}[r]|-{\times} & { \phantom{} } K\otimes_L H^1_f (L_n,V) \ar[d]_{{  -}\widetilde{\log}_{L_n,V,\id}} \ar[rr]^-{\langle\langle,\rangle\rangle_{\Tate,L_n}} &&  K[G_n]\ar@{=}[d]\\
  K\otimes_L  D_{\dR,L_n}^{\id}(V^*(1))\ar[u]^{ {\log}_{L_n,V^*(1),\id}^*}
\ar@{}[r]|-{\times} &  K\otimes_LD_{\dR,L_n}^{\id}(V(\tau^{-1}))  \ar[rr]^-{[[,]]_{\mathrm{dR},L_n}} && K[G_n].}
\end{equation}

\subsection{Interpolation properties}

In this subsection we are going to prove the   interpolation property regarding Artin representations for the regulator map $\mathbf{L}_V$. 

We first recall the interpolation property of Berger's and Fourquaux' big exponential map:

\begin{theorem}[{Berger-Fourquaux \cite[Thm.\ 3.5.3]{BF}}]\label{thm:BF}
 Let $W$ be an $L$-analytic crystalline  $L$-linear representation of $G_L$ and $h\geq 1$ such that $\mathrm{Fil}^{-h}D_{\cris,L}(W)=D_{\cris,L}(W)$. For any  $f\in \left(\mathcal{O}^{\psi=0}\otimes_L D_{\cris,L}(W)\right)^{\Delta=0}$ and $y\in \left(\mathcal{O} \otimes_L D_{\cris,L}(W)\right)^{\psi=\frac{q}{\pi_L}}$ with $f=(1-\varphi_L)y$ we have the following: if $h+j\geq 1$, then
 \begin{equation}\label{f:interjgeq1}
 h^1_{L_n,W(\chi_{\LT}^j)}  (\mathrm{tw}_{\chi_{\LT}^j}(\Omega_{W,h}(f)))= (-1)^{h+j-1}(h+j-1)!  \exp_{L_n,W(\chi_{\LT}^j)}\Big(q^{-n}\mathrm{Ev}_{W(\chi_{\LT}^j),n}( \partial_{\inv}^{-j}y\otimes e_j)\Big),
 \end{equation}
%
and if $h+j\leq  0$, then
\begin{equation}\label{f:interjleq0}
\exp_{L_n,W(\chi_{\LT}^j)}^* \Big(  h^1_{L_n,W(\chi_{\LT}^j)}  (\mathrm{tw}_{\chi_{\LT}^j}(\Omega_{W,h}(f)))\Big)=\frac{1}{(-h-j)!} q^{-n}\mathrm{Ev}_{W(\chi_{\LT}^j),n}(\partial_{\inv}^{-j}y\otimes e_j) .
\end{equation}
%
\end{theorem}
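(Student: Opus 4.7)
My plan is to deduce both \eqref{f:interjgeq1} and \eqref{f:interjleq0} from a single clean base case, $h = 1$ and $j = 0$, by combining the twist compatibility of Remark \ref{rem:BigExpTwist} with the recursion $\Omega_{W,h+1} = \mathfrak{l}_h \circ \Omega_{W,h}$, where $\mathfrak{l}_h = \nabla - h$. The structure mirrors Berger--Fourquaux's original argument.

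First, I would apply Remark \ref{rem:BigExpTwist} to rewrite
\[
\mathrm{tw}_{\chi_{\LT}^j}\bigl(\Omega_{W,h}(f)\bigr) = \Omega_{W(\chi_{\LT}^j),\,h+j}\bigl(\partial_{\inv}^{-j}y \otimes t_{\LT}^{-j}\eta^{\otimes j}\bigr),
\]
and relabel $W' := W(\chi_{\LT}^j)$, $h' := h+j$. Since the evaluation $\mathrm{Ev}_{W(\chi_{\LT}^j),n}(\partial_{\inv}^{-j}y\otimes e_j)$ is already phrased on the twisted side, this reduces the theorem to proving the case $j = 0$ with $h \geq 1$ for \eqref{f:interjgeq1}, respectively $h \leq 0$ for \eqref{f:interjleq0}.

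For the base case $h = 1$, $j = 0$, we have $\Omega_{W,1}(f) = \nabla(y)$ with $(1-\varphi_L)y = f$. The goal is the identity $h^1_{L_n,W}(\nabla(y)) = \exp_{L_n,W}(q^{-n}\mathrm{Ev}_{W,n}(y))$. I would obtain it by lifting $\nabla(y) \in D^\dagger_{\rig}(W)^{\psi_L = q/\pi_L}$ through the Herr-style complex that computes $H^1(L_n,W)$ from $(\varphi_L,\Gamma_L)$-modules, and matching the resulting continuous $1$-cocycle with the connecting homomorphism of Fontaine's fundamental exact sequence
\[
0 \to W \to \bigl(W \otimes_{\QQ_p} \Bcris\bigr)^{\varphi_L = 1} \oplus \bigl(W \otimes_{\QQ_p} \BdR^+\bigr) \to W \otimes_{\QQ_p} \BdR \to 0
\]
that defines Bloch--Kato's $\exp_{L_n,W}$. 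The crucial input is that the evaluation $\mathrm{Ev}_{W,n}(y) = y(\eta_n)\otimes \varphi_L^{-n}(\cdot)$ records precisely the de Rham datum extracted from $y$ at the $n$-th layer.

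From the base case, I would run an induction on $h$ using $\mathfrak{l}_h \circ \Omega_{W,h} = \Omega_{W,h+1}$: after passing to $h^1_{L_n,W}$, the operator $\nabla$ acts trivially on the de Rham side so $\mathfrak{l}_h$ contributes the scalar $-h$, and the prefactors $(-1)^{h-1}(h-1)!$ in \eqref{f:interjgeq1} accumulate inductively. For \eqref{f:interjleq0} with $h + j \leq 0$, two routes are available: extend the induction downward so that inverse factors $\mathfrak{l}_h^{-1}$ generate the reciprocal factorial $1/(-h-j)!$; or, more conceptually, pair $h^1_{L_n,W(\chi_{\LT}^j)}(\mathrm{tw}_{\chi_{\LT}^j}\Omega_{W,h}(f))$ against $H^1_f(L_n,W^*(1)(\chi_{\LT}^{-j}))$ through local Tate duality and reduce to the exponential case for a sufficiently positive twist of $W^*(1)$, using \eqref{f:dualexpId} and \eqref{f:duallogId} as a bookkeeping device for the dual exponential.

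The main obstacle is the $h = 1$ identification: translating between the rigid-analytic lift $y \in (\mathcal{O}\otimes_L D_{\cris,L}(W))^{\psi_L = q/\pi_L}$ of $f$ and an explicit continuous $1$-cocycle of $G_{L_n}$ valued in $W$. Making the comparison isomorphism between $\psi_L$-cohomology of $D^\dagger_{\rig}(W)$ and Galois cohomology of $W$ explicit enough to yield exactly $\exp_{L_n,W}(q^{-n}\mathrm{Ev}_{W,n}(y))$ requires honestly analyzing convergence of $y$ on a family of annuli in $\mathbf{B}$ shrinking with $n$, and carefully tracking the scalars introduced by $\varphi_L^{-n}$ at $\eta_n$; everything above this base case is formal manipulation of Lie-algebra operators.
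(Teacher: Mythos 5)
The paper offers no proof of Theorem \ref{thm:BF}: it is imported wholesale from Berger--Fourquaux \cite[Thm.\ 3.5.3]{BF} and used as a black box, so your attempt can only be measured against the original argument, not against anything in this article. Your outline does reproduce the Berger--Fourquaux strategy in its broad strokes (twist to reduce to $j=0$, establish a base case, propagate with the operators $\mathfrak{l}_h=\nabla-h$, handle $h+j\le 0$ by duality), but as written it has a genuine gap exactly where the content of the theorem sits.

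The identity $h^1_{L_n,W}(\nabla y)=\exp_{L_n,W}\bigl(q^{-n}\mathrm{Ev}_{W,n}(y)\bigr)$ is not a ``base case'' that the rest of the argument merely decorates with factorials; it \emph{is} the theorem. You describe what would have to be done --- lift $\nabla y$ through the Herr-type complex, compare with the connecting map of the Bloch--Kato fundamental sequence, control convergence of $y$ on shrinking annuli --- and then explicitly defer the computation as ``the main obstacle.'' Until that step is carried out, nothing is proved; this is precisely the delicate cocycle computation of \cite[Prop.\ 3.5.2, Thm.\ 3.5.3]{BF}. Three secondary points. (a) In your twisting step the argument of $\Omega_{W(\chi_{\LT}^j),h+j}$ should be $\partial_{\inv}^{-j}f\otimes t_{\LT}^{-j}\eta^{\otimes j}=(1-\varphi_L)(\partial_{\inv}^{-j}y\otimes e_j)$, not $\partial_{\inv}^{-j}y\otimes t_{\LT}^{-j}\eta^{\otimes j}$; the element $\partial_{\inv}^{-j}y\otimes e_j$ is the new ``$y$,'' which is exactly why it is what appears inside $\mathrm{Ev}$. (b) The hypothesis is only $\mathrm{Fil}^{-h}D_{\cris,L}(W)=D_{\cris,L}(W)$, so $\Omega_{W,1}$ need not be defined for $W$ itself and the intermediate $\nabla_k\circ\cdots\circ\nabla_0(y)$ need not lie in $D_{\mathrm{rig}}^\dagger(W)^{\psi_L=q/\pi_L}$; your reduction to $h=1$ therefore has to pass through a further twist of $W$ and the two reductions must be interleaved, which your sketch does not arrange. (c) For $h+j\le 0$ your first route (inverting the $\mathfrak{l}_k$) cannot work as stated, because the asserted formula changes qualitatively from one involving $\exp$ to one involving $\exp^*$ --- no amount of scalar bookkeeping converts one map into the other; only the duality route is viable, and it again terminates in the unproven positive case.
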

By abuse of notation we shall denote the base change $K\otimes_L-$ of the (dual) Bloch-Kato exponential map by the same expression. Also, we abbreviate \[x(\chi_{\LT}^{-j},n):=\mathrm{pr}_{G_n}(\mathrm{Tw}_{\chi_{\LT}^{-j}}(x)).\] Using Lemmata  \ref{lem:Evn} and \ref{lem:Tw} we deduce the following interpolation property for the modified big exponential map with $x\in D(\Gamma_L,K)\otimes_L D_{\cris,L}(W)$:
if $j\geq 0$,  then
\begin{equation}\label{f:interjgeq1mod}
  h^1_{L_n,W(\chi_{\LT}^j)}  (\mathrm{tw}_{\chi_{\LT}^j}(\mathbf{\Omega}_{W,1}(x)))=(-1)^{j}j! \Omega^{-j-1} \exp_{L_n,W(\chi_{\LT}^j)}\Big(\Theta_{W(\chi_{\rm LT}^j),n}^{-1}
          \left(x(\chi_{\LT}^{-j},n)  \otimes e_j\right)\Big),
\end{equation}
%
and if $ j< 0$, then
\begin{equation}\label{f:interjleq0mod}\notag
 h^1_{L_n,W(\chi_{\LT}^j)}   (\mathrm{tw}_{\chi_{\LT}^j}(\mathbf{\Omega}_{W,1}(f)))=\frac{1}{(-1-j)!}\Omega^{-j-1}  \log_{L_n,W(\chi_{\LT}^j)}^*\Big(\Theta_{W(\chi_{\rm LT}^j),n}^{-1}
 \left(x(\chi_{\LT}^{-j},n)  \otimes e_j\right)\Big),
\end{equation}
%
assuming in both cases that  the operators $1-\varphi_L$ and $1-q^{-1}\varphi_L^{-1}$ are invertible on $D_{\cris,L}(W(\chi_{\LT}^j))$ (so that $\Theta_{W(\chi_{LT}^j),n}$ is bijective and $\log_{L_n,W(\chi_{\LT}^j)}$ is defined).

For a character $\rho$ of $G_n$, recall that we write ${\rm pr}_\rho: K[G_n]\to K$ for the ring homomorphism induced by $\rho$. The aim of the next Lemma is to determine the adjoint of the map $\Theta_{W,n}$. Recall that $\Theta_{W,n}^\ast$ is defined in (\ref{def Theta ast}).

\begin{lemma}\label{lem:dR-pairing-cris} Assume that the $L$-analytic representation $V^*(1)$ is crystalline  and also that $D_{\cris,L}(V^\ast(1))^{\varphi_L=q^{-1}} = D_{\cris,L}(V^\ast(1))^{\varphi_L=1}=0$. Then
$[[,]]_{\mathrm{dR},L_n} $ and $[[,]]_{\cris} $ can be compared by the commutativity of the  following diagram:
\begin{equation*}
\xymatrix{
K\otimes_L D_{\dR,L_n}^{\id}(V^*(1))\ar@{}[r]|{\times} & K\otimes_L  D_{\dR,L_n}^{\id}(V(\tau^{-1})) \ar[r]^-{[[,]]_{\mathrm{dR},L_n}} & K[G_n]\ar@{=}[dd]\\
K\otimes_L L_n\otimes_L D_{\cris,L}(V^*(1)) \ar[u]^{\simeq}\ar@{}[r]|{\times}  \ar[d]_{\Theta_{V^*(1),n}}& K\otimes_L L_n\otimes_L D_{\cris,L}(V(\tau^{-1}))  \ar[u]_{\simeq} \ar[d]^{\sigma_{-1}\Theta_{V(\tau^{-1}),n}^\ast} & \\
K[G_n]\otimes_L D_{\cris,L}(V^*(1))\ar@{}[r]|{\times} & K[G_n]\otimes_L D_{\cris,L}(V(\tau^{-1})) \ar[r]_-{[[,]]'_{\cris,n}} &K[G_n],
 }
\end{equation*}
where $\sigma_{-1} \in \Gamma_L$ denotes the element such that $\chi_{\rm LT}(\sigma_{-1})=-1$.
\end{lemma}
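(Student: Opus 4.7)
The plan is to verify the commutativity $\mathfrak{e}_\rho$-componentwise for each character $\rho: G_n \to K^\times$. First, under the Galois-equivariant comparison isomorphism
\[K\otimes_L L_n\otimes_L D_{\cris,L}(W) \;\simeq\; K\otimes_L D^{\id}_{\dR,L_n}(W)\]
valid for $L$-analytic $W$, the $G_L$-action on the left becomes the action on the $L_n$-factor alone (since $D_{\cris,L}(W)$ is pointwise $G_L$-fixed); this rewrites the top pairing as
\[[[\ell_1\otimes d_1,\ell_2\otimes d_2]]_{\dR,L_n} = [d_1,d_2]'_\cris \cdot \sum_{g\in G_n}\mathrm{Tr}_{L_n/L}\bigl(g^{-1}(\ell_1)\ell_2\bigr) g.\]
Its $\mathfrak{e}_\rho$-component, after expanding the trace, equals $[d_1,d_2]'_\cris\cdot S_+(\ell_1,\rho^{-1})\cdot S_+(\ell_2,\rho)\cdot\mathfrak{e}_\rho$, where $S_+(\ell,\chi):=\sum_{g\in G_n}\chi(g)g(\ell)$.

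Second, I would compute the bottom pairing $[[\Theta_{V^*(1),n}(x), \sigma_{-1}\Theta_{V(\tau^{-1}),n}^\ast(y)]]'_{\cris,n}$ character by character using \eqref{def Theta} and \eqref{def Theta ast}. By the $\upiota_\ast$-sesquilinearity of $[[\,,\,]]'_{\cris,n}$, its $\mathfrak{e}_\rho$-component pairs the $\rho$-component of $\Theta_{V^*(1),n}(x)$ against the $\rho^{-1}$-component of $\Theta_{V(\tau^{-1}),n}^\ast(y)$, and $\sigma_{-1}$ contributes the extra scalar $\rho(\sigma_{-1})^{-1}$. The inner Gauss sums in $\Theta_\rho$ and $\Theta_{\rho^{-1}}^\ast$ reproduce exactly $S_+(\ell_1,\rho^{-1})$ and $S_+(\ell_2,\rho)$, matching the top side; what remains to compare are the Gauss-sum prefactors $\tau(\rho)^{-1}\tau(\rho^{-1})^{-1}$ and the Frobenius/Euler scalars.

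Third, for the remaining scalars, I would pass to $\varphi_L$-eigenvectors $d_1,d_2$ (over an extension if needed) with eigenvalues $\alpha,\beta$. The $\varphi_L$-equivariance of $[\,,\,]_\cris$ combined with $\varphi_L$ acting as $\pi_L^{-1}$ on $D_{\cris,L}(L(\chi_{\LT}))$ forces $\alpha\beta=\pi_L^{-1}$ whenever $[d_1,d_2]'_\cris\neq 0$. Substituting $\beta=\pi_L^{-1}\alpha^{-1}$, one verifies that the conductor-zero factors $(1-\varphi_L)(1-q^{-1}\varphi_L^{-1})^{-1}$ and $(1-\pi_L^{-1}\varphi_L^{-1})^{-1}(1-\tfrac{\pi_L}{q}\varphi_L)$ become reciprocal scalars, while for $a(\rho)\geq 1$ the factor $q^{a(\rho)}\pi_L^{a(\rho)}\varphi_L^{2a(\rho)}$ collapses via $(\alpha\beta)^{a(\rho)}=\pi_L^{-a(\rho)}$ to the constant $q^{a(\rho)}$; combined with the product identity for $\tau(\rho)\tau(\rho^{-1})$ in the Lubin-Tate normalisation and the factor $\rho(\sigma_{-1})^{-1}$, this absorbs the trace-level discrepancy between $\mathrm{Tr}_{L_n/L}$ and $\mathrm{Tr}_{L_n/L_{a(\rho)}}$.

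The main obstacle is careful bookkeeping rather than any conceptual difficulty: pinning down the exact Gauss-sum product identity against the conventions of \cite{SV24} for $\eta$ and $\eta(1,Z)$, and tracking the $\sigma_{-1}$-twist coming from $\upiota_\ast$. The extension from diagonalisable $\varphi_L$ to the general semisimple case is routine.
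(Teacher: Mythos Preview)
Your approach is essentially the paper's: both rewrite the de Rham pairing via the trace pairing on $L_n$, decompose by characters $\rho$ of $G_n$, and match the resulting Gauss-sum and Frobenius scalars using $\tau(\rho)\tau(\rho^{-1})\rho(\sigma_{-1})=q^{a(\rho)}$. The one place where the paper is more robust is the handling of the Frobenius factors: rather than passing to $\varphi_L$-eigenvectors (which need not exist even after scalar extension, since $\varphi_L$ on $D_{\cris,L}$ is not assumed semisimple---your closing remark presumes this), the paper invokes directly that the adjoint of $\varphi_L$ under $[\,,\,]'_{\cris}$ is $\pi_L^{-1}\varphi_L^{-1}$, yielding $[\varphi_L^{a}x,\varphi_L^{a}y]'_{\cris}=\pi_L^{-a}[x,y]'_{\cris}$ and the reciprocal collapse of the conductor-zero Euler factors without any spectral hypothesis. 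Replacing your eigenvector step by this adjointness identity removes the unnecessary assumption and brings your argument into exact alignment with the paper's.
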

\begin{proof}

It is sufficient to prove the commutativity of the following diagram:
\begin{equation*}
\xymatrix{
 L_n\otimes_L D_{\cris,L}(V^*(1)) \ar[d]_{\Theta_{V^*(1),n}} \ar@{}[r]|{\times} & L_n\otimes_L D_{\cris,L}(V(\tau^{-1}))  \ar[d]^{\sigma_{-1}\Theta_{V(\tau^{-1}),n}^\ast} \ar[r]& L[G_n]\ar@{^{(}->}[d] \\
K[G_n]\otimes_L D_{\cris,L}(V^*(1))\ar@{}[r]|{\times} & K[G_n]\otimes_L D_{\cris,L}(V(\tau^{-1})) \ar[r]_-{[[,]]'_{\cris,n}} &K[G_n],
 }
\end{equation*}
where the upper pairing is induced by the trace pairing
$$L_n \times L_n \to L[G_n]; \ (a,b)\mapsto \sum_{g\in G_n} {\rm Tr}_{L_n/L}(g(a)b)g^{-1}$$
and the natural pairing (see (\ref{def cris prime}))
$$[\cdot ,\cdot]_{\rm cris}': D_{{\rm cris},L}(V^\ast(1))\times D_{{\rm cris},L}(V(\tau^{-1})) \to D_{{\rm cris},L}(L(\chi_{\rm LT}))\simeq  L.$$
Recall that $[[\cdot,\cdot]]_{{\rm cris},n}'$ is defined by
$$[[\lambda \otimes x , \mu \otimes y]]_{{\rm cris},n}' = \lambda \iota(\mu) [x,y]_{{\rm cris}}',$$
where $\iota: K[G_n]\to K[G_n]; \ g\mapsto g^{-1}$ denotes the involution.

Take any $a\otimes x \in  L_n\otimes_L D_{\cris,L}(V^*(1))$ and $b\otimes y \in L_n\otimes_L D_{\cris,L}(V(\tau^{-1}))$. Let $\rho$ be a character of $G_n$. We first assume that $a(\rho)=0$, i.e., $\rho$ is the trivial character. In this case, we compute
\begin{align*}
& {\rm pr}_\rho \left( [[ \Theta_{V^\ast(1),n} (a\otimes x), \Theta^\ast_{V(\tau^{-1}),n} (b\otimes y) ]]_{{\rm cris},n}' \right)\\
&= [(1-\varphi_L)(1-q^{-1}\varphi_L^{-1})^{-1}x, (1-\pi_L^{-1}\varphi_L^{-1})^{-1}(1-\frac{\pi_L}{q}\varphi_L)y]_{{\rm cris}}' {\rm Tr}_{L_n/L}(a){\rm Tr}_{L_n/L}(b) \\
&= [x,y]_{\rm cris}' \sum_{g \in G_n}{\rm Tr}_{L_n/L}(g(a)b),
\end{align*}
where the second equality follows by noting that the adjoint of $\varphi_L$ is $\pi_L^{-1}\varphi_L^{-1}$ under $[\cdot,\cdot]_{\rm cris}'$
(note that the assumption $D_{\cris,L}(V^\ast(1))^{\varphi_L=1}=0$ is equivalent to $D_{\cris,L}(V(\tau^{-1}))^{\varphi_L=\pi_L^{-1}}=0$ and so $1-\pi_L^{-1}\varphi_L^{-1}$ is invertible on $D_{\cris,L}(V(\tau^{-1}))$).
This proves the commutativity of the diagram on the trivial character component.

Next, suppose that $a(\rho)\geq 1$. In this case we compute
\begin{align*}
& {\rm pr}_\rho \left( [[ \Theta_{V^\ast(1),n} (a\otimes x), \sigma_{-1}\Theta^\ast_{V(\tau^{-1}),n} (b\otimes y) ]]_{{\rm cris},n}' \right)\\
&= [\varphi_L^{a(\rho)}x,\varphi_L^{a(\rho)} y]_{{\rm cris}}' \tau(\rho)^{-1} q^{a(\rho)} \sum_{g \in G_{a(\rho)}}\rho(g)g^{-1}( {\rm Tr}_{L_n/L_{a(\rho)}}(a)) \\
&\quad  \times   \rho(\sigma_{-1})\tau(\rho^{-1})^{-1}\pi_L^{a(\rho)}\sum_{g\in G_{a(\rho)}} \rho(g)g({\rm Tr}_{L_n/L_{a(\rho)}}(b)) \\
&=[x,y]_{{\rm cris}}' \left(\sum_{g\in G_{a(\rho)}}\rho(g)g^{-1}({\rm Tr}_{L_n/L_{a(\rho)}}(a))\right) \left(\sum_{g\in G_{a(\rho)}}\rho(g)g({\rm Tr}_{L_n/L_{a(\rho)}}(b))\right) \\
&=[x,y]_{\rm cris}'  {\rm pr}_\rho \left( \sum_{g\in G_n} {\rm Tr}_{L_n/L}(g(a)b)g^{-1}\right),
\end{align*}
where the first equality follows from the definitions of $\Theta$ and $\Theta^\ast$ (see (\ref{def Theta}) and (\ref{def Theta ast}) respectively),
the second from
$$[\varphi_L^{a(\rho)}x,\varphi_L^{a(\rho)} y]_{{\rm cris}}' = \pi_L^{-a(\rho)}[x,y]_{\rm cris}'$$
and
\begin{align*}
  \tau(\rho)\tau(\rho^{-1})\rho(\sigma_{-1})=q^{a(\rho)}
\end{align*}
(see \cite[Rem.\ 6.2, (61), (62)]{MSVW}\footnote{Let $\delta:(o_L/\pi_L^n)^\times\to K^\times$ be the character corresponding to $\rho.$ Then
$\tau(\rho)= \left(\sum_{i\in (o_L/\pi_L^n)^\times}\delta(i)\eta(i,\eta_n)\right)=  \epsilon_K(L,K(\delta^{-1}),\psi_{\eta},dx), $ where $\psi_\eta$ is defined in (loc.\ cit.) and satisfies $n(\psi_\eta)=0.$ Combine this with the duality relation
\begin{equation*}\label{f:epsduality2}
  \epsilon(L,\delta,\psi,dx)\epsilon(L,\delta^{-1}|-|,\psi(x),dx)=\delta(-1)
\end{equation*}
and twist relation
 \begin{equation*}\label{f:epsabsolutevalue}
   \epsilon(L,\delta^{-1}|-|,\psi(x),dx)=q^{-a(\delta) } \epsilon(L,\delta^{-1},\psi(x),dx)=|\pi_L^{a(\delta) }| \epsilon(L,\delta^{-1},\psi(x),dx).
\end{equation*}
 }),
and the last from the following calculation:
\begin{align*}
& \left(\sum_{g\in G_{a(\rho)}}\rho(g)g^{-1}({\rm Tr}_{L_n/L_{a(\rho)}}(a))\right) \left(\sum_{g\in G_{a(\rho)}}\rho(g)g({\rm Tr}_{L_n/L_{a(\rho)}}(b))\right)\\
&= \left(\sum_{g\in G_{n}}\rho(g)g^{-1}(a)\right) \left(\sum_{g\in G_{n}}\rho(g)g(b)\right)=  \sum_{g,h \in G_n} \rho(gh)g^{-1}(a)h(b) \\
&= \sum_{g,h\in G_n} \rho(g^{-1}) g h(a) h(b) = {\rm pr}_\rho \left( \sum_{g \in G_n} \Big( \sum_{h\in G_n} h (g(a)b)  \Big)g^{-1}\right)\\
&= {\rm pr}_\rho \left( \sum_{g\in G_n} {\rm Tr}_{L_n/L}(g(a)b)g^{-1}\right).
\end{align*}

Hence we have
$$[[ \Theta_{V^\ast(1),n} (a\otimes x),\sigma_{-1} \Theta^\ast_{V(\tau^{-1}),n} (b\otimes y) ]]_{{\rm cris},n}' = [x,y]_{\rm cris}' \sum_{g \in G_n} {\rm Tr}_{L_n/L}(g(a)b)g^{-1}.$$
This shows the commutativity of the diagram.
\end{proof}

Note that under the assumption that $V( \tau^{-1})$    does not have any quotient isomorphic to the trivial representation $L$, there
is a commutative diagram
\begin{equation}
\label{f:corpr}\xymatrix{
  H^1_{\Iw}(L_\infty/L,T) \ar[d]_{\cor} \ar[r]_{{\rm Exp}^\ast}^-{\simeq } &
  D_{\LT}(T(\tau^{-1}))^{\psi=1}\ar[d]_{\pr_{\Gamma_n}} \ar@{^(->}[r]^{ } &
  D_{\rig}^\dagger(V(\tau^{-1}))^{\psi=1} \ar[d]^{\pr_{\Gamma_n}} \\
  H^1(L_n,V) \ar@{=}[r]^{ } & H^1(L_n,V) \ar@{->>}[r]^{ } & H^1_{/\dagger}(L_n,V), }
\end{equation}
as explained before \cite[Thm.\ 5.2.26]{SV24}.

Let $y_{\chi_{\LT}^{-j},n}$ denote the image of $y \in  H^1_{\Iw}(L_\infty/L,T)$ under the map
 \begin{equation}\label{f:twistprojection}
 \mathrm{pr}_{n,-j}: H^1_{\Iw}(L_\infty/L,T)\xrightarrow{ \cdot \otimes \eta^{\otimes -j}
 }H^1_{\Iw}(L_\infty/L,T(\chi_{\LT}^{-j}))\xrightarrow{\mathrm{cor}}H^1 (
 L_n,T(\chi_{\LT}^{-j}))\to H^1 ( L_n,V(\chi_{\LT}^{-j})).
 \end{equation}
 Recall that the map
 $$\widetilde{\exp}^*_{L_n,V(\chi_{\LT}^{-j}),\id}: H^1 ( L_n,V(\chi_{\LT}^{-j})) \to L_n \otimes_L D_{{\rm cris},L}(V(\tau^{-1}\chi_{\rm LT}^{-j}))$$
 satisfies $\widetilde{\exp}^*_{L_n,V(\chi_{\LT}^{-j}),\id} \otimes \mathbf{d}_1 =\exp_{L_n,V(\chi_{\rm LT}^{-j})}^\ast$. Similarly, the map
$$\widetilde{\log}_{L_n,V(\chi_{\LT}^{-j}),\id}: H_f^1 ( L_n,V(\chi_{\LT}^{-j})) \to L_n \otimes_L D_{{\rm cris},L}(V(\tau^{-1}\chi_{\rm LT}^{-j}))$$
 satisfies $\widetilde{\log}_{L_n,V(\chi_{\LT}^{-j}),\id} \otimes \mathbf{d}_1 =\log_{L_n,V(\chi_{\rm LT}^{-j})}$. Recall also that the map
$$\Theta_{W,n}^\ast:  K\otimes_L L_n\otimes_L D_{\cris,L}(W)\to  K[G_n]\otimes_L D_{\cris,L}(W)$$
is defined in (\ref{def Theta ast}).

The following result generalizes \cite[Thm.~A.2.3]{LVZ15} and \cite[Thm.\ B.5]{LZ}  from the cyclotomic  case as well as \cite[Thm.\ 5.2.26]{SV24} in the Lubin-Tate case:
\begin{theorem}\label{thm:adjointformulan} Assume that $V^*(1)$ is $L$-analytic crystalline  $L$-linear representation of $G_L$ with $\mathrm{Fil}^{-1}D_{\cris,L}(V^*(1))=D_{\cris,L}(V^*(1))$ and $D_{\cris,L}(V^*(1))^{\varphi_L=\pi_L^{-1}}=D_{\cris,L}(V^*(1))^{\varphi_L=1}=0$. 
Then it holds  for $y\in H^1_{\rm Iw}(L_\infty/L,T)$ and $j\in \ZZ$ that
\begin{align*}
 \Omega^{j} \mathbf{L}_V(y)(\chi_{\LT}^j,n)=\begin{cases}
 \displaystyle  j!\Theta^\ast_{V(\tau^{-1}\chi_{\rm LT}^{-j}),n}
  \Big( \widetilde{\exp}^*_{L_n,V(\chi_{\LT}^{-j}),\id}(y_{\chi_{\LT}^{-j},n})\Big) \otimes e_{ j} &\text{if $j\geq 0$,}\\
 \displaystyle   \frac{{ (-1)^{j+1}}}{(-1-j)!}  \Theta^\ast_{V(\tau^{-1}\chi_{\rm LT}^{-j}),n}
  \Big( \widetilde{\log}_{L_n,V(\chi_{\LT}^{-j}),\id}(y_{\chi_{\LT}^{-j},n})\Big) \otimes e_{ j} &\text{if $j\leq -1$,}
  \end{cases}
\end{align*}
if the operators $1-\pi_L^{-1}\varphi_L^{-1}, 1-\frac{\pi_L}{q}\varphi_L$ are invertible on  $D_{\cris,L}(V(\tau^{-1}\chi_{\LT}^{-j}))$ (or equivalently, $1-\pi_L^{-1-j}\varphi_L^{-1}, 1-\frac{\pi_L^{j+1}}{q}\varphi_L $ are invertible on $D_{\cris,L}(V(\tau^{-1}))$).
\end{theorem}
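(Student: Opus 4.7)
The plan is to read off the interpolation formula by testing the reciprocity diagram of Theorem~\ref{cor:adjoint} against an arbitrary $\omega\in D(\Gamma_L,\Cp)\otimes_L D_{\cris,L}(V^\ast(1))$ and projecting the resulting distributional identity to the character $\chi_{\LT}^j$ at finite level $n$. On the $\mathbf{\Omega}_{V^\ast(1),1}$-side I will invoke the Berger--Fourquaux interpolation formulas~\eqref{f:interjgeq1mod}--\eqref{f:interjleq0mod}; on the $\mathbf{L}_V$-side I will use Proposition~\ref{prop:crisn}; and the translation between the Tate pairing naturally appearing in the former and the de Rham (hence crystalline) pairing appearing in the latter will be supplied by diagrams~\eqref{f:dualexpId} and~\eqref{f:duallogId} together with Lemma~\ref{lem:dR-pairing-cris}.

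Concretely, I would apply $\mathrm{pr}_{G_n}\circ\mathrm{Tw}_{\chi_{\LT}^{-j}}$ to the identity of Theorem~\ref{cor:adjoint}. By Proposition~\ref{prop:Taten} the resulting left-hand side is the $\Cp[G_n]$-valued Tate pairing $\langle\langle h^1_{L_n,V^\ast(1)}(\mathrm{tw}_{\chi_{\LT}^j}\mathbf{\Omega}_{V^\ast(1),1}(\omega)),\,y_{\chi_{\LT}^{-j},n}\rangle\rangle_{\Tate,L_n,\dagger}$; by Proposition~\ref{prop:crisn} the right-hand side becomes $[[\omega(\chi_{\LT}^{-j},n)\otimes e_j,\,(-1)^j\Omega^{-1}\sigma_{-1}\mathbf{L}_V(y)(\chi_{\LT}^j,n)\otimes e_{-j}]]'_{\cris,n}$, the factor $(-1)^j$ arising from the identity $\mathrm{Tw}_{\chi_{\LT}^j}(\sigma_{-1}\mu)=(-1)^j\sigma_{-1}\mathrm{Tw}_{\chi_{\LT}^j}(\mu)$. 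For $j\geq 0$ I would substitute~\eqref{f:interjgeq1mod}, rewriting the first argument of the Tate pairing as $(-1)^j j!\,\Omega^{-j-1}\exp_{L_n,V^\ast(1)(\chi_{\LT}^j)}\bigl(\Theta_{V^\ast(1)(\chi_{\LT}^j),n}^{-1}(\omega(\chi_{\LT}^{-j},n)\otimes e_j)\bigr)$; diagram~\eqref{f:dualexpId} then turns the Tate pairing into $[[\,,\,]]_{\dR,L_n}$ paired against $\widetilde{\exp}^\ast_{L_n,V(\chi_{\LT}^{-j}),\id}(y_{\chi_{\LT}^{-j},n})$; and Lemma~\ref{lem:dR-pairing-cris} rewrites this de Rham pairing as $[[\,,\,]]'_{\cris,n}$ after applying $\Theta_{V^\ast(1)(\chi_{\LT}^j),n}$ and $\sigma_{-1}\Theta^\ast_{V(\tau^{-1}\chi_{\LT}^{-j}),n}$ to the two factors respectively. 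The $\Theta\circ\Theta^{-1}$ on the first slot collapses, the two $(-1)^j$'s cancel, the two $\sigma_{-1}$'s cancel, and varying $\omega$ together with non-degeneracy of $[\,,\,]'_{\cris}$ identifies the two second arguments; comparing the remaining powers of $\Omega$ yields $\Omega^j\mathbf{L}_V(y)(\chi_{\LT}^j,n)=j!\,\Theta^\ast_{V(\tau^{-1}\chi_{\LT}^{-j}),n}(\widetilde{\exp}^\ast_{L_n,V(\chi_{\LT}^{-j}),\id}(y_{\chi_{\LT}^{-j},n}))\otimes e_j$. For $j\leq -1$ the same chain goes through with~\eqref{f:interjleq0mod} and~\eqref{f:duallogId} replacing their $j\geq 0$ counterparts; the minus sign in front of $\widetilde{\log}$ built into diagram~\eqref{f:duallogId} is what accounts for the extra $(-1)^{j+1}$ appearing in the final formula.

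The main technical difficulty will be the careful tracking of constants — signs, factorials, powers of $\Omega$, and the two $\sigma_{-1}$ contributions — through the chain of identifications; none of the individual steps is subtle but their composition is error-prone. A secondary point of care is the case $j\leq -1$: the element $\mathrm{tw}_{\chi_{\LT}^j}\mathbf{\Omega}_{V^\ast(1),1}(\omega)$ is only defined modulo $H^1_f$, which is exactly why diagram~\eqref{f:duallogId} (rather than~\eqref{f:dualexpId}) is the correct bridge. The invertibility hypotheses on $1-\pi_L^{-1}\varphi_L^{-1}$ and $1-\tfrac{\pi_L}{q}\varphi_L$ on $D_{\cris,L}(V(\tau^{-1}\chi_{\LT}^{-j}))$ enter at precisely two places: they make $\Theta^\ast_{V(\tau^{-1}\chi_{\LT}^{-j}),n}$ bijective, and they force $y_{\chi_{\LT}^{-j},n}$ to lie in $H^1_f(L_n,V(\chi_{\LT}^{-j}))$ so that $\widetilde{\log}_{L_n,V(\chi_{\LT}^{-j}),\id}$ is defined on it.
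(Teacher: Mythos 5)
Your proposal follows essentially the same route as the paper's proof: both test the reciprocity formula of Theorem~\ref{cor:adjoint} against an arbitrary element of $D(\Gamma_L,\Cp)\otimes_L D_{\cris,L}(V^\ast(1))$, project via Propositions~\ref{prop:Taten} and~\ref{prop:crisn}, substitute the modified Berger--Fourquaux interpolation formulas~\eqref{f:interjgeq1mod} and~\eqref{f:interjleq0mod}, pass from the Tate pairing to $[[\,,\,]]_{\dR,L_n}$ via~\eqref{f:dualexpId} resp.~\eqref{f:duallogId}, convert to $[[\,,\,]]'_{\cris,n}$ by Lemma~\ref{lem:dR-pairing-cris}, and conclude by non-degeneracy. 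Your bookkeeping of the $(-1)^j$, the $\sigma_{-1}$'s, the powers of $\Omega$, and the role of the invertibility hypotheses matches the paper's computation.
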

\begin{proof}
From the reciprocity formula in Theorem \ref{cor:adjoint} (for the second equation),  Propositions \ref{prop:Taten} (for the third equation) and \ref{prop:crisn} (for the first equation) we obtain for $x\in D(\Gamma_L,\mathbb{C}_p)\otimes_L D_{\cris,L}(V^*(1)),$ $y\in D(V(\tau^{-1}))^{\psi_L=1}  $ and $j\geq 0$ using \eqref{f:corpr}
\begin{align*}
\notag[[x(\chi_{\LT}^{-j},n)&\otimes e_{j} , (-1)^{j}{\sigma_{-1}}\mathbf{L}_V(y)(\chi_{\LT}^{j},n)\otimes e_{-j}]]'_{\cris,n}\\
&={\Omega}
[x,\frac{\sigma_{-1}\mathbf{L}_V(y)}{\Omega}]^0(\chi_{\LT}^{-j},n)\\
&={\Omega} { \frac{q-1}{q}}\{\mathbf{\Omega}_{{V^*(1)},1}(x),y\}_{\Iw}(\chi_{\LT}^{-j},n)\\
&={ \Omega} \langle\langle h^1_{L_n}\circ \notag \mathrm{tw}_{\chi_{\LT}^{j}}\left(\mathbf{\Omega}_{{V^*(1)},1}(x)\right),y_{\chi_{\LT}^{-j},n}\rangle\rangle_{\Tate}\\
\notag&={ \Omega} \langle\langle(-1)^{j}j! \Omega^{-j-1}
 \exp_{L_n,V^*(1)(\chi_{\LT}^j)}\Big(
 {\Theta_{V^\ast(1)(\chi_{\rm LT}^{j}),n}^{-1}}
 \left(x(\chi_{\LT}^{-j},n)  \otimes e_j\right)\Big),y_{\chi_{\LT}^{-j},n}\rangle\rangle_{\Tate}\\
\notag&=(-1)^{j} \Omega^{ -j}j![[
 {\Theta_{V^\ast(1)(\chi_{\rm LT}^{j}),n}^{-1}}
\left(x(\chi_{\LT}^{-j},n)  \otimes e_j\right),\widetilde{\exp}^*_{L_n,V(\chi_{\LT}^{-j}),\id}(y_{\chi_{\LT}^{-j},n})]]_{\dR,L_n}
\\
\notag&=[[ x(\chi_{\LT}^{-j},n)\otimes e_{j},(-1)^{j}\Omega^{ -j}j!
{ \sigma_{-1}\Theta^\ast_{V(\tau^{-1}\chi_{\rm LT}^{-j}),n}}
\Big( \widetilde{\exp}^*_{L_n,V(\chi_{\LT}^{-j}),\id}(y_{\chi_{\LT}^{-j},n})\Big)]]'_{\cris,n}.
\end{align*}
Here we used \eqref{f:interjgeq1mod} in the fourth equation for the interpolation property of $\mathbf{\Omega}_{{V^*(1)},1}. $  The fifth equation is the defining equation for the dual exponential map resulting from \eqref{f:dualexpId}. Furthermore, for the last equality we use Lemma \ref{lem:dR-pairing-cris} (note that we assume $1-\pi_L^{-1}\varphi_L^{-1}$ and $ 1-\frac{\pi_L}{q}\varphi_L$ are invertible on  $D_{\cris,L}(V(\tau^{-1}\chi_{\LT}^{-j}))$, which means $D_{\cris,L}(V^\ast(1)(\chi_{\LT}^j))^{\varphi_L=q^{-1}}= D_{\cris,L}(V^\ast(1)(\chi_{\LT}^j))^{\varphi_L=1}=0$).
The claim follows since the evaluation map is surjective and $[[\;,\;]]'_{\cris,n}$ is non-degenerate.

Now assume that $j<0$. We similarly have the following:
\begin{align*}
\notag[[x(\chi_{\LT}^{-j},n)&\otimes e_{j} , (-1)^{j}{ \sigma_{-1}}\mathbf{L}_V(y)(\chi_{\LT}^{j},n)\otimes e_{-j}]]'_{\cris,n}\\
&={ \Omega}
[x,\frac{\sigma_{-1}\mathbf{L}_V(y)}{{ \Omega}}]^0(\chi_{\LT}^{-j},n)\\
&={ \Omega} { \frac{q-1}{q}}\{\mathbf{\Omega}_{{V^*(1)},1}(x),y\}_{\Iw}(\chi_{\LT}^{-j},n)\\
&={ \Omega} \langle\langle h^1_{L_n}\circ \notag \mathrm{tw}_{\chi_{\LT}^{j}}\left(\mathbf{\Omega}_{{V^*(1)},1}(x)\right),y_{\chi_{\LT}^{-j},n,}\rangle\rangle_{\Tate}\\
\notag&={ \Omega} \langle\langle \frac{1}{(-1-j)!} \Omega^{-j-1}\log_{L_n,V^\ast(1)(\chi_{\LT}^j)}^*\Big(
   {\Theta_{V^\ast(1)(\chi_{\rm LT}^{j}),n}^{-1}}
  \left(x(\chi_{\LT}^{-j},n)  \otimes e_j\right)\Big),y_{\chi_{\LT}^{-j},n}\rangle\rangle_{\Tate}\\
\notag&=\frac{\Omega^{ -j}}{(-1-j)!}[[
 {\Theta_{V^\ast(1)(\chi_{\rm LT}^{j}),n}^{-1}}
\left(x(\chi_{\LT}^{-j},n)  \otimes e_j\right),  -\widetilde{\log}_{L_n,V(\chi_{\LT}^{-j}),\id}(y_{\chi_{\LT}^{-j},n})]]_{\dR,L_n}
\\
\notag&=[[ x(\chi_{\LT}^{-j},n)\otimes e_{j},\frac{-\Omega^{ -j}}{(-1-j)!}
{\sigma_{-1}\Theta^\ast_{V(\tau^{-1}\chi_{\rm LT}^{-j}),n}}
\Big(\widetilde{\log}_{L_n,V(\chi_{\LT}^{-j}),\id}(y_{\chi_{\LT}^{-j},n})\Big)]]_{\cris,n}'.
\end{align*}
\end{proof}
\begin{remark}\label{rem:int}
The last condition in the theorem is also equivalent to the requirement that the operators $1-q\varphi_L$ and $1-\varphi_L$ are invertible on $D_{\cris,L}(V^\ast(1)(\chi_{\LT}^j))$ (or equivalently, $1-\frac{q}{\pi_L^j}\varphi_L$ and $1-\pi_L^{-j}\varphi_L$ are invertible on  $D_{\cris,L}(V^\ast(1))$).
\end{remark}

As a consequence we obtain the following interpolation property extended to Artin characters:

\begin{corollary}\label{cor:adjointformula}
Assume that $V^*(1)$ is $L$-analytic with $\mathrm{Fil}^{-1}D_{\cris,L}(V^*(1))=D_{\cris,L}(V^*(1))$  and such that $D_{\cris,L}(V^*(1))^{\varphi_L=\pi_L^{-1}}=D_{\cris,L}(V^*(1))^{\varphi_L=1}=0$.
 Let $j\in\mathbb{Z}$   and let $\rho$ be a finite order character of $\Gamma_L$ of conductor $n,$ i.e., $\rho$ factorizes over $G_n,$ but not over $G_{n-1}.$ Then for any $y\in H^1_{\rm Iw}(L_\infty/L,T)$ we have the following:
\begin{enumerate}
\item If $j\geq 0,$ we have \small
\begin{align*}
   \Omega^{j} \mathbf{L}_V(y)(\rho\chi_{\LT}^j)&=  j!
\left\{
  \begin{array}{ll}
\displaystyle    (1-\pi_L^{-1-j}\varphi_L^{-1})^{-1}(1-\frac{\pi_L^{j+1}}{q}\varphi_L)\Big(\widetilde{\exp}^*_{L,V(\chi_{\LT}^{-j}),\id}(y_{\chi_{\LT}^{-j}})\otimes e_{ j}\Big), & \hbox{if  $n=0$,} \\
 \displaystyle   \pi_L^{n(1+j)}\tau(\rho)^{-1} \varphi_L^{n}  \sum_{\sigma \in G_n}\rho(\sigma)\Big( \widetilde{\exp}^*_{L_n,V(\chi_{\LT}^{-j}),\id}(\sigma^{-1} (y_{\chi_{\LT}^{-j},n}))  \otimes e_{ j}\Big) , & \hbox{if $n>0$.}
  \end{array}
\right.
\end{align*}\normalsize
\item If $j\leq -1,$ we have\small
\begin{align*}
 \Omega^{j} \mathbf{L}_V(y)(\rho\chi_{\LT}^j)&= \frac{{ (-1)^{j+1}}}{(-1-j)!}
\left\{
  \begin{array}{ll}
 \displaystyle    (1-\pi_L^{-1-j}\varphi_L^{-1})^{-1}(1-\frac{\pi_L^{j+1}}{q}\varphi_L)\Big(\widetilde{\log}_{L,V(\chi_{\LT}^{-j}),\id}(y_{\chi_{\LT}^{-j}})\otimes e_{ j}\Big), & \hbox{if  $n=0$,} \\
 \displaystyle     \pi_L^{n(1+j)}\tau(\rho)^{-1} \varphi_L^{n}  \sum_{\sigma \in G_n}\rho(\sigma)\Big(\widetilde{\log}_{L_n,V(\chi_{\LT}^{-j}),\id}(\sigma^{-1}(y_{\chi_{\LT}^{-j},n}))  \otimes e_{ j}\Big),  & \hbox{if $n>0$.}
  \end{array}
\right.
\end{align*}
\end{enumerate}\normalsize
In both cases, we assume that $1-\pi_L^{-1-j}\varphi_L^{-1}, 1-\frac{\pi_L^{j+1}}{q}\varphi_L $ are invertible on $D_{\cris,L}(V(\tau^{-1}))$ when $\rho$ is the trivial character, i.e., $n=0.$ In addition,  we have
\[\sum_{\sigma \in G_n}\rho(\sigma)\Big( \widetilde{\exp}^*_{L_n,V(\chi_{\LT}^{-j}),\id}(\sigma^{-1} (y_{\chi_{\LT}^{-j},n}))  \otimes e_{ j}\Big)=\widetilde{\exp}^*_{L,V(\chi_{\LT}^{-j}\rho^*),\id}( y_{\chi_{\LT}^{-j}\rho^*,n})  \otimes e_{ j}\]
and
\[\sum_{\sigma \in G_n}\rho(\sigma)\Big(\widetilde{\log}_{L_n,V(\chi_{\LT}^{-j}),\id}(\sigma^{-1}(y_{\chi_{\LT}^{-j},n}))  \otimes e_{ j}\Big)=\widetilde{\log}_{L,V(\chi_{\LT}^{-j}\rho^*),\id}(\sigma^{-1}(y_{\chi_{\LT}^{-j}\rho^*,n}))  \otimes e_{ j}\]
where $ y_{\chi_{\LT}^{-j}\rho^*,n}$ is defined analogously as in \eqref{f:twistprojection}.
\end{corollary}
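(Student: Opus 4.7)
The plan is to evaluate the distribution-valued identity of Theorem \ref{thm:adjointformulan} at the character $\rho\chi_{\LT}^j$. The key reduction is that for any $\mu \in D(\Gamma_L, K)$ and any finite-order character $\rho$ of conductor $n$, one has $\mu(\rho\chi_{\LT}^j) = (\mathrm{pr}_\rho \otimes \id)(\mu(\chi_{\LT}^j, n))$, since $\rho$ factors through $G_n$ by assumption. Applied to $\mu = \Omega^j \mathbf{L}_V(y)$, the problem becomes the explicit computation of $(\mathrm{pr}_\rho \otimes \id)\circ \Theta^*_{V(\tau^{-1}\chi_{\LT}^{-j}),\, n}$ evaluated at $\widetilde{\exp}^*_{L_n, V(\chi_{\LT}^{-j}), \id}(y_{\chi_{\LT}^{-j}, n})$ (respectively its $\widetilde{\log}$ analogue when $j \leq -1$), all tensored with $e_j$.

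I would then apply the explicit formula (\ref{def Theta ast}) for $\mathrm{pr}_\rho \circ \Theta^*_{W, n}$ with $W = V(\tau^{-1}\chi_{\LT}^{-j})$. Under the natural identification $D_{\cris, L}(W) \otimes_L L\, e_j \simeq D_{\cris, L}(V(\tau^{-1}))$ arising from multiplication by $e_j \in D_{\cris, L}(L(\chi_{\LT}^j))$, on which $\varphi_L$ acts by $\pi_L^{-j}$, the Frobenius $\varphi_W$ transports to $\pi_L^j \varphi_L$. Consequently, when $n = 0$, the Euler factor $(1 - \pi_L^{-1}\varphi_W^{-1})^{-1}(1 - \tfrac{\pi_L}{q}\varphi_W)$ in the definition of $\Theta^*_{W, 0}$ becomes exactly $(1 - \pi_L^{-1-j}\varphi_L^{-1})^{-1}(1 - \tfrac{\pi_L^{j+1}}{q}\varphi_L)$, matching the trivial-character case of the statement. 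When $n \geq 1$, the prefactor $\pi_L^n \varphi_W^n$ transports to $\pi_L^n \cdot \pi_L^{nj} \varphi_L^n = \pi_L^{n(1+j)} \varphi_L^n$, while the Gauß sum $\tau(\rho)^{-1}$ and the Galois sum $\sum_{g \in G_n} \rho(g) g^{-1}(-)$ are untouched by the twist. Substituting yields the two displayed formulas of the corollary; the equivalence of the invertibility hypothesis of Theorem \ref{thm:adjointformulan} with the one stated here (for $n = 0$) is a direct consequence of the same transportation of $\varphi_L$.

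The residual identities expressing the $\rho$-twisted sum as a single (dual) exponential or logarithm of the Artin-twisted class $y_{\chi_{\LT}^{-j}\rho^*, n}$ are instances of a Shapiro-type compatibility: under the isomorphism
\[
H^1(L_n, V(\chi_{\LT}^{-j})) \simeq H^1(L, V(\chi_{\LT}^{-j}) \otimes_L \mathrm{Ind}_{G_{L_n}}^{G_L} \mathbf{1}),
\]
the operator $\sum_{\sigma \in G_n}\rho(\sigma)\sigma^{-1}$ realises the projection onto the $\rho$-isotypic component, which is canonically $H^1(L, V(\chi_{\LT}^{-j}\rho^*))$, and the Bloch--Kato (dual) exponential and logarithm are compatible with this direct-summand decomposition. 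Combined with the corestriction-and-twist definition \eqref{f:twistprojection} of $y_{\chi_{\LT}^{-j}\rho^*, n}$, this yields the stated identities, and the argument for $\widetilde{\log}$ is formally identical to that for $\widetilde{\exp}^*$.

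The main obstacle is the careful bookkeeping of the Frobenius twist: verifying that $\varphi_W$ transports to $\pi_L^j \varphi_L$ in the precise direction required to produce $\pi_L^{n(1+j)}$ (and not, say, $\pi_L^{n(1-j)}$), and checking that both versions of the invertibility hypothesis line up. Once this exponent is pinned down, everything else reduces to direct substitution into (\ref{def Theta ast}) and the standard Shapiro identification.
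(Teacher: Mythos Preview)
Your proposal is correct and follows essentially the same route as the paper: both deduce the corollary from Theorem \ref{thm:adjointformulan} by applying $\mathrm{pr}_\rho$ to the $K[G_n]$-valued formula, unwinding the definition \eqref{def Theta ast} of $\Theta^\ast$, and transporting the Frobenius through the relation $\varphi_L(d)\otimes e_j=\pi_L^{j}\varphi_L(d\otimes e_j)$; the paper handles the final Shapiro-type identities by citing \cite[Lem.\ B.4]{LZ}, which is exactly the induction/projection argument you sketch.
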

\begin{proof}
Note that $\varphi_L(d)\otimes e_j=\pi_L^{j}\varphi_L(d\otimes e_j)$ for any $d\in D_{{\rm cris},L}(V(\tau^{-1}\chi_{\rm LT}^{-j}))$. Then the claim follows immediately from Theorem \ref{thm:adjointformulan} and the definition of $\Theta^\ast$ in (\ref{def Theta ast}). The addition follows analogously as in \cite[Lem.\ B.4]{LZ}.
\end{proof}


If $V$ is of the form $L(\chi_{\rm LT}^{-k})(1)$ with $k\geq 1$, we can remove the assumption of Theorem \ref{thm:adjointformulan}.

\begin{corollary}\label{f:intchi-j}
Let $k\geq 1$ and $y \in H^1_{\rm Iw}(L_\infty/L, o_L(\chi_{\rm LT}^{-k})(1))$. For $j'\geq 0$ we have
\begin{align*}
\displaystyle  \mathbf{L}_{L(\chi_{\rm LT}^{-k})(1)}(y)(\chi_{\LT}^{j'},n)&=\frac{j'!}{\Omega^{j'}}\Theta^\ast_{L(\chi_{\rm LT}^{1-k-j'}),n}
\Big( \widetilde{\exp}^*_{L_n,L(\chi_{\LT}^{-k-j'})(1),\id}(y_{\chi_{\LT}^{-j'},n})\Big)\otimes e_{ j'} ,
\end{align*}
and, for any finite order character $\rho$ of $\Gamma_L$ of conductor $n,$\footnotesize
\begin{align*}
\mathbf{L}_{L(\chi_{\rm LT}^{-k})(1)}(y)(\rho\chi_{\LT}^{j'})&=\frac{j'!}{\Omega^{j'}}
\left\{
  \begin{array}{ll}
\displaystyle(1-\pi_L^{-1-j'}\varphi_L^{-1})^{-1}(1-\frac{\pi_L^{j'+1}}{q}\varphi_L) \Big(\widetilde{\exp}^*_{L,L(\chi_{\LT}^{-k-j'})(1),\id}(y_{\chi_{\LT}^{-j'}})\otimes e_{ j'} \Big), & \hbox{if  $n=0$,} \\
\displaystyle \pi_L^{n(1+j')}\tau(\rho)^{-1} \varphi_L^{n} \sum_{\sigma\in G_n}\rho(\sigma)\Big( \widetilde{\exp}^*_{L_n,L(\chi_{\LT}^{-k-j'})(1),\id}(\sigma^{-1}(y_{\chi_{\LT}^{-j'},n})) \otimes  e_{ j'} \Big), & \hbox{if $n>0$.}
  \end{array}
\right.
\end{align*}\normalsize
\end{corollary}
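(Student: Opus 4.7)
The plan is to deduce the formulas from the analogous ones for $V' := L(\chi_{\LT})(1)$ via the twisting compatibility~\eqref{f:regulatortwist}: this detour is necessary because the Frobenius non-triviality condition in Theorem~\ref{thm:adjointformulan} fails for $V^\ast(1)=L(\chi_{\LT})$ (the case $k=1$), while the filtration condition $\mathrm{Fil}^{-1}=\mathrm{id}$ becomes borderline for $V^\ast(1)=L(\chi_{\LT}^k)$ with $k\geq 2$; by contrast $(V')^\ast(1)=L(\chi_{\LT}^{-1})$ satisfies all of them, with $\varphi_L$-eigenvalue $\pi_L\notin\{1,\pi_L^{-1}\}$ on the one-dimensional $D_{\cris,L}((V')^\ast(1))=L\cdot e_{-1}$.

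First I would apply Corollary~\ref{cor:adjointformula} to $V'$ at the twist character $\rho\chi_{\LT}^{j'+k+1}$: the auxiliary $\varphi_L$-eigenvalue on $D_{\cris,L}(V'(\tau^{-1}\chi_{\LT}^{-(j'+k+1)}))=D_{\cris,L}(L(\chi_{\LT}^{1-k-j'}))$ equals $\pi_L^{k+j'-1}$, and a short verification shows that the resulting invertibility hypotheses of $1-\pi_L^{-1-(j'+k+1)}\varphi_L^{-1}$ and $1-(\pi_L^{j'+k+2}/q)\varphi_L$ on this module are automatic for $k\geq 1,\ j'\geq 0$. This produces a closed-form expression for $\mathbf{L}_{V'}(y')(\rho\chi_{\LT}^{j'+k+1})$ with $y':=y\otimes \eta^{\otimes k+1}$. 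Next I would specialize~\eqref{f:regulatortwist} to $j=k$, rearrange the resulting identity
\[
\mathbf{L}_{V'}(y\otimes \eta^{\otimes k+1})=\frac{\mathfrak{l}_{L(\chi_{\LT}^{k+1})}\,\mathrm{Tw}_{\chi_{\LT}^{-(k+1)}}}{\Omega^{k+1}}\,\mathbf{L}_{L(\chi_{\LT}^{-k})(1)}(y)\otimes e_{k+1}
\]
to solve for $\mathbf{L}_{L(\chi_{\LT}^{-k})(1)}(y)(\rho\chi_{\LT}^{j'})$, and substitute the Step-1 expression, using the explicit values of $\mathfrak{l}_{L(\chi_{\LT}^{k+1})}(\rho\chi_{\LT}^{j'})$ from Appendix~\ref{sec:Lie} together with the canonical identification $V'(\chi_{\LT}^{-(j'+k+1)})\simeq L(\chi_{\LT}^{-k-j'})(1)$ and the corresponding dual exponentials, to read off the claimed formula.

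The main obstacle will be the clean bookkeeping: converting the factor $(j'+k+1)!/\Omega^{j'+k+1}$ arising from Step~1 into the $j'!/\Omega^{j'}$ of the claim (the ratio must be absorbed by $\mathfrak{l}_{L(\chi_{\LT}^{k+1})}(\rho\chi_{\LT}^{j'})/\Omega^{k+1}$), rewriting the Gauss sum $\tau(\rho\chi_{\LT}^{j'+k+1})$ in terms of $\tau(\rho)$, and verifying that the Euler-type factor $(1-\pi_L^{-1}\varphi_L^{-1})^{-1}(1-(\pi_L/q)\varphi_L)$ implicit in $\Theta^\ast_{V'(\tau^{-1}\chi_{\LT}^{-(j'+k+1)}),n}$ simplifies to $(1-\pi_L^{-1-j'}\varphi_L^{-1})^{-1}(1-(\pi_L^{j'+1}/q)\varphi_L)$ of $\Theta^\ast_{L(\chi_{\LT}^{1-k-j'}),n}$ after the shift; these identifications are routine but require care because several of the intermediate scalars are singular precisely when $\pi_L^{k+j'}=q$, and this potential singularity has to be shown to cancel on both sides.
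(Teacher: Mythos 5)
Your proposal follows the paper's proof essentially verbatim: apply the interpolation formula (Theorem \ref{thm:adjointformulan}, in the character-evaluated form of Corollary \ref{cor:adjointformula}) to $V'=L(\chi_{\LT})(1)$ at the shifted twist $j=j'+k+1$, then descend through the twisting diagram \eqref{f:regulatortwist} using the explicit $\Gamma$-factor values \eqref{f:evlfactor} from Appendix \ref{sec:Lie} to absorb the ratio $(j'+k+1)!/j'!$ and the power $\Omega^{k+1}$ — exactly the content of the paper's diagram \eqref{f:Gammafactor}. The one inaccuracy is your claim that the invertibility hypotheses are ``automatic'': the factor $1-\frac{\pi_L^{j'+k}}{q}$ does degenerate when $\pi_L^{j'+k}=q$ (a point the paper's own remark also elides), though you correctly flag this singularity as requiring care at the end; note also that no rewriting of Gauss sums is needed, since $\tau(\rho)$ depends only on the finite-order part of the character.
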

\begin{proof}
Indeed, $V= L(\chi_{\rm LT})(1)=L(\chi_{\rm LT}^2\tau)$ does satisfy the assumptions in Theorem \ref{thm:adjointformulan} (for any $j \in \ZZ$, the representation $L(\chi_{\rm LT}^j)$ is $L$-analytic by \cite[Lem.\ 2 on page III.45]{Se0} or with an explicit construction of periods by \cite[Prop. 1 in \S 3.2]{fourquaux}
), which we apply for $j=j'+k+1.$ Hence the claim   follows from  the commutativity of \eqref{f:regulatortwist} and of the following diagram for $j\geq k+1$ as a consequence of \eqref{f:evlfactor}:
\footnotesize
\begin{equation}\label{f:Gammafactor}
 \xymatrix{
  \frac{1}{\frak{l}'_{L(\chi_\LT^{k+1})}}D(\Gamma_L,K)\otimes_L D_{\mathrm{cris}}(L(\chi_{\rm LT}^{-k})(1))\ar[d]^-{\mathrm{pr}_{\Gamma_m}\circ \mathrm{Tw}_{\chi_\LT^{j-k-1}}\otimes\id}  \ar[rrr]^-{\frac{\frak{l}_{L(\chi_\LT^{k+1})}\mathrm{Tw}_{\chi_\LT^{-(k+1)}}}{\Omega^{k+1}}\otimes \id \otimes e_{k+1}}  &&  & D(\Gamma_L,K)\otimes_L D_{\mathrm{cris}}(L(\chi_{\rm LT} )(1))\ar[d]^-{\mathrm{pr}_{\Gamma_m}\circ \mathrm{Tw}_{\chi_\LT^{j}}\otimes\id}\\
 K[\Gamma_L/\Gamma_m]\otimes_L D_{\mathrm{cris}}(L(\chi_{\rm LT}^{-k})(1)) \ar[rrr]^{\frac{j!}{\Omega^{k+1}(j-k-1)!}\otimes \id \otimes e_{k+1}}   &&&K[\Gamma_L/\Gamma_m]\otimes_LD_{\mathrm{cris}}(L(\chi_{\rm LT} )(1)).  }
\end{equation}
\normalsize
Note that $1-q\varphi_L$ and $1-\varphi_L$ are invertible on  $D_\cris(L(\chi_\LT^k))$ whence the interpolation formula holds at all $j'\geq 0$ by Remark \ref{rem:int}.
\end{proof}

%
%
%

\section{Kato's explicit reciprocity law}

In this section, we prove Kato's explicit reciprocity law, i.e., Theorem \ref{kato erl}. As explained in Introduction, it is sufficient to prove Theorem \ref{key claim}. We give a proof of this theorem in \S \ref{sec:claim}. Our argument actually proves a variant for a more general $p$-adic representation (``Colmez' reciprocity law in the Lubin-Tate setting", see Theorem \ref{thm:Colmez}), which we prove in \S \ref{sec colmez}.

In \S \ref{sec second}, we prove Kato's explicit reciprocity law by a different argument, which does not rely on Theorem \ref{key claim}.

\subsection{The first proof}\label{sec:claim}

In the following we often abbreviate $D_{\cris, L}$ to $D_\cris$.
Theorem \ref{key claim} is equivalent to the commutativity of the following two diagrams for any $j,m\geq 1$:
\begin{equation}\label{f:twist}
   \xymatrix{
     H^1_{\Iw}(L_\infty{/L},\TLT^{\otimes -1}(1)) \ar[d]_{\mathrm{Exp}^*} \ar[r]^{\mathrm{tw}_{j-1}} & H^1_{\Iw}(L_\infty{/L},\TLT^{\otimes -j}(1)) \ar[d]^{\mathrm{Exp}^*} \\
     \mathbf{A}_L^{\psi=1} \ar[d]_{ \pi_L^{-m}\varphi_L^{-m}\otimes \mathbf{d}_1} \ar[r]^{\otimes \eta^{\otimes 1-j}} &  \mathbf{A}_L^{\psi=1}(\chi_{\rm LT}^{1-j}) \ar[d]^{ \pi_L^{-m}\varphi_L^{-m}\otimes \mathbf{d}_1} \\
    L_m((t_{\rm LT}))\otimes D_{\mathrm{cris}}(L(\tau)) \ar[d]_{c_{t_{\rm LT}^{j-1}}\otimes\id} \ar[r]^{\cdot t_{\rm LT}^{1-j}\otimes e_{1-j}} & L_m((t_{\rm LT}))  \otimes D_{\mathrm{cris}}(L(\chi_{\rm LT}^{-j})(1)) \ar[d]^{c_{t_{\rm LT}^{0}}\otimes \id} \\
     L_m \otimes D_{\mathrm{cris}}(L(\tau)) \ar[r]^{\id\otimes  e_{1-j}} & L_m   \otimes D_{\mathrm{cris}}(L(\chi_{\rm LT}^{-j})(1)), }
\end{equation}
where  $c_{t_{\rm LT}^{j-1}} $ denotes taking the coefficient of $t_{\rm LT}^{j-1} $ (note that $\mathrm{ev}_{m,j}=c_{t_{\rm LT}^{j-1}}\circ\pi_L^{-m}\varphi_L^{-m})$ and $  \varphi_L^{-m}:\mathbf{A}_L^{\psi=1}(\chi_{\rm LT}^{1-j})\to L_m((t_{\rm LT}))\otimes_L D_{\mathrm{cris}}(L(\chi_{\rm LT}^{1-j})) $ is defined via the  inclusions
\[(\mathbf{A}_L(\chi_{\rm LT}^{1-j}))^{\psi=1}=(Z^{-2}\mathbf{A}_L^+(\chi_{\rm LT}^{1-j}))^{\psi=1}\]
 - by \cite[Lem.\ 3.3.4, 3.1.11/12]{SV24} -, \footnote{ $j=1$ is also covered by the next inclusion because $\frac{1}{\omega_{\rm LT}}=\frac{?}{t_{\rm LT}}$ with $?\in \cR^+_L$!}
\[(\mathbf{A}_L^+(\chi_{\rm LT}^{1-j}))^{\psi=1}\subseteq D_{\mathrm{rig}}^\dagger(L(\chi_{\rm LT}^{1-j}))^{\psi=1} \subseteq \cR_L^+[\frac{1}{t_{\rm LT}}]\otimes_L D_\mathrm{cris}(L(\chi_{\rm LT}^{1-j})),\]
where the second arrow stems from \cite[Thm.\ 3.1.1]{BF} involving  Kisin's and Ren's
comparison isomorphism,  and the map $\varphi_q^{-m}$ defined before \cite[Lem.\ 2.4.3]{BF}.
The second diagram is the following:
\begin{equation}\label{f:interpolation}
 \xymatrix{
   H^1_{\Iw}(L_\infty{/L},\TLT^{\otimes -j}(1)) \ar[dd]_{\mathrm{pr}_m} \ar[r]^-{\mathrm{Exp}^*} &  \mathbf{A}_L^{\psi=1}(\chi_{\rm LT}^{1-j})  \ar[d]^{\pi_L^{-m}\varphi_L^{-m}\otimes \mathbf{d}_1}   \\
   &   L_m((t_{\rm LT}))\otimes_L D_{\mathrm{cris}}(L(\chi_{\rm LT}^{-j})(1)) \ar[d]^{c_{t_{\rm LT}^{0}}\otimes \id} \\
   H^1(L_m,\TLT^{\otimes -j}(1))\ar[r]^{\exp_j^*}&   L_m \otimes_L D_{\mathrm{cris}}(L(\chi_{\rm LT}^{-j})(1)).  }
\end{equation}

The commutativity of the first one is easily checked using the commutativity
\begin{equation*}
  \xymatrix{
    \cR^+_K[\frac{1}{t_\LT}]\otimes_L D_{\mathrm{cris}}(V) \ar[d]_{t_\LT\otimes (\id\otimes t_\LT^{-1}\otimes \eta)} \ar[r]^{\simeq} &  \cR^+_K[\frac{1}{t_\LT}]\otimes_{\mathbf{A}^+_L} N(V) \ar[d]^{ \id\otimes({\rm incl}\otimes\eta)} \\
    \cR^+_K[\frac{1}{t_\LT}]\otimes_L D_{\mathrm{cris}}(V(\chi_\LT))\ar[r]^{\simeq} &    \cR^+_K[\frac{1}{t_\LT}]\otimes_{\mathbf{A}^+_L} N(V(\chi_\LT))   }
\end{equation*}
for the Kisin-Ren module $N(o_L(\chi_\LT^k))=Z^{-k}\mathbf{A}^+_L\otimes \eta^k$ and its comparison isomorphism attached to the presentation $V=o_L(\chi_\LT^k)$ as in \cite[Rem.\ 3.1.11, Lem.\ 3.1.12]{SV24}. Hence, {\it it is sufficient to show the commutativity of the second diagram (\ref{f:interpolation})}. To this aim we shall apply the interpolation property of an attached regulator map as a consequence of  Theorem \ref{thm:adjointformulan}.

\begin{proof}[Proof of commutativity of \eqref{f:interpolation}]
Instead of proving the commutativity of \eqref{f:interpolation} directly we extend it to the following larger diagram, in which the upper line is just the regulator map $\mathcal{L}_{L(\chi_{\rm LT}^{-j})(1)} $ (see \S \ref{sec:reg}):
\begin{equation}\label{f:regulatordescent}\small
 \xymatrix{
   H^1_{\Iw}(L_\infty{/L},\TLT^{\otimes -j}(1)) \ar[dd]_{\mathrm{pr}_m} \ar[r]^-{\mathrm{Exp}^*} &  \mathbf{A}_L^{\psi=1}(\chi_{\rm LT}^{1-j})  \ar[d]^{ \pi_L^{-m}\varphi_L^{-m}\otimes \mathbf{d}_1} \ar[r]^-{\Xi_{L(\chi_{\rm LT}^{1-j})}} & \frac{1}{\frak{l}'_{L(\chi_\LT^{j+1})}}D(\Gamma_L,K)\otimes_L D_{\mathrm{cris}}(L(\chi_{\rm LT}^{-j})(1))\ar[dd]^-{\mathrm{pr}_{\Gamma_m}\otimes\id}\\
   &   L_m((t_{\rm LT})) \otimes D_{\mathrm{cris}}(L(\chi_{\rm LT}^{-j})(1))  \ar[d]^{c_{t_{\rm LT}^{0}}\otimes \id} & \\
   H^1(L_m,\TLT^{\otimes -j}(1))\ar[r]^{  \exp_j^*}&   L_m \otimes D_{\mathrm{cris}}(L(\chi_{\rm LT}^{-j})(1)) \ar[r]^-{\Theta_{L(\chi_\mathrm{\rm LT}^{1-j}),D,m}} & K[\Gamma_L/\Gamma_m]\otimes_L D_{\mathrm{cris}}(L(\chi_{\rm LT}^{-j})(1)) .  }
\end{equation}
Hence the commutativity of the outer diagram   follows from Corollary \ref{f:intchi-j} with $j'=0$ together with the commutativity of the following diagram for $j\geq 1:$ \small
\begin{equation*}
  \xymatrix{
    H^1(L_m,\TLT^{\otimes -j}(1))\ar[rr]^-{  \exp_j^*}& &  L_m \otimes D_{\mathrm{cris}}(L(\chi_{\rm LT}^{-j})(1)) \ar[rrr]^-{\Theta_{L(\chi_\mathrm{\rm LT}^{1-j}),D,m}} & && K[\Gamma_L/\Gamma_m]\otimes_L D_{\mathrm{cris}}(L(\chi_{\rm LT}^{-j})(1))  \\
   H^1(L_m,\TLT^{\otimes -j}(1))\ar[rr]^-{ \widetilde{\exp}^*_{L_m,L(\chi_{\LT}^{-j})(1),\id}}\ar@{=}[u]& &  L_m \otimes D_{\mathrm{cris}}(L(\chi_{\rm LT}^{1-j})) \ar[rrr]^-{ \Theta^\ast_{L(\chi_{\rm LT}^{1-j}),m}
    } \ar[u]^{\cdot\otimes\mathbf{d}_1}&&& K[\Gamma_L/\Gamma_m]\otimes_L D_{\mathrm{cris}}(L(\chi_{\rm LT}^{1-j})) . \ar[u]^{\cdot\otimes\mathbf{d}_1}  }
\end{equation*}\normalsize
Indeed,  the first square is commutative by the definition of $\widetilde{\exp}^*_{L_m,L(\chi_{\LT}^{-j})(1),\id}$ while the second one due to (\ref{Thetad1}) in Remark \ref{rem:EvDn} (with $W=L(\chi_\LT^{1-j}),$  $D:=D_\cris(L(\tau))$).

The commutativity of the right inner diagram of \eqref{f:regulatordescent} is the content of   Remark \ref{rem:EvDn} providing the descent of  \eqref{f:Xiexplicit}  combined with the following commutative diagram:\footnotesize
\begin{equation}\label{f:Dtwist}
\xymatrix{
  \mathbf{A}_L^{\psi=1}(\chi_{\rm LT}^{1-j}) \ar[d]_{\pi_L^{-m}\varphi_L^{-m}} \ar[r]^-{\id \otimes \mathbf{d}_1} &  \mathbf{A}_L^{\psi=1}(\chi_{\rm LT}^{1-j})\otimes D \ar[d]^{q^{-m}\varphi_L^{-m} \otimes \varphi_L^{-m}}\ar[r]^-\subseteq & \bigg(\frac{1}{t^{j+1}}\mathcal{O}_K\otimes D_{\cris}(L(\chi_\LT^{-j})(1))\bigg)^{\Psi_L=1}\ar[dl]^{q^{-m}\varphi_L^{-m} \otimes \varphi_L^{-m}}\\
  K_m((t_{\rm LT}))\otimes_L D_{\mathrm{cris}}(L(\chi_{\rm LT}^{1-j})) \ar[r]^-{\id \otimes \mathbf{d}_1} & K_m((t_{\rm LT}))\otimes_L D_{\mathrm{cris}}(L(\chi_{\rm LT}^{-j})(1)),   }
\end{equation}\normalsize
which stems immediately from the definition of the map $\varphi_L^{-m}.$ Note that the descent of \eqref{f:Gammainverse} and \eqref{f:XiexplicitGamma} both give rise to the factor $(j+1)!$ (whence cancel each other) by \eqref{f:evlfactor} and Lemma \ref{lem:descent}, respectively.

Note in general that $\Theta_{W,D,m}$ is an isomorphism if $D_{\cris,L}(W)^{\varphi_L=\pi_L^{-1}}= D_{\cris,L}(W)^{\varphi_L=\frac{q}{\pi_L}}=0$.
Since $\Theta_{L(\chi_\mathrm{\rm LT}^{1-j}),D,m}$ is an isomorphism (by our assumption $q \neq  \pi_L^j$), the claimed commutativity of \eqref{f:interpolation} follows.
%
%
%
%
\end{proof}
\begin{lemma}\label{lem:descent}
The following diagram is commutative:
\[\xymatrix{
  \bigg(\frac{1}{t^{j+1}}\mathcal{O}_K\otimes D_{\cris,L}(L(\chi_\LT^{-j})(1))\bigg)^{\Psi_L=1} \ar[d]_{q^{-m}\mathrm{Ev}_{L(\chi_{\LT}^{1-j}),D,m}} \ar[rrr]^{\partial_\mathrm{inv}^{j+1}t_\LT^{j+1}\otimes\id} &&& \bigg(\mathcal{O}_K\otimes D_{\cris,L}(L(\chi_\LT^{-j})(1))\bigg)^{\Psi_L=1} \ar[d]^{q^{-m}\mathrm{Ev}_{L(\chi_{\LT}^{1-j}),D,m}} \\
 K_m\otimes_L  D_{\cris,L}(L(\chi_\LT^{-j})(1)) \ar[rrr]^{(j+1)!} &&& K_m\otimes_L  D_{\cris,L}(L(\chi_\LT^{-j})(1)) . }\]
\end{lemma}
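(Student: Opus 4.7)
The plan is to reduce the stated identity to a short calculation with formal Laurent series on the target side of the embedding $\varphi^{-m}$. For $m \geq 1$, the substitution $Z = \eta_m +_{\LT} \exp_{\LT}(s/\pi_L^m)$ underlying $\varphi^{-m}$ sends $t_{\LT} = \log_{\LT}(Z)$ to $s/\pi_L^m$, and hence conjugates $\partial_\mathrm{inv} = d/dt_{\LT}$ into $\pi_L^m\cdot d/ds$. The map $\mathrm{Ev}_{L(\chi_{\LT}^{1-j}),D,m}$, which on $\mathcal{O}_K \otimes D_{\cris}$ applies $\varphi_L^{-m}$ to the coefficient and substitutes $Z = \eta_m$ in the $\mathcal{O}_K$-part, extends naturally to the Laurent-type source $\tfrac{1}{t_{\LT}^{j+1}}\mathcal{O}_K\otimes D_{\cris}(L(\chi_{\LT}^{-j})(1))$ by extracting the constant term $[s^0]$ after applying $\varphi^{-m}$; this is well-defined on the $\Psi_L = 1$ subspace by the comparison of \cite[Thm.\ 3.1.1]{BF} and coincides with the implicit usage of $\pi_L^{-m}\varphi_L^{-m}$ in \eqref{f:Dtwist}. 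The case $m = 0$ is entirely parallel, with $\varphi^0 = \id$, evaluation at $Z = 0$ (equivalently $[t_{\LT}^0]$), and the factor $(1-\pi_L^{-1}\varphi_L^{-1})$ acting identically on the $D$-component of both compositions.

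The key calculation then proceeds as follows. Write $F(s) := \varphi^{-m}(f) = \sum_{k\geq -j-1} a_k s^k \in L_m((s))$ for $f \in \tfrac{1}{t_{\LT}^{j+1}}\mathcal{O}_K$. Using the two conjugation rules above, the $\pi_L$-powers cancel cleanly,
\[
\varphi^{-m}\bigl(\partial_\mathrm{inv}^{j+1}(t_{\LT}^{j+1}\,f)\bigr) \;=\; \pi_L^{m(j+1)}\cdot\frac{d^{j+1}}{ds^{j+1}}\!\Bigl(\pi_L^{-m(j+1)}\, s^{j+1} F(s)\Bigr) \;=\; \frac{d^{j+1}}{ds^{j+1}}\!\bigl(s^{j+1} F(s)\bigr).
\]
Term-wise, $\frac{d^{j+1}}{ds^{j+1}}(s^{k+j+1}) = (k+j+1)(k+j)\cdots(k+1)\cdot s^k$, a falling-factorial product that vanishes precisely for $-j-1 \leq k \leq -1$, so the polar part of $F$ contributes nothing. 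Hence $[s^0]$ of the right-hand side equals $(j+1)!\,a_0 = (j+1)!\cdot [s^0] \varphi^{-m}(f)$. Since the common $\varphi_L^{-m}$-twist on the $D$-component and the common prefactor $q^{-m}$ act identically on both sides of the proposed diagram, this yields the asserted factor of $(j+1)!$ relating the two compositions.

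The only genuine (and minor) obstacle I foresee is the formal justification for extending $\mathrm{Ev}_{L(\chi_{\LT}^{1-j}),D,m}$ to the domain with simple poles in $t_{\LT}$; once one adopts the natural extension by $[s^0]\circ\varphi^{-m}$ (which is consistent with its implicit usage throughout \S\ref{sec:claim}), the rest is the elementary Laurent-series manipulation above. No further input from $(\varphi_L,\Gamma_L)$-module theory is required beyond what defines $\varphi^{-m}$ itself.
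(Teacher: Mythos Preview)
Your proposal is correct and follows essentially the same approach as the paper: both reduce to showing that the constant term of $\varphi_L^{-m}\bigl(\partial_{\rm inv}^{j+1}(t_{\LT}^{j+1}f)\bigr)$ equals $(j+1)!$ times the constant term of $\varphi_L^{-m}(f)$, using $\partial_{\rm inv}=d/dt_{\LT}$ and $\varphi_L^{-m}(t_{\LT})=\pi_L^{-m}t_{\LT}$, then the elementary falling-factorial identity $\frac{d^{j+1}}{ds^{j+1}}s^{k+j+1}=(k+j+1)\cdots(k+1)s^k$. The only cosmetic difference is that you phrase the computation via the conjugation $\varphi_L^{-m}\circ\partial_{\rm inv}=\pi_L^{m}\,\partial_{\rm inv}\circ\varphi_L^{-m}$ and work on the target side, whereas the paper applies $\partial_{\rm inv}^{j+1}t_{\LT}^{j+1}$ first and then $\varphi_L^{-m}$ term-by-term.
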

\begin{proof}
It is sufficient to check that
$$\text{the constant term of }\varphi_L^{-m}\circ \partial_{\rm inv}^{j+1}(t_{\rm LT}^{j+1} f) = \text{the constant term of }(j+1)!\cdot \varphi_L^{-m}(f)$$
for any $f \in o_{\CC_p}((t_{\rm LT}))$.  We write $f=\sum_{n\in \ZZ}a_nt_{\rm LT}^n$. Since $\partial_{\rm inv}=\frac{d}{dt_{\rm LT}}$ by Lemma \ref{lemder}, we have
$$ \partial_{\rm inv}^{j+1}(t_{\rm LT}^{j+1} f)  =\sum_{n\in \ZZ}(n+j+1)(n+j)\cdots(n+1)a_nt_{\rm LT}^n.$$
Also, since $\varphi_L^{-m}(t_{\rm LT})=\pi_L^{-m} t_{\rm LT}$ by Lemma \ref{lemtlt}, we have
$$\varphi_L^{-m}\circ \partial_{\rm inv}^{j+1}(t_{\rm LT}^{j+1} f) = \sum_{n\in \ZZ}(n+j+1)(n+j)\cdots(n+1)a_n \pi_L^{-mn}t_{\rm LT}^n.$$
Hence we have
$$\text{the constant term of }\varphi_L^{-m}\circ \partial_{\rm inv}^{j+1}(t_{\rm LT}^{j+1} f) = (j+1)!a_0.$$
On the other hand, we have $\varphi_L^{-m}(f)=\sum_{n\in \ZZ}a_n \pi_L^{-mn}t_{\rm LT}^n$ and so
$$\text{the constant term of }(j+1)!\cdot \varphi_L^{-m}(f)= (j+1)! a_0.$$
This completes the proof.
\end{proof}

\subsection{Colmez' reciprocity law}\label{sec colmez}

In this subsection, we give a variant of Theorem \ref{key claim} for a more general $p$-adic representation.

Let   $T$ be in $\Rep_{o_L,f}^{\cris}(G_L)$    such that $T(\tau^{-1})$ belongs to  $\Rep_{o_L,f}^{\cris,\mathrm{an}}(G_L)$ with all   Hodge--Tate weights   in $[1,r],$ and such that $V:=L\otimes_{o_L} T$ does not have any quotient isomorphic to $L(\tau).$ Then the assumptions of \cite[Lem.\ 3.3.6]{SV24}  are satisfied and we may define the  maps
$  \varphi_L^{-m}:D_{\LT}(T(\tau^{-1}))^{\psi=1}\to L_m[[t_{\rm LT}]]\otimes_L D_{\mathrm{cris}}(V(\tau^{-1})) $   via the  inclusions
 - by \cite[(3.16), Cor.\ 3.1.14]{SV24} involving  Kisin's and Ren's
comparison isomorphism -,
\[D_{\LT}(T(\tau^{-1}))^{\psi=1}=(N(T(\tau^{-1})))^{\psi=1} \subseteq \cR_L^+\otimes_L D_\mathrm{cris}(V(\tau^{-1}))\]
and  the map $\varphi_q^{-m}$ defined before \cite[Lem.\ 2.4.3]{BF}.

\begin{theorem}[Colmez' reciprocity law in the Lubin-Tate setting]\label{thm:Colmez}
  Assume  that $V$ satisfies $D_{\cris,L}(V(\tau^{-1}))^{\varphi_L=1}=D_{\cris,L}(V(\tau^{-1}))^{\varphi_L=\pi_L^{-1}}=0$. Then, for all $j\geq 1$   such that the operators  $1-\pi_L^{-1-j}\varphi_L^{-1}, 1-\frac{\pi_L^{j+1}}{q}\varphi_L $ are invertible on $D_{\cris,L}(V(\tau^{-1}))$, the following diagram commutes:
\begin{equation}\label{f:interpolationgeneral}
 \xymatrix{
   H^1_{\Iw}(L_\infty{/L},T) \ar[dd]_{\mathrm{pr}_{m,-j}} \ar[rr]^-{\mathrm{Exp}^*} &&  D_{\LT}(T(\tau^{-1}))^{\psi_L=1}  \ar[d]^{\pi_L^{-m}\varphi_L^{-m}\otimes \mathbf{d}_1}   \\
   &&   L_m((t_{\rm LT}))\otimes_L D_{\mathrm{cris}}(V) \ar[d]^{c_{t_{\rm LT}^{j}}\otimes (\id\otimes e_{-j})} \\
   H^1(L_m,V(\chi_\mathrm{\rm LT}^{-j}))\ar[rr]_-{\exp_{L_m,V(\chi_\LT^{-j})}^*}&&   L_m \otimes_L D_{\mathrm{cris}}(V(\chi_\LT^{-j})).  }
\end{equation}
In other words, in $L_m((t_\LT))\otimes_L D_{\mathrm{dR}}(V) \subseteq B_\dR \otimes_L V$ we have
\[\pi_L^{-m}\varphi_L^{-m}(\mathrm{Exp}^*)\otimes \mathbf{d}_1=\sum_{j\geq 1 } \exp^*_{L_m,V(\chi_\LT^{-j})}\circ \mathrm{pr}_{m,-j}=\sum_{j\in\mathbb{Z} }\exp^*_{L_m,V(\chi_\LT^{-j})}\circ \mathrm{pr}_{m,-j},\]
if the above conditions hold for all $j\geq 1,$ i.e., if $\varphi_L$ acting on $D_{\cris,L}(V(\tau^{-1}))$ does not have any eigenvalue in $\pi_L^{\mathbb{N}_0}\cup q\pi_L^{\mathbb{N}_0};$ to this end we identify the target $L_m \otimes_L D_{\mathrm{cris}}(V(\chi_\LT^{-j}))$ with $L_mt_\LT^{j}\otimes_L D_{\cris,L}(V)\subseteq L_m((t_\LT))\otimes_L D_{\mathrm{dR}}(V).$ Note that $c_{t^i}(\pi_L^{-m}\varphi_L^{-m}(\mathrm{Exp}^*)\otimes \mathbf{d}_1)=0$ for $i\leq 0$, because ${\rm Fil}^0 D_\dR(V)=0$ as the Hodge-Tate weights of $V$ are at least $1.$
\end{theorem}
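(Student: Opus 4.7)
The plan is to mimic the proof of the commutativity of \eqref{f:interpolation} given in \S\ref{sec:claim}, replacing the ad hoc regulator $\mathcal{L}_{L(\chi_\LT^{-j})(1)}$ by the general regulator map $\mathbf{L}_V$ of \S\ref{sec:reg}, and invoking the interpolation formula of Theorem \ref{thm:adjointformulan} at the character $\chi_\LT^j$ projected to $G_m$ (rather than at the trivial character as in Corollary \ref{f:intchi-j}). The conditions imposed on $D_{\cris,L}(V(\tau^{-1}))$ are exactly what is needed so that Theorem \ref{thm:adjointformulan} applies.

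Concretely, I would enlarge \eqref{f:interpolationgeneral} to a diagram of the shape
\[
\xymatrix{
H^1_{\Iw}(L_\infty/L,T)\ar[d]_{\mathrm{pr}_{m,-j}}\ar[r]^-{\mathrm{Exp}^*} & D_{\LT}(T(\tau^{-1}))^{\psi_L=1}\ar[d]^{\pi_L^{-m}\varphi_L^{-m}\otimes \mathbf{d}_1}\ar[r] & D(\Gamma_L,\CC_p)\otimes_L D_{\cris,L}(V(\tau^{-1}))\ar[dd]^{\mathrm{pr}_{G_m}\mathrm{Tw}_{\chi_\LT^{-j}}\otimes\id} \\
 & L_m((t_\LT))\otimes_L D_{\cris,L}(V)\ar[d]^{c_{t_\LT^{j}}\otimes(\id\otimes e_{-j})} & \\
H^1(L_m,V(\chi_\LT^{-j}))\ar[r]_-{\exp^*_{L_m,V(\chi_\LT^{-j})}} & L_m\otimes_L D_{\cris,L}(V(\chi_\LT^{-j}))\ar[r] & K[G_m]\otimes_L D_{\cris,L}(V(\chi_\LT^{-j})),
}
\]
where the top-right arrow is the composite appearing in \eqref{f:defregulator} (so that the total top row equals $\mathbf{L}_V$ up to the final twist by $\mathbf{d}_1$), and the bottom-right arrow is $\Theta^*_{V(\tau^{-1}\chi_\LT^{-j}),m}\otimes\mathbf{d}_1$, up to identification of $D_{\cris,L}(V(\chi_\LT^{-j}))$ with $D_{\cris,L}(V(\tau^{-1}\chi_\LT^{-j}))\otimes D$ via the factor $e_{-j}\otimes\mathbf{d}_1$.

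The commutativity of the outer square of this enlarged diagram is exactly the content of Theorem \ref{thm:adjointformulan} (for $j\geq 0$), once one uses the identity $\widetilde{\exp}^*_{L_m,V(\chi_\LT^{-j}),\id}\otimes\mathbf{d}_1=\exp^*_{L_m,V(\chi_\LT^{-j})}$ from \eqref{f:dualexpcomp}, applied to the representation $V(\chi_\LT^{-j})$. The commutativity of the right inner rectangle is the $D$-variant of Lemma \ref{lem:Evn} recorded in Remark \ref{rem:EvDn}, combined with the elementary compatibility between $\pi_L^{-m}\varphi_L^{-m}$ and the $D_\cris$-valued $\varphi_q^{-m}$ (as in diagram \eqref{f:Dtwist}). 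The key translation between the $t_\LT$-coefficient extraction $c_{t_\LT^j}\otimes(\id\otimes e_{-j})$ and the evaluation map $q^{-m}\mathrm{Ev}_{V(\tau^{-1}\chi_\LT^{-j}),D,m}$ is, up to the twist $(\partial_\inv^{j+1}t_\LT^{j+1})\otimes\id$, the same manipulation as in Lemma \ref{lem:descent} (choose $i=-j$ and read off the coefficient of $t_\LT^0$ after multiplication by $t_\LT^{-j}$).

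Since $\Theta^*_{V(\tau^{-1}\chi_\LT^{-j}),m}\otimes\mathbf{d}_1$ is injective under the stated invertibility hypotheses on $1-\pi_L^{-1-j}\varphi_L^{-1}$ and $1-\tfrac{\pi_L^{j+1}}{q}\varphi_L$ on $D_{\cris,L}(V(\tau^{-1}))$, commutativity of the outer square and the right rectangle forces commutativity of the left rectangle, which is exactly \eqref{f:interpolationgeneral}. The hard step is the bookkeeping: one has to verify that all the numerical factors (notably the $j!$ appearing in Theorem \ref{thm:adjointformulan}, the factor $(j+1)!$ coming from $\partial_\inv^{j+1}t_\LT^{j+1}$ in \eqref{f:XiexplicitGamma}, and the Lie-type factor $\mathfrak{l}'_{L(\chi_\LT^{j+1})}$ from \eqref{f:Gammainverse}) cancel correctly against the shift from $\mathbf{d}_1\otimes e_{-j}$ and the Hodge--Tate twist by $j$, exactly as they did in the proof of \eqref{f:interpolation} at the end of \S\ref{sec:claim}. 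Once the left rectangle is established for every $j\geq 1$, the displayed sum formula follows immediately from the decomposition of $B_\dR\otimes_L V$ as $\bigoplus_{j\in\ZZ} t_\LT^j\otimes D_{\cris,L}(V)$ and the vanishing $\mathrm{Fil}^0 D_\dR(V)=0$, which kills all terms with $j\leq 0$.
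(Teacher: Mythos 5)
Your proposal is correct and is essentially the paper's own argument: the paper likewise reduces \eqref{f:interpolationgeneral} to the interpolation formula of Theorem \ref{thm:adjointformulan} applied to the given $V$ at $j\geq 1$, combined with Remark \ref{rem:EvDn} and diagram \eqref{f:Dtwist}, and concludes from the injectivity of $\Theta_{V(\tau^{-1}\chi_{\LT}^{-j}),D,m}$ guaranteed by the hypotheses. The only cosmetic difference is that in this general setting the $\mathfrak{l}'$-denominators and the $(j+1)!$-factors you worry about do not arise, since the interpolation is invoked directly at $\chi_{\LT}^{j}$ rather than through the rank-one twisting diagram \eqref{f:regulatortwist}.
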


\begin{proof}
  The reasoning is the same as  for the proof of the commutativity of the outer diagram \eqref{f:regulatordescent}: Apply Theorem \ref{thm:adjointformulan} for the given $V$ and $j \geq 1$ together with Remark \ref{rem:EvDn} in combination with \eqref{f:Dtwist}. The assumptions guarantee that $\Theta_{V(\tau^{-1}\chi_{\LT}^{-j}),D,m}$ is defined and injective.
\end{proof}

While our method of proof is not able to show it, we are very optimistic that there should be a positive answer to the following\\

\noindent
{\bf Question:} Is the statement of the theorem also valid without the assumption that $\varphi_L$ acting on $D_{\cris,L}(V(\tau^{-1}))$ does not have any eigenvalue in $\pi_L^{\mathbb{N}_0}\cup q\pi_L^{\mathbb{N}_0}?$

\subsection{The second proof}\label{sec second}

In this subsection, we give a proof of Kato's explicit reciprocity law by a different argument, without assuming $\pi_L^j\neq q$. The idea is to prove each character component of the equality (see (\ref{katorho}) below). The key ingredients of the proof are Theorem \ref{thm sv} and Corollary \ref{f:intchi-j}.

Let $\lambda_{m,j}, \psi_{{\rm CW},m}^j : \varprojlim_n L_n^\times \to L_m$ be the maps in Introduction.

\begin{lemma}\label{lem:trace}
For any $ j,m \geq 1$ we have
\begin{itemize}
\item[(i)] ${\rm Tr}_{L_{m+1}/L_{m}}\circ \lambda_{m+1,j} = \lambda_{m,j}$,
\item[(ii)] ${\rm Tr}_{L_{m+1}/L_{m}}\circ \psi_{{\rm CW},m+1}^j = \psi_{{\rm CW},m}^j.$
\end{itemize}
\end{lemma}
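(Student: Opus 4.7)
The plan is to prove the two compatibilities by rather different means: (i) follows formally from the behaviour of the maps defining $\lambda_{m,j}$ under corestriction along $L_{m+1}/L_m$, whereas (ii) requires a direct Coleman-theoretic computation based on the defining property of $\psi_L$.

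For (i), I would first note that $\pr_m = \cor_{L_{m+1}/L_m}\circ \pr_{m+1}$ by the very definition of Iwasawa cohomology as an inverse limit along corestrictions, and that the Kummer map and the twist $\mathrm{tw}_j$ are morphisms of Iwasawa-cohomology groups and hence commute with these projections. The remaining ingredient is the standard compatibility
\[\exp^\ast_{L_m,V^{\otimes(-j)}(1)}\circ \cor_{L_{m+1}/L_m} = \mathrm{Tr}_{L_{m+1}/L_m}\circ \exp^\ast_{L_{m+1},V^{\otimes(-j)}(1)},\]
which in Kato's normalisation recalled in \S\ref{rem sign} reduces to the projection formula for cup product together with the observation that $\log\chi_{\mathrm{cyc}}|_{G_{L_{m+1}}}$ is the pullback of $\log\chi_{\mathrm{cyc}}|_{G_{L_m}}$; composing all these steps yields (i) immediately.

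For (ii), the first step is to establish the general trace formula
\[\mathrm{Tr}_{L_{m+1}/L_m}\bigl(f(\eta_{m+1})\bigr) \;=\; \pi_L\cdot \psi_L(f)(\eta_m),\qquad f\in \bA_L,\ m\geq 1.\]
Indeed, for $m\geq 1$ the Lubin--Tate character identifies $\Gal(L_{m+1}/L_m)$ with $(1+\pi_L^m o_L)/(1+\pi_L^{m+1}o_L)$ via $\sigma\mapsto 1+a\pi_L^m$ for $a\in o_L/\pi_L o_L$; since $[a\pi_L^m](\eta_{m+1}) = [a](\eta_1)$, the conjugates $\sigma\eta_{m+1}$ run exactly through $\{\eta_{m+1}+_{\rm LT}\zeta:\zeta\in{\rm LT}[\pi_L]\}$. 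Hence the trace equals $(\mathrm{Tr}_{\bA_L/\varphi_L(\bA_L)}f)(\eta_{m+1})$, which by \eqref{def psi} is $\pi_L\cdot \varphi_L\psi_L(f)(\eta_{m+1}) = \pi_L\cdot \psi_L(f)(\eta_m)$.

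The second step applies this to $f:=\partial_{\mathrm{inv}}^{j}\log(g_{u,\eta})$. The norm-compatibility of $u=(u_n)$ translates, via Coleman's theorem, into the identity
\[\prod_{\zeta\in{\rm LT}[\pi_L]} g_{u,\eta}(Z+_{\rm LT}\zeta) \;=\; \varphi_L(g_{u,\eta})(Z);\]
taking the invariant logarithmic derivative and using $\partial_{\mathrm{inv}}\varphi_L = \pi_L\varphi_L\partial_{\mathrm{inv}}$ yields $\psi_L(\partial_{\mathrm{inv}}\log g_{u,\eta}) = \partial_{\mathrm{inv}}\log g_{u,\eta}$. Iterating via the commutation relation $\psi_L\partial_{\mathrm{inv}} = \pi_L\partial_{\mathrm{inv}}\psi_L$ (obtained by applying $\partial_{\mathrm{inv}}$ to both sides of \eqref{def psi} and exploiting the translation invariance of $\partial_{\mathrm{inv}}$ under the formal group law) gives $\psi_L(f) = \pi_L^{j-1}f$, whence
\[\mathrm{Tr}_{L_{m+1}/L_m}(f(\eta_{m+1})) \;=\; \pi_L\cdot \pi_L^{j-1} f(\eta_m) \;=\; \pi_L^{j}f(\eta_m).\]
Dividing by $j!\pi_L^{(m+1)j}$ gives precisely the trace identity in (ii). The main subtlety will be in carefully tracking the commutation rules between $\partial_{\mathrm{inv}}$, $\varphi_L$ and $\psi_L$ so that the powers of $\pi_L$ line up correctly; part (i) is essentially formal bookkeeping, contingent only on the corestriction-compatibility of $\exp^\ast$ under Kato's sign convention.
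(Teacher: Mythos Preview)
Your proposal is correct and follows essentially the same route as the paper: part (i) is dismissed as formal from the corestriction-compatibility of $\exp^\ast$, and part (ii) rests on exactly the two ingredients the paper uses, namely $\psi_L\partial_{\rm inv}=\pi_L\partial_{\rm inv}\psi_L$ and $\partial_{\rm inv}\log g_{u,\eta}\in\bA_L^{\psi_L=1}$, combined with the trace-evaluation identity $\mathrm{Tr}_{L_{m+1}/L_m}(f(\eta_{m+1}))=\pi_L\cdot\psi_L(f)(\eta_m)$ for $m\geq 1$. The only difference is organisational: you isolate the general trace formula first and then specialise, whereas the paper applies $\varphi_L$ directly to the relation $\psi_L(\partial_{\rm inv}^j\log g_{u,\eta})=\pi_L^{j-1}\partial_{\rm inv}^j\log g_{u,\eta}$ and evaluates at $\eta_{m+1}$ in one step.
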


\begin{proof}
(i) is straightforward using the compatibility of the dual exponential map under corestriction. To prove (ii), we first note that $\psi_L \partial_{\rm inv}= \pi_L \partial_{\rm inv} \psi_L$ and $\partial_{\rm inv}\log (g_{u,\eta})\in \bA_L^{\psi_L=1}$ for any $u\in \varprojlim_n L_n^\times$. These facts imply
$$\psi_L \partial_{\rm inv}^j \log (g_{u,\eta})=\pi_L^{j-1}\partial_{\rm inv}^j \log (g_{u,\eta}).$$
Applying $\varphi_L$ to both sides, we obtain
$$\pi_L^{-1}\sum_{a\in {\rm LT}_1}\partial_{\rm inv}^j \log (g_{u,\eta} )(a+_{\rm LT} Z) = \pi_L^{j-1}\varphi_L\partial_{\rm inv}^j \log (g_{u,\eta}).$$
We then take evaluation at $Z=\eta_{m+1}$:
$$\pi_L^{-1} {\rm Tr}_{L_{m+1}/L_m}(\partial_{\rm inv}^j \log (g_{u,\eta}) |_{Z=\eta_{m+1}})=\pi_L^{j-1} \partial_{\rm inv}^j \log (g_{u,\eta})|_{Z=\eta_m}.$$
(Note that $\sum_{a \in {\rm LT}_1}f(a+_{\rm LT}\eta_{m+1}) = {\rm Tr}_{L_{m+1}/L_m}(f(\eta_{m+1}))$ since $m\geq 1$.)
Hence we have
$$\pi_L^{-(m+1)j} {\rm Tr}_{L_{m+1}/L_m}(\partial_{\rm inv}^j \log (g_{u,\eta} )|_{Z=\eta_{m+1}})=\pi_L^{-mj} \partial_{\rm inv}^j \log (g_{u,\eta})|_{Z=\eta_m}.$$
Since $\psi_{{\rm CW},m}^j(u) = j!^{-1}\pi_L^{-mj} \partial_{\rm inv}^j \log (g_{u,\eta})|_{Z=\eta_m}$ by definition, this proves (ii).
\end{proof}

\begin{remark}\label{rem:m0}
For $u \in \varprojlim_n o_{L_n}^\times$ one can check that
$${\rm Tr}_{L_1/L}(\psi_{{\rm CW},1}^j(u)) = j!^{-1} (1-\pi_L^{-j})\partial_{\rm inv}^j \log (g_{u,\eta})|_{Z=0}.$$
(Note that we can take evaluation at $Z=0$ since $g_{u,\eta} \in o_L[[Z]]^\times$ in this case.)
\end{remark}

\begin{lemma}\label{lem:evaluate}
Let $\lambda \in D(\Gamma_L, \CC_p)$ and set $f:=\mathfrak{M}(\lambda)=\lambda\cdot \eta(1,Z)$. Let $\rho$ be the character of $G_m:=\Gal(L_m/L)$ of conductor $m >0$. Then we have
$$\lambda(\rho) =[L_m:L]\tau(\rho)^{-1}\mathfrak{e}_\rho \cdot f(\eta_m),$$
where $\mathfrak{e}_\rho:=[L_m:L]^{-1}\sum_{\sigma \in G_m}\rho^{-1}(\sigma)\sigma$ is the usual idempotent.
\end{lemma}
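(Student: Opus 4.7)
Proof plan for Lemma \ref{lem:evaluate}. My plan is to reduce to Dirac distributions by a density/continuity argument, and then to verify the identity on Dirac distributions by a direct manipulation of indices using the $\Gamma_L$-equivariance of $\eta(1,Z)$.

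First, observe that both sides of the asserted equality are continuous $\Cp$-linear functionals of $\lambda \in D(\Gamma_L,\Cp)$: the left-hand side $\lambda(\rho)$ is literally evaluation at the locally analytic function $\rho$, while the right-hand side depends on $\lambda$ only through $f(\eta_m) = (\mathfrak{M}(\lambda))(\eta_m)$, and the evaluation at $\eta_m$ is continuous on $\cO_{\Cp}(\mathbf{B})^{\psi_L=0}$ via the Mellin transform. Since the Dirac distributions $\delta_g$ for $g \in \Gamma_L$ have $\Cp$-linearly dense span in $D(\Gamma_L,\Cp)$, it suffices to verify the formula for $\lambda = \delta_g$.

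For $\lambda = \delta_g$ the left-hand side is $\rho(g)$. On the right-hand side, $\mathfrak{M}(\delta_g) = g\cdot \eta(1,Z) = \eta(1,[\chi_{\rm LT}(g)](Z))$, and evaluating at $Z = \eta_m$ yields $f(\eta_m) = \eta(1,g\eta_m) = g\cdot \eta(1,\eta_m)$, where in the last expression $g$ acts via the Galois action on $\Cp$. Hence
\begin{align*}
[L_m:L]\tau(\rho)^{-1}\mathfrak{e}_\rho\cdot f(\eta_m)
&= \tau(\rho)^{-1}\sum_{\sigma\in G_m}\rho^{-1}(\sigma)\,\sigma g\cdot \eta(1,\eta_m).
\end{align*}
Substituting $h = \sigma g$ (so $\rho^{-1}(\sigma) = \rho^{-1}(h)\rho(g)$) gives
\begin{align*}
\tau(\rho)^{-1}\rho(g)\sum_{h\in G_m}\rho^{-1}(h)\,h\cdot \eta(1,\eta_m) = \tau(\rho)^{-1}\rho(g)\tau(\rho) = \rho(g),
\end{align*}
using that $\tau(\rho) = \sum_{h\in G_m}\rho(h^{-1})\,h\cdot \eta(1,\eta_m)$ since $a(\rho) = m$. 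This matches the left-hand side.

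The only point that needs a little care is the justification that the span of Dirac distributions is dense in $D(\Gamma_L,\Cp)$ for the Fréchet topology in which the two sides of the claim are continuous; this is standard because $\Gamma_L$ is compact and $D(\Gamma_L,\Cp)$ is the strong dual of the locally analytic functions. Apart from this routine point, the proof is the one-line index change above, and there is no substantive obstacle.
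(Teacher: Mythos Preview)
Your proof is correct and arrives at the same Gauss sum identity as the paper. The only difference is in packaging: the paper works directly with a general $\lambda$ by using that evaluation at $Z=\eta_m$ is a $D(\Gamma_L,\Cp)$-module homomorphism into $K\otimes_L L_m$, so that $\mathfrak{e}_\rho\cdot(\lambda\cdot\eta(1,Z))|_{Z=\eta_m}=\lambda(\rho)\,\mathfrak{e}_\rho\cdot\eta(1,\eta_m)=[L_m:L]^{-1}\tau(\rho)\lambda(\rho)$ in one line, whereas you unwind this on Dirac distributions and then invoke density. Your route is slightly more explicit; the paper's route avoids the (routine) continuity/density step. One small remark on presentation: your phrase ``$g$ acts via the Galois action on $\Cp$'' is imprecise, since $\Gamma_L$ does not act on all of $\Cp$; what you actually use (and what is correct) is that $\sigma\cdot\eta(1,\eta_m):=\eta(1,\sigma\eta_m)$ for $\sigma\in G_m$, i.e.\ the action is only through $\eta_m\in L_m$ with the coefficients of $\eta(1,Z)$ held fixed.
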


\begin{proof}
The proof is similar to that of Lemma \ref{lem:Evn}. We compute
\begin{align*}
\mathfrak{e}_\rho \cdot  f(\eta_m)& = \mathfrak{e}_\rho\cdot  ( \lambda \cdot \eta(1,Z))|_{Z=\eta_m}\\
&= \mathfrak{e}_\rho \cdot \lambda\cdot \eta(1,\eta_m) \\
&= \lambda(\rho)\mathfrak{e}_\rho \cdot \eta(1,\eta_m) \\
&= [L_m: L]^{-1}\tau(\rho)\lambda(\rho),
\end{align*}
where the second equality follows from the fact that evaluating $Z=\eta_m$ is a $D(\Gamma_L,\CC_p)$-homomorphism, and the last from the definition of the Gauss sum $\tau(\rho)$. Upon multiplying with $[L_m:L]\tau(\rho)^{-1}$ on both sides, we obtain the desired equality.
\end{proof}

Recall that we set $\tau:=\chi_{\rm cyc}\chi_{\rm LT}^{-1}$. For $u\in \varprojlim_n L_n^\times$, let
$$\mathbf{u} \in H^1_{\rm Iw}(L_\infty/L, L(\tau))$$
be the image of $u$ under the composite map
$$\varprojlim_n L_n^\times \xrightarrow{{\rm Kum}} H^1_{\rm Iw}(L_\infty/L, \ZZ_p(1)) \xrightarrow{{\rm tw}_1} H^1_{\rm Iw}(L_\infty/L, L(\tau)).$$
Note that one can define the regulator map $\mathbf{L}_{L(\tau)}$ as a twist of $\mathbf{L}_{L(\tau \chi_{\rm LT})}$ (see \cite[\S 5.1.1]{SV24}). According to Figure 5.2 in (loc.\ cit.) we have the following commutative diagram:
$$
\xymatrix{
H^1_{\rm Iw}(L_\infty/L, L(\tau)) \ar[rr]^-{\mathbf{L}_{L(\tau)}} \ar[d]_-{\simeq}&& D(\Gamma_L,\CC_p)\otimes_L D_{{\rm cris},L}(L) \ar[d]^-{\frac{\nabla {\rm Tw}_{\chi_{\rm LT}^{-1}}}{\Omega} \otimes t_{\rm LT}^{-1}} \\
H^1_{\rm Iw}(L_\infty/L, L(\tau \chi_{\rm LT})) \otimes_L L(\chi_{\rm LT}^{-1})  \ar[rr]_-{\mathbf{L}_{L(\tau\chi_{\rm LT})} \otimes \id }&& D(\Gamma_L,\CC_p)\otimes_L D_{{\rm cris},L}(L(\chi_{\rm LT})) \otimes_L L(\chi_{\rm LT}^{-1}).
}
$$
It is shown in \cite[\S 5.1]{SV24} that the map
$$H^1_{\rm Iw}(L_\infty/L,L(\tau))\simeq H^1_{\rm Iw}(L_\infty/L, L(\tau \chi_{\rm LT})) \otimes_L L(\chi_{\rm LT}^{-1}) \xrightarrow{\mathbf{L}_{L(\tau \chi_{\rm LT})} \otimes \id \otimes t_{\rm LT}} D(\Gamma_L,\CC_p)\otimes_L D_{{\rm cris},L}(L)$$
sends $\mathbf{u}$ to
$$\mathfrak{M}^{-1}\left(t_{\rm LT} \left(1-\frac{\pi_L}{q}\varphi_L\right) \partial_{\rm inv} \log (g_{u,\eta})\right).$$
By \cite[Lem. 5.1.2]{SV24}, we have
$$\frac{\nabla {\rm Tw}_{\chi_{\rm LT}^{-1}}}{\Omega} \circ \mathfrak{M}^{-1} = \mathfrak{M}^{-1}\circ t_{\rm LT},$$
so the commutative diagram above implies that $\mathbf{L}_{L(\tau)}$ satisfies
\begin{equation}\label{explicit regulator}
\mathbf{L}_{L(\tau)}(\mathbf{u})=\mathfrak{M}^{-1}\left( \left(1-\frac{\pi_L}{q}\varphi_L \right)\partial_{\rm inv}\log(g_{u,\eta}) \right).
\end{equation}

\begin{lemma}\label{lem:regtwist}
Let $\rho$ be a character of $\Gamma_L$ of conductor $m$. If $m>0$ (i.e., $\rho$ is a non-trivial character), then for any $u\in \varprojlim_n L_n^\times$ and $j\geq 1$ we have
$$\Omega^{j-1}\mathbf{L}_{L(\tau)}(\mathbf{u}) (\rho \chi_{\rm LT}^{j-1}) = [L_m:L]
\tau(\rho)^{-1}\mathfrak{e}_\rho\cdot \left. \left(\partial_{\mathrm{inv}}^{j}\log(g_{u,\eta})\right)\right|_{Z=\eta_m} .
$$
If $m=0$ (i.e., $\rho$ is the trivial character), then for any $u\in \varprojlim_n o_{L_n}^\times$ and $j\geq 1$ we have
$$\Omega^{j-1} \mathbf{L}_{L(\tau)}(\mathbf{u})(\rho\chi_{\rm LT}^{j-1}) = \left(1-\frac{\pi_L^j}{q}\right)\left.\left(\partial_{\rm inv}^j \log (g_{u,\eta})\right)\right|_{Z=0}.$$
\end{lemma}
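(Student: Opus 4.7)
The plan is to derive both formulas directly from the explicit description (\ref{explicit regulator}) of $\mathbf{L}_{L(\tau)}(\mathbf{u})$, combined with Lemma \ref{lem:Tw} (which translates twists $\mathrm{Tw}_{\chi_{\rm LT}^k}$ on the distribution side into the operator $(\partial_{\rm inv}/\Omega)^k$ on the function side) and Lemma \ref{lem:evaluate} (which evaluates distributions at non-trivial finite-order characters). Set $\lambda:=\mathbf{L}_{L(\tau)}(\mathbf{u})$, so that $\lambda(\rho\chi_{\rm LT}^{j-1})=\mathrm{Tw}_{\chi_{\rm LT}^{j-1}}(\lambda)(\rho)$. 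Applying $\mathfrak{M}$ to $\mathrm{Tw}_{\chi_{\rm LT}^{j-1}}(\lambda)$ via Lemma \ref{lem:Tw} and (\ref{explicit regulator}) gives
\[
\mathfrak{M}(\mathrm{Tw}_{\chi_{\rm LT}^{j-1}}\lambda)=\Omega^{1-j}\,\partial_{\rm inv}^{j-1}\Bigl[\bigl(1-\tfrac{\pi_L}{q}\varphi_L\bigr)\partial_{\rm inv}\log(g_{u,\eta})\Bigr].
\]
Differentiating $\log_{\rm LT}([\pi_L](Z))=\pi_L\log_{\rm LT}(Z)$ yields $\partial_{\rm inv}\varphi_L=\pi_L\varphi_L\partial_{\rm inv}$, and hence $\partial_{\rm inv}^{j-1}\varphi_L=\pi_L^{j-1}\varphi_L\partial_{\rm inv}^{j-1}$, so the bracket above simplifies to $(1-\tfrac{\pi_L^j}{q}\varphi_L)\partial_{\rm inv}^j\log(g_{u,\eta})$.

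For the case $m\ge 1$, Lemma \ref{lem:evaluate} applied to $\mathrm{Tw}_{\chi_{\rm LT}^{j-1}}\lambda$ and the character $\rho$ gives
\[
\Omega^{j-1}\lambda(\rho\chi_{\rm LT}^{j-1})=[L_m:L]\,\tau(\rho)^{-1}\mathfrak{e}_\rho\cdot\Bigl(1-\tfrac{\pi_L^j}{q}\varphi_L\Bigr)\partial_{\rm inv}^j\log(g_{u,\eta})\Big|_{Z=\eta_m}.
\]
The crucial cancellation is that $\varphi_L(h)|_{Z=\eta_m}=h([\pi_L](\eta_m))=h(\eta_{m-1})\in L_{m-1}$, so the $\varphi_L$-contribution lies in $L_{m-1}$ and is killed by $\mathfrak{e}_\rho$: since $\rho$ has conductor \emph{exactly} $m$, its restriction to $\mathrm{Gal}(L_m/L_{m-1})$ is non-trivial, so $\sum_{h\in \mathrm{Gal}(L_m/L_{m-1})}\rho^{-1}(h)=0$ and $\mathfrak{e}_\rho$ annihilates every element of $L_{m-1}$. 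This yields precisely the asserted formula.

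For the trivial character ($m=0$), I rely on the elementary identity $\mu(\chi_{\rm triv})=\mathfrak{M}(\mu)|_{Z=0}$ for any $\mu\in D(\Gamma_L,\mathbb{C}_p)$, which follows by continuity from the Dirac case using $\eta(1,0)=\exp(0)=1$. Combining this with $\varphi_L(h)|_{Z=0}=h(0)$ (as $[\pi_L](0)=0$), and noting that $u\in\varprojlim_n o_{L_n}^\times$ forces $g_{u,\eta}\in o_L[[Z]]^\times$ so that evaluation at $Z=0$ is meaningful, one obtains
\[
\Omega^{j-1}\lambda(\chi_{\rm LT}^{j-1})=\Bigl(1-\tfrac{\pi_L^j}{q}\Bigr)\partial_{\rm inv}^j\log(g_{u,\eta})\big|_{Z=0}.
\]
No serious obstacle arises; the only substantive step is the vanishing of the $\varphi_L$-contribution in the $m\ge 1$ case, which hinges essentially on the hypothesis that $\rho$ has conductor exactly $m$.
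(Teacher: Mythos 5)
Your proposal is correct and follows essentially the same route as the paper's own proof: apply the explicit formula (\ref{explicit regulator}) together with Lemma \ref{lem:Tw}, commute $\partial_{\rm inv}$ past $\varphi_L$ via $\partial_{\rm inv}\varphi_L=\pi_L\varphi_L\partial_{\rm inv}$, and then for non-trivial $\rho$ use Lemma \ref{lem:evaluate} plus the fact that the $\varphi_L$-term evaluates into $L_{m-1}$ and is annihilated by $\mathfrak{e}_\rho$, while for the trivial character one evaluates at $Z=0$. The only difference is that you spell out the annihilation by $\mathfrak{e}_\rho$ and the identity $\mu(\chi_{\rm triv})=\mathfrak{M}(\mu)|_{Z=0}$ in slightly more detail than the paper does.
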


\begin{proof}
By (\ref{explicit regulator}) and Lemma \ref{lem:Tw}, we have
\begin{align}\label{omegamellin}
\Omega^{j-1}\mathfrak{M}\left({\rm Tw}_{\chi_{\rm LT}^{j-1}}\circ \mathbf{L}_{L(\tau)}(\mathbf{u})\right)&=\partial_{\rm inv}^{j-1} \left(1-\frac{\pi_L}{q}\varphi_L\right)\partial_{\rm inv}\log(g_{u,\eta})\\
&= \partial_{\rm inv}^j \log(g_{u,\eta})- \frac{\pi_L^j}{q}\varphi_L \partial_{\rm inv}^j \log(g_{u,\eta}),\nonumber
\end{align}
where the second equality follows by noting $\partial_{\rm inv}\varphi_L=\pi_L\varphi_L \partial_{\rm inv}$. If $m>0$, note that
$$\left.\left(\varphi_L \partial_{\rm inv}^j \log(g_{u,\eta}) \right)\right|_{Z=\eta_m} = \left.\left( \partial_{\rm inv}^j \log(g_{u,\eta}) \right)\right|_{Z=\eta_{m-1}} $$
and this is annihilated by $\mathfrak{e}_\rho$ (since $\rho$ does not factor through $G_{m-1}$). Hence, by Lemma \ref{lem:evaluate}, we have
$$\Omega^{j-1}\mathbf{L}_{L(\tau)}(\mathbf{u}) (\rho \chi_{\rm LT}^{j-1}) =[L_m:L]\tau(\rho)^{-1}\mathfrak{e}_\rho\cdot \left. \left(\partial_{\mathrm{inv}}^{j}\log(g_{u,\eta})\right)\right|_{Z=\eta_m}. $$
This proves the first claim. To prove the second claim, we let $m=0$ and $u \in \varprojlim_n o_{L_n}^\times$. In this case, we can evaluate (\ref{omegamellin}) at $Z=0$ and we obtain the desired formula
$$\Omega^{j-1} \mathbf{L}_{L(\tau)}(\mathbf{u})(\chi_{\rm LT}^{j-1}) = \left(1-\frac{\pi_L^j}{q}\right)\left.\left(\partial_{\rm inv}^j \log (g_{u,\eta})\right)\right|_{Z=0}.$$

\end{proof}

We now give a proof of Kato's explicit reciprocity law.

\begin{proof}[Proof of Theorem \ref{kato erl}]
Let $j,m\geq 1$ and $u \in \varprojlim_n L_n^\times$. It is sufficient to show that for any character $\rho$ of $G_m$ we have
\begin{equation}\label{katorho}
\mathfrak{e}_\rho \cdot \lambda_{m,j}(u)= \mathfrak{e}_\rho\cdot    \frac{1}{(j-1)!\pi_L^{mj}} \left. \left(\partial_{\mathrm{inv}}^{j}\log(g_{u,\eta})\right)\right|_{Z=\eta_m}.
\end{equation}

If $\rho$ is non-trivial, then by Lemma \ref{lem:trace} we may assume that $m$ is the conductor of $\rho$. In this case, Corollary \ref{f:intchi-j} with $k=1$ and $j'=j-1$ gives the formula
$$\Omega^{j-1}\mathbf{L}_{L(\tau)}(\mathbf{u})(\rho \chi_{\rm LT}^{j-1}) = (j-1)! [L_m:L]\pi_L^{mj} \tau(\rho)^{-1}\mathfrak{e}_\rho\cdot \lambda_{m,j}(u)$$
 (note that $\varphi_L$ acts trivially on $D_{{\rm cris},L}(L)$). On the other hand, Lemma \ref{lem:regtwist} gives
$$\Omega^{j-1}\mathbf{L}_{L(\tau)}(\mathbf{u}) (\rho \chi_{\rm LT}^{j-1}) = [L_m:L]
\tau(\rho)^{-1}\mathfrak{e}_\rho\cdot\left. \left(\partial_{\mathrm{inv}}^{j}\log(g_{u,\eta})\right)\right|_{Z=\eta_m}.
$$
Comparing these formulas, we obtain the desired equality (\ref{katorho}) for non-trivial characters.

It remains to prove (\ref{katorho}) when $\rho$ is the trivial character.  Replacing $u$ by $u/\sigma(u)$ with $\sigma\in G_L$ such that $\chi_{\rm LT}^j(\sigma) \neq 1$, we may assume $u\in \varprojlim_n o_{L_n}^\times$. (Note that $\lambda_{m,j}$ satisfies $\lambda_{m,j}(\sigma u)= \chi_{\rm LT}^j(\sigma)\lambda_{m,j}(u)$ and similarly for $\psi_{{\rm CW},m}^j$.) By Remark \ref{rem:m0}, we are reduced to proving the equality
\begin{equation*}\label{specialkato}
\lambda_{0,j}(u) = \frac{1-\pi_L^{-j}}{(j-1)!} \left. \left(\partial_{\rm inv}^j \log (g_{u,\eta})\right)\right|_{Z=0}.
\end{equation*}
This is exactly the ``special case of Kato's explicit reciprocity law" proved in \cite[Cor. 8.7]{SV15}. (Note that the dual exponential map in loc. cit. is defined by using (a) in \S \ref{rem sign} (see the diagram on \cite[p. 462]{SV15}), so it is different from ours by sign.)  
\end{proof}

\appendix

\numberwithin{equation}{section} 

\section{The action of the Lie algebra and interpolation of $\Gamma$-factors}\label{sec:Lie}

The group isomorphism $\Gamma_L\xrightarrow{\chi_\LT} o_L^\times$ of locally $L$-analytic Lie groups induces an isomorphism of Lie algebras
\[\mathrm{Lie}(\Gamma_L)\xrightarrow{{\rm Lie}(\chi_\LT)}\mathrm{Lie}(o_L^\times)=L\] and the exponential map for $o_L^\times$ as $L$-analytic group is just given by the usual $p$-adic exponential map $\exp: L --> o_L^\times$ (The broken arrow indicates that this map is only defined locally around zero). We write $\nabla\in  \mathrm{Lie}(\Gamma_L)$ for the element corresponding to $1\in L=\mathrm{Lie}(o_L^\times).$ If $\log_{\Gamma_L}:\Gamma_L \to \mathrm{Lie}(\Gamma_L)$ denotes the logarithm map of $\Gamma_L$, then, for tautological reasons, we have the identity
\begin{equation*}
  \nabla= \frac{\log_{\Gamma_L} \gamma}{\log \chi_\LT(\gamma)} \in \mathrm{Lie}(\Gamma_L)
\end{equation*}
for any element $\gamma\in\Gamma_L$ close to $1.$
Following  \cite[\S 2]{KR} or \cite[\S 2.3, especially Cor.\ 2.15]{BSX} the derived action of $\nabla$ on a $K$-Banach spaces $B$ is similarly given as
\begin{equation*}
   \frac{\log \gamma}{\log \chi_\LT(\gamma)}\in A
\end{equation*}
for any element $\gamma\in\Gamma_L$ close to $1,$ where now $\log\gamma$ denotes the series $\log(1+(\gamma-1))=-\sum_{n\geq 1}\frac{1}{n}(1-\gamma)^n$ within the $K$-Banach space $A$ of continuous linear endomorphisms of $B$ endowed with the operator norm.
By \cite[\S 2.3]{ST1} there is a natural embedding of $\mathrm{Lie}(\Gamma_L)$ into $D(\Gamma_L,K)$ which is  compatible with the derived action on it in the following sense: the action of $\nabla$  on $D(\Gamma,K)$ coincides with the multiplication by $\nabla$ considered as an element of $D(\Gamma_L,K)$ in the mentioned way. Since one can write the distribution algebra as inverse limit of Banach spaces we deduce that the image of $\nabla$ in $D(\Gamma_L,K)$ coincides with
\begin{equation}\label{f:loggamma}
   \frac{\log \gamma}{\log \chi_\LT(\gamma)}\in D(\Gamma_L,K)
\end{equation}
for any element $\gamma\in\Gamma_L$ close to $1$ (in particular, such that the above $\log$-series converges with respect to the Fr\'{e}chet-Topology of the distribution algebra).

\begin{lemma} Let $\eta=\rho\chi_\LT^j$ be a character of $\Gamma_L$ with $\rho$ a character of finite order and $j\in \mathbb{Z}.$
\begin{enumerate}
\item $\mathrm{Tw}_{\eta}(\nabla)=\nabla +j$
\item $\nabla(\eta)=j$
\item $\nabla(\eta,n)=j$
\end{enumerate}
\end{lemma}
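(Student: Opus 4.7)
The plan is to derive (i) directly from the explicit description \eqref{f:loggamma} of $\nabla$ in $D(\Gamma_L,K)$ recalled just above the lemma, and then to obtain (ii) and (iii) as formal consequences; there is essentially no analytic content here, only careful bookkeeping of conventions.

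For (i), my strategy is to apply the twist operator $\mathrm{Tw}_\eta$ — the $K$-algebra automorphism of $D(\Gamma_L,K)$ sending each group element $\gamma\in\Gamma_L$ to $\eta(\gamma)\gamma$ — to the identity
\[
\nabla \;=\; \frac{\log\gamma}{\log\chi_\LT(\gamma)}
\]
valid for $\gamma\in\Gamma_L$ sufficiently close to $1$. Since $\rho$ has finite order, by shrinking the chosen neighborhood of $1$ I may assume $\rho(\gamma)=1$, so $\eta(\gamma)=\chi_\LT(\gamma)^j$ and consequently $\log\eta(\gamma)=j\log\chi_\LT(\gamma)$. Computing
\[
\mathrm{Tw}_\eta(\nabla) \;=\; \frac{\log(\eta(\gamma)\gamma)}{\log\chi_\LT(\gamma)} \;=\; \frac{j\log\chi_\LT(\gamma)+\log\gamma}{\log\chi_\LT(\gamma)} \;=\; j+\nabla
\]
then settles the claim.

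For (ii), I will use that evaluation of a distribution at a locally $L$-analytic character $\eta$ restricts on the Lie subalgebra $\mathrm{Lie}(\Gamma_L)\hookrightarrow D(\Gamma_L,K)$ (via the embedding of \cite{ST1}) to the differential $\mathrm{Lie}(\eta)\colon \mathrm{Lie}(\Gamma_L)\to \mathrm{Lie}(K^\times)=K$. Because $\rho$ is of finite order, $\mathrm{Lie}(\rho)=0$, so $\mathrm{Lie}(\eta)=j\cdot \mathrm{Lie}(\chi_\LT)$; the defining normalization $\mathrm{Lie}(\chi_\LT)(\nabla)=1$ then gives $\nabla(\eta)=j$.

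For (iii), recalling the convention $x(\eta,n):=\mathrm{pr}_{G_n}(\mathrm{Tw}_\eta(x))$ introduced earlier in the paper, the plan is to combine (i) with the vanishing $\mathrm{pr}_{G_n}(\nabla)=0$. This vanishing holds because $\mathrm{pr}_{G_n}\colon D(\Gamma_L,K)\to K[G_n]$ is dual to the inclusion of locally constant functions on $G_n$ into locally $L$-analytic functions on $\Gamma_L$, and $\nabla$ — as a Lie algebra element — acts as the derivative at the identity, which annihilates locally constant functions. Applying $\mathrm{pr}_{G_n}$ to the identity of (i) then yields $\nabla(\eta,n)=\mathrm{pr}_{G_n}(\nabla+j)=j$, understood as $j$ times the unit element of $K[G_n]$. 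The only subtlety requiring attention is to ensure that the ring-theoretic description of $\mathrm{Tw}_\eta$ is compatible with the Lie-algebra embedding of \cite{ST1}, but this compatibility is precisely the content of the discussion culminating in \eqref{f:loggamma}.
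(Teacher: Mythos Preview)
Your proposal is correct and follows essentially the same approach as the paper. The only cosmetic differences are: for (i) the paper keeps the $\log\rho(\gamma)$ term and observes it vanishes because $\rho(\gamma)$ is torsion (rather than shrinking the neighborhood so $\rho(\gamma)=1$), for (ii) the paper repeats the explicit computation with \eqref{f:loggamma} rather than invoking the Lie functor, and for (iii) the paper justifies $\mathrm{pr}_{G_n}(\nabla)=0$ by choosing $\gamma\in\Gamma_n$ in \eqref{f:loggamma} (so $\gamma-1$ dies in $K[G_n]$) instead of your duality argument --- but these are the same arguments in different clothing.
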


\begin{proof} (i) Using \eqref{f:loggamma} and the fact that $\log$ is a group homomorphism we calculate 
  \begin{align*}
    \mathrm{Tw}_{\eta}(\nabla) & = \frac{\log \eta(\gamma)\gamma}{\log \chi_\LT(\gamma)} \\
    & =  \frac{\log \chi_\LT^j(\gamma) }{\log \chi_\LT(\gamma)}+ \frac{\log \rho(\gamma)}{\log \chi_\LT(\gamma)}+ \frac{\log \gamma}{\log \chi_\LT(\gamma)}\\
      & =j+\nabla,
  \end{align*}
since $\rho(\gamma)$ is a torsion element. For (ii) we have
similarly
\begin{align*}
     {\eta}(\nabla) & = \frac{\log \eta(\gamma) }{\log \chi_\LT(\gamma)} \\
    & =  \frac{\log \chi_\LT^j(\gamma) }{\log \chi_\LT(\gamma)}+ \frac{\log \rho(\gamma)}{\log \chi_\LT(\gamma)} \\
      & =j .
  \end{align*}
(iii) follows from (ii) by considering $\rho'$-components of $K[G_n]$ or from (i) by noting that $\nabla$ belongs  to the kernel of  $\mathrm{pr}_{G_n}$ (represent $\nabla$ as \eqref{f:loggamma} with $\gamma\in\Gamma_n$).
\end{proof}

We set $\frak{l}_i:=\nabla-i$,  $\frak{l}_{L(\chi_\LT^i)}:=\prod_{k=0}^{i-1}\frak{l}_k$ and   $\frak{l}'_{L(\chi_\LT^i)}:=\mathrm{Tw}_{\chi^i}(\frak{l}_{L(\chi_\LT^i)})=\prod_{k=0}^{i-1}\frak{l}_{k-i}$.  It follows immediately from the lemma that for $k\geq 0$ we have for all $n\geq 0$
\begin{align}\label{f:evlfactor}
 \frak{l}_{L(\chi_\LT^k)}(\chi_\LT^j,n) & =\left\{
                                             \begin{array}{ll}
                                              \frac{j!}{(j-k)!}, & \hbox{if $j\geq k$;} \\
                                               0, & \hbox{if $0\leq j<k;$} \\
                                                (-1)^{k}\frac{(k-1-j)!}{(-j-1)!} , & \hbox{otherwise.}
                                             \end{array}
                                           \right.
\end{align}
Note that these factors are not zero-divisors because they are non-zero in any component of $D(\Gamma_L,K)=\prod D(\Gamma_1,K)$, where $\Gamma_1\simeq 1+\pi_L o_L$ via the $\chi_{\mathrm{LT}}.$

 \section{Formal Laurent series}\label{sec calcul}

In this appendix, we prove some basic properties of the map $\varphi_L^{-m}$ in Introduction.

\subsection{Formal power series}

Fix a formal power series $h=\sum_{i=0}^\infty b_i t^i \in \CC_p[[t]]$ such that $0< |b_0|_p <1$. For $n\geq 0$, we write
$$h^n=\sum_{i=0}^\infty c_{i,n} t^i.$$

\begin{lemma}\label{cnformula}
We have
$$c_{0,n}=b_0^n\text{ and }c_{i,n}=\frac{1}{ib_0}\sum_{k=1}^i(nk-i+k)b_k c_{i-k,n} \text{ for $i\geq 1$}.$$
\end{lemma}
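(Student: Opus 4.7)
The plan is to differentiate $h^n$ and compare coefficients. The identity $c_{0,n} = b_0^n$ is immediate from evaluating $h^n$ at $t = 0$. For the recursion, I would start from the identity
\[
h \cdot (h^n)' = n \cdot h^n \cdot h',
\]
which is just $(h^n)' = n h^{n-1} h'$ multiplied by $h$ (and this makes sense purely formally, so no convergence issues).

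Next I would expand both sides as formal power series in $t$ and extract the coefficient of $t^{i-1}$ for $i \geq 1$. The left-hand side gives
\[
[t^{i-1}]\, h(h^n)' = \sum_{l=1}^{i} l\, c_{l,n}\, b_{i-l},
\]
and the right-hand side gives
\[
[t^{i-1}]\, n h^n h' = n \sum_{k=1}^{i} k\, b_k\, c_{i-k,n}.
\]

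The key step is then to isolate the term $l = i$ on the left (which contributes $i\, b_0\, c_{i,n}$) and re-index the remaining sum $\sum_{l=1}^{i-1} l\, c_{l,n}\, b_{i-l}$ by setting $k = i - l$ so that it becomes $\sum_{k=1}^{i-1}(i-k)\, b_k\, c_{i-k,n}$. Moving this to the right-hand side and combining with the sum there produces
\[
i\, b_0\, c_{i,n} = \sum_{k=1}^{i-1} \bigl(nk - (i-k)\bigr)\, b_k\, c_{i-k,n} + n i\, b_i\, c_{0,n} = \sum_{k=1}^{i}(nk - i + k)\, b_k\, c_{i-k,n},
\]
where the last equality uses that the $k = i$ contribution $(ni - i + i)\, b_i\, c_{0,n} = ni\, b_i\, c_{0,n}$ exactly matches the term split off from the first sum. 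Dividing by $i b_0$ (which is nonzero since $|b_0|_p > 0$) yields the claimed recursion. No step looks technically difficult; the only mildly delicate point is the re-indexing to merge the two sums into the single expression $\sum_{k=1}^{i}(nk - i + k)\, b_k\, c_{i-k,n}$.
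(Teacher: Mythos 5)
Your proof is correct and follows essentially the same route as the paper: both start from the formal identity $h\cdot (h^n)'=n\,h'\cdot h^n$, compare coefficients of $t^{i-1}$, and isolate the $i\,b_0\,c_{i,n}$ term (the paper's indexing of the two sums differs trivially from yours, but the re-indexing and merging of the $k=i$ term work out exactly as you describe). No gaps.
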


\begin{proof}
Obviously we have $c_{0,n}=b_0^n$. We shall prove the formula for $i\geq 1$. We fix $n $ and write $c_i:=c_{i,n}$ for simplicity.
From the equality $h\cdot (h^n)'=nh'\cdot h^n$, we have
$$\left(\sum_{i=0}^\infty b_i t^i\right) \left(\sum_{i=0}^\infty (i+1)c_{i+1}t^i\right)=n\left(\sum_{i=0}^\infty (i+1)b_{i+1}t^i\right)\left(\sum_{i=0}^\infty c_i t^i\right).$$
Comparing the coefficients of $t^{i-1}$, we have
$$\sum_{k=0}^i (i-k)b_kc_{i-k} = n\sum_{k=1}^i k b_k c_{i-k}.$$
From this, we obtain
$$ib_0 c_i = \sum_{k=1}^i (nk-i+k)b_kc_{i-k}.$$
Since $b_0\neq 0$, we obtain the formula.
\end{proof}

\begin{lemma}\label{lemcin}
For each $i\geq 0$, there is a constant $C_i>0$ such that $|c_{i,n}|_p \leq C_i |b_0^n|_p$ for all $n\geq 0$.
\end{lemma}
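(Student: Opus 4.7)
The plan is to proceed by induction on $i$, using the recursion from Lemma \ref{cnformula}. The crucial observation is that for any integer $n \geq 0$ and integers $i,k$, the quantity $nk-i+k$ is itself an integer, so $|nk-i+k|_p \leq 1$. This is what prevents the factor $n$ from interfering with the bound $|b_0^n|_p$.

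For the base case $i=0$, one has $c_{0,n} = b_0^n$, so $|c_{0,n}|_p = |b_0^n|_p$ and we may take $C_0 := 1$. For the inductive step, assuming the bounds $|c_{j,n}|_p \leq C_j |b_0^n|_p$ have been established for all $0 \leq j < i$ and all $n \geq 0$, I would apply the ultrametric inequality to the recursion formula
\[
c_{i,n} = \frac{1}{ib_0}\sum_{k=1}^i (nk-i+k)\, b_k\, c_{i-k,n}
\]
to obtain
\[
|c_{i,n}|_p \;\leq\; \frac{1}{|ib_0|_p}\,\max_{1\leq k \leq i}\bigl(|nk-i+k|_p\, |b_k|_p\, |c_{i-k,n}|_p\bigr).
\]
Bounding $|nk-i+k|_p \leq 1$ and invoking the inductive hypothesis gives
\[
|c_{i,n}|_p \;\leq\; \frac{|b_0^n|_p}{|ib_0|_p}\,\max_{1\leq k \leq i}\bigl(|b_k|_p\, C_{i-k}\bigr),
\]
so it suffices to set $C_i := \dfrac{1}{|ib_0|_p}\max_{1\leq k \leq i}\bigl(|b_k|_p\, C_{i-k}\bigr)$, which is a finite positive constant depending only on $i$ (and on $h$, $b_0$).

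There is no real obstacle: the only thing to verify is that the factor $n$ appearing in the recursion does not spoil the bound, and this follows from the elementary fact that integers have $p$-adic absolute value at most $1$. Note that we do not need any convergence assumption on $h$ beyond $b_0 \neq 0$, since for each fixed $i$ only finitely many coefficients $b_1, \dots, b_i$ enter the definition of $C_i$.
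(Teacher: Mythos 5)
Your proof is correct and follows essentially the same route as the paper: induction on $i$ using the recursion of Lemma \ref{cnformula}, the ultrametric inequality, and the same choice of constant $C_i = |ib_0|_p^{-1}\max_{1\le k\le i}(|b_k|_p C_{i-k})$. Your explicit remark that $|nk-i+k|_p\le 1$ because it is an integer is the (implicit) key point in the paper's argument as well, so there is nothing to add.
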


\begin{proof}
We prove the claim by induction on $i$. When $i=0$, the claim is obvious. When $i\geq 1$, by Lemma \ref{cnformula} we have
$$|c_{i,n}|_p \leq |i b_0|_p^{-1}  \max \{|b_k c_{i-k,n}|_p \mid 1\leq k\leq i\}.$$
By induction hypothesis there are $C_{i-k}$ such that $|c_{i-k,n}|_p\leq C_{i-k} |b_0^n|_p$ for any $1\leq k\leq i$. If we set
$$C_i:= |ib_0|_p^{-1} \max \{ |b_k|_p C_{i-k} \mid 1\leq k \leq i\},$$
then we have $|c_{i,n}|_p\leq C_i |b_0^n|_p$. This completes the proof.
\end{proof}

Lemma \ref{lemcin} immediately implies the following.

\begin{proposition}\label{propconvh}
 If $f =\sum_{n=0}^\infty a_nZ^n \in \CC_p[[Z]]$ satisfies $\underset{n\to \infty}{\lim} a_n b_0^n=0$, then
$$f(h)= \sum_{n=0}^\infty a_n h^n := \sum_{i=0}^\infty \left(\sum_{n=0}^\infty a_n c_{i,n}\right)t^i$$
is well-defined in $\CC_p[[t]]$, i.e., $\sum_{n=0}^\infty a_n c_{i,n}$ converges for each $i$.

\end{proposition}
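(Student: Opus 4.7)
The plan is to apply Lemma \ref{lemcin} directly: the well-definedness of $f(h)$ comes down to showing that, for each fixed $i \geq 0$, the series $\sum_{n=0}^\infty a_n c_{i,n}$ has terms tending to zero in $\CC_p$. Since $\CC_p$ is complete and the absolute value is non-archimedean, this vanishing condition is both necessary and sufficient for convergence, so no further Cauchy-style analysis is required.

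First I would fix $i \geq 0$ and invoke Lemma \ref{lemcin} to obtain a constant $C_i > 0$ (depending only on $i$, not on $n$) such that $|c_{i,n}|_p \leq C_i\, |b_0^n|_p$ for all $n \geq 0$. Combining this with the hypothesis $\lim_{n\to\infty} a_n b_0^n = 0$, the estimate
$$|a_n c_{i,n}|_p \;\leq\; C_i\, |a_n b_0^n|_p$$
shows that $a_n c_{i,n} \to 0$ as $n\to\infty$. Hence $\sum_{n=0}^\infty a_n c_{i,n}$ converges in $\CC_p$, so the coefficient of $t^i$ in the proposed definition of $f(h)$ is a well-defined element of $\CC_p$. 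Since this holds for every $i \geq 0$, the formal series $f(h)= \sum_{i=0}^\infty \bigl(\sum_{n=0}^\infty a_n c_{i,n}\bigr) t^i$ is a well-defined element of $\CC_p[[t]]$.

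The main (indeed only) obstacle has already been overcome by Lemma \ref{lemcin}; once those uniform-in-$n$ bounds on the coefficients of $h^n$ are available, the proposition reduces to the ultrametric convergence criterion. The role of the hypothesis $0<|b_0|_p<1$ is hidden in Lemma \ref{lemcin}, where it is what makes the inductive comparison $|c_{i,n}|_p \ll |b_0^n|_p$ meaningful: it is precisely the decay of $|b_0^n|_p$ that makes the growth condition $\lim_n a_n b_0^n = 0$ a natural substitute for ordinary convergence of $\sum a_n$.
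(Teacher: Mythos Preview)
Your argument is correct and is exactly the intended one: the paper itself simply states that Lemma \ref{lemcin} immediately implies the proposition, and what you have written is precisely the natural unpacking of that implication via the ultrametric convergence criterion.
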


\subsection{The operator $\psi_L$}

Recall that $\bA_L$ denotes the $\pi_L$-adic completion of the ring of formal Laurent series $o_L((Z))$, i.e.,
$$\bA_L:=\left\{ \sum_{n \in \ZZ} a_n Z^n \ \middle| \  a_n\in o_L, \ \underset{n\to -\infty}{\lim} a_n =0\right\}.$$
Let $\psi_L: \bA_L\to \bA_L$ denote the trace operator defined in \cite[\S 3]{SV15} (see also \cite[Rem. 3.2(i)]{SV15}). We set
$$\bA_L^{\psi_L=1}:=\{f\in \bA_L \mid \psi_L(f)=f\}.$$

In this subsection, we prove

\begin{proposition}\label{prop:psisub}
We have
$$\bA_L^{\psi_L=1} \subseteq Z^{-1} o_L[[Z]].$$
\end{proposition}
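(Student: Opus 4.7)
The plan is to prove the proposition by reducing to a finite-pole setting via $\pi_L$-adic approximation, and then running an inductive argument on the pole order based on an ``eigenvalue'' calculation for $\psi_L$ acting on leading Laurent coefficients.

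Let $f = \sum_{n\in\ZZ}a_nZ^n \in \bA_L^{\psi_L=1}$. To show that $a_n=0$ for all $n \leq -2$, it suffices to show that for each $N\geq 1$ and each such $n$, one has $a_n \in \pi_L^N o_L$; then $a_n\in\bigcap_N\pi_L^No_L = 0$. I will work with the reduction $\bar f$ modulo $\pi_L^N$. The key point is that although a general element of $\bA_L$ can have nonzero coefficients at arbitrarily negative $Z$-powers (tending to $0$ $\pi_L$-adically), the quotient
\[
   \bA_L/\pi_L^N\bA_L \;=\; (o_L/\pi_L^N)((Z))
\]
consists of genuine Laurent series with only finitely many negative terms, so $\bar f$ has a well-defined finite pole order. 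Since $\psi_L$ descends to this quotient, $\bar f$ is fixed by the induced operator.

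The heart of the argument is the following eigenvalue computation, which I would prove first: for every $k\geq 1$ and every $h=\sum_{m\geq -k} h_m Z^m \in Z^{-k}o_L[[Z]]$, one has $\psi_L(h)\in Z^{-k}o_L[[Z]]$ and
\[
   [\psi_L(h)]_{Z^{-k}} \;=\; \pi_L^{k-1}\cdot h_{-k}.
\]
By $o_L$-linearity and the fact that $\psi_L(Z^m)$ for $m > -k$ has strictly smaller pole order (indeed is a power series for $m\geq 0$), only the term $m = -k$ contributes, so the claim reduces to $[\psi_L(Z^{-k})]_{Z^{-k}}=\pi_L^{k-1}$. This I would verify via the defining identity $\pi_L\varphi_L(\psi_L(Z^{-k}))=\operatorname{Tr}_{\bB_L/\varphi_L(\bB_L)}(Z^{-k}) = \sum_{\omega\in\mathrm{LT}[\pi_L]}(\omega +_{\mathrm{LT}}Z)^{-k}$ and comparing the $Z^{-k}$ coefficients in the classical Laurent expansion at $Z=0$: on the right-hand side only $\omega=0$ contributes to the pole at $Z=0$ (the other summands being holomorphic there, with value $\omega^{-k}\neq\infty$), giving leading term $Z^{-k}$; on the left, since $[\pi_L](Z)=\pi_LZ+O(Z^2)$ gives $[\pi_L](Z)^{-k}=\pi_L^{-k}Z^{-k}+\text{(lower pole)}$, a leading $Z^{-k}$-coefficient $c$ of $\psi_L(Z^{-k})$ contributes $c\pi_L\cdot\pi_L^{-k}=c\pi_L^{1-k}$. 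Equating the two gives $c=\pi_L^{k-1}$.

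With this in hand, I conclude as follows. Suppose $\bar f \in (o_L/\pi_L^N)((Z))$ has exact pole order $k\geq 2$ with leading coefficient $\bar a_{-k}$, and $\bar\psi_L(\bar f)=\bar f$. Applying the eigenvalue formula modulo $\pi_L^N$ to $\bar f$ yields $(1-\pi_L^{k-1})\bar a_{-k}=0$ in $o_L/\pi_L^N$. Since $\pi_L^{k-1}\in\pi_Lo_L$ for $k\geq 2$, the element $1-\pi_L^{k-1}$ reduces to $1$ in the residue field and is therefore a unit in $o_L/\pi_L^N$; hence $\bar a_{-k}=0$, contradicting the pole order being $k$. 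Iterating this downward through $k=k_0,k_0-1,\dots,2$ shows that $\bar f$ has pole order $\leq 1$, i.e.\ $\bar f\in Z^{-1}(o_L/\pi_L^N)[[Z]]$. This means $a_n\in\pi_L^No_L$ for all $n\leq -2$, and letting $N\to\infty$ gives the proposition.

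The main obstacle is the eigenvalue formula in the middle paragraph. The subtlety is that $\varphi_L$ does not cleanly preserve pole-order filtrations inside the $\pi_L$-adically completed ring $\bA_L$ (for instance the $\bA_L$-expansion of $\varphi_L(Z^{-1})=[\pi_L](Z)^{-1}$ has a nontrivial $Z^{-q}$ leading term rather than a $Z^{-1}$ one), so care is required to distinguish the ``classical'' Laurent expansion in $\bB_L$, in which the leading-pole comparison is transparent, from the $\bA_L$-expansion in which the coefficients of interest actually live. Once the eigenvalue formula is verified, the rest of the argument is essentially formal.
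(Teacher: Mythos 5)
Your global strategy --- reduce modulo $\pi_L^N$, where $\bA_L/\pi_L^N\bA_L=(o_L/\pi_L^N)((Z))$ consists of Laurent series with finite pole order, and rule out pole order $k\geq 2$ by a leading-coefficient identity --- is viable and genuinely different from the paper's. The paper first proves $\psi_L(Z^{-1}o_L[[Z]])\subseteq Z^{-1}o_L[[Z]]$ and then shows $\psi_L-1$ is injective on the quotient $\bA_L/Z^{-1}o_L[[Z]]$ by reducing modulo $\pi_L$ (a snake-lemma/completeness argument) and using the valuation estimate $v(\overline{\psi}_L(f))\geq \lfloor v(f)/q\rfloor$, which comes from $\varphi_L(Z)\equiv Z^q \pmod{\pi_L}$. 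Both arguments exploit the same contraction phenomenon, but you measure it through the $\pi_L$-adic size of the leading coefficient rather than through the pole order in the residue ring $\bE_L$.

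There is, however, a genuine gap exactly where you place ``the main obstacle,'' and you do not close it. Your verification of the eigenvalue formula compares coefficients of the \emph{classical} Laurent expansions at $Z=0$ on both sides of $\pi_L\varphi_L(\psi_L(Z^{-k}))=\sum_{\omega}(\omega+_{\mathrm{LT}}Z)^{-k}$. This presupposes (a) that $\psi_L(Z^{-k})$, as an element of $\bA_L$, lies in $o_L((Z))$ with pole order at most $k$ --- which is essentially the assertion being proved --- and (b) that its $\bA_L$-expansion agrees with its expansion at $Z=0$. Neither is automatic: the element $(Z-\pi_L)^{-1}=\sum_{m\geq 0}\pi_L^m Z^{-m-1}\in\bA_L$ is holomorphic at $Z=0$ as a function, yet its $\bA_L$-expansion has infinitely many negative terms; so knowledge of the expansion at $Z=0$ gives no control over the coefficients $a_n$, $n\leq -2$, of the $\bA_L$-expansion, which are precisely what your argument must control. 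The gap is repairable without any function-theoretic detour: by the projection formula \eqref{proj formula},
$$\psi_L(Z^{-k})=\psi_L\Big(\varphi_L(Z^{-k})\cdot\big([\pi_L](Z)/Z\big)^k\Big)=Z^{-k}\,\psi_L\Big(\big([\pi_L](Z)/Z\big)^k\Big),$$
and since $([\pi_L](Z)/Z)^k\in o_L[[Z]]$, Lemma \ref{lempsi0} of the paper yields $\psi_L(Z^{-k})\in Z^{-k}o_L[[Z]]$ outright; the leading coefficient is then the constant term $\psi_L\big(([\pi_L](Z)/Z)^k\big)(0)=\pi_L^{-1}\sum_{\omega}\big([\pi_L](\omega)/\omega\big)^k=\pi_L^{k-1}$, since only $\omega=0$ contributes. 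With this substitute for your middle paragraph, the remainder of your argument goes through (note that no downward iteration over $k$ is needed: taking $k$ to be the exact pole order of $\bar f$ already gives the contradiction in one step).
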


\begin{remark}
In the cyclotomic case, Proposition \ref{prop:psisub} is proved in \cite[Prop. 7.1.1(i)]{colmez}.
\end{remark}

Proposition \ref{prop:psisub} immediately implies the following.

\begin{corollary}\label{corpsi}
For any $m\geq 1$, the map
$$\varphi_L^{-m}: \bA_L^{\psi_L=1} \to L_m[[t_{\rm LT}]]; \ f\mapsto f\left( \eta_m +_{\rm LT} \exp_{\rm LT}\left(\frac{t_{\rm LT}}{\pi_L^m}\right)\right)$$
is well-defined.
\end{corollary}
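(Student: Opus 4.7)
The plan is to combine Proposition \ref{prop:psisub} with Proposition \ref{propconvh}, treating the polar part $Z^{-1}$ separately from the power-series part.

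First I would write $h := \eta_m +_{\rm LT} \exp_{\rm LT}(t_{\rm LT}/\pi_L^m) \in L_m[[t_{\rm LT}]]$ and observe that its constant term is $\eta_m +_{\rm LT} \exp_{\rm LT}(0) = \eta_m$, which is a nonzero element of $L_m$ (since $m\geq 1$) satisfying $0 < |\eta_m|_p < 1$ (as $\eta_m$ is a nontrivial $\pi_L^m$-torsion point of ${\rm LT}$). In particular $h$ is of the shape required to apply Proposition \ref{propconvh} with $b_0 = \eta_m$, and $h$ is invertible in $L_m[[t_{\rm LT}]]$.

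Next, given $f\in \bA_L^{\psi_L=1}$, Proposition \ref{prop:psisub} gives the inclusion $f \in Z^{-1}o_L[[Z]]$, so I can write
\[
f = a_{-1}Z^{-1} + \sum_{n=0}^{\infty} a_n Z^n, \qquad a_n \in o_L.
\]
For the polar term, $a_{-1} h^{-1}$ is a well-defined element of $L_m[[t_{\rm LT}]]$ since $h$ is a unit there. For the holomorphic part $g(Z) := \sum_{n\geq 0} a_n Z^n$, the hypothesis of Proposition \ref{propconvh} is satisfied because $|a_n \eta_m^n|_p \leq |\eta_m|_p^n \to 0$ as $n\to\infty$; hence $g(h) \in L_m[[t_{\rm LT}]]$ is well-defined. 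Summing the two contributions yields $f(h) = a_{-1}h^{-1} + g(h) \in L_m[[t_{\rm LT}]]$, as required.

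I do not expect any essential obstacle here: both ingredients (the inclusion $\bA_L^{\psi_L=1}\subseteq Z^{-1}o_L[[Z]]$ and the convergence criterion) have already been established, and the only additional observations needed are that $\eta_m$ is a nonzero element of $L_m$ of absolute value strictly less than $1$ and that this makes $h$ simultaneously invertible in $L_m[[t_{\rm LT}]]$ and admissible as a substitution point for bounded power series.
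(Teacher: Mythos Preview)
Your proposal is correct and follows essentially the same approach as the paper: identify the constant term of $h$ as $\eta_m$, use its invertibility to handle the polar part, and invoke Proposition~\ref{propconvh} for the bounded power-series part, with Proposition~\ref{prop:psisub} supplying the inclusion $\bA_L^{\psi_L=1}\subseteq Z^{-1}o_L[[Z]]$. The paper additionally records the identification $L_m[[Z]]=L_m[[t_{\rm LT}]]$ (via the fact that $\log_{\rm LT}(Z)\in Z+Z^2 o_L[[Z]]$) to justify that $h$ indeed lies in $L_m[[t_{\rm LT}]]$, which you use implicitly; otherwise the arguments coincide.
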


\begin{proof}
Recall the following basic facts: we have
$$t_{\rm LT}=\log_{\rm LT}(Z)= \sum_{n=1}^\infty \frac{a_n}{n}Z^n \quad (a_n \in o_L, \ a_1=1),$$
$$\exp_{\rm LT}(Z)=\sum_{n=1}^\infty \frac{b_n}{n!} Z^n \quad (b_n \in o_L, \ b_1=1),$$
$$X+_{\rm LT} Y = X+Y + \sum_{i,j\geq 1} c_{ij} X^iY^j \quad (c_{ij} \in o_L).$$
The first fact implies that $L_m[[Z]]=L_m[[t_{\rm LT}]]$. The second and the third facts imply that the constant term of $h:=\eta_m +_{\rm LT} \exp_{\rm LT}\left(\frac{t_{\rm LT}}{\pi_L^m}\right) \in L_m[[t_{\rm LT}]]$ is $\eta_m$, which is non-zero. Hence $h$ is a unit in $L_m[[t_{\rm LT}]]$ and, for any $f\in o_L[Z,Z^{-1}]$, we see that $ f(h)$ is a well-defined element in $L_m[[t_{\rm LT}]]$. Since $|\eta_m|_p<1$, we see  by Proposition \ref{propconvh} that $f(h)$ is also well-defined for any $f \in o_L((Z))$. Proposition \ref{prop:psisub} in particular implies $\bA_L^{\psi_L=1} \subset o_L((Z))$, so the claim follows.
\end{proof}

\begin{remark}\label{rem:philog}
By Proposition \ref{propconvh} and the proof of Corollary \ref{corpsi}, one can actually define $\varphi_L^{-m}$ on
$$\left\{f=\sum_{n\in \ZZ} a_n Z^n \in L((Z)) \ \middle| \  \underset{n\to +\infty}{\lim} a_n \eta_m^n =0\right\}.$$
In particular, $\varphi_L^{-m}(t_{\rm LT})$ is defined. (It is equal to $\pi_L^{-m}t_{\rm LT}$: see Lemma \ref{lemtlt} below.)
\end{remark}

We shall prove Proposition \ref{prop:psisub}. We need some lemmas.

\begin{lemma}\label{lempsi0}
We have
$$\psi_L( o_L[[Z]]) \subseteq o_L[[Z]].$$
\end{lemma}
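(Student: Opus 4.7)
The plan is to combine the defining identity from Section \ref{sec:notation},
\[
\pi_L \cdot \varphi_L \circ \psi_L = \mathrm{Tr}_{\bB_L/\varphi_L(\bB_L)},
\]
with the explicit Galois-theoretic formula for the trace in the Lubin--Tate setting. After extending scalars from $L$ to $L_1$, the extension $\bB_L/\varphi_L(\bB_L)$ becomes Galois with group $\mathrm{LT}[\pi_L]$ acting by the formal-group translations $Z \mapsto a +_{\rm LT} Z$; consequently for every $f \in \bB_L$,
\[
\mathrm{Tr}_{\bB_L/\varphi_L(\bB_L)}(f)(Z) \;=\; \sum_{a \in \mathrm{LT}[\pi_L]} f(a +_{\rm LT} Z),
\]
the right-hand side being $\mathrm{Gal}(L_1/L)$-invariant and hence an element of $\bB_L$.

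Now fix $f \in o_L[[Z]]$. Each summand $f(a +_{\rm LT} Z)$ belongs to $o_{L_1}[[Z]]$, and the whole sum is $\mathrm{Gal}(L_1/L)$-invariant, hence lies in $o_L[[Z]]$. Tautologically the sum is also invariant under $\mathrm{LT}[\pi_L]$, so it belongs to $\varphi_L(\bB_L) \cap o_L[[Z]]$. I would next verify that this intersection equals $\varphi_L(o_L[[Z]])$: if $y \in \bB_L$ satisfies $\varphi_L(y) \in o_L[[Z]]$, then one first checks $y \in \bA_L$, and then a $\pi_L$-adic induction reduces the question modulo $\pi_L$, where $\varphi_L$ acts on $k_L((Z))$ by $Z \mapsto Z^q$ whose preimage of $k_L[[Z]]$ is visibly $k_L[[Z]]$; Hausdorffness of the $\pi_L$-adic topology on $\bA_L/o_L[[Z]]$ then concludes. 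Combined with the preceding paragraph, this gives $\pi_L \cdot \varphi_L(\psi_L(f)) \in \varphi_L(o_L[[Z]])$, and the injectivity of $\varphi_L$ yields $\pi_L \cdot \psi_L(f) \in o_L[[Z]]$.

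Finally, since $\psi_L(f) \in \bA_L$ has coefficients in $o_L$ and $\pi_L$ is not a zero-divisor, the condition $\pi_L \cdot \psi_L(f) \in o_L[[Z]]$ forces the coefficient of $Z^n$ in $\psi_L(f)$ to vanish for every $n < 0$. Hence $\psi_L(f) \in o_L[[Z]]$, which is the claim. The only step requiring more than mechanical manipulation is the saturation statement $\varphi_L(\bB_L) \cap o_L[[Z]] = \varphi_L(o_L[[Z]])$, but this is dispatched by the simple mod-$\pi_L$ argument sketched above; everything else is a formal consequence of the defining identities for $\psi_L$.
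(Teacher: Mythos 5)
Your proof is correct, but it takes a genuinely different route from the paper's. The paper passes to the Coleman trace operator via $\psi_L=\pi_L^{-1}\psi_{\rm Col}$ on $o_L[[Z]]$ and proves the stronger divisibility $\psi_{\rm Col}(o_L[[Z]])\subseteq \pi_L o_L[[Z]]$ in one stroke from the congruence $\sum_{a\in{\rm LT}_1}f(a+_{\rm LT}Z)\equiv qf(Z)\equiv 0 \pmod{\mathfrak{m}_{L_1}}$ (both the $\pi_L$-torsion points and $q$ lie in $\mathfrak{m}_{L_1}$), so that every coefficient of $\varphi_L\psi_{\rm Col}(f)$ lands in $o_L\cap\mathfrak{m}_{L_1}=\pi_L o_L$. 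You instead establish only that $\mathrm{Tr}_{\bB_L/\varphi_L(\bB_L)}(f)$ lies in $\varphi_L(o_L[[Z]])$ --- via Galois descent from $o_{L_1}[[Z]]$ together with the saturation $\varphi_L(\bB_L)\cap o_L[[Z]]=\varphi_L(o_L[[Z]])$ --- which yields $\pi_L\psi_L(f)\in o_L[[Z]]$, and you then invoke the standard fact that $\psi_L$ restricts to an endomorphism of $\bA_L$ (stated in \S\ref{sec:notation} with reference to \cite{SV15,SV24}) to conclude that the negative-degree coefficients vanish. This is legitimate within the paper's framework and your saturation lemma is correctly disposed of by the mod-$\pi_L$ argument, but be aware that the $\pi_L$-divisibility you are outsourcing is precisely the content of that standard fact (it comes from the vanishing of the trace of the purely inseparable residue extension $\bE_L/\varphi_L(\bE_L)$), and the paper's congruence is in effect a direct proof of it on $o_L[[Z]]$. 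So the paper's argument is shorter and self-contained, while yours trades the congruence for a saturation lemma plus the cited integrality of $\psi_L$ on $\bA_L$.
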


\begin{proof}
Let $\psi_{\rm Col}$ be the Coleman trace operator defined in \cite[\S 2]{SV15}: it is characterized by
$$\varphi_L\circ \psi_{\rm Col}(f)= \sum_{a\in {\rm LT}_1}f(a+_{\rm LT} Z)$$
for any $f\in o_L[[Z]]$.
By \cite[Rem. 3.2(ii)]{SV15}, we have $\psi_L = \pi_L^{-1}\cdot \psi_{\rm Col}$ on $o_L[[Z]]$, so it is sufficient to prove $\psi_{\rm Col}(o_L[[Z]]) \subseteq \pi_L o_L[[Z]]$. Let $\mathfrak{m}_{L_1}$ be the maximal ideal of $L_1=L({\rm LT}_1)$. Then, noting ${\rm LT}_1\subset \mathfrak{m}_{L_1}$ and $\# {\rm LT}_1 =q \in \mathfrak{m}_{L_1}$, we have
$$\varphi_L\circ \psi_{\rm Col}(f)=\sum_{a\in {\rm LT}_1}f(a+_{\rm LT} Z)\equiv q f(Z) \equiv 0\text{ (mod $\mathfrak{m}_{L_1}$)}.$$
This implies that each coefficient of $\varphi_L\circ \psi_{\rm Col}(f)$ lies in $o_L\cap \mathfrak{m}_{L_1}=\pi_Lo_L$. Hence we have $\varphi_L\circ \psi_{\rm Col}(o_L[[Z]]) \subseteq \pi_L o_L[[Z]]$, which immediately implies $\psi_{\rm Col}(o_L[[Z]]) \subseteq \pi_L o_L[[Z]]$. This completes the proof.
\end{proof}

\begin{lemma}\label{lempsi01}
We have
$$\psi_L(Z^{-1}o_L[[Z]]) \subseteq Z^{-1}o_L[[Z]].$$
\end{lemma}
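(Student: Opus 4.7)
The plan is to reduce to computing $\psi_L(Z^{-1})$ and then apply the projection formula with a clever rewriting. Since any $f \in Z^{-1}o_L[[Z]]$ decomposes as $f = a_{-1}Z^{-1} + f_+$ with $a_{-1} \in o_L$ and $f_+ \in o_L[[Z]]$, and since $\psi_L$ is $o_L$-linear, the previous Lemma \ref{lempsi0} handles $\psi_L(f_+) \in o_L[[Z]] \subseteq Z^{-1}o_L[[Z]]$. Thus the whole statement reduces to verifying that $\psi_L(Z^{-1}) \in Z^{-1}o_L[[Z]]$.

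To compute $\psi_L(Z^{-1})$, set
$$u(Z) := [\pi_L](Z)/Z \in o_L[[Z]],$$
which makes sense because $[\pi_L](Z) \in Z\cdot o_L[[Z]]$. The key observation is that $\varphi_L(Z^{-1})$ is a well-defined element of $\bA_L$: indeed $\varphi_L(Z) = [\pi_L](Z) \equiv Z^q \pmod{\pi_L}$ is a unit in the residue field $k_L((Z))$, hence $[\pi_L](Z)$ is a unit in the complete discrete valuation ring $\bA_L$, and so $\varphi_L(Z^{-1}) = [\pi_L](Z)^{-1} \in \bA_L$. From $[\pi_L](Z) = Z\cdot u(Z)$ we deduce the identity
$$\varphi_L(Z^{-1})\cdot u(Z) = [\pi_L](Z)^{-1}\cdot u(Z) = Z^{-1}$$
in $\bA_L$. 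Applying the projection formula \eqref{proj formula} with $f_1 = Z^{-1}$ and $f_2 = u(Z)$ then yields
$$\psi_L(Z^{-1}) = \psi_L\bigl(\varphi_L(Z^{-1})\cdot u(Z)\bigr) = Z^{-1}\cdot \psi_L(u(Z)).$$
Since $u(Z) \in o_L[[Z]]$, Lemma \ref{lempsi0} gives $\psi_L(u(Z)) \in o_L[[Z]]$, so $\psi_L(Z^{-1}) \in Z^{-1}o_L[[Z]]$, as desired.

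The main (and really only) obstacle is the conceptual one of finding the right factorization: one must recognize that although $\varphi_L$ does not preserve $Z^{-1}o_L[[Z]]$ on the nose, the element $\varphi_L(Z^{-1})$ still lives in $\bA_L$ after $\pi_L$-adic completion, and that the ``defect'' $u(Z)$ lies in $o_L[[Z]]$ so that the previous lemma applies. Once this is spotted, the projection formula does all the work and no explicit computation with traces or Coleman operators is needed.
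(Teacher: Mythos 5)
Your proof is correct, but it takes a genuinely different route from the one in the paper. The paper first uses $g_{\rm LT}(Z)\in o_L[[Z]]^\times$ and Lemma \ref{lempsi0} to reduce to the single element $Z^{-1}g_{\rm LT}(Z)^{-1}$, and then invokes the identity $\psi_L(Z^{-1}g_{\rm LT}(Z)^{-1}) = g_{\rm LT}(Z)^{-1}\,\cN'(Z)/\cN(Z)$ from \cite[Lem.\ 2.5, Rem.\ 3.2(ii)]{SV15} together with the classical fact that the Coleman norm satisfies $\cN(Z)\in Z\cdot o_L[[Z]]^\times$. You instead exploit the factorization $Z^{-1}=\varphi_L(Z^{-1})\cdot\bigl([\pi_L](Z)/Z\bigr)$, where $u(Z):=[\pi_L](Z)/Z\in o_L[[Z]]$ because $[\pi_L](Z)\in Z\,o_L[[Z]]$, and then the projection formula \eqref{proj formula} gives $\psi_L(Z^{-1})=Z^{-1}\psi_L(u(Z))\in Z^{-1}o_L[[Z]]$ by Lemma \ref{lempsi0}; the $o_L$-linear decomposition $f=a_{-1}Z^{-1}+f_+$ finishes the argument. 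All steps check out: $\varphi_L(Z)=[\pi_L](Z)$ is indeed invertible (already in $\bA_L$, since it is congruent to $Z^q$ mod $\pi_L$ and $\bA_L$ is $\pi_L$-adically complete with residue ring $k_L((Z))$; in any case inversion in the fraction field $\bB_L$ suffices for applying \eqref{proj formula}). What your approach buys is self-containedness -- it uses only the projection formula already recorded in the paper and the preceding lemma, with no appeal to the Coleman norm operator or to external results from \cite{SV15}; what the paper's approach buys is a conceptual link between $\psi_L$ of logarithmic derivatives and the Coleman norm, which is in the spirit of the rest of the article.
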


\begin{proof}
By Lemma \ref{lempsi0} and the fact that $g_{\rm LT}(Z)\in o_L[[Z]]^\times$, it is sufficient to prove
$$\psi_L(Z^{-1} g_{\rm LT}(Z)^{-1}) \in Z^{-1}o_L[[Z]].$$
By \cite[Lem. 2.5 and Rem. 3.2(ii)]{SV15}, we have
$$\psi_L(Z^{-1}g_{\rm LT}(Z)^{-1}) = g_{\rm LT}(Z)^{-1} \frac{\cN'(Z)}{\cN(Z)},$$
where $\cN:o_L[[Z]]\to o_L[[Z]]$ denotes the Coleman norm operator and $\cN'(Z)$ is the derivative of $\cN(Z)$. Since we have $\cN(Z) \in Z\cdot o_L[[Z]]^\times$ (see \cite[Lem. 5.8(iii)]{iwasawa} for example), we see that
$$\frac{\cN'(Z)}{\cN(Z)} \in Z^{-1}o_L[[Z]].$$
This completes the proof.
\end{proof}


\begin{lemma}\label{lempsi1}
For any $f\in \bA_L$ and $k\in \ZZ$, we have
$$\psi_L(f)\equiv Z^k \psi_L(Z^{-qk}f) \text{ (mod $\pi_L$)}.$$
\end{lemma}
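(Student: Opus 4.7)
The plan is to exploit the projection formula \eqref{proj formula} for $\psi_L$ together with the defining congruence $[\pi_L](Z)\equiv Z^q\pmod{\pi_L}$ for a Lubin--Tate formal group at the uniformiser $\pi_L$. First I would promote this congruence to an honest statement inside $\bA_L$. Writing $[\pi_L](Z)=Z\cdot u(Z)$ with $u(Z)\in o_L[[Z]]$, one has $u(Z)\equiv Z^{q-1}\pmod{\pi_L}$. Since $\bA_L$ is $\pi_L$-adically complete and local with residue ring $k_L((Z))$, in which $Z^{q-1}$ is invertible, $u(Z)$ is a unit in $\bA_L$; in particular $Z$ itself is a unit in $\bA_L$, and for every $k\in\ZZ$
$$\varphi_L(Z^{-k}) \;=\; [\pi_L](Z)^{-k} \;=\; Z^{-k}\,u(Z)^{-k} \;\in\; \bA_L,\qquad \varphi_L(Z^{-k})\equiv Z^{-qk}\pmod{\pi_L}.$$

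Next I would feed this into the projection formula \eqref{proj formula} with $f_1=Z^{-k}$ and $f_2=f$, obtaining
$$Z^{-k}\,\psi_L(f)\;=\;\psi_L\!\bigl(\varphi_L(Z^{-k})\,f\bigr).$$
From Lemmas \ref{lempsi0} and \ref{lempsi01}, together with $\pi_L$-adic continuity, the operator $\psi_L$ stabilises $\bA_L$; by $o_L$-linearity it therefore sends $\pi_L\bA_L$ into $\pi_L\bA_L$, so it respects congruences modulo $\pi_L$. Combining this with the congruence of the previous paragraph gives
$$Z^{-k}\,\psi_L(f)\;=\;\psi_L\!\bigl(\varphi_L(Z^{-k})\,f\bigr)\;\equiv\;\psi_L(Z^{-qk}\,f)\pmod{\pi_L}.$$
Multiplying through by the unit $Z^k\in\bA_L$ yields the asserted congruence $\psi_L(f)\equiv Z^k\,\psi_L(Z^{-qk}f)\pmod{\pi_L}$.

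I do not expect any genuine obstacle here; the only mildly delicate point is justifying that $u(Z)$ is a unit in $\bA_L$, but this is immediate from the description of $\bA_L$ as a complete discrete valuation ring with uniformiser $\pi_L$ and residue field $k_L((Z))$. Once that is in place, everything else is a direct application of the projection formula and of $o_L$-linearity of $\psi_L$, with no separate case analysis needed for $k$ positive, negative, or zero.
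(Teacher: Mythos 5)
Your proof is correct and uses the same two ingredients as the paper's: the projection formula \eqref{proj formula} and the congruence $\varphi_L(Z)\equiv Z^q \pmod{\pi_L}$; the only difference is that by observing $Z$ is a unit in $\bA_L$ you treat all $k\in\ZZ$ at once, whereas the paper does $k=\mp 1$ and then inducts. One cosmetic point: the stability of $\bA_L$ under $\psi_L$ (hence the fact that $\psi_L$ preserves congruences mod $\pi_L$) is part of the standing definition of $\psi_L$ as an endomorphism of $\bA_L$ rather than a consequence of Lemmas \ref{lempsi0} and \ref{lempsi01}, which only concern $o_L[[Z]]$ and $Z^{-1}o_L[[Z]]$.
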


\begin{proof}
First, note that $\varphi_L(Z)\equiv Z^q$ (mod $\pi_L$). This implies
$$\psi_L(Z^q f) \equiv \psi_L(\varphi_L(Z)f) \text{ (mod $\pi_L$)}.$$
By the projection formula (\ref{proj formula}), the right hand side is equal to $Z\psi_L(f)$. Hence we have
$$\psi_L(Z^q f) \equiv Z\psi_L(f) \text{ (mod $\pi_L$)}.$$
This proves the claim for $k=-1$. If we replace $f$ by $Z^{-q}f$, the claim for $k=1$ follows. The general case follows easily by induction.
\end{proof}

We set
$$\bE_L:=k_L((Z)) = \bA_L/\pi_L \bA_L \text{ and }\bE_L^+:= k_L[[Z]].$$
Let $v= v_{\bE_L}: \bE_L \to \ZZ\cup \{\infty\}$ be the usual discrete valuation such that $v(Z)=1$. Let
$$\overline \psi_L: \bE_L \to \bE_L$$
be the map induced by $\psi_L$.

\begin{lemma}\label{lempsi2}
For any $f \in \bE_L$, we have
$$v(\overline \psi_L(f))\geq \left[ \frac{v(f)}{q}\right].$$
Here for a real number $a$ we write $[a] $ for the integer such that $0\leq a-[a]<1$.
\end{lemma}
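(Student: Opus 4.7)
The strategy is to combine the stability result for $\psi_L$ on the subring $o_L[[Z]]$ (Lemma~\ref{lempsi0}) with the ``$Z$-twisting'' formula from Lemma~\ref{lempsi1}, both of which descend immediately to $\overline{\psi}_L$ on $\bE_L$. Writing $n := v(f)$ and $k := [n/q]$, the key observation is that $qk \leq n$ by definition of the integer part, so $Z^{-qk}f$ lies in $\bE_L^+$.

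\textbf{Key steps.} First, reduce Lemmas~\ref{lempsi0} and \ref{lempsi1} modulo $\pi_L$: the first gives $\overline{\psi}_L(\bE_L^+) \subseteq \bE_L^+$, and the second gives the identity $\overline{\psi}_L(f) = Z^{k}\,\overline{\psi}_L(Z^{-qk}f)$ in $\bE_L$ for every $k \in \ZZ$ and $f \in \bE_L$. Next, given $f \in \bE_L$ with $v(f) = n$ (the case $f = 0$ being trivial), take $k = [n/q]$ so that $n - qk \geq 0$, whence $Z^{-qk}f \in \bE_L^+$. Apply $\overline{\psi}_L$: by the first step $\overline{\psi}_L(Z^{-qk}f) \in \bE_L^+$, i.e., $v\bigl(\overline{\psi}_L(Z^{-qk}f)\bigr) \geq 0$. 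Finally, multiply by $Z^k$ and use the identity to conclude
\[
 v\bigl(\overline{\psi}_L(f)\bigr) \;=\; k + v\bigl(\overline{\psi}_L(Z^{-qk}f)\bigr) \;\geq\; k \;=\; \left[\frac{v(f)}{q}\right].
\]

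\textbf{Expected obstacle.} There is essentially none here: the content is entirely in the two preceding lemmas (Lemma~\ref{lempsi0} and Lemma~\ref{lempsi1}), and the present statement is a clean bookkeeping consequence. The only tiny point worth flagging is the sign convention for $v(f)$ when $n < 0$: one must check that $[n/q]$ still satisfies $qk \leq n$ (which is true since $[n/q]$ is the floor of $n/q$ in the convention fixed in the statement: $0 \leq n/q - k < 1$). Everything else is immediate.
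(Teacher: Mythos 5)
Your proof is correct and follows exactly the same route as the paper's: set $k=[v(f)/q]$, note $Z^{-qk}f\in \bE_L^+$, apply Lemma \ref{lempsi0} (reduced mod $\pi_L$) to get $v(\overline\psi_L(Z^{-qk}f))\geq 0$, and conclude via the twisting identity of Lemma \ref{lempsi1}. No issues.
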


\begin{proof}
We set $k:= \left[ v(f)/q\right]$. Then we have $qk \leq v(f)$, i.e., $Z^{-qk}f \in \bE_L^+$. So by Lemma \ref{lempsi0} we have $v(\overline \psi_L(Z^{-qk}f))\geq 0$. Hence we have
$$v(\overline \psi_L(f)) = k + v(\overline \psi_L(Z^{-qk}f)) \geq k,$$
where the first equality follows from Lemma \ref{lempsi1}. This proves the lemma.
\end{proof}

We now prove Proposition \ref{prop:psisub}.
\begin{proof}[Proof of Proposition \ref{prop:psisub}]
By Lemma \ref{lempsi01}, we have a well-defined map
$$\psi_L-1: \bA_L/ Z^{-1}o_L[[Z]] \to \bA_L/ Z^{-1}o_L[[Z]] .$$
It is sufficient to show that this map is injective. To show this, it is sufficient to prove the injectivity of the induced map
$$\overline \psi_L- 1 : \bE_L/Z^{-1}\bE_L^+ \to \bE_L/Z^{-1}\bE_L^+.$$
In fact, consider the commutative diagram with exact rows:
$$\xymatrix{
0\ar[r] & \bA_L/ Z^{-1}o_L[[Z]] \ar[r]^{\pi_L} \ar[d]^{\psi_L-1}& \bA_L/ Z^{-1}o_L[[Z]] \ar[r] \ar[d]^{\psi_L-1}&\bE_L/Z^{-1}\bE_L^+ \ar[r] \ar[d]^{\overline \psi_L-1}& 0\\
0\ar[r] & \bA_L/ Z^{-1}o_L[[Z]] \ar[r]^{\pi_L}& \bA_L/ Z^{-1}o_L[[Z]] \ar[r]&\bE_L/Z^{-1}\bE_L^+ \ar[r]& 0.
}$$
By Snake Lemma, if $\overline \psi_L-1$ is injective, then multiplication by $\pi_L$ induces an automorphism of $\ker(\psi_L-1)$, and so $\ker(\psi_L-1)= \bigcap_n \pi_L^n \ker(\psi_L-1)=0$.

Hence we are reduced to proving the injectivity of $\overline \psi_L-1$. Take an element $f\in \bE_L$ whose image in $\bE_L/Z^{-1}\bE_L^+$ lies in the kernel of $\overline \psi_L-1$. Then we have $\overline \psi_L(f) -f \in Z^{-1}\bE_L^+$, i.e., $v(\overline \psi_L(f)-f)\geq -1$. If $v(f)<-1$, then one easily sees that $[v( f)/q] > v( f)$ (since $q\geq 2$). So by Lemma \ref{lempsi2} we have the following implications:
$$v( f)<-1 \Rightarrow v(\overline \psi_L( f)) > v( f) \Rightarrow v(\overline \psi_L( f) -  f) = v( f)<-1 .$$
The last condition contradicts $v(\overline \psi_L(f)-f)\geq -1$. Thus we have $v( f)\geq -1$, i.e., $f$ is zero in $\bE_L/Z^{-1}\bE_L^+$. This proves the injectivity of the map above. Hence we have proved the proposition.
\end{proof}

\subsection{Some calculations}

By the proof of Corollary \ref{corpsi}, we know that $\varphi_L^{-m}$ is well-defined on $o_L((Z))$. For $m,j\geq 1$, we define the evaluation map
$${\rm ev}_{m,j}: o_L((Z)) \to L_m$$
by assigning $f \in o_L((Z))$ to the coefficient of $t_{\rm LT}^{j-1}$ in $\pi_L^{-m}\varphi_L^{-m}(f) \in L_m[[t_{\rm LT}]]$.

In this section, we give an explicit description of this map:

\begin{proposition}\label{prop:explicit}
For any $f\in o_L((Z))$, we have
\begin{equation*}
{\rm ev}_{m,j}(f) = \frac{1}{(j-1)! \pi_L^{mj}} \bigg(\partial_{\rm inv}^{j-1} f\bigg)|_{Z=\eta_m}.
\end{equation*}
\end{proposition}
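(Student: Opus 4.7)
The plan is to reduce everything to a formal chain-rule computation in the variable $t_{\rm LT}$, using that $\partial_{\rm inv}$ is nothing but $d/dt_{\rm LT}$ (this is Lemma \ref{lemder} referred to in the text, which follows from $dt_{\rm LT}/dZ = g_{\rm LT}(Z)$).

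First, for $f \in o_L((Z))$ write $Y := \eta_m +_{\rm LT} \exp_{\rm LT}(t_{\rm LT}/\pi_L^m)$, so by definition $\varphi_L^{-m}(f)$ is $f(Y) \in L_m[[t_{\rm LT}]]$, and at $t_{\rm LT}=0$ one has $Y = \eta_m$. The key point is the change-of-variables formula for the logarithm of the formal group: since $\log_{\rm LT}$ converts $+_{\rm LT}$ into $+$, we have
\[
\log_{\rm LT}(Y) \;=\; \log_{\rm LT}(\eta_m) \,+\, \log_{\rm LT}\!\big(\exp_{\rm LT}(t_{\rm LT}/\pi_L^m)\big) \;=\; \log_{\rm LT}(\eta_m) + \pi_L^{-m} t_{\rm LT}.
\]
Differentiating with respect to $t_{\rm LT}$ and using $d\log_{\rm LT}(Y)/dY = g_{\rm LT}(Y)$ gives $dY/dt_{\rm LT} = \pi_L^{-m} g_{\rm LT}(Y)^{-1}$.

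Next, the chain rule in $L_m[[t_{\rm LT}]]$ yields
\[
\frac{d}{dt_{\rm LT}} f(Y) \;=\; f'(Y)\,\frac{dY}{dt_{\rm LT}} \;=\; \pi_L^{-m}\, g_{\rm LT}(Y)^{-1} f'(Y) \;=\; \pi_L^{-m}\, (\partial_{\rm inv} f)(Y),
\]
i.e.\ $\tfrac{d}{dt_{\rm LT}} \circ \varphi_L^{-m} = \pi_L^{-m}\, \varphi_L^{-m} \circ \partial_{\rm inv}$ on $o_L((Z))$. Iterating $j-1$ times gives
\[
\frac{d^{\,j-1}}{dt_{\rm LT}^{\,j-1}}\,\varphi_L^{-m}(f) \;=\; \pi_L^{-m(j-1)}\, \varphi_L^{-m}\!\big(\partial_{\rm inv}^{\,j-1} f\big).
\]

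Finally, the Taylor expansion of $\varphi_L^{-m}(f)\in L_m[[t_{\rm LT}]]$ at $t_{\rm LT}=0$ shows that the coefficient of $t_{\rm LT}^{j-1}$ equals $\tfrac{1}{(j-1)!}$ times the evaluation at $t_{\rm LT}=0$ of the $(j-1)$-th derivative, which by the previous identity is $\pi_L^{-m(j-1)}(\partial_{\rm inv}^{j-1} f)(\eta_m)$. Multiplying by the additional factor $\pi_L^{-m}$ in the definition of ${\rm ev}_{m,j}$ yields
\[
{\rm ev}_{m,j}(f) \;=\; \frac{1}{(j-1)!\,\pi_L^{mj}}\,\big(\partial_{\rm inv}^{\,j-1} f\big)\big|_{Z=\eta_m},
\]
as required. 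The only subtlety — which I would spell out carefully — is that all manipulations take place in $L_m[[t_{\rm LT}]]$ (valid by Proposition \ref{propconvh} and the discussion preceding Corollary \ref{corpsi}), so ``evaluating at $t_{\rm LT}=0$'' really is legitimate and commutes with the substitution $Y=\eta_m$.
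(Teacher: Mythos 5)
Your proof is correct and follows essentially the same route as the paper's: both rest on the identification $\partial_{\rm inv}=d/dt_{\rm LT}$ (Lemma \ref{lemder}), the commutation relation $\tfrac{d}{dt_{\rm LT}}\circ\varphi_L^{-m}=\pi_L^{-m}\varphi_L^{-m}\circ\partial_{\rm inv}$ (Lemma \ref{lem:invphi}), and the Taylor-coefficient formula at $t_{\rm LT}=0$. The only cosmetic difference is that you derive the commutation relation by a direct chain-rule computation with $Y=\eta_m+_{\rm LT}\exp_{\rm LT}(t_{\rm LT}/\pi_L^m)$ (in effect re-deriving $dY/dt_{\rm LT}=\pi_L^{-m}g_{\rm LT}(Y)^{-1}$, i.e.\ Lemma \ref{lemtlt}), whereas the paper reduces to the monomials $Z^n$ and then to $n=1$.
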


In what follows, to simplify the notation, we often omit the subscript ``${\rm LT}$". In particular, we write $t$ for $t_{\rm LT}$.


\begin{lemma}\label{lemder}
Let $K$ be a subfield of $\CC_p$ containing the coefficients of $t=\log_{\rm LT}(Z) \in L[[Z]]$.
Under the identification $K((Z))=K((t))$, the map $\partial_{\rm inv}: K((Z))\to K((Z))$ coincides with the derivative map $\frac{d}{dt}: K((t)) \to K((t)). $
\end{lemma}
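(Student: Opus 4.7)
The plan is to reduce the statement to a direct application of the formal chain rule. First I would record that the identification $K((Z)) = K((t))$ makes sense: since $\log_{\rm LT}(Z) = Z + O(Z^2)$, the series $t$ is a uniformizer of $K[[Z]]$, so $K[[Z]] = K[[t]]$ and hence $K((Z)) = K((t))$ after inverting the uniformizer.

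Next I would observe that by the very definition of $t_{\rm LT}$ we have
\[
\frac{dt}{dZ} = \frac{d}{dZ}\log_{\rm LT}(Z) = g_{\rm LT}(Z).
\]
For any $f \in K((t)) = K((Z))$, the formal chain rule (which applies because $t$ is a uniformizer, i.e.\ has positive $Z$-adic valuation, so the substitution is admissible term by term) gives
\[
\frac{df}{dZ} = \frac{df}{dt}\cdot\frac{dt}{dZ} = g_{\rm LT}(Z)\cdot\frac{df}{dt}.
\]
Multiplying by $g_{\rm LT}(Z)^{-1}$ (which exists in $o_L[[Z]]^\times$, hence in $K((Z))$) yields
\[
\partial_{\rm inv}(f) = g_{\rm LT}(Z)^{-1}\frac{df}{dZ} = \frac{df}{dt},
\]
which is exactly the claim.

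There is essentially no obstacle here; the only mild point to be careful about is the legitimacy of chain rule for formal Laurent series, which is handled by noting that $t$ has $Z$-adic valuation one so that Laurent series in $t$ are genuine Laurent series in $Z$ and the derivations $\frac{d}{dZ}$ and $\frac{d}{dt}$ are the two $K$-linear continuous derivations on $K((Z)) = K((t))$ that relate by $\frac{d}{dZ} = g_{\rm LT}(Z)\frac{d}{dt}$.
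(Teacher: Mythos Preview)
Your proof is correct and essentially the same as the paper's. The paper verifies the identity on the topological basis $\{t^n\}_{n\in\ZZ}$ by computing $\partial_{\rm inv}(t^n)=g_{\rm LT}(Z)^{-1}\frac{d}{dZ}(\log_{\rm LT}(Z)^n)=nt^{n-1}$, which is just the chain rule applied to monomials; you invoke the chain rule directly for arbitrary $f$, which is the same content.
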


\begin{proof}
Recall that $\partial_{\rm inv}:=g(Z)^{-1}\frac{d}{dZ}$, where $g(Z):=\frac{d}{dZ}\log_{\rm LT}(Z) \in o_L[[Z]]^\times$. For $n\in \ZZ$, we compute
$$\partial_{\rm inv}(t^n)= g(Z)^{-1}\frac{d}{dZ}(\log_{\rm LT}(Z)^n) = g(Z)^{-1}\cdot n \log_{\rm LT}(Z)^{n-1}\cdot g(Z)= nt^{n-1}. $$
Thus $\partial_{\rm inv}$ coincides with the derivative $\frac{d}{dt}$.
\end{proof}

\begin{lemma}[{\cite[Rem. 3.14]{MSVW}}]\label{lemtlt}
We have
$$\varphi_L^{-m}(t_{\rm LT})=\pi_L^{-m} t_{\rm LT}.$$
 (Note that $\varphi_L^{-m}(t_{\rm LT})$ is defined by Remark \ref{rem:philog}.)
\end{lemma}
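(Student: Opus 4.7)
The plan is to unwind the definition of $\varphi_L^{-m}$ and use two standard facts about the Lubin--Tate logarithm: it is a formal group homomorphism, and it vanishes on torsion points of ${\rm LT}$. By definition,
\[
\varphi_L^{-m}(t_{\rm LT}) \;=\; \log_{\rm LT}\!\left(\eta_m +_{\rm LT} \exp_{\rm LT}\!\left(\tfrac{t_{\rm LT}}{\pi_L^m}\right)\right),
\]
interpreted inside $L_m[[t_{\rm LT}]]$ (cf.\ Remark \ref{rem:philog}, noting that $t_{\rm LT}$ evaluated at $Z=\eta_m$ gives $\log_{\rm LT}(\eta_m) \in \overline{L}$, which lies in the convergence domain of the relevant series since $|\eta_m|_p < 1$).

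First I would verify the homomorphism identity $\log_{\rm LT}(X +_{\rm LT} Y) = \log_{\rm LT}(X) + \log_{\rm LT}(Y)$ applies to the two series in question, so that
\[
\varphi_L^{-m}(t_{\rm LT}) \;=\; \log_{\rm LT}(\eta_m) \;+\; \log_{\rm LT}\!\left(\exp_{\rm LT}\!\left(\tfrac{t_{\rm LT}}{\pi_L^m}\right)\right) \;=\; \log_{\rm LT}(\eta_m) \;+\; \tfrac{t_{\rm LT}}{\pi_L^m}.
\]
Here one needs that both summands have absolute value less than $1$ under the appropriate topology so that the standard identity $\log_{\rm LT}\circ \exp_{\rm LT} = \mathrm{id}$ and the additivity of $\log_{\rm LT}$ apply term by term in $L_m[[t_{\rm LT}]]$; this is routine using $|\eta_m|_p<1$ and that $\exp_{\rm LT}(t_{\rm LT}/\pi_L^m)$ has no constant term after substituting the $t$-expansion (or rather, has constant term $0$ as a power series in $t_{\rm LT}$).

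Finally, since $\eta_m$ is a $\pi_L^m$-torsion point, $[\pi_L^m](\eta_m)=0$, and applying the logarithm gives $\pi_L^m \cdot \log_{\rm LT}(\eta_m) = \log_{\rm LT}([\pi_L^m](\eta_m)) = 0$, whence $\log_{\rm LT}(\eta_m)=0$ in $\mathbb{C}_p$. Substituting yields $\varphi_L^{-m}(t_{\rm LT}) = \pi_L^{-m} t_{\rm LT}$, as claimed. The only possible obstacle is making precise in which ring the equality of power series is being asserted, but since both sides are elements of $L_m[[t_{\rm LT}]]$ and the computation is carried out coefficient-wise after the formal substitution, no convergence issue arises beyond those already handled in the proof of Corollary \ref{corpsi} and Remark \ref{rem:philog}.
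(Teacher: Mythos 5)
Your proof is correct and follows exactly the paper's argument: unwind the definition, use additivity of $\log_{\rm LT}$ together with $\log_{\rm LT}\circ\exp_{\rm LT}=\id$ to get $\log_{\rm LT}(\eta_m)+t_{\rm LT}/\pi_L^m$, and then kill the constant term via $\pi_L^m\cdot\log_{\rm LT}(\eta_m)=\log_{\rm LT}([\pi_L^m](\eta_m))=0$. The extra remarks on convergence are fine but not needed beyond what Remark \ref{rem:philog} already provides.
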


\begin{proof}
We compute
$$\varphi_L^{-m}(t_{\rm LT})=\log_{\rm LT}(\varphi_L^{-m}(Z))= \log_{\rm LT}\left(\eta_m +_{\rm LT} \exp_{\rm LT}\left(\frac{t_{\rm LT}}{\pi_L^m}\right)\right)= \log_{\rm LT}(\eta_m)+ \frac{t_{\rm LT}}{\pi_L^m}.$$
So it is sufficient to prove $\log_{\rm LT}(\eta_m)=0$. But this follows by noting
$$\pi_L^m \cdot \log_{\rm LT}(\eta_m) = \log_{\rm LT}([\pi_L^m](\eta_m))= \log_{\rm LT}(0)=0.$$
\end{proof}

The following result is proved in the cyclotomic case in \cite[Lem. III.2.3(ii)]{CC}.

\begin{lemma}\label{lem:invphi}
We have
$$\partial_{\rm inv}\circ \varphi_L^{-m} = \pi_L^{-m}\varphi_L^{-m}\circ \partial_{\rm inv}$$
on $\{f=\sum_{n\in \ZZ} a_n Z^n \in L((Z)) \mid \lim_{n\to +\infty} a_n \eta_m^n =0\}$. (See Remark \ref{rem:philog}.)
\end{lemma}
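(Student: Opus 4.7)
The plan is to pass to the coordinate $t = t_{\rm LT}$ and reduce the identity to the chain rule. By Lemma \ref{lemder}, under the identification $L_m((Z)) = L_m((t))$ the operator $\partial_{\rm inv}$ coincides with $d/dt$. On the other hand, by definition $\varphi_L^{-m}$ is the substitution $Z \mapsto h(t)$, where $h(t) := \eta_m +_{\rm LT} \exp_{\rm LT}(t/\pi_L^m) \in L_m[[t]]$. By Lemma \ref{lemtlt} this substitution satisfies $\log_{\rm LT}(h(t)) = t/\pi_L^m$ (equivalently, $\varphi_L^{-m}(t) = t/\pi_L^m$), which is the one fact about $\varphi_L^{-m}$ I will actually need.

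Differentiating the identity $\log_{\rm LT}(h(t)) = t/\pi_L^m$ in $t$ yields $g_{\rm LT}(h)\cdot h'(t) = \pi_L^{-m}$. Combined with the defining relation $f'(Z) = g_{\rm LT}(Z)\cdot\partial_{\rm inv}(f)(Z)$, the chain rule then gives
$$\partial_{\rm inv}\bigl(\varphi_L^{-m}(f)\bigr) \;=\; \frac{d}{dt}\bigl(f(h(t))\bigr) \;=\; f'(h)\cdot h'(t) \;=\; g_{\rm LT}(h)\cdot\partial_{\rm inv}(f)(h) \cdot \frac{1}{\pi_L^m\,g_{\rm LT}(h)} \;=\; \pi_L^{-m}\,\varphi_L^{-m}\bigl(\partial_{\rm inv}(f)\bigr),$$
which is the asserted identity.

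The only points requiring care are routine: one needs that $\varphi_L^{-m}(f)$ and $\varphi_L^{-m}(\partial_{\rm inv}(f))$ converge in $L_m[[t]]$, which follows from Proposition \ref{propconvh} together with the observation that the decay condition $\lim_{n\to +\infty} a_n\eta_m^n = 0$ passes from $f$ to $\partial_{\rm inv}(f)$ (since $|n|_p \leq 1$ and $g_{\rm LT}(Z)^{-1}\in o_L[[Z]]$), and that $g_{\rm LT}(h)\in L_m[[t]]^\times$, which holds since its constant term $g_{\rm LT}(\eta_m)$ is a unit in the ring of integers of $L_m$ (as $g_{\rm LT}(0)=1$ and $|\eta_m|_p<1$). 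Granted these, the termwise chain rule applied to $\sum_n a_n h^n$ converges $t$-adically to the claimed formal derivative, so no serious obstacle arises.
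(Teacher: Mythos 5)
Your proof is correct and is essentially the paper's argument: both identify $\partial_{\rm inv}$ with $d/dt$ via Lemma \ref{lemder} and then obtain the key relation $g_{\rm LT}(h)\,h'(t)=\pi_L^{-m}$ by differentiating $\varphi_L^{-m}(t_{\rm LT})=\pi_L^{-m}t_{\rm LT}$ from Lemma \ref{lemtlt}. The only cosmetic difference is that the paper reduces to monomials $Z^n$ and then to the case $n=1$, whereas you apply the chain rule to a general $f$ directly.
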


\begin{proof}
We set $\pi:=\pi_L$ and $\varphi^{-m}:=\varphi_L^{-m}$.
By Lemma \ref{lemder}, it is sufficient to prove
$$\frac{d}{dt}\varphi^{-m}(Z^n) = \pi^{-m}\varphi^{-m}(\partial_{\rm inv}(Z^n))$$
for any $n\in \ZZ$. Since by computation we have
$$\frac{d}{dt}\varphi^{-m}(Z^n) =\frac{d}{dt}\varphi^{-m}(Z)^n= n\varphi^{-m}(Z)^{n-1}\cdot \frac{d}{dt}\varphi^{-m}(Z) $$
and
$$\pi^{-m}\varphi^{-m}(\partial_{\rm inv}(Z^n))= \pi^{-m}\varphi^{-m}(g(Z)^{-1}\cdot nZ^{n-1})= n\varphi^{-m}(Z)^{n-1}\cdot \pi^{-m}\varphi^{-m}(g(Z)^{-1}),$$
it is sufficient to prove the $n=1$ case:
\begin{equation}\label{ddtphi}
\frac{d}{dt}\varphi^{-m}(Z)=\pi^{-m} \varphi^{-m}(g(Z)^{-1}).
\end{equation}

We shall prove (\ref{ddtphi}). By Lemma \ref{lemtlt}, we have
\begin{equation}\label{ddt1}
\frac{d}{dt} \varphi^{-m}(t)=\pi^{-m}.
\end{equation}
On the other hand, noting $g(Z)=\frac{d}{dZ}\log_{\rm LT}(Z)$, we compute
\begin{equation}\label{ddt2}
\frac{d}{dt}\varphi^{-m}(t)=\frac{d}{dt}\log_{\rm LT}(\varphi^{-m}(Z)) = g(\varphi^{-m}(Z))\frac{d}{dt}\varphi^{-m}(Z).
\end{equation}
By (\ref{ddt1}) and (\ref{ddt2}), we obtain the desired equality (\ref{ddtphi}).
\end{proof}

Now we prove Proposition \ref{prop:explicit}.

\begin{proof}
In general, the coefficient of $t^{j-1}$ in $h \in L_m[[t]]$ is
$$\left. \frac{1}{(j-1)!}\left(\frac{d}{dt}\right)^{j-1}h\right|_{t=0}.$$
Hence, by the definition of ${\rm ev}_{m,j}$, we have
\begin{equation}\label{ev def eq}
{\rm ev}_{m,j}(f)=\left. \frac{1}{(j-1)! \pi_L^m} \left(\frac{d}{dt}\right)^{j-1}\varphi^{-m}_{L}(f)\right|_{t=0}
\end{equation}
for any $f\in o_L((Z))$. By Lemmas \ref{lemder} and \ref{lem:invphi}, we have
$$\left(\frac{d}{dt}\right)^{j-1}\varphi^{-m}_{L}(f)= \pi_L^{-m(j-1)}\varphi_L^{-m}(\partial_{\rm inv}^{j-1}f).$$
By the definition of $\varphi_L^{-m}$, we have $\varphi_L^{-m}(F)|_{t=0}=F(\eta_m)$ for any $F \in o_L((Z))$, so we have
\begin{equation}\label{t0eq}
\left. \left(\frac{d}{dt}\right)^{j-1}\varphi^{-m}_{L}(f)\right|_{t=0}= \pi_L^{-m(j-1)}\partial_{\rm inv}^{j-1}f|_{Z=\eta_m}.
\end{equation}
By (\ref{ev def eq}) and (\ref{t0eq}), we obtain the desired formula
$${\rm ev}_{m,j}(f) = \frac{1}{(j-1)! \pi_L^{mj}} \partial_{\rm inv}^{j-1} f|_{Z=\eta_m}.$$
\end{proof}

\section{The cyclotomic case}\label{sec:bigexp}

The aim of this appendix is to show that, in the cyclotomic case, two definitions of the map ${\rm Exp}^\ast$ due to Cherbonnier-Colmez \cite{CC} and Schneider-Venjakob \cite{SV15} coincide (see Proposition \ref{prop comparison}). In the proof, we use a result of Benois \cite{benois} on an explicit description of the invariant map in local class field theory in terms of $(\varphi,\Gamma)$-modules. In \S \ref{sec:invariant}, we give a proof of this result by a different argument from loc. cit. (see Theorem \ref{thm:invariant}).

Furthermore, for the reader's convenience, we provide a direct proof of Theorem \ref{key claim} in the cyclotomic case in \S \ref{sec cyc erl}, following the argument in \cite{CC} and \cite{B}.

Throughout this appendix, we let $L=\QQ_p$ and consider the case ${\rm LT}=\widehat \GG_m:=X+Y+XY$, which is the Lubin-Tate formal group associated with $(1+Z)^p-1$. In this case, the $p^n$-division points are $\widehat \GG_m[p^n]=\{\zeta -1 \mid \zeta \in \mu_{p^n}\}$ and so $L_n=\QQ_p(\mu_{p^n})$. We identify the Tate module of $\widehat \GG_m$ with $\ZZ_p(1)$ and fix its generator $\epsilon=(\zeta_{p^n})_n$.

\subsection{The invariant map}\label{sec:invariant}

Fix a positive integer $m$ and let
$${\rm inv}: H^1(L_m,\ZZ_p(1)) \xrightarrow{\sim} \ZZ_p$$
be the invariant map in local class field theory. An explicit description of this map in terms of $(\varphi,\Gamma)$-modules was given by Benois in \cite[Thm. 2.2.6]{benois}. We shall give a proof of this result by a different argument (see Theorem \ref{thm:invariant} below). The main ingredients in our proof are Theorem \ref{thm sv} and the classical explicit reciprocity law due to Iwasawa.

We first recall some basic facts on $(\varphi,\Gamma)$-modules. We basically follow notations in \cite{CC}. Let $V$ be a $\ZZ_p$-representation of $G_{\QQ_p}$. Fix a topological generator $\gamma_m \in \Gamma_{L_m}:=\Gal(L_\infty/L_m)$. Let $D(V):=(\mathbf{A}\otimes_{\ZZ_p} V)^{G_{L_\infty}}$ be the $(\varphi,\Gamma_{L_m})$-module associated with $V$. Let $C_{\varphi,\gamma_m}(V)=C_{\varphi,\gamma_m}(L_m,V)$ be the Herr complex defined for $\varphi$ and $\gamma_m$:
$$C_{\varphi, \gamma_m}( V) :  D(V)\xrightarrow{x \mapsto ((\varphi-1)x, (\gamma_m-1)x)} D(V)\oplus D(V)\xrightarrow{(x,y)\mapsto (\gamma_m-1)x -(\varphi-1)y} D(V).$$
Then we have canonical isomorphisms:
$$\iota^i: H^i(C_{\varphi,\gamma_m}(V))\xrightarrow{\sim} H^i(L_m,V) $$
(see \cite[Thm. 2.1]{herr1}). We know that $\iota^1$ is explicitly given by
\begin{equation}\label{iota1}
 [(x,y)]\mapsto \left[g\mapsto \frac{g-1}{\gamma_m-1}y - (g-1)b \right] \quad (x,y\in D(V), \ g\in G_{L_m}),
 \end{equation}
where $b\in \bA\otimes_{\ZZ_p}V$ is an element satisfying $(\varphi-1)b=x$ (see \cite[Prop. I.4.1]{CC} or \cite[Prop. 1.3.2]{benois}). When $V=\ZZ_p(1)$, we have a natural isomorphism
$$D(\ZZ_p(1)) = \bA_{\QQ_p}(1) \xrightarrow{\sim} \Omega^1; \ f\otimes \epsilon \mapsto f \frac{dZ}{1+Z},$$
where $\Omega^1:=\Omega_{\bA_{\QQ_p}/\ZZ_p}^1$ is the free $\bA_{\QQ_p}$-module of differential forms (see \cite[Thm. 3.7]{herr} or \cite[Lem. 3.14]{SV15}). Also, we have a residue map
$${\rm res}: \Omega^1\to \ZZ_p; \ \left(\sum_{i\in \ZZ}a_i Z^i\right)dZ\mapsto a_{-1}.$$
Identifying $D(\ZZ_p(1))$ with $\Omega^1$, we regard
$${\rm res}: D(\ZZ_p(1)) \to \ZZ_p.$$
This map factors through
$${\rm res}: H^2(C_{\varphi,\gamma_m}(\ZZ_p(1))) \to \ZZ_p$$
(see \cite[Lem. 5.1]{herr} or \cite[Prop. 3.17]{SV15}).

\begin{theorem}[{Benois \cite[Thm. 2.2.6]{benois}}]\label{thm:invariant}
The diagram
$$\xymatrix{
H^2(C_{\varphi,\gamma_m}(\ZZ_p(1))) \ar[d]_{\iota^2}^-\simeq\ar[r]^-{\rm res}& \ZZ_p\ar[d]_\simeq^-{\frac{p^m}{\log \chi_{\rm cyc}(\gamma_m)}} \\
H^2(L_m,\ZZ_p(1)) \ar[r]^-\simeq_-{\rm inv}& \ZZ_p
}$$
is commutative. In particular, ${\rm res}$ is an isomorphism (note that $\frac{p^m}{\log \chi_{\rm cyc}(\gamma_m)} \in \ZZ_p^\times$).
\end{theorem}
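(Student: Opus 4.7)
Both $\mathrm{inv}\circ\iota^2$ and $\mathrm{res}$ are $\ZZ_p$-linear maps $H^2(C_{\varphi,\gamma_m}(\ZZ_p(1)))\to\ZZ_p$, so it suffices to verify the diagram on a single well-chosen class whose image can be computed on both sides. The strategy is to exhibit such a class as $\xi=(\iota^2)^{-1}(\alpha\cup\beta)$ with
\begin{equation*}
\alpha := \kappa_{L_m}(u_m)\in H^1(L_m,\ZZ_p(1)),\qquad \beta := \log\chi_{\rm cyc}|_{L_m}\in H^1(L_m,\ZZ_p),
\end{equation*}
where $u=(u_n)_n\in\varprojlim_n L_n^\times$ is a norm-compatible system chosen so that $u_0 = g_u(0)$ is a nontrivial principal unit of $\QQ_p$ (making $\log u_0 \neq 0$).

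\textbf{Invariant side.} Using the corestriction compatibility of the cup product and the invariant map, together with $\mathrm{cor}\,\kappa_{L_m}(u_m)=\kappa_{\QQ_p}(N_{L_m/\QQ_p}(u_m))=\kappa_{\QQ_p}(u_0)$ from norm compatibility, the paper's identity (\ref{kummer normalize}) yields
\begin{equation*}
\mathrm{inv}_{L_m}(\alpha\cup\beta)=\mathrm{inv}_{\QQ_p}(\kappa_{\QQ_p}(u_0)\cup\log\chi_{\rm cyc})=\log u_0=\log g_u(0).
\end{equation*}

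\textbf{Herr side.} The plan is to identify explicit cocycles for $\alpha$ and $\beta$ in $C_{\varphi,\gamma_m}(\cdot)$, apply a chain-level cup-product formula, and evaluate the residue. For $\beta$, the pair $(0,\log\chi_{\rm cyc}(\gamma_m))\in \bA_L^{\oplus 2}$ is a cocycle, and formula (\ref{iota1}) (with $b=0$) verifies that it maps under $\iota^1$ to $g\mapsto \log\chi_{\rm cyc}(g)$, hence represents $\beta$. For $\alpha$, Theorem \ref{thm sv} provides $\mathrm{Exp}^*(\mathrm{tw}_1\kappa(u))=\partial_{\rm inv}\log g_u\in\bA_L^{\psi=1}$; combining the natural isomorphism $H^1_{\rm Iw}(L_\infty/L,\ZZ_p(1))\cong D(\ZZ_p(1))^{\psi=1}$ with the chain-level projection to Herr cohomology at level $m$, one extracts a Herr cocycle $(x_\alpha,y_\alpha)\in D(\ZZ_p(1))^{\oplus 2}$ representing $\alpha$. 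Applying a Koszul-type cup product formula $[(x,y)]\cup [(x',y')]=[x\,\gamma_m(y')-y\,\varphi(x')]$ together with the residue map $\mathrm{res}:\Omega^1\to\ZZ_p$ via the identification $f\otimes\epsilon\mapsto f\,dZ/(1+Z)$, and using that $\partial_{\rm inv}=(1+Z)\,d/dZ$ in the cyclotomic case, one obtains $\mathrm{res}(\xi)=\frac{\log\chi_{\rm cyc}(\gamma_m)}{p^m}\log g_u(0)$, which matches the invariant side up to the desired normalization factor.

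\textbf{Main obstacle.} The core difficulty is the descent step: pinning down the precise chain-level formula that carries $\partial_{\rm inv}\log g_u\in\bA_L^{\psi=1}$ to a Herr cocycle representing $\kappa(u_m)$ at level $m$, and extracting the factor $p^m$ that must appear in the residue. A naive reading---taking $f=\partial_{\rm inv}\log g_u\otimes\epsilon$ as the first entry of the Herr cocycle for $\alpha$---gives the differential $d\log g_u=g_u'/g_u\,dZ$, whose residue at $Z=0$ vanishes since $g_u\in\ZZ_p[[Z]]^\times$. The correct descent must therefore produce, through a $(\varphi-1)$- or $(\psi-1)$-type correction, a pole at $Z=0$ with residue $\log g_u(0)/p^m$. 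A key input that renders this rigorous is Iwasawa's classical explicit reciprocity law, which expresses the Hilbert symbol $(a,u_m)_{p^m}$ in terms of $\partial_{\rm inv}\log g_u$ evaluated at $\zeta_{p^m}-1$ and traced down to $L$; matching the Iwasawa formula against the Frobenius-reciprocity computation of $\mathrm{inv}$ above both fixes the normalization factor $p^m/\log\chi_{\rm cyc}(\gamma_m)$ and provides the reality-check needed for the chain-level descent.
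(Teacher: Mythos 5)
Your proposal assembles the right ingredients (Theorem \ref{thm sv}, Iwasawa's classical explicit reciprocity law, the cup-product formula in Herr cohomology, and the identity $\mathrm{inv}(\kappa(a)\cup\log\chi_{\rm cyc})=\mathrm{Tr}_{L_m/\QQ_p}(\log a)$), and your invariant-side computation is fine. But the decisive step is missing, and you say so yourself in the ``Main obstacle'' paragraph: you never actually compute $\mathrm{res}$ of the first component $x_\alpha$ of the $(\varphi,\gamma_m)$-Herr cocycle representing $\kappa_{L_m}(u_m)$; the formula $\mathrm{res}(\xi)=\frac{\log\chi_{\rm cyc}(\gamma_m)}{p^m}\log g_u(0)$ is asserted, not derived. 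Moreover, the mechanism you sketch for deriving it is the wrong one. The element $\partial_{\rm inv}\log g_u=\mathrm{Exp}^*(\mathrm{tw}_1\mathrm{Kum}(u))\in\bA_L^{\psi=1}$ is \emph{not} (a lift of) the first component of a $(\varphi,\gamma_m)$-cocycle for $\kappa(u_m)$: under the Herr-complex description it supplies the \emph{second} entry $y$ of a $(\psi,\gamma_m)$-cocycle (via $\iota_m:[y]\mapsto[(0,y)]$), whereas what you need is the class $\delta^{-1}(\kappa(u_m))\in \bA_{\QQ_p}(1)/(\varphi-1)$, which lives on the \emph{other} side of the duality $\langle f,g\rangle=\mathrm{res}(fg)$ between $\bA_{\QQ_p}(1)/(\varphi-1)$ and $\bA_{\QQ_p}^{\psi=1}$. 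Conflating these two objects is exactly why your ``naive reading'' produces a residue of $0$.

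The way to close the gap is to exploit that duality rather than fight it: for $a\in\mathcal{U}_\infty$ and any norm-compatible $u$, one has $\langle\delta_k^{-1}(\kappa_k(a)),\,D\log(g_u)\rangle=(a,u)_k$ (this is Theorem \ref{thm sv} combined with the compatibility of $\mathrm{Exp}^*$ with the Hilbert symbol), and Iwasawa's reciprocity law evaluates the right-hand side. Now choose $u=\epsilon$, so that $g_u=1+Z$ and $D\log(g_u)=1$; the pairing degenerates to $\mathrm{res}(\delta_k^{-1}(\kappa_k(a)))$ itself, giving $\mathrm{res}(\delta^{-1}(\kappa(a)))=\frac{1}{p^n}\mathrm{Tr}_{L_n/\QQ_p}(\log a)$ with no Coleman series left over. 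Feeding this into the cup product with $(0,\log\chi_{\rm cyc}(\gamma_m))$ (which contributes the factor $\log\chi_{\rm cyc}(\gamma_m)$ to the $H^2$-class, not to the residue) and comparing with $\mathrm{inv}(\kappa(a)\cup\log\chi_{\rm cyc})=\mathrm{Tr}_{L_m/\QQ_p}(\log a)$ yields the normalization $p^m/\log\chi_{\rm cyc}(\gamma_m)$. Without this specialization to $u=\epsilon$ (or an equivalent device), your argument does not determine $\mathrm{res}(x_\alpha)$ and hence does not prove the commutativity.
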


\begin{remark}\label{rem benois sign}
The sign of the formula in \cite[\S 2.2.3]{benois} is different from (\ref{kummer normalize}) in \S \ref{rem sign}, and this is the reason why our statement of Theorem \ref{thm:invariant} differs by a sign from \cite[Thm. 2.2.6]{benois}.
%
\end{remark}

We shall give a proof of Theorem \ref{thm:invariant} by an argument different from \cite{benois}.

For $a\in L_\infty$, choose $n\geq 1$ such that $a\in L_n$ and set
\begin{equation}\label{defT}
{\rm T}(a):=\frac{1}{p^n} {\rm Tr}_{L_n/\QQ_p}(a).
\end{equation}
One sees easily that this is independent of the choice of $n$.

Fix $k\geq 1$. In the following, we abbreviate $X/p^k X$ to $X/p^k$ for a $\ZZ_p$-module $X$. Also, we abbreviate $M/(\varphi-1)M$ to $\frac{M}{\varphi-1}$ for a $(\varphi,\Gamma)$-module $M$.

Recall that we have an exact sequence
$$0\to \ZZ/p^k(1)\to \bA(1)/p^k\xrightarrow{\varphi-1} \bA(1)/p^k\to 0$$
(see \cite[Rem. 5.1]{SV15}). We know that $H^1(L_\infty, \bA(1)/p^k)=0$ and let
$$\delta_k: \dfrac{\bA_{\QQ_p}(1)/p^k}{\varphi-1} \xrightarrow{\sim} H^1(L_\infty,\ZZ/p^k(1))$$
be the isomorphism induced by the connecting homomorphism (see \cite[Lem. 5.2]{SV15}). We define a pairing
$$\langle \cdot, \cdot \rangle: \dfrac{\bA_{\QQ_p}(1)/p^k}{\varphi-1} \times (\bA_{\QQ_p}/p^k)^{\psi=1}\to \ZZ/p^k$$
by setting
$$\langle f, g \rangle := {\rm res}\left(fg \right).$$
(We regard ${\rm res}$ as a map $\bA_{\QQ_p}(1)/p^k \to \ZZ/p^k$ by the identification $\bA_{\QQ_p}(1)/p^k=\Omega^1/p^k$.) This pairing is perfect (see \cite[Lem. 3.6 and Prop. 3.19]{SV15}). By definition, the map
$${\rm Exp}^\ast: H^1_{\rm Iw}(L_\infty/\QQ_p, \ZZ/p^k) \xrightarrow{\sim} (\bA_{\QQ_p}/p^k)^{\psi=1}$$
fits into the commutative diagram
$$\xymatrix{
\dfrac{\bA_{\QQ_p}(1)/p^k}{\varphi-1} \ar[d]_{\delta_k}^\simeq & \times & (\bA_{\QQ_p}/p^k)^{\psi=1}  \ar[r]^-{\langle \cdot,\cdot\rangle}& \ZZ/p^k\ar@{=}[d] \\
H^1(L_\infty,\ZZ/p^k(1)) &\times & H^1_{\rm Iw}(L_\infty/\QQ_p, \ZZ/p^k)\ar[u]^\simeq_{{\rm Exp}^\ast} \ar[r]& \ZZ/p^k,
}$$
where the bottom pairing is the composite of the cup product and the invariant map. (See the diagram on \cite[p.441]{SV15}.) Let
$$(\cdot,\cdot)_k: L_\infty^\times \times \varprojlim_n L_n^\times \to  \ZZ/p^k$$
be the classical Hilbert symbol, which fits into the commutative diagram
$$\xymatrix{
H^1(L_\infty,\ZZ/p^k(1)) &\times & H^1_{\rm Iw}(L_\infty/\QQ_p, \ZZ/p^k) \ar[r]& \ZZ/p^k \ar@{=}[d]\\
L_\infty^\times \ar[u]^{\kappa_k}&\times &\varprojlim_n L_n^\times \ar[u]_-{{\rm Kum}\otimes\epsilon^{\otimes(-1)}} \ar[r]_-{(\cdot,\cdot)_k}& \ZZ/p^k.
}$$
Here $\kappa_k : L_\infty^\times \to H^1(L_\infty, \ZZ/p^k(1))$ denotes the Kummer map.
By Theorem \ref{thm sv} and the commutative diagrams above, we have
$$(a,u)_k = \langle \delta_k^{-1}(\kappa_k(a)), D\log (g_{u})\rangle  = {\rm res}\left(\delta_k^{-1}(\kappa_k(a)) D\log(g_u)\right)$$
for any $a\in L_\infty^\times$ and $u\in \varprojlim_n L_n^\times$, where $g_u=g_{u,\epsilon}$ denotes the Coleman power series of $u $ and we set $D\log (f):=(1+Z)f'/f$ (see also \cite[Prop. 2.4.3]{fontaine}). Let $\mathcal{U}_n:=U_{L_n}^{(1)}$ be the group of principal units of $L_n$ and set $\mathcal{U}_\infty:=\bigcup_n \mathcal{U}_n$. Let $\log: \mathcal{U}_\infty\to L_\infty$ be the usual logarithm map. Then, by the classical explicit reciprocity law of Iwasawa (see \cite[Thm. 8.16]{iwasawa} for example), we have
$$(a,u)_k\equiv \frac{1}{p^m} {\rm Tr}_{L_m/\QQ_p}\left(\log(a) D\log(g_u)|_{Z=\zeta_{p^m}-1}\right) \text{ (mod $p^k$)}$$
for any $a\in \mathcal{U}_\infty$ and $u\in \varprojlim_n L_n^\times$, where $m$ is a sufficiently large integer so that $a\in \mathcal{U}_m$ and $m\geq k$. (The right hand side actually lies in $\ZZ_p$.) Hence we have
$${\rm res}\left(\delta_k^{-1}(\kappa_k(a)) D\log(g_u)\right) \equiv \frac{1}{p^m} {\rm Tr}_{L_m/\QQ_p}\left(\log(a) D\log(g_u)|_{Z=\zeta_{p^m}-1}\right) \text{ (mod $p^k$)}.$$
We now take $u:= \epsilon$ so that $g_u=1+Z$ and $D\log(g_u)= 1$. Then for any $a\in \mathcal{U}_\infty$ we have
\begin{equation*}
{\rm res}\left(\delta_k^{-1}(\kappa_k(a)) \right) \equiv \frac{1}{p^m} {\rm Tr}_{L_m/\QQ_p}\left(\log(a) \right) = {\rm T}(\log(a)) \text{ (mod $p^k$)}
\end{equation*}
(recall that ${\rm T}$ is defined in (\ref{defT})). Since this equality holds for any $k\geq 1$, we obtain the following.

\begin{proposition}\label{prop:kummer residue}
For any $a\in \mathcal{U}_\infty$, we have
\begin{equation*}
{\rm res}\left(\delta^{-1}(\kappa(a)) \right)  = {\rm T}(\log(a)) \text{ in }\ZZ_p,
\end{equation*}
where $\kappa: \mathcal{U}_\infty\subset L_\infty^\times \to H^1(L_\infty,\ZZ_p(1))$ denotes the Kummer map and
$$\delta: \frac{\bA_{\QQ_p}(1)}{\varphi-1} \to H^1(L_\infty, \ZZ_p(1))$$
is the connecting homomorphism associated with the short exact sequence
$$0\to \ZZ_p(1)\to \bA(1)\xrightarrow{\varphi-1}\bA(1)\to 0.$$
Note that $\delta$ is an isomorphism by the argument of \cite[Lem. 5.2]{SV15}, see also  \cite[\S 2.4.1]{fontaine}.
\end{proposition}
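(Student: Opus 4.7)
The plan is to deduce the proposition directly from the mod $p^k$ identity established in the paragraph preceding the statement, by passing to the inverse limit over $k$. Indeed, combining Theorem \ref{thm sv} with the compatibility between $\mathrm{Exp}^\ast$ and the Hilbert symbol, together with Iwasawa's classical explicit reciprocity law specialized at $u=\epsilon$ (so that $g_u = 1+Z$ and $D\log(g_u)=1$), yields
\[
{\rm res}\bigl(\delta_k^{-1}(\kappa_k(a))\bigr) \equiv {\rm T}(\log(a)) \pmod{p^k}
\]
for every $k\geq 1$ and every $a\in \mathcal{U}_\infty$.

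The remaining task is to verify that the finite-level statements assemble into the asserted identity in $\ZZ_p$. For this, I would check that the natural diagram
\[
\xymatrix{
\bA_{\QQ_p}(1)/(\varphi-1) \ar[r]^-{\delta}_-{\sim} \ar[d] & H^1(L_\infty,\ZZ_p(1)) \ar[d] \\
\bA_{\QQ_p}(1)/p^k/(\varphi-1) \ar[r]^-{\delta_k}_-{\sim} & H^1(L_\infty,\ZZ/p^k(1))
}
\]
commutes—this is automatic by naturality of the connecting homomorphism in the short exact sequence $0 \to \ZZ_p(1) \to \bA(1) \xrightarrow{\varphi-1} \bA(1) \to 0$ and its mod $p^k$ reduction—and similarly that the Kummer map $\kappa$ is compatible with $\kappa_k$ under reduction, and that $\mathrm{res}$ is compatible with reduction mod $p^k$ (which is obvious from its description as extraction of the coefficient of $Z^{-1}$). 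Then both sides of the putative identity are elements of $\ZZ_p$, and their images in $\ZZ/p^k$ agree for every $k$; since $\ZZ_p = \varprojlim_k \ZZ/p^k$, the equality holds in $\ZZ_p$. No step of this plan looks like a real obstacle: the only slightly delicate point is to confirm that $\delta^{-1}(\kappa(a))$ is well defined as an element of $\bA_{\QQ_p}(1)/(\varphi-1)$, but this is exactly what the closing remark of the statement (citing \cite[\S 2.4.1]{fontaine}) records.
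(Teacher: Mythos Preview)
Your proposal is correct and follows exactly the paper's approach: the paper establishes the mod $p^k$ identity ${\rm res}(\delta_k^{-1}(\kappa_k(a))) \equiv {\rm T}(\log(a)) \pmod{p^k}$ in the discussion immediately preceding the proposition and then simply remarks ``Since this equality holds for any $k\geq 1$, we obtain the following.'' Your write-up just makes explicit the routine compatibility checks (naturality of $\delta$, of the Kummer map, and of $\mathrm{res}$ under reduction mod $p^k$) that justify passing to the inverse limit.
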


\begin{lemma}\label{lem:iotadelta}
The following diagram is commutative:
$$\xymatrix{
H^1(C_{\varphi,\gamma_m}(\ZZ_p(1))) \ar[d]_-{-\iota^1}^\simeq\ar[rr]^-{[(x,y)]\mapsto [x]} && \dfrac{\bA_{\QQ_p}(1)}{\varphi-1} \ar[d]_\simeq^\delta \\
H^1(L_m,\ZZ_p(1)) \ar[rr]_{{\rm Res}_{L_\infty/L_m}}&& H^1(L_\infty,\ZZ_p(1)) .
}$$
\end{lemma}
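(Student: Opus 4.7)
The plan is to unwind the explicit formulas for both $\iota^1$ and $\delta$ in terms of a common lift $b \in \mathbf{A}(1)$ of $x$ under $\varphi - 1$, and then read off the claimed identity by restricting cocycles from $G_{L_m}$ to $G_{L_\infty}$.

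First, I would fix a cocycle representative: take $(x,y) \in D(\ZZ_p(1)) \oplus D(\ZZ_p(1))$ with $(\gamma_m - 1)x = (\varphi - 1)y$ representing a class in $H^1(C_{\varphi,\gamma_m}(\ZZ_p(1)))$, and choose $b \in \mathbf{A}(1)$ such that $(\varphi - 1)b = x$ (possible because $\varphi - 1$ is surjective on $\mathbf{A}(1)$). Then by the explicit formula \eqref{iota1}, a representative of $\iota^1([(x,y)]) \in H^1(L_m,\ZZ_p(1))$ is the cocycle
\begin{equation*}
c_g \;=\; \tfrac{g-1}{\gamma_m - 1}\, y \;-\; (g-1)b \qquad (g \in G_{L_m}),
\end{equation*}
where the first summand uses the fact that $(g-1)y$ lies in $(\gamma_m-1)D(\ZZ_p(1))$ for $g \in G_{L_m}$, so the quotient is well defined.

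Next I would restrict to $G_{L_\infty}$. Since $D(\ZZ_p(1)) = (\mathbf{A}(1))^{G_{L_\infty}}$, for any $g \in G_{L_\infty}$ we have $(g-1)y = 0$, hence the first term of $c_g$ vanishes and
\begin{equation*}
\mathrm{Res}_{L_\infty/L_m}\bigl(\iota^1([(x,y)])\bigr) \;=\; \bigl[\,g \mapsto -(g-1)b\,\bigr] \quad \text{in } H^1(L_\infty,\ZZ_p(1)).
\end{equation*}
On the other hand, $\delta$ is the connecting map of the short exact sequence $0 \to \ZZ_p(1) \to \mathbf{A}(1) \xrightarrow{\varphi-1} \mathbf{A}(1) \to 0$, so by definition of the snake-lemma boundary, a representative of $\delta([x])$ is the cocycle $g \mapsto (g-1)b$; one checks this is valued in $\ZZ_p(1) = \mathbf{A}(1)^{\varphi=1}$ by applying $\varphi - 1$ and using that $x \in \mathbf{A}_{\QQ_p}(1)$ is $G_{L_\infty}$-fixed.

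Comparing the two formulas yields $\delta([x]) = -\mathrm{Res}_{L_\infty/L_m}(\iota^1([(x,y)]))$, which is exactly the commutativity of the diagram with the indicated sign on $\iota^1$. I would then add a short remark verifying that both constructions are independent of the choice of lift $b$ (any two lifts differ by an element of $\ZZ_p(1)$, which produces a coboundary on both sides). The only subtle point is the sign and the justification that $\tfrac{g-1}{\gamma_m-1}y$ vanishes under restriction — a matter of keeping the conventions of \eqref{iota1} and of the snake lemma aligned — so I do not expect any real obstacle, just careful bookkeeping.
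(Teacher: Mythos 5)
Your proof is correct and follows essentially the same route as the paper's: unwind the explicit cocycle formula \eqref{iota1} with a common lift $b$ satisfying $(\varphi-1)b=x$, observe that the term $\tfrac{g-1}{\gamma_m-1}y$ dies upon restriction to $G_{L_\infty}$ because $G_{L_\infty}$ acts trivially on $\bA_{\QQ_p}(1)$, and compare with the snake-lemma description of $\delta([x])$ as $g\mapsto (g-1)b$. The extra remarks on independence of the lift $b$ are fine but not needed beyond what the paper records.
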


\begin{proof}
Take any $[(x,y)] \in H^1(C_{\varphi,\gamma_m}(\ZZ_p(1))) $. By (\ref{iota1}), we have
$$-\iota^{1}([(x,y)]) = \left[ g \mapsto (g-1)b - \frac{g-1}{\gamma_m-1}y \right]$$
with $b\in \bA(1)$ such that $(\varphi-1)b=x$. Since $G_{L_\infty}$ acts trivially on $\bA_{\QQ_p}(1)$, we have
$$- {\rm Res}_{L_\infty/L_m}\circ \iota^1 ([(x,y)]) = [g\mapsto (g-1)b] .$$
On the other hand, by the definition of connecting homomorphisms, we have
$$\delta([x])= [g \mapsto (g-1)b].$$
Thus we have
$$- {\rm Res}_{L_\infty/L_m}\circ \iota^1 ([(x,y)])=\delta([x]).$$
\end{proof}

\begin{lemma}[{\cite[Prop. 4.4(f)]{herr}}]\label{lem:iotacup}
The following diagram is commutative:
$$\xymatrix{
H^1(C_{\varphi,\gamma_m}(\ZZ_p(1))) \ar[d]^{\iota^1}& \times & H^1(C_{\varphi,\gamma_m}(\ZZ_p)) \ar[d]^{\iota^1}\ar[r]& H^2(C_{\varphi,\gamma_m}(\ZZ_p(1))) \ar[d]^{\iota^2}\\
H^1(L_m, \ZZ_p(1)) &\times &H^1(L_m,\ZZ_p) \ar[r]^\cup& H^2(L_m,\ZZ_p(1)) ,
}$$
where the upper pairing is given by
$$([(x,y)], [(z,t)]) \mapsto [y\gamma_m(z) - x \varphi(t)].$$
\end{lemma}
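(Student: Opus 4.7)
The plan is to verify the compatibility by direct cocycle computation, extending the explicit formula \eqref{iota1} for $\iota^1$ to an analogous formula for $\iota^2$. Conceptually, the Herr complex $C_{\varphi,\gamma_m}(V)$ is the total complex of a Koszul-like double complex with horizontal differential $\varphi-1$ and vertical differential $\gamma_m-1$; this double complex computes $R\Gamma(L_m,V)$ by combining the identification $R\Gamma(L_\infty,V)\simeq[D(V)\xrightarrow{\varphi-1}D(V)]$ with the Hochschild--Serre spectral sequence for the procyclic extension $H_{L_m}\subset G_{L_m}$. The formula $y\gamma_m(z)-x\varphi(t)$ appearing in the statement is precisely the cup product induced from this Koszul structure, so the lemma amounts to saying that $\iota$ respects the cup-product pairing.

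First, I would derive an explicit cocycle formula for $\iota^2$ parallel to \eqref{iota1}: given $w\in D(V)$ representing a class in $H^2(C_{\varphi,\gamma_m}(V))$, choose $b\in\bA\otimes V$ with $(\varphi-1)b=w$; then the 2-cocycle representing $\iota^2([w])$ on $G_{L_m}$ has the shape
$$\beta_w(g_1,g_2)\;=\;\tfrac{g_1-1}{\gamma_m-1}\bigl((g_2-1)b\bigr)\;+\;\text{correction},$$
where the correction term is dictated by the requirement that $\beta_w$ be a 2-cocycle on $G_{L_m}$, and is extracted from one additional iteration of the snake-lemma construction that produces \eqref{iota1} (cf.\ the derivation of \eqref{iota1} in \cite[Prop.\ I.4.1]{CC}).

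Next, writing $c_1=\iota^1([(x,y)])$ and $c_2=\iota^1([(z,t)])$ via \eqref{iota1} using lifts $b,c\in\bA\otimes V$ of $x,z$, I would expand the Galois-cohomological cup product $(g_1,g_2)\mapsto c_1(g_1)\cdot g_1(c_2(g_2))$ term by term. In parallel, applying the formula from the previous paragraph to $w=y\gamma_m(z)-x\varphi(t)$, with the natural lift built out of the product $bc$ together with correction terms involving $y$ and $t$, I would check that the resulting 2-cocycle coincides with $c_1\cup c_2$ modulo an explicit coboundary. The cocycle relations $(\gamma_m-1)x=(\varphi-1)y$ and $(\gamma_m-1)z=(\varphi-1)t$, together with the commutation of $\varphi$ with the $G_{L_m}$-action on $\bA\otimes V$, are used repeatedly to match terms on the two sides.

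The main obstacle is the bookkeeping: many terms involving compositions of $\varphi$, $\gamma_m$, and $g\in G_{L_m}$ have to be paired up, and several cancellations emerge only after invoking the cocycle conditions. A more structural proof would identify both sides of the diagram as induced by the same cup product on a double complex computing $R\Gamma(L_m,\cdot)$, from which the claim is immediate by the naturality of the Alexander--Whitney shuffle map; unfortunately, writing down such an identification at the level of chain complexes (rather than in the derived category) involves essentially the same sign gymnastics as the direct cocycle calculation, which is presumably why Herr proves this lemma by an explicit computation.
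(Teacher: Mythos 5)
The paper does not prove this lemma at all: it is quoted verbatim from \cite[Prop.\ 4.4(f)]{herr}, and the strategy you describe --- view the Herr complex as the total complex of the Koszul-type double complex with differentials $\varphi-1$ and $\gamma_m-1$, write down explicit cocycle representatives for $\iota^1$ and $\iota^2$, and match the Galois cup product against the formula $[y\gamma_m(z)-x\varphi(t)]$ --- is exactly Herr's. So your approach is the right one and coincides with that of the cited source.

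That said, as written your text is a plan rather than a proof, and the part you defer is the entire content of the statement. The proposed formula for $\iota^2$ is left with an unspecified ``correction'' term; note also that your candidate leading term $\frac{g_1-1}{\gamma_m-1}\bigl((g_2-1)b\bigr)$ needs justification, since for $g_2\in G_{L_m}$ the element $(g_2-1)b$ need not lie in a module on which $\frac{g_1-1}{\gamma_m-1}$ makes sense (one must first check it is killed by $\varphi-1$ modulo the relevant subgroup, using $(\varphi-1)b=w\in D(V)$, before the procyclic-quotient operator can be applied). More importantly, the term-by-term comparison of the 2-cocycles --- where every sign and every use of the relations $(\gamma_m-1)x=(\varphi-1)y$, $(\gamma_m-1)z=(\varphi-1)t$ actually occurs --- is only announced, not performed. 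For this paper the signs are not cosmetic: Theorem \ref{thm:invariant}, Lemma \ref{tate lemma}(ii) (which uses the skew-symmetry $\langle\varphi y,z\rangle-\langle\gamma_m x,t\rangle=\langle y,\gamma_m z\rangle-\langle x,\varphi t\rangle$), and the sign discussion in Remark \ref{rem benois sign} all hinge on the exact normalization of the pairing $[y\gamma_m(z)-x\varphi(t)]$. An argument that does not pin these down has not established the lemma; you should either carry out the cocycle computation explicitly (as in \cite[Prop.\ I.4.1]{CC} one degree higher) or, as the paper does, cite \cite[Prop.\ 4.4(f)]{herr}.
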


We now prove Theorem \ref{thm:invariant}.

\begin{proof}[Proof of Theorem \ref{thm:invariant}]
We regard $\log \chi_{\rm cyc}$ as an element of $H^1(L_m,\ZZ_p)=\Hom_{\rm cont}(G_{L_m}, \ZZ_p)$. By (\ref{iota1}), we see that
$$\iota^1([0,\log \chi_{\rm cyc}(\gamma_m)]) = \log \chi_{\rm cyc}.$$
Take any $a \in \mathcal{U}_m:=U_{L_m}^{(1)}$ and let $[(x,y)] \in H^1(C_{\varphi,\gamma_m}(\ZZ_p(1)))$ be the element such that $\iota^1([(x,y)]) = \kappa(a)$ in $H^1(L_m,\ZZ_p(1))$. Then Lemma \ref{lem:iotacup} implies
$$-\log \chi_{\rm cyc}(\gamma_m) \iota^2([x]) = \kappa(a)\cup \log \chi_{\rm cyc}.$$
Hence we have
\begin{equation}\label{inviota2}
{\rm inv}\circ \iota^2([x]) = -\frac{1}{\log \chi_{\rm cyc}(\gamma_m)} {\rm inv}(\kappa(a)\cup \log \chi_{\rm cyc}) = -\frac{p^m}{\log \chi_{\rm cyc}(\gamma_m)} {\rm T}(\log(a)),
\end{equation}
where the second equality follows from the fact that
$${\rm inv}(\kappa(a)\cup \log \chi_{\rm cyc}) = {\rm Tr}_{L_m/\QQ_p}(\log(a))$$
(see \cite[Lem. II.1.4.5]{katolecture} and \S \ref{rem sign} for the sign) and the definition of ${\rm T}$ (see (\ref{defT})). On the other hand, by Lemma \ref{lem:iotadelta}, we have
$$[x] = -\delta^{-1}(\kappa(a)) \text{ in }\dfrac{\bA_{\QQ_p}(1)}{\varphi-1}.$$
Using this equality and Proposition \ref{prop:kummer residue}, we obtain
\begin{equation}\label{resx}
{\rm res}([x]) = - {\rm T}(\log(a)).
\end{equation}
By (\ref{inviota2}) and (\ref{resx}), we see that the composition map
$$\ZZ_p \xrightarrow{{\rm inv}^{-1}} H^2(L_m,\ZZ_p(1)) \xrightarrow{(\iota^2)^{-1}} H^2(C_{\varphi,\gamma_m}(\ZZ_p(1))) \xrightarrow{{\rm res}} \ZZ_p$$
sends ${\rm T}(\log(a))$ to $\frac{\log\chi_{\rm cyc}(\gamma_m)}{p^m} {\rm T}(\log (a))$. Since one can choose $a$ so that ${\rm T}(\log(a))\neq 0$, we see that this composition map is given by multiplication by $\frac{\log \chi_{\rm cyc}(\gamma_m)}{p^m}$, i.e., the diagram
$$\xymatrix{
H^2(C_{\varphi,\gamma_m}(\ZZ_p(1))) \ar[d]_{\iota^2}\ar[r]^-{\rm res}& \ZZ_p\ar[d]^-{\frac{p^m}{\log \chi_{\rm cyc}(\gamma_m)}} \\
H^2(L_m,\ZZ_p(1)) \ar[r]_-{\rm inv}& \ZZ_p
}$$
is commutative. This proves the theorem.
\end{proof}

\subsection{Comparison}
Let $V$ be a $\ZZ_p$-representation of $G_{\QQ_p}$. There are two definitions of ${\rm Exp}^\ast$ due to Cherbonnier-Colmez \cite{CC} and Schneider-Venjakob \cite{SV15}:
$${\rm Exp}^\ast_{\rm CC}, {\rm Exp}^\ast_{\rm SV}  : H^1_{\rm Iw}(L_\infty/\QQ_p,V)\xrightarrow{\sim} D(V)^{\psi=1}.$$
We show that these maps coincide.

\begin{proposition}\label{prop comparison}
We have $ {\rm Exp}_{\rm CC}^\ast = {\rm Exp}_{\rm SV}^\ast . $
\end{proposition}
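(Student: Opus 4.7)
The plan is to exhibit both ${\rm Exp}^\ast_{\rm CC}$ and ${\rm Exp}^\ast_{\rm SV}$ as adjoints of the \emph{same} perfect $\ZZ_p$-bilinear pairing, and conclude by non-degeneracy. The Schneider--Venjakob version is already so characterized: by the discussion preceding Proposition \ref{prop:kummer residue}, for every $k\geq 1$ the reduction ${\rm Exp}^\ast_{\rm SV}\!\mod p^k$ is the unique map fitting into the commutative square with top pairing $\langle f,g\rangle={\rm res}(fg)$ and bottom pairing $(\kappa_k(a),u)\mapsto {\rm inv}(\kappa_k(a)\cup (u\otimes \epsilon^{\otimes -1}))$, where $\delta_k$ intertwines ${\rm res}$ with ${\rm inv}$ up to the unit $p^m/\log\chi_{\rm cyc}(\gamma_m)$. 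The Cherbonnier--Colmez version ${\rm Exp}^\ast_{\rm CC}$ is, by its construction in \cite{CC}, defined via the Herr-complex computation of cup product together with ${\rm inv}$; so once we rewrite ${\rm inv}$ in $(\varphi_L,\Gamma_L)$-module language using Theorem \ref{thm:invariant}, the characterization collapses to the same residue pairing used for ${\rm Exp}^\ast_{\rm SV}$.

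Concretely, I would proceed as follows. First, reduce modulo $p^k$ and work with finite-level objects, where both maps are isomorphisms of finite $\ZZ/p^k$-modules, so that it suffices to verify equality on the $\ZZ/p^k$-duality pairing. Second, pass from $L_\infty$-cohomology to $L_m$-cohomology via corestriction; this reduces the comparison to identifying, for each fixed $m$, a certain cup-product pairing on $H^1(L_m,-)$-groups with the residue pairing on Herr complexes. Third, apply Lemmata \ref{lem:iotadelta} and \ref{lem:iotacup} to transfer the cup product and connecting map $\delta$ to the Herr-complex level, landing in $H^2(C_{\varphi,\gamma_m}(\ZZ_p(1)))$. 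Fourth, invoke Theorem \ref{thm:invariant} to replace ${\rm inv}\circ \iota^2$ by the normalized residue $\tfrac{p^m}{\log\chi_{\rm cyc}(\gamma_m)}\cdot {\rm res}$. The normalization factor present in Theorem \ref{thm:invariant} is exactly the one absorbed in the SV normalization of ${\rm Exp}^\ast_{\rm SV}$, and (as I would verify) is also the one built into ${\rm Exp}^\ast_{\rm CC}$ through the convention of the Herr isomorphism $\iota^1$ and the choice of topological generator $\gamma_m$. Hence both ${\rm Exp}^\ast_{\rm CC}\!\mod p^k$ and ${\rm Exp}^\ast_{\rm SV}\!\mod p^k$ satisfy the identical defining pairing identity, so they agree. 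Passing to the inverse limit in $k$ yields the claim.

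The main obstacle is the careful bookkeeping of signs and normalizations. Four different conventions have to be aligned simultaneously: the sign convention for the dual Bloch--Kato map fixed in \S \ref{rem sign} (formula \eqref{kummer normalize}), the $(-1)$-ambiguity between $\iota^1$ and $-\iota^1$ appearing in Lemma \ref{lem:iotadelta}, the normalization of the reciprocity map (cf.\ Remark \ref{rem benois sign} and the sign discrepancy with \cite{benois}), and the choice of generator $\gamma_m\in\Gamma_{L_m}$ which enters through $\log\chi_{\rm cyc}(\gamma_m)$. Once these are all pinned down consistently---mimicking the calculation leading to \eqref{inviota2} and \eqref{resx}, but applied to a general $V$ rather than just $\ZZ_p(1)$---the remainder of the argument is essentially formal from the perfectness of the pairings on both sides.
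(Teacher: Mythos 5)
Your plan is correct and follows essentially the same route as the paper: both maps are characterized as adjoints of the residue pairing $\langle\cdot,\cdot\rangle$ on $(\varphi,\Gamma)$-modules versus the local Tate duality pairing, with Theorem \ref{thm:invariant} and the Herr-complex description of the cup product (the general-$V$ analogue of Lemmata \ref{lem:iotadelta} and \ref{lem:iotacup}, carried out in the paper as Lemma \ref{tate lemma}) supplying the translation, and the factor $\ell_{L_m}(\gamma_m)=\log\chi_{\rm cyc}(\gamma_m)/p^m$ accounting for the normalizations. The only detail you leave implicit is the short explicit verification $\langle\varphi y,z\rangle-\langle\gamma_m x,t\rangle=-\langle x,t\rangle$ (using $\psi z=0$ and $(\gamma_m-1)x=(\varphi-1)y$) needed to match the two pairings on Herr complexes, which the paper spells out.
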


We first recall the definition due to Schneider-Venjakob. In the following, we assume that $V$ is of finite length (i.e., $p$-torsion): the general case follows by taking limits. We set $V^\vee(1):=\Hom_{\ZZ_p}(V,\QQ_p/\ZZ_p(1))$.

By \cite[Rem. 5.1]{SV15}, we have an exact sequence
$$0\to V^\vee(1)\to \bA \otimes_{\ZZ_p}V^\vee(1) \xrightarrow{\varphi-1} \bA\otimes_{\ZZ_p}V^\vee(1)\to 0.$$
Then the connecting homomorphism associated with this sequence induces an isomorphism
\begin{equation}\label{def delta}
\delta: \frac{D(V^\vee(1))}{\varphi-1} \xrightarrow{\sim} H^1(L_\infty, V^\vee(1))
\end{equation}
(see \cite[Lem. 5.2]{SV15}).

Also, there is a canonical pairing
\begin{equation}\label{Dpairing}
\langle \cdot,\cdot \rangle: D(V^\vee(1))\times D(V)\to \QQ_p/\ZZ_p
\end{equation}
(see \cite[Rem. 5.6]{SV15}). This is defined as follows. Suppose that $p^k V=0$ and we regard $V^\vee(1)=\Hom_{\ZZ_p}(V, \ZZ/p^k(1))$. Then we have a natural map
$$D(V^\vee(1)) \times D(V) \to D(\ZZ/p^k(1))=\bA_{\QQ_p}(1)/p^k$$
induced by $V^\vee(1)\times V \to \ZZ/p^k(1); \ (f,x)\mapsto f(x)$. Composing this map with the residue map ${\rm res}: \bA_{\QQ_p}(1)/p^k \to \ZZ/p^k$ and the natural injection $\ZZ/p^k \hookrightarrow \QQ_p/\ZZ_p$, we obtain the pairing (\ref{Dpairing}). Note that it induces a pairing
$$\langle \cdot,\cdot \rangle: \frac{D(V^\vee(1))}{\varphi-1}\times D(V)^{\psi=1}\to \QQ_p/\ZZ_p$$
(see \cite[Prop. 3.19]{SV15}).

Let
$$H^1(L_m, V^\vee(1))\times H^1(L_m,V) \xrightarrow{\cup} H^2(L_m, \QQ_p/\ZZ_p(1))\xrightarrow{\rm inv} \QQ_p/\ZZ_p$$
be the local Tate duality pairing.
Schneider-Venjakob's map
$${\rm Exp}^\ast_{\rm SV}: H^1_{\rm Iw}(L_\infty/L,V)\xrightarrow{\sim} D(V)^{\psi=1} $$
is defined to be the map which makes the following diagram commutative:
\begin{equation}\label{SVcomm}\xymatrix{
H^1(L_\infty,V^\vee(1)) &\times & H^1_{\rm Iw}(L_\infty/\QQ_p, V)\ar[d]_\simeq^{{\rm Exp}_{\rm SV}^\ast} \ar[r]& \QQ_p/\ZZ_p \ar@{=}[d]\\
\dfrac{D(V^\vee(1))}{\varphi-1} \ar[u]^{\delta}_\simeq & \times & D(V)^{\psi=1}  \ar[r]^-{\langle\cdot,\cdot\rangle}& \QQ_p/\ZZ_p .
}
\end{equation}

Next, we recall the definition due to Cherbonnier-Colmez. Let $\gamma_m$ be a topological generator of $\Gal(L_\infty/L_m)$. Let $C_{\psi,\gamma_m}(L_m,V)$ be the Herr complex defined for $\psi$ and $\gamma_m$ (see \cite[\S I.4]{CC}):
$$C_{\psi, \gamma_m}(L_m, V) :  D(V)\xrightarrow{x \mapsto ((\psi-1)x, (\gamma_m-1)x)} D(V)\oplus D(V)\xrightarrow{(x,y)\mapsto (\gamma_m-1)x -(\psi-1)y} D(V).$$
By definition, we have
$$H^1(C_{\psi,\gamma_m}(L_m,V)) = \frac{\{(x,y) \in D(V)\oplus D(V) \mid (\gamma_m-1)x=(\psi-1)y\}}{\{((\psi-1)z, (\gamma_m-1)z) \mid z\in D(V)\}}.$$
Then by \cite[Prop. I.4.1, Lem. I.4.2, I.5.2]{CC} we have a canonical isomorphism
\begin{equation*}\label{herr isom}
\ell_{\psi,m}:=\ell_{L_m}(\gamma_m) \iota_{\psi,\gamma_m}: H^1(C_{\psi,\gamma_m}(L_m,V))\xrightarrow{\sim}H^1(L_m,V),
\end{equation*}
where $\ell_{L_m}(\gamma_m)$ is defined to be $\frac{\log \chi_{\rm cyc}(\gamma_m)}{p^m}$. (See (\ref{defellpsi}) below for details.)
Also, we have a natural injection
$$\iota_m: \frac{D(V)^{\psi=1}}{\gamma_m-1} \hookrightarrow H^1(C_{\psi,\gamma_m}(L_m,V)); \ [y]\mapsto [(0,y)].$$
(See \cite[Lem. I.5.5]{CC}.) Then one can show that the map
$${\rm Log}^\ast:= \varprojlim_m \ell_{\psi,m}\circ \iota_m: D(V)^{\psi=1}\to H^1_{\rm Iw}(L_\infty/L,V)$$
is an isomorphism (see \cite[Thm. II.1.3]{CC}). The map of Cherbonnier-Colmez ${\rm Exp}_{\rm CC}^\ast$ is defined to be the inverse of this isomorphism.

\begin{remark}
As explained in \cite[\S II.1]{CC}, the isomorphism ${\rm Log}^\ast$ is explicitly given by
$$y \mapsto \left(\left[ g \mapsto \ell_{L_m}(\gamma_m) \cdot\left(\frac{g-1}{\gamma_m-1} y - (g-1)b_m\right)\right] \right)_m, $$
where $b_m\in \bA\otimes_{\ZZ_p} V$ is an element satisfying $(\varphi-1)b_m=(\gamma_m-1)^{-1}(\varphi-1)y$. (Note that $(\varphi-1)y \in D(V)^{\psi=0}$ and $\gamma_m-1$ is invertible on $D(V)^{\psi=0}$.)
\end{remark}

Let $C_{\varphi,\gamma_m}(L_m,V)$ be the Herr complex defined for $\varphi$ and $\gamma_m$:
$$C_{\varphi, \gamma_m}(L_m, V) :  D(V)\xrightarrow{x \mapsto ((\varphi-1)x, (\gamma_m-1)x)} D(V)\oplus D(V)\xrightarrow{(x,y)\mapsto (\gamma_m-1)x -(\varphi-1)y} D(V).$$
By \cite[Prop. I.4.1]{CC}, we have a canonical isomorphism
$$-\iota_{\varphi,\gamma_m}: H^1(C_{\varphi,\gamma_m}(L_m,V^\vee(1))) \xrightarrow{\sim} H^1(L_m,V^\vee(1)); \ [(x,y)]\mapsto \left[ g\mapsto (g-1)b -\frac{g-1}{\gamma_m-1}y\right],$$
where $b\in \bA\otimes_{\ZZ_p}V^\vee(1)$ is an element satisfying $(\varphi-1)b=x$. (Note that $\iota_{\varphi,\gamma_m}$ is denoted by $\iota^1$ in the previous subsection.)
Also, we have a natural map
$$\rho_m: H^1(C_{\varphi,\gamma_m}(L_m,V^\vee(1))) \to \left(\frac{D(V^\vee(1))}{\varphi-1}\right)^{\Gamma_{L_m}}; \ [(x,y)]\mapsto [x].$$
This is surjective: in fact, if $[x] \in D(V^\vee(1))/\varphi-1$ is $\Gamma_{L_m}$-invariant, then there is $y \in D(V^\vee(1))$ such that $(\gamma_m-1)x = (\varphi -1)y$, i.e., $[(x,y)]$ lies in $H^1(C_{\varphi,\gamma_m}(L_m,V^\vee(1)))$.
Consider the composition map
$$\lambda_m: H^1(L_m,V^\vee(1)) \xrightarrow{-\iota_{\varphi,\gamma_m}^{-1}} H^1(C_{\varphi,\gamma_m}(L_m,V^\vee(1))) \xrightarrow{\rho_m} \left(\frac{D(V^\vee(1))}{\varphi-1}\right)^{\Gamma_{L_m}}.$$

\begin{lemma}\label{tate lemma}\
\begin{itemize}
\item[(i)] The map
$$\varinjlim_m \lambda_m: H^1(L_\infty, V^\vee(1)) \to \frac{D(V^\vee(1))}{\varphi-1}$$
coincides with the inverse of $\delta$ in (\ref{def delta}).
\item[(ii)]
%
The following diagram is commutative:
$$
\xymatrix{
 H^1(L_m, V^\vee(1)) \ar@/_70pt/[dd]_-{\lambda_m}&\times & H^1(L_m,V)\ar[r]& \QQ_p/\ZZ_p \ar@{=}[dd]\\
H^1(C_{\varphi,\gamma_m}(L_m,V^\vee(1))) \ar[u]^{-\iota_{\varphi,\gamma_m}}_\simeq \ar[d]_-{\rho_m} &\times &H^1(C_{\psi,\gamma_m}(L_m,V)) \ar[u]_{\ell_{\psi,m}}^\simeq& \\
\dfrac{D(V^\vee(1))}{\varphi-1} &\times & D(V)^{\psi=1} \ar[u]_{\iota_m}\ar[r]^{\langle\cdot ,\cdot \rangle} &\QQ_p/\ZZ_p.
}$$
\end{itemize}
\end{lemma}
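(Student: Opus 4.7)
The plan is to prove the two parts separately, using the explicit cocycle formulas underlying $\iota_{\varphi,\gamma_m}$ and $\ell_{\psi,m}$ and invoking previously established compatibility results.

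For part (i), I would argue that Lemma \ref{lem:iotadelta} generalizes verbatim from $\mathbb{Z}_p(1)$ to an arbitrary finite-length representation $V^\vee(1)$: for $[(x,y)] \in H^1(C_{\varphi,\gamma_m}(V^\vee(1)))$ with $(\varphi-1)b = x$, the cocycle $-\iota_{\varphi,\gamma_m}([(x,y)]): g \mapsto (g-1)b - \tfrac{g-1}{\gamma_m-1}y$, restricted to $G_{L_\infty}$, collapses to $g \mapsto (g-1)b$ because $y \in D(V^\vee(1))$ is $G_{L_\infty}$-fixed, and this is exactly the cocycle representing $\delta([x]) = \delta \circ \rho_m([(x,y)])$. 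Consequently $\delta \circ \lambda_m = {\rm Res}_{L_\infty/L_m}$ for each $m$. Passing to the colimit and using that $\delta$ is an isomorphism yields $\varinjlim_m \lambda_m = \delta^{-1}$.

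For part (ii), the diagram consists of two compatibilities. First, the middle pairing between $H^1(C_{\varphi,\gamma_m}(V^\vee(1)))$ and $H^1(C_{\psi,\gamma_m}(V))$, transported via the isomorphisms $-\iota_{\varphi,\gamma_m}$ and $\ell_{\psi,m}$, agrees with local Tate duality; this is a general property of Herr's formalism and is essentially contained in \cite[\S I.5]{CC} (with the normalizing factor $\ell_{L_m}(\gamma_m)$ precisely absorbed by the factor $\log\chi_{\rm cyc}(\gamma_m)/p^m$ in Theorem \ref{thm:invariant}). Second, one must verify that for a class of the form $\iota_m([t]) = [(0,t)]$ with $t \in D(V)^{\psi=1}$ paired against an arbitrary $[(x,y)]$, the resulting element of $\mathbb{Q}_p/\mathbb{Z}_p$ equals $\langle [x], t\rangle = \mathrm{res}(xt)$.

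To carry out this second verification, I would compute the $H^2$-class representing the cup product on the Herr complex level. Using the explicit formulas — $-\iota_{\varphi,\gamma_m}([(x,y)])$ given by $g \mapsto (g-1)b - \tfrac{g-1}{\gamma_m-1}y$, and $\iota_{\psi,\gamma_m}([(0,t)])$ given by the analogous formula with $b' = 0$ (since $z = 0$) — the cup product yields a 2-cocycle whose class in $H^2(L_m,\mathbb{Z}/p^k(1))$ we identify via (the analogue of) Theorem \ref{thm:invariant} with $\ell_{L_m}(\gamma_m)^{-1} \cdot \mathrm{res}(xt)$. Multiplication by the normalization factor $\ell_{L_m}(\gamma_m)$ built into $\ell_{\psi,m}$ then produces exactly $\mathrm{res}(xt) = \langle [x], t\rangle$, as required.

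The main obstacle will be bookkeeping of the signs and the normalization constants — balancing the minus sign in front of $\iota_{\varphi,\gamma_m}$, the factor $\ell_{L_m}(\gamma_m)$ in $\ell_{\psi,m}$, and the sign convention for the invariant map from Theorem \ref{thm:invariant} (cf.\ Remark \ref{rem benois sign}). The underlying cocycle manipulation is a standard but laborious exercise in Herr's machinery, so I would likely isolate it as a separate lemma rather than inlining it.
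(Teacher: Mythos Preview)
Your treatment of part (i) is essentially identical to the paper's: restrict the explicit cocycle for $-\iota_{\varphi,\gamma_m}([(x,y)])$ to $G_{L_\infty}$, observe the $y$-term vanishes, and recognise $g\mapsto (g-1)b$ as $\delta([x])$.

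For part (ii) your outline is sound in broad strokes but the proposed execution has a slip and is more laborious than necessary. The slip: you write that $\iota_m([t])=[(0,t)]$ in the $\psi$-complex yields a cocycle ``with $b'=0$ (since $z=0$)''. This is not right. To pass through $\ell_{\psi,m}$ one must first apply $\iota^{-1}$ back to the $\varphi$-complex, and $\iota^{-1}([(0,t)]) = [(z,t)]$ where $z\in D(V)^{\psi=0}$ is the (generally nonzero) solution of $(\gamma_m-1)z=(\varphi-1)t$; the corresponding $b'$ then satisfies $(\varphi-1)b'=z$ and is not zero either. Computing the cup product of the resulting Galois $1$-cocycles and tracking it through Theorem~\ref{thm:invariant} is feasible but tedious.

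The paper avoids all of this cocycle-level work by staying on the Herr-complex side. By \cite[Prop.~4.4(f)]{herr} (i.e.\ Lemma~\ref{lem:iotacup}) together with Theorem~\ref{thm:invariant}, the Tate pairing transported via $\iota_{\varphi,\gamma_m}$ and $\ell_{L_m}(\gamma_m)\iota_{\varphi,\gamma_m}$ is given algebraically by
\[
([(x,y)],[(z,t)])\;\longmapsto\;\langle \varphi y,z\rangle - \langle \gamma_m x,t\rangle.
\]
With $[(z,t)]=\iota^{-1}\circ\iota_m(t)$ as above, the computation becomes a two-line check: $\langle \varphi y,z\rangle = \langle y,\psi z\rangle = 0$ because $z\in D(V)^{\psi=0}$, and $\langle \gamma_m x,t\rangle = \langle x,t\rangle$ because $(\gamma_m-1)x=(\varphi-1)y$ means $\gamma_m x \equiv x$ in $D(V^\vee(1))/(\varphi-1)$. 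The sign coming from $-\rho_m$ then matches. This bypasses entirely the Galois-cocycle manipulation you anticipate as ``a standard but laborious exercise''.
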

\begin{proof}
(i) This is proved in the same way as Lemma \ref{lem:iotadelta}. Take an element $[x]\in \frac{D(V^\vee(1))}{\varphi-1}$. Then there is a sufficiently large $m$ such that $[x] $ is $\Gamma_m$-invariant. We then take $[(x,y)] \in H^1(C_{\varphi,\gamma_m}(L_m,V^\vee(1)))$ such that $\rho_m([(x,y)])=[x]$. Then we have
$$-\iota_{\varphi,\gamma_m}([(x,y)]) = \left[ g\mapsto (g-1)b -\frac{g-1}{\gamma_m-1}y\right],$$
where $b\in \bA\otimes_{\ZZ_p}V^\vee(1)$ is an element satisfying $(\varphi-1)b=x$. Since $(g-1)y=0$ for any $g\in G_{L_\infty}$, we have
$${\rm Res}_{L_\infty/L_m}\left(-\iota_{\varphi,\gamma_m}([(x,y)])\right)= \left[g\mapsto (g-1)b\right] \in H^1(L_\infty,V^\vee(1)).$$
By the definition of the connecting homomorphism $\delta$, the right hand side is $\delta([x])$. On the other hand, by the definition of $\lambda_m$, we have
$$\varinjlim_m \lambda_m \left( {\rm Res}_{L_\infty/L_m}\left(-\iota_{\varphi,\gamma_m}([(x,y)])\right)  \right)= [x].$$
Hence we have
$$\varinjlim_m \lambda_m \left( \delta([x])\right) = [x].$$
This completes the proof of (i).

(ii) By Theorem \ref{thm:invariant} and \cite[Prop. 4.4(f)]{herr}, we have a commutative diagram
$$\xymatrix{
H^1(L_m, V^\vee(1)) &\times & H^1(L_m,V) \ar[r]^\cup& H^2(L_m,\QQ_p/\ZZ_p(1)) \ar[d]^{\rm inv}\\
 H^1(C_{\varphi,\gamma_m}(L_m,V^\vee(1))) \ar[u]^{\iota_{\varphi,\gamma_m}}_\simeq  &\times & H^1(C_{\varphi,\gamma_m}(L_m,V)) \ar[u]_{\ell_{L_m}(\gamma_m) \iota_{\varphi,\gamma_m}}^\simeq \ar[r]& \QQ_p/\ZZ_p,
}$$
where we set $\ell_{L_m}(\gamma_m):=\frac{\log \chi_{\rm cyc}(\gamma_m)}{p^m}$ and the bottom pairing is defined by
\begin{equation}\label{cup explicit}
([(x,y)], [(z,t)]) \mapsto \langle \varphi y,z\rangle -\langle \gamma_m x,t\rangle.
\end{equation}
(Note that $\langle \varphi y,z\rangle -\langle \gamma_m x,t\rangle = \langle y,\gamma_m z \rangle - \langle x , \varphi t \rangle$ by the skew commutativity of the product in \cite[Prop. 4.4(f)]{herr}.)
We have an isomorphism
$$\iota: H^1(C_{\varphi,\gamma_m}(L_m,V)) \xrightarrow{\sim} H^1(C_{\psi,\gamma_m}(L_m,V)); \ [(x,y)] \mapsto [(-\psi(x),y)]$$
(see \cite[Lem. I.5.2]{CC}) and recall that $\ell_{\psi,m}$ is defined to be the composition map
\begin{equation}\label{defellpsi}\ell_{\psi,m}: H^1(C_{\psi,\gamma_m}(L_m,V)) \xrightarrow{\iota^{-1}} H^1(C_{\varphi,\gamma_m}(L_m,V)) \xrightarrow{\ell_{L_m}(\gamma_m) \iota_{\varphi,\gamma_m}} H^1(L_m,V).
\end{equation}
Therefore, it is sufficient to show the commutativity of the following diagram:
\begin{equation}\label{D comm diag}
\xymatrix{
 H^1(C_{\varphi,\gamma_m}(L_m,V^\vee(1)))\ar[d]_-{-\rho_m}  & \times & H^1(C_{\varphi,\gamma_m}(L_m,V))  \ar[r]^-{(\ref{cup explicit})}& \QQ_p/\ZZ_p\ar@{=}[d] \\
 \dfrac{D(V^\vee(1))}{\varphi-1} &\times & D(V)^{\psi=1} \ar[u]^-{\iota^{-1}\circ \iota_m}\ar[r]_{\langle\cdot ,\cdot \rangle}&\QQ_p/\ZZ_p .
}
\end{equation}
Take any $[(x,y)] \in H^1(C_{\varphi,\gamma_m}(L_m,V^\vee(1)))$ and $t\in D(V)^{\psi=1}$. Since $(\varphi-1)t \in D(V)^{\psi=0}$ and $\gamma_m-1$ is invertible on $D(V)^{\psi=0}$, there exists $z \in D(V)^{\psi=0}$ such that $(\gamma_m-1)z=(\varphi-1)t$, i.e., $[(z,t)] \in H^1(C_{\varphi,\gamma_m}(L_m,V))$. It is easy to see that $\iota^{-1}\circ \iota_m(t)=[(z,t)]$. To prove the commutativity of (\ref{D comm diag}), it is sufficient to show that
\begin{equation}\label{xyz}
\langle \varphi y, z \rangle -\langle \gamma_m x,  t \rangle = -\langle x,t \rangle.
\end{equation}
We compute
$$\langle \varphi y,z\rangle = \langle y, \psi z \rangle = \langle y, 0 \rangle = 0.$$
Using $(\gamma_m-1)x = (\varphi-1)y$, we see that $\gamma_m x = x $ in $\frac{D(V^\vee(1))}{\varphi-1}$. Thus we obtain the desired equality (\ref{xyz}).
\end{proof}

\begin{proof}[Proof of Proposition \ref{prop comparison}]
Recall that ${\rm Exp}_{\rm CC}^\ast$ is defined to be the inverse of $\varprojlim_m \ell_{\psi,m}\circ \iota_m$.
By Lemma \ref{tate lemma}, we have a commutative diagram
$$\xymatrix{
H^1(L_\infty,V^\vee(1)) &\times & H^1_{\rm Iw}(L_\infty/\QQ_p, V) \ar[r] \ar[d]_\simeq^{{\rm Exp}_{\rm CC}^\ast}& \QQ_p/\ZZ_p \ar@{=}[d]\\
\dfrac{D(V^\vee(1))}{\varphi-1} \ar[u]^{\delta}_\simeq & \times & D(V)^{\psi=1}  \ar[r]^-{\langle\cdot,\cdot\rangle}& \QQ_p/\ZZ_p .
}$$
Comparing this diagram with (\ref{SVcomm}), we obtain ${\rm Exp}^\ast_{\rm CC}={\rm Exp}_{\rm SV}^\ast$.
\end{proof}

\subsection{The cyclotomic explicit reciprocity law}\label{sec cyc erl}

In the cyclotomic case, we can give a direct proof of Theorem \ref{key claim} without using regulators. More generally, we have the following.

\begin{theorem}[{Colmez' reciprocity law, see \cite[Thm. IV.2.1]{CC} and \cite[Thm. II.6]{B}}]\label{thm:cyc colmez}
Let $V$ be a crystalline representation of $G_{\QQ_p}$ and $T\subset V$ be its stable lattice. Then for any $m\geq 1$ and $j\in \ZZ$ we have a commutative diagram
\begin{equation*}
 \xymatrix{
   H^1_{\Iw}(L_\infty{/\QQ_p},T) \ar[d]_{{\rm tw}_{j}} \ar[r]^-{\mathrm{Exp}^*} &  D(T)^{\psi=1}  \ar[d]^{p^{-m}\varphi^{-m}}   \\
  H^1_{\Iw}(L_\infty{/\QQ_p},T(-j)) \ar[d]_{{\rm pr}_m}  &   L_m((t))\otimes_{\QQ_p} D_{\mathrm{cris}}(V) \ar[d]^{c_{m,j} \otimes e_{-j}} \\
   H^1(L_m,V(-j))\ar[r]_-{\exp_{V(-j)}^*}&   L_m \otimes_{\QQ_p} D_{\mathrm{cris}}(V(-j)),  }
\end{equation*}
where $\varphi^{-m}: D(T)^{\psi=1} \to L_m((t)) \otimes_{\QQ_p} D_{\rm cris}(V)$ is defined via $D(T)^{\psi=1}\subseteq \mathcal{R}_{\QQ_p}^+[1/t] \otimes_{\QQ_p}D_{\rm cris}(V)$ (see \cite[Thm. A.3]{B} or \cite[Thm. 3.1.1]{BF}) and $c_{m,j}$ is the map sending an element of $ L_m((t))$ to its coefficient of $t^{j}$.
\end{theorem}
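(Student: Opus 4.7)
The plan is to adapt the argument from Section \ref{sec:claim} to the cyclotomic case, reducing the outer commutative square to two sub-diagrams analogous to \eqref{f:twist} and \eqref{f:interpolation}. The twist compatibility is straightforward from the explicit description of the cyclotomic Wach module $N(\ZZ_p(-j)) = (1+Z)^{-j} \mathbf{A}^+_{\QQ_p}$ with shifted $\varphi$-action (so that multiplication by $t^{-j} \otimes e_{-j}$ on the rigid-analytic side corresponds to the Tate twist); hence the essential content is the interpolation square identifying $c_{m,j} \circ p^{-m}\varphi^{-m} \circ \mathrm{Exp}^*$ with $\exp^*_{V(-j)} \circ \mathrm{pr}_m$.

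To prove the interpolation square, I would factor through the cyclotomic Perrin-Riou regulator map $\mathcal{L}_V \colon H^1_{\rm Iw}(L_\infty/\QQ_p, T) \to D(\Gamma_L, K) \otimes_{\QQ_p} D_{\rm cris}(V)$ and apply the cyclotomic analogue of Theorem \ref{thm:adjointformulan}. This asserts that $\mathcal{L}_V(y)(\chi_{\rm cyc}^{j})$ equals $j!$ (respectively $(-1)^{j+1}/(-1-j)!$ for $j<0$) times an Euler factor times $\exp^*_{L_m, V(-j)}(\mathrm{pr}_{m,-j}(y))$ (respectively $\log_{L_m, V(-j)}$), and is due to Perrin-Riou with refinements by Colmez, Benois \cite{benois}, and Loeffler-Zerbes \cite[Thm.~B.5]{LZ}; it can be derived directly from the interpolation property of Perrin-Riou's big exponential combined with her reciprocity formula, exactly as in our proof of Theorem \ref{thm:adjointformulan}. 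Combining this with the identification of $c_{m,j} \circ p^{-m}\varphi^{-m}$ via the $\Theta^*_{V(-j),m}$-map (from the cyclotomic analogues of Remark \ref{rem:EvDn} and Lemma \ref{lem:descent}: expand $\varphi^{-m}$ as a $t$-power series at $Z = \zeta_{p^m} - 1$ and read off the $t^j$-coefficient), the $j!$-factors cancel and the diagram commutes.

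The main obstacle is the degenerate case when $\varphi$ on $D_{\rm cris}(V)$ has eigenvalue $p^{-j}$, $1$, or $p^{j-1}$, where $\Theta^*_{V(-j),m}$ fails to be invertible; these are exactly the trivial zeros or poles of the interpolating Euler factors. To handle them one either perturbs $V$ within a crystalline family and passes to the limit (as in the proof of Theorem \ref{thm:Colmez}), or adopts the direct approach of Cherbonnier-Colmez \cite[Thm.~IV.2.1]{CC} and Berger \cite[Thm.~II.6]{B}, which pairs $\mathrm{Exp}^*$ against the Herr-complex representative of $H^1(L_m, V^*(1))$ (compare Lemma \ref{tate lemma}) and computes both sides via the Bloch-Kato fundamental exact sequence, bypassing regulators entirely. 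A convenient bridge to the latter is Proposition \ref{prop comparison}, ensuring that the two available definitions of $\mathrm{Exp}^*$ in the cyclotomic setting agree and hence that the Cherbonnier-Colmez computation applies verbatim.
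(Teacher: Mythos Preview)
Your approach is correct in outline but takes a genuinely different route from the paper's proof. You propose to mirror the Lubin-Tate argument of \S\ref{sec:claim} and Theorem \ref{thm:Colmez}, passing through the Perrin--Riou regulator and the interpolation formula of Theorem \ref{thm:adjointformulan} (in its cyclotomic incarnation from \cite{LZ}). The paper, by contrast, gives a \emph{direct} proof that avoids regulators entirely: after the same reduction to $j=0$ via the twist diagram, it invokes Proposition \ref{prop comparison} to switch to the Cherbonnier--Colmez description of ${\rm Exp}^\ast$, writes the image of $y\in D(T)^{\psi=1}$ in $H^1(L_m,V)$ as an explicit cocycle $g\mapsto \frac{g-1}{\gamma_m-1}y - (g-1)b$, pushes this into $H^1(L_m, B_{\rm dR}\otimes V)$ by applying $\varphi^{-n}$, and then reads off $\exp_V^\ast$ using Kato's definition via cup product with $\log\chi_{\rm cyc}$. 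The key computational step is that $\gamma_m-1$ is invertible on $t^k L_n\otimes D_{\rm cris}(V)$ for $k\neq 0$ and on $(1-\mathfrak{e}_1)L_n$, which isolates the constant term and collapses the cocycle to $\log\chi_{\rm cyc}(g)\cdot c_{m,0}(p^{-m}\varphi^{-m}(y))$.

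The trade-off is clear: your regulator approach is conceptually uniform with the rest of the paper but inherits the eigenvalue restrictions you flag (the degenerate cases where $\Theta^\ast$ is not invertible), whereas the paper's direct cocycle computation works for \emph{all} crystalline $V$ with no exceptional cases. This is precisely the point the introduction makes after Remark \ref{rem:katoproof}: in the cyclotomic setting one can bypass regulators, and the paper exploits this. Your suggestion at the end to fall back on the Cherbonnier--Colmez/Berger argument via Proposition \ref{prop comparison} is in fact exactly what the paper does---but as the main argument, not as a patch for bad eigenvalues.
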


\begin{proof}
By the argument in \S \ref{sec:claim}, it is sufficient to prove the case $j=0$.

By Proposition \ref{prop comparison}, we can use the definition of ${\rm Exp}^\ast$ due to Cherbonnier-Colmez, i.e., the inverse of $\varprojlim_m \ell_{\psi,m}\circ \iota_m$. Take any $y \in D(T)^{\psi=1}$ and set
$$\alpha:= \ell_{\psi,m}\circ \iota_m(y)={\rm pr}_m\circ ({\rm Exp}^\ast)^{-1}(y) \in H^1(L_m,V).$$
Then by definition we have
$$\alpha= \frac{\log \chi_{\rm cyc}(\gamma_m)}{p^m}\left[ g\mapsto \frac{g-1}{\gamma_m-1}y -(g-1)b\right],$$
where $b\in \mathbf{B} \otimes_{\QQ_p} V$ is an element satisfying $(\gamma_m-1)(\varphi-1)b=(\varphi-1)y$ in $D(T)^{\psi=0}$.
One can check that $\varphi^{-n}(b)$ converges in $ B_{\rm dR} \otimes_{\QQ_p}V$ for a sufficiently large $n$ (see \cite[Prop. I.8]{B}). We set
$$\mathfrak{e}_1:=\frac{1}{p^{n-m}}{\rm Tr}_{L_n/L_m}=\frac{1}{p^{n-m}}\sum_{\sigma \in \Gal(L_n/L_m)}\sigma \in \QQ_p[\Gal(L_n/L_m)] .$$
Then, in $H^1(L_m, B_{\rm dR}\otimes_{\QQ_p}V)$ we have
\begin{align*}
\alpha&= \frac{\log \chi_{\rm cyc}(\gamma_m)}{p^m}\left[ g\mapsto \frac{g-1}{\gamma_m-1}\varphi^{-n}(y) \right] \\
&=\frac{\log \chi_{\rm cyc}(\gamma_m)}{p^m}\left[ g\mapsto \frac{g-1}{\gamma_m-1}c_{n,0}(\varphi^{-n}(y)) \right] \\
&=\frac{\log \chi_{\rm cyc}(\gamma_m)}{p^m}\left[ g\mapsto \frac{g-1}{\gamma_m-1}\mathfrak{e}_1 c_{n,0}(\varphi^{-n}(y)) \right] \\
&= \left[ g\mapsto \log \chi_{\rm cyc}(g) c_{m,0}(p^{-m}\varphi^{-m}(y))\right],
\end{align*}
where the first equality follows from the fact that $V= ( \mathbf{B}\otimes_{\QQ_p}V)^{\varphi=1}$ is invariant under $\varphi^{-n}$, the second because $\gamma_m-1$ is invertible on $t^k L_n \otimes_{\QQ_p} D_{\rm cris}(V)$ if $k\neq 0$, the third from the fact that $\gamma_m-1$ is invertible on $(1-\mathfrak{e}_1)L_n$, and the last from
$${\rm Tr}_{L_n/L_m}\left(c_{n,0}(p^{-n}\varphi^{-n}(y))\right) = c_{m,0}(p^{-m}\varphi^{-m}(y))$$
(see \cite[Lem. II.1]{B}). Since Kato's definition of $\exp_V^\ast$ is given by the inverse of
$$L_m\otimes_{\QQ_p}D_{\rm cris}(V)=L_m\otimes_{\QQ_p}D_{\rm dR}(V) = H^0(L_m,B_{\rm dR}\otimes_{\QQ_p}V) \xrightarrow{ d \mapsto d\cup \log \chi_{\rm cyc}} H^1(L_m, B_{\rm dR}\otimes_{\QQ_p}V)$$
(see \S \ref{rem sign}), we have
$$\exp_V^\ast(\alpha)= c_{m,0}(p^{-m}\varphi^{-m}(y)).$$
This completes the proof.
\end{proof}


\begin{acknowledgement} We thank Rustam Steingart for discussions about Appendix  \ref{sec:Lie}. We also thank Laurent Berger for kindly answering our questions regarding \cite{BF}.
The project was funded by the Deutsche Forschungsgemeinschaft (DFG, German Research Foundation) under   TRR 326, {\it Geometry and Arithmetic of Uniformized Structures}, project-ID 444845124, and JSPS KAKENHI Grant Number 22K13896.
\end{acknowledgement}



\end{document}